\documentclass{article}

\usepackage{amsfonts} 
\usepackage{amsmath} 
\usepackage{amssymb} 
\usepackage{amsthm} 
	\newtheorem{thm}{Theorem}[section]

	\newtheorem{cor}[thm]{Corollary}
	\newtheorem{defn}[thm]{Definition}
	\newtheorem{lem}[thm]{Lemma}
	
	\newtheorem{prop}[thm]{Proposition}

\usepackage{comment}

\usepackage[margin=3cm]{geometry} 

\usepackage{graphicx} 

\usepackage{hyperref} 
\usepackage{makecell} 
\usepackage{multicol}



\usepackage{tikz}
\usetikzlibrary{3d, calc, arrows, decorations.markings, decorations.pathmorphing, positioning, decorations.pathreplacing}

\makeatletter
\tikzoption{canvas is xy plane at z}[]{%
  \def\tikz@plane@origin{\pgfpointxyz{0}{0}{#1}}%
  \def\tikz@plane@x{\pgfpointxyz{1}{0}{#1}}%
  \def\tikz@plane@y{\pgfpointxyz{0}{1}{#1}}%
  \tikz@canvas@is@plane
}
\makeatother
\tikzset{xyp/.style={canvas is xy plane at z=#1}}
\tikzset{xzp/.style={canvas is xz plane at y=#1}}
\tikzset{yzp/.style={canvas is yz plane at x=#1}}
\tikzset{mypersp/.style={x={(0:1cm)},y={(55:0.75cm)},z={(90:1cm)}}}

\newcommand{\drawcube}{
\draw [xzp=0] (0,0) -- (0,1) -- (1,1) -- (1,0) -- cycle;
\draw [yzp=1] (0,0) -- (0,1) -- (1,1) -- (1,0) -- cycle;
\draw [xyp=1] (0,0) -- (0,1) -- (1,1) -- (1,0) -- cycle;
\draw [densely dotted, yzp=0] (0,0) -- (0,1) -- (1,1) -- (1,0) -- cycle;
\draw [densely dotted, xzp=1] (0,0) -- (0,1) -- (1,1) -- (1,0) -- cycle;
\draw [densely dotted, xyp=0] (0,0) -- (0,1) -- (1,1) -- (1,0) -- cycle;
}

\newcommand{\rightclean}
{
\draw [red, ultra thick, yzp=1] (0.5,0) arc (180:90:0.5);
\draw [red, ultra thick, yzp=1] (0.5,1) arc (0:-90:0.5);
}
\newcommand{\frontclean}
{
\draw [red, ultra thick, xzp=0] (0.5,0) arc (180:90:0.5);
\draw [red, ultra thick, xzp=0] (0.5,1) arc (0:-90:0.5);
}

\newcommand{\leftclean}
{
\draw [red, ultra thick, densely dotted, yzp=0] (0.5,0) arc (0:90:0.5);
\draw [red, ultra thick, densely dotted, yzp=0] (0.5,1) arc (-180:-90:0.5);
}

\newcommand{\backclean}
{
\draw [red, ultra thick, densely dotted, xzp=1] (0.5,0) arc (0:90:0.5);
\draw [red, ultra thick, densely dotted, xzp=1] (0.5,1) arc (-180:-90:0.5);
}

\newcommand{\topoff}
{
\draw [red, ultra thick, densely dotted, xyp=1] (0.5,0) arc (0:90:0.5);
\draw [red, ultra thick, densely dotted, xyp=1] (0.5,1) arc (-180:-90:0.5);
}
\newcommand{\topofftwo}
{
\draw [red, ultra thick, densely dotted, xyp=2] (0.5,0) arc (0:90:0.5);
\draw [red, ultra thick, densely dotted, xyp=2] (0.5,1) arc (-180:-90:0.5);
}

\newcommand{\topon}
{
\draw [red, ultra thick, xyp=1] (0.5,0) arc (180:90:0.5);
\draw [red, ultra thick, xyp=1] (0.5,1) arc (0:-90:0.5);
}
\newcommand{\toponthree}
{
\draw [red, ultra thick, xyp=3] (0.5,0) arc (180:90:0.5);
\draw [red, ultra thick, xyp=3] (0.5,1) arc (0:-90:0.5);
}
\newcommand{\bottomon}
{
\draw [red, ultra thick, densely dotted, xyp=0] (0.5,0) arc (180:90:0.5);
\draw [red, ultra thick, densely dotted, xyp=0] (0.5,1) arc (0:-90:0.5);
}
\newcommand{\bottomoff}
{
\draw [red, ultra thick, densely dotted, xyp=0] (0.5,0) arc (0:90:0.5);
\draw [red, ultra thick, densely dotted, xyp=0] (0.5,1) arc (-180:-90:0.5);
}

\newcommand{\strandbackgroundshading}
{
\draw [draw=none, fill=gray!10!white] (0,0) -- (0,0.5) -- (1,0.5) -- (1,0) -- cycle;
\draw [draw=none, fill=gray!10!white] (0,1) -- (0,1.5) -- (1,1.5) -- (1,1) -- cycle;
}
\newcommand{\strandsetup}{
\draw [ultra thick, color=black!50!green] (0,0) -- (0,0.5);
\draw [ultra thick, color=black!50!green] (0,1) -- (0,1.5);
\draw (0,0.25) to [bend left=90] (0,1.25);
\draw (-0.15,0.15) node {$q$};
\draw (-0.15,1.35) node {$p$};
}
\newcommand{\strandsetupn}[2]{
\draw [ultra thick, color=black!50!green] (0,0) -- (0,0.5);
\draw [ultra thick, color=black!50!green] (0,1) -- (0,1.5);
\draw (0,0.25) to [bend left=90] (0,1.25);
\draw (-0.2,0.15) node {$#1$};
\draw (-0.2,1.35) node {$#2$};
}
\newcommand{\cubestrandsetup}
{
\draw [ultra thick, color=black!50!green] (0,0) -- (0,0.5);
\draw [ultra thick, color=black!50!green] (0,1) -- (0,1.5);
\draw (0,0.25) to [bend left=90] (0,1.25);
\draw (-0.15,0.15) node {$v$};
\draw (-0.15,1.35) node {$w$};
}
\newcommand{\lefton}
{
\draw [color=black!50!green, fill=black!50!green] (0,0.25) circle (2 pt);
\draw [color=black!50!green, fill=black!50!green] (0,1.25) circle (2 pt);
}
\newcommand{\leftoff}
{
\draw [color=black!50!green, fill=none] (0,0.25) circle (2 pt);
\draw [color=black!50!green, fill=none] (0,1.25) circle (2 pt);
}
\newcommand{\righton}
{
\draw [color=black!50!green, fill=black!50!green] (1,0.25) circle (2 pt);
\draw [color=black!50!green, fill=black!50!green] (1,1.25) circle (2 pt);
}
\newcommand{\rightoff}
{
\draw [color=black!50!green, fill=none] (1,0.25) circle (2 pt);
\draw [color=black!50!green, fill=none] (1,1.25) circle (2 pt);
}
\newcommand{\aftervused}
{
\draw [draw=none, fill=gray!50!white] (0,0.25) -- (0,0.5) -- (1,0.5) -- (1,0.25) -- cycle;
}
\newcommand{\beforevused}
{
\draw [draw=none, fill=gray!50!white] (0,0.25) -- (0,0) -- (1,0) -- (1,0.25) -- cycle;
}
\newcommand{\afterwused}
{
\draw [draw=none, fill=gray!50!white] (0,1.25) -- (0,1.5) -- (1,1.5) -- (1,1.25) -- cycle;
}
\newcommand{\beforewused}
{
\draw [draw=none, fill=gray!50!white] (0,1.25) -- (0,1) -- (1,1) -- (1,1.25) -- cycle;
}
\newcommand{\usea}
{
\draw [ultra thick] (0,1.25) to [out=0, in=-90] (0.5,1.5);
}
\newcommand{\useb}
{
\draw [ultra thick] (0.5,1) to [out=90, in=180] (1,1.25);
}
\newcommand{\useab}
{
\draw [ultra thick]  (0.4,1) to [out=90, in=-90] (0.6,1.5);
}
\newcommand{\usec}
{
\draw [ultra thick] (0,0.25) to [out=0, in=-90] (0.5,0.5);
}
\newcommand{\used}
{
\draw [ultra thick] (0.5,0) to [out=90, in=180] (1,0.25);
}
\newcommand{\usecd}
{
\draw [ultra thick]  (0.4,0) to [out=90, in=-90] (0.6,0.5);
}
\newcommand{\dothorizontals}
{
\draw [ultra thick, dotted]  (0,0.25) -- (1,0.25);
\draw [ultra thick, dotted]  (0,1.25) -- (1,1.25);
}

\newcommand{\tstrandsetup}[1]{
\draw [ultra thick, color=black!50!green] ($ ( #1 , 0 ) $) -- ($ ( #1 , 0.5) $);
\draw [ultra thick, color=black!50!green] ($ ( #1 , 1) $) -- ($ (#1 , 1.5) $);
}
\newcommand{\tstrandbackgroundshading}[1]
{
\draw [draw=none, fill=gray!10!white] ($ (#1,0) $) -- ($ (#1,0.5) $) -- ($ (#1 + 1, 0.5) $) -- ($ (#1 + 1, 0) $) -- cycle;
\draw [draw=none, fill=gray!10!white] ($ (#1,1) $) -- ($ (#1,1.5) $) -- ($ (#1 + 1, 1.5) $) -- ($ (#1 + 1, 1) $) -- cycle;
}
\newcommand{\trighton}[1]
{
\draw [color=black!50!green, fill=black!50!green] ($ (#1+1,0.25) $) circle (2 pt);
\draw [color=black!50!green, fill=black!50!green] ($ (#1+1,1.25) $) circle (2 pt);
}
\newcommand{\trightoff}[1]
{
\draw [color=black!50!green, fill=none] ($ (#1+1,0.25) $) circle (2 pt);
\draw [color=black!50!green, fill=none] ($ (#1+1,1.25) $) circle (2 pt);
}
\newcommand{\taftervused}[1]
{
\draw [draw=none, fill=gray!50!white] ($ (#1,0.25) $) -- ($ (#1,0.5) $) -- ($ (#1+1,0.5) $) -- ($ (#1+1,0.25) $) -- cycle;
}
\newcommand{\tbeforevused}[1]
{
\draw [draw=none, fill=gray!50!white] ($ (#1,0.25) $) -- ($ (#1,0) $) -- ($ (#1+1,0) $) -- ($ (#1+1,0.25) $) -- cycle;
}
\newcommand{\tafterwused}[1]
{
\draw [draw=none, fill=gray!50!white] ($ (#1,1.25) $) -- ($ (#1,1.5) $) -- ($ (#1+1,1.5) $) -- ($ (#1+1,1.25) $) -- cycle;
}
\newcommand{\tbeforewused}[1]
{
\draw [draw=none, fill=gray!50!white] ($ (#1,1.25) $) -- ($ (#1,1) $) -- ($ (#1+1,1) $) -- ($ (#1+1,1.25) $) -- cycle;
}
\newcommand{\tusea}[1]
{
\draw [ultra thick] ($ (#1,1.25) $) to [out=0, in=-90] ($ (#1+0.5,1.5) $);
}
\newcommand{\tuseb}[1]
{
\draw [ultra thick] ($ (#1+0.5,1) $) to [out=90, in=180] ($ (#1+1,1.25) $);
}
\newcommand{\tuseab}[1]
{
\draw [ultra thick] ($ (#1+0.4,1) $) to [out=90, in=-90] ($ (#1+0.6,1.5) $);
}
\newcommand{\tusec}[1]
{
\draw [ultra thick] ($ (#1,0.25) $) to [out=0, in=-90] ($ (#1+0.5,0.5) $);
}
\newcommand{\tused}[1]
{
\draw [ultra thick] ($ (#1+0.5,0) $) to [out=90, in=180] ($ (#1+1,0.25) $);
}
\newcommand{\tusecd}[1]
{
\draw [ultra thick] ($ (#1+0.4,0) $) to [out=90, in=-90] ($ (#1+0.6,0.5) $);
}
\newcommand{\tdothorizontals}[1]
{
\draw [ultra thick, dotted]  ($ (#1,0.25) $) -- ($ (#1+1,0.25) $);
\draw [ultra thick, dotted]  ($ (#1,1.25) $) -- ($ (#1+1,1.25) $);
}



\newcommand{\To}{\longrightarrow}

\newcommand{\A}{\mathcal{A}}
\newcommand{\AAbar}{\overline{\mathcal{A}}}
\newcommand{\Abar}{\overline{A}}

\newcommand{\CC}{\mathcal{C}}

\newcommand{\fbar}{\overline{f}}

\newcommand{\F}{\mathcal{F}}

\newcommand{\HH}{\mathcal{H}}

\newcommand{\T}{\mathcal{T}}

\newcommand{\U}{\mathcal{U}}
\newcommand{\Ubar}{\overline{U}}

\newcommand{\Z}{\mathbb{Z}}
\newcommand{\ZZ}{\mathcal{Z}}

\newcommand{\on}{\bullet}
\newcommand{\off}{\circ}

\DeclareMathOperator{\inv}{inv}


\newcolumntype{x}[1]{>{\centering\arraybackslash}p{#1}}
\newcommand\diag[4]{%
  \multicolumn{1}{p{#2}|}{\hskip-\tabcolsep
  $\vcenter{\begin{tikzpicture}[baseline=0,anchor=south west,inner sep=#1]
  \path[use as bounding box] (0,0) rectangle (#2+2\tabcolsep,\baselineskip);
  \node[minimum width={#2+2\tabcolsep},minimum height=\baselineskip+\extrarowheight] (box) {};
  \draw (box.north west) -- (box.south east);
  \node[anchor=south west] at (box.south west) {#3};
  \node[anchor=north east] at (box.north east) {#4};
 \end{tikzpicture}}$\hskip-\tabcolsep}}


\begin{document}

\title{A-infinity algebras, strand algebras, and contact categories} 


\author{Daniel V. Mathews}


\maketitle

\begin{abstract}

In previous work we showed that the contact category algebra of a quadrangulated surface is isomorphic to the homology of a strand algebra from bordered sutured Floer theory. Being isomorphic to the homology of a differential graded algebra, this contact category algebra has an A-infinity structure, allowing us to combine contact structures not just by gluing, but also by higher-order operations.

In this paper we investigate such A-infinity structures and higher order operations on contact structures. We give explicit constructions of such A-infinity structures, and establish some of their properties, including conditions for the vanishing and nonvanishing of A-infinity operations. Along the way we develop several related notions, including a detailed consideration of tensor products of strand diagrams.

\end{abstract}


\tableofcontents

\section{Introduction}

\subsection{Overview}

In previous work \cite{mathews_strand_2016} we demonstrated an isomorphism of two unital $\Z_2$-algebras: the first arising from contact geometry; the second from bordered Floer theory.
\begin{equation}
\label{eqn:main_iso}
CA (\Sigma, Q) \cong H(\A(\ZZ))
\end{equation}
Here $(\Sigma, Q)$ is a \emph{quadrangulated surface}, a useful object in TQFT-type structures in contact geometry \cite{mathews_itsy_2014, mathews_twisty_2014}, and $\ZZ$ is an \emph{arc diagram}, an equivalent object used in bordered sutured Floer theory 
\cite{zarev_bordered_2009}. The left hand side $CA(\Sigma, Q)$ is the algebra of a \emph{contact category}, with objects and morphisms given by certain contact structures on $\Sigma \times [0,1]$. The right hand side $H(\A(\ZZ))$ is the homology of the strand algebra $\A(\ZZ)$, a differential graded algebra (DGA) generated by \emph{strand diagrams} on $\ZZ$, which encode Reeb chords arising as asymptotics of certain holomorphic curves. 
The isomorphism (\ref{eqn:main_iso}) therefore allows us to interpret (homology classes of) strand diagrams as contact structures.

Of particular interest, (\ref{eqn:main_iso}) expresses the contact category algebra as the homology of a DGA. The homology of a DGA is known to have the structure of an $A_\infty$ algebra. This $A_\infty$ structure provides a sequence of higher-order operations $X_n$ on the homology, extending from multiplication $X_2$ and satisfying relations which provide a homotopy-theoretic form of associativity \cite{stasheff_homotopy_1963, stasheff_homotopy_1963-1}.

While $A_\infty$ structures are well known to arise in Floer theory 
(see e.g. \cite{seidel_fukaya_2008}), it is perhaps surprising that an $A_\infty$ structure should arise directly out of contact structures.
The $A_\infty$ operations allow us to combine contact structures not just by gluing, but also by higher-order operations.
A natural question arises: what are the higher $A_\infty$ operations on contact structures, and what do they mean geometrically?

This paper essentially consists of an investigation of $A_\infty$ structures on this contact category algebra. This investigation is carried out through the use of strand diagrams, which are more general objects, and easier to work with algebraically than contact structures. Therefore, more accurately, this paper consists of an investigation of $A_\infty$ structures on $H(\A(\ZZ))$, from a contact-geometric perspective.

Throughout this paper we work with $\Z_2$ coefficients; signs are always irrelevant.

\subsection{Main results}

Our first main result is the explicit construction of $A_\infty$ structures on $H(\A(\ZZ))$. 

\begin{thm}
\label{thm:first_thm}
A pair ordering of $\ZZ$ can be used to define an explicit $A_\infty$ structure $X$ on $H(\A(\ZZ))$, together with a morphism of $A_\infty$ algebras $f \colon H(\A(\ZZ)) \To \A(\ZZ)$. These consist of maps
\[
X_n \colon H(\A(\ZZ))^{\otimes n} \To H(\A(\ZZ)),
\quad
f_n \colon H(\A(\ZZ))^{\otimes n} \To \A(\ZZ),
\]
where $X$ extends the DGA structure of $H(\A(\ZZ))$, and $\A(\ZZ)$ is regarded as an $A_\infty$ algebra with trivial $n$-ary operations for $n \geq 3$.
\end{thm}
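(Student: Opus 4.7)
The plan is to invoke the standard homotopy transfer theorem (Kadeishvili's theorem) and feed it an explicit splitting of $\A(\ZZ)$ that is manufactured from the pair ordering. Recall that the transfer theorem says: given a DGA $A$, a subcomplex $H \hookrightarrow A$ of cycle representatives giving a deformation retract of $A$ onto $H$ (specified by a projection $\pi \colon A \To H$, an inclusion $\iota \colon H \To A$, and a chain homotopy $h \colon A \To A$ with $\iota \pi - \Id = dh + hd$), one obtains explicit $A_\infty$ operations $X_n$ on $H$ and an $A_\infty$ quasi-isomorphism $f_n \colon H^{\otimes n} \To A$ as signed sums indexed by planar rooted binary trees with $n$ leaves; at each tree one inserts $\iota$ at the leaves, the DGA multiplication $\mu$ at each internal vertex, $h$ on each internal edge, and either $\pi$ (for $X_n$) or nothing (for $f_n$) at the root. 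Since we are in characteristic $2$, no signs appear. So the entire theorem reduces to producing the triple $(\iota, \pi, h)$ from a pair ordering, and checking that the tree sums are finite on each tensor of generators.

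The first concrete step is to use the pair ordering to select, inside each homology class of $\A(\ZZ)$, a canonical cycle representative — for example, the strand diagram that is ``minimal'' with respect to the order on chord endpoints. This gives $\iota \colon H(\A(\ZZ)) \To \A(\ZZ)$, and the complementary splitting gives $\pi$. The second concrete step is to define the contracting homotopy $h$: on each non-representative generator, the pair ordering will single out a preferred pair of strand endpoints that can be ``uncrossed,'' producing a strand diagram $h(a)$ such that $d h(a) + h d(a)$ equals $a$ minus its chosen representative. The key technical point is to verify, generator by generator, that this uncrossing rule yields a chain homotopy in the usual sense; this amounts to a careful bookkeeping of the resolutions of the differential $d$ on strand diagrams and matches what is done in bordered Floer theory for similar arc-diagram algebras.

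With $(\iota, \pi, h)$ in hand, the formulas
\[
X_n = \pi \circ \sum_{T} \Phi_T(\mu, h) \circ \iota^{\otimes n}, \qquad f_n = \sum_{T} \Phi_T(\mu, h) \circ \iota^{\otimes n},
\]
ranging over planar binary rooted trees $T$ with $n$ leaves, automatically satisfy the $A_\infty$ relations and the $A_\infty$ morphism relations, because these are a formal consequence of $\iota \pi - \Id = dh + hd$, $d\mu = \mu(d\otimes 1 + 1 \otimes d)$, and associativity of $\mu$. That $X_2$ recovers the DGA multiplication on $H(\A(\ZZ))$ comes from the unique binary tree with two leaves, for which $\Phi_T = \mu$, and $h$ applied to a cycle representative times another is again a cycle manipulation. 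That $X_1 = 0$ and higher operations on $\A(\ZZ)$ vanish is just our convention for the target side.

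The main obstacle is the finiteness of the tree sums: a priori, for generators $a_1, \dots, a_n \in H(\A(\ZZ))$, the sum over all trees is infinite. However, because $\A(\ZZ)$ is finite-dimensional in each idempotent-graded piece, and because $h$ strictly decreases some complexity (a suitable energy or combinatorial statistic attached to the pair ordering, likely the number of crossings or an analogous count), only finitely many trees contribute. Establishing this strict decrease — i.e., identifying the right complexity on strand diagrams that the pair-ordering homotopy reduces — is the nontrivial combinatorial check on which the whole construction rests; the rest is the formal machinery of homological perturbation.
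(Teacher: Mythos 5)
Your overall route (homotopy transfer via tree-sum formulas) is a legitimate alternative to what the paper does, but as written the proposal leaves the actual content of the theorem unproved. The entire point is to manufacture the contraction data from the pair ordering, and this is exactly the step you defer. Two concrete problems. First, your description of the homotopy $h$ is in the wrong direction: the differential on $\A(\ZZ)$ \emph{resolves} crossings (Maslov degree $-1$), so a chain homotopy must have degree $+1$ and \emph{insert} crossings; an ``uncrossing rule'' would have the same degree as $\partial$ and cannot satisfy $\iota\pi - \Id = \partial h + h\partial$. The paper's creation operators $A_P^*$ are precisely the crossing-insertion maps, and they satisfy $A_P^*\partial + \partial A_P^* = 1$ only on summands $\A(h,s,t)$ with twisted H-data, i.e.\ where the homology vanishes. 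On summands with tight H-data ($\HH(h,s,t)\cong\Z_2$) no such identity is available, so to run your argument you must additionally build a genuine deformation retract there — a projection $\pi$ (your ``complementary splitting'' is not canonical and must be chosen, e.g.\ via the pair ordering at all-on doubly occupied pairs) and a homotopy compatible with it, say by tensoring local contractions of the complexes $C''_P$. None of this is carried out, and it is where the pair ordering actually enters. Second, the obstacle you single out as the main one — finiteness of the tree sums — is not an obstacle at all: for each $n$ the sum runs over planar rooted binary trees with exactly $n$ leaves, a finite (Catalan-number) set, so no complexity-decreasing statistic is needed. Misplacing the difficulty there while waving through the construction of $(\iota,\pi,h)$ means the proof of the theorem is essentially missing.

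For comparison, the paper avoids ever constructing a global deformation retract. It follows Kadeishvili's inductive scheme: at each level one only needs to solve $\partial f_n = f_1 X_n - U_n$, and the right-hand side is a boundary supported in a fixed H-summand. On twisted summands the pair ordering supplies a creation choice function and $f_n = A^*_{\mathcal{CR}}\circ(f_1X_n - U_n)$; on tight summands the paper shows (lemma on solutions lying in $\F$) that any choice of $f_n$ lies in the ideal $\F$, and that such choices do not affect $X_n$ or the later $\fbar_k$, so one can simply work in the quotient $\AAbar = \A/\F$ and take $\fbar_n = 0$ there. Your approach, if completed, would buy closed tree formulas for all $X_n$ and $f_n$ at once, at the cost of constructing and verifying full contraction data (including side conditions or an argument that they can be arranged); the paper's approach buys a construction from strictly less data, at the cost of being inductive and only determining $f_n$ modulo $\F$.
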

By (\ref{eqn:main_iso}), theorem \ref{thm:first_thm} provides $A_\infty$ structures on the contact category algebra $CA(\Sigma,Q)$.

We will discuss \emph{pair orderings} as we proceed (section \ref{sec:ordering}); they consist of a total order on the matched pairs of $\ZZ$, along with an ordering of the two points in each pair. In fact the full statement (theorem \ref{thm:main_thm}) allows for a slightly more general $A_\infty$ structures, using certain types of ``choice functions" to parametrise the various choices involved in the construction.

The second main result provides necessary conditions under which these $A_\infty$ maps are nontrivial, and under those conditions gives an explicit description of the results. The idea is that certain ``local" conditions at the matched pairs of $\ZZ$ are necessary to obtain nonzero output from the $A_\infty$ maps.
\begin{thm}
\label{thm:second_thm}
Let $M = M_1 \otimes \cdots \otimes M_n \in H(\A(\ZZ))^{\otimes n}$ be a tensor product of nonzero homology classes of strand diagrams. The maps $f_n$ and $X_n$ of theorem \ref{thm:first_thm} have the following properties.
\begin{enumerate}
\item
If $\fbar_n (M) \neq 0$, then $M$ has $l$ twisted and $m$ critical matched pairs, where $l+m \geq n-1$ and $m \leq n-2$, and all other matched pairs are tight. In this case $\fbar_n (M)$ is a sum of strand diagrams, where each diagram $D$ is tight at all matched pairs where $M$ is critical or tight, and has $n-1-m$ crossed and $l+m-n+1$ twisted matched pairs.
\item
If $X_n (M) \neq 0$, then $M$ has precisely $n-2$ critical matched pairs, and all other matched pairs tight. In this case, $X_n (M)$ is the unique homology class of tight diagram with the appropriate gradings.
\end{enumerate}
\end{thm}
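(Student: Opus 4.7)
The plan is to exploit the explicit construction of $X_n$ and $f_n$ from theorem \ref{thm:first_thm}, in which the pair ordering allows the $A_\infty$ operations to be defined (or at least analyzed) one matched pair at a time. Because each step of the construction is local to a single matched pair of $\ZZ$, the global conditions in theorem \ref{thm:second_thm} should follow from an iterated local analysis, together with bookkeeping that tracks the classification of each matched pair of a diagram as tight, critical, twisted, or crossed.

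The proof would proceed by induction on $n$, with the cases $n \le 2$ being immediate from the fact that $X$ extends the DGA structure on $H(\A(\ZZ))$. For the inductive step, I would unpack the recursive definition of $f_n$ and $X_n$, which assembles the $n$-ary operation from lower $f_k$ and $X_k$ by multiplying strand diagrams in $\A(\ZZ)$ and applying the homotopy (splitting) associated to the pair ordering. At each matched pair $p$, the key observation should be that the multiplication, followed by the homotopy for $p$, gives zero unless the local configuration of the inputs at $p$ matches a short list of allowed patterns.

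The central ingredient is therefore a local lemma at a single matched pair $p$, describing all local input/output behaviour of the recursion at $p$. I expect this lemma to say, roughly: a tight pair stays tight and contributes nothing new; a critical pair can be ``absorbed'' by one application of the homotopy, accounting for one of the $n-2$ extra slots beyond the binary multiplications, and becomes tight in the output; a twisted pair may either persist as a twist in the output or be converted to a crossing; and crossed local configurations cannot appear in inputs drawn from $H(\A(\ZZ))$. Counting these absorptions across the tree underlying the $A_\infty$ operation then produces the inequalities $l+m \geq n-1$ and $m \leq n-2$ for $\fbar_n$, with equality $m = n-2$ and $l = 0$ forced for $X_n$ because $X_n$ lands in $H(\A(\ZZ))$ rather than $\A(\ZZ)$. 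The explicit counts of crossed ($n-1-m$) and twisted ($l+m-n+1$) pairs in the output of $\fbar_n$ should drop out of the same local accounting.

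The main obstacle, I anticipate, is precisely this bookkeeping: keeping control of how the local type of each matched pair changes as one traverses the pair ordering and applies the recursive definition of $X_n$ and $f_n$, and verifying that the local lemma at $p$ is robust enough that the order in which pairs are processed does not affect the count. Once the local lemma and the combinatorics of absorbed critical pairs are in place, the uniqueness of the homology class in part (ii) follows because a tight diagram is determined by its gradings in $H(\A(\ZZ))$, so the sum of tight diagrams produced by the construction collapses to a single homology class.
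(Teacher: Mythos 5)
There is a genuine gap here: your plan rests on an induction through the recursive (Kadeishvili) definition of $f_n$ and $X_n$, governed by a ``local lemma'' at each matched pair, but the decisive steps of that plan are exactly the ones you leave unresolved, and they do not go through in the form stated. First, the homotopy in the construction is not applied ``at each matched pair'': each application of the creation operator inserts a crossing at a \emph{single} pair, chosen globally (the $\preceq$-minimal twisted pair of the relevant summand), so the local behaviour at a pair $p$ depends on the global state of the tensor product, not just on its local configuration; this is precisely why the paper's sufficiency analysis (section \ref{sec:nontrivial_higher_operations}) needs operation trees, strong validity, and extra hypotheses such as excluding on-on doubly occupied pairs. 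Second, $\fbar_n(M)$ is a \emph{sum} over all terms of $U_n$, with possible cancellations (see e.g.\ figure \ref{fig:U3_eg2}), so ``some term is nonzero'' inductions only yield existence-type conclusions (this is what proposition \ref{prop:nonzero_implies_trees} extracts), not the exact counts of crossed and twisted pairs in every diagram of the output, nor the bound $m \le n-2$. Your sketch also conflates how critical pairs are repaired: the creation operator never acts at a critical pair; rather a crossing is inserted while the pair is twisted in some sub-tensor-product, and the pair later becomes tight by sublimation under multiplication.

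The paper's actual proof (theorems \ref{thm:fn_structure} and \ref{thm:Xn_structure}) needs none of this bookkeeping: it is a direct grading argument. Since $\fbar_n$ preserves H-data and has Maslov degree $n-1$, every diagram $D$ appearing in $\fbar_n(M)$ has $\iota(D)=\iota(M)+n-1$; the local classifications (tables \ref{tbl:local_diagrams} and \ref{tbl:local_tensor_products}) show the Maslov index can increase by at most $1$ per matched pair, and only at pairs where $M$ is twisted or critical (critical pairs are forced to become tight, tight pairs stay tight, and in standard form crossed doubly occupied pairs are excluded), which immediately gives $l+m\ge n-1$ and the exact counts $n-1-m$ crossed and $l+m-n+1$ twisted pairs; the remaining inequality $m\le n-2$ comes from lemma \ref{lem:fn_crossings}, i.e.\ that $f_n(M)$ for $n\ge 2$ must contain a crossing, again by pure degree reasons. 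Part (ii) is the same argument with Maslov degree $n-2$, plus the observation that $X_n(M)\ne 0$ forces the H-data of $M$ to be tight, which rules out twisted pairs outright. If you want to salvage your approach, you would have to prove the exact-count statements through the recursion despite cancellations and the global choice of creation pair, which is substantially harder than the theorem itself; the grading route is both shorter and what the construction's degree conventions are designed to deliver.
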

All the terminology will be defined in due course. Very roughly, $\fbar_n$ is the projection of $f_n$ into a useful quotient algebra; matched pairs are objects which appear in the arc diagrams on which strand diagrams are drawn; and the words ``tight", ``twisted", and ``critical" are descriptions of types of configurations of strands in strand diagrams (and their homology classes and their tensor products).

The other main results involve the notion of \emph{operation trees}. These will be defined in due course (section \ref{sec:operation_trees}). They consist of rooted plane binary trees with vertices labelled by strand diagrams or contact structures; they encode the way in which contact structures can be combined by the various $A_\infty$ operations. Trees have commonly been used to encode $A_\infty$ operations (e.g. \cite{keller_introduction_1999, kontsevich_homological_2001, seidel_fukaya_2008}).

Certain trees of this type are required to obtain nonzero output from an $A_\infty$ operation.
\begin{prop}
\label{prop:intro_nonzero_trees}
If $X_n (M) \neq 0$ or $\fbar_n (M) \neq 0$, there is a valid distributive operation tree for $M$.
\end{prop}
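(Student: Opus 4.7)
The plan is to unwind the construction of $X_n$ and $f_n$ given in the proof of Theorem \ref{thm:first_thm}. By the standard homological-perturbation recipe for transferring a DGA structure to homology, each $X_n$ and each $f_n$ is built inductively out of the multiplication on $\A(\ZZ)$ together with a chosen cochain-level homotopy determined by the pair ordering, and hence can be expanded as a sum indexed by rooted plane binary trees with $n$ leaves. In this expansion the leaves are labelled (in order) by $M_1,\ldots,M_n$, the internal vertices carry either a multiplication or the homotopy, and each internal edge naturally acquires a label given by the intermediate strand diagram produced by the vertex below it.

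I would first pick such a tree $T$ whose contribution to $X_n(M)$ or $\fbar_n(M)$ is a nonzero summand; this exists by hypothesis. I would then show that $T$, with its induced labelling by strand diagrams at all vertices and edges, satisfies the definition of an operation tree for $M$ from Section \ref{sec:operation_trees}: the leaves carry exactly $M_1,\ldots,M_n$ in order, and at each internal vertex the labels on the two incoming edges agree with the inputs of the operation (multiplication or homotopy) carried out there, while the outgoing edge carries the output.

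Next, I would verify that $T$ is \emph{distributive} and \emph{valid}. Distributivity should follow essentially from the construction, since the tensor factors $M_1,\ldots,M_n$ are distributed across the leaves in the order prescribed by the $A_\infty$ operation, with no factor re-used. Validity is the more substantial condition; here I would appeal to Theorem \ref{thm:second_thm}, applied locally at each internal vertex. Since the operation at that vertex produces a nonzero output out of its two inputs, part (ii) (or part (i), at the root if we are working with $\fbar_n$) constrains the inputs to have the right configuration of tight, twisted, and critical matched pairs. Iterating this constraint from the root down toward the leaves determines the admissible local configurations at every vertex, which I would identify with the validity conditions in the definition.

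The main obstacle is the bookkeeping of how the tight/twisted/critical classification of matched pairs propagates through $T$. In particular, one must check that the counts of twisted and critical matched pairs demanded by Theorem \ref{thm:second_thm} at each internal vertex are compatible with the counts produced by the vertex immediately below, so that a single choice of tree supports the nonvanishing all the way from leaves to root. I expect this to reduce to a matched-pair-by-matched-pair local analysis: each matched pair of $\ZZ$ contributes an independent combinatorial constraint along $T$, and the cumulative constraints are exactly what validity demands. Once this compatibility is established, $T$ is the desired valid distributive operation tree for $M$.
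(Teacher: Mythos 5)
Your plan has a genuine gap at its central step. Both validity and distributivity of the tree are, in the paper's setup, consequences of the \emph{full} lower-level maps $\fbar_j$ and $X_j$ being nonzero on the sub-tensor-products labelling the vertices (non-singularity of the labels and the counts of twisted/critical pairs come from theorems \ref{thm:fn_structure} and \ref{thm:Xn_structure}, i.e.\ theorem \ref{thm:second_thm}). But in your ``expand over trees and pick a nonzero summand'' scheme, the nonvanishing of a single tree-indexed summand does not certify that the full maps $\fbar_j$, $X_j$ are nonzero on the corresponding sub-tensor-products: the full map is a sum over many trees and can vanish by cancellation even when your chosen summand does not. So the appeal to theorem \ref{thm:second_thm} ``locally at each internal vertex'' is not available as stated, and you would instead need per-tree analogues of those grading arguments, which you have not supplied. (You have also misdescribed distributivity: it is a count of twisted/critical matched pairs at each vertex, not a statement about tensor factors being distributed across leaves without re-use, so it does not ``follow essentially from the construction''.) The paper sidesteps the cancellation issue by a straightforward induction on $n$: if $X_n(M)\neq 0$ then by lemma \ref{lem:Xn_directly} some product $\fbar_j(M_1\otimes\cdots\otimes M_j)\,\fbar_{n-j}(M_{j+1}\otimes\cdots\otimes M_n)$ of \emph{full} maps is nonzero; if $\fbar_n(M)\neq 0$ then, since $\fbar_n = \Abar^*_{\mathcal{CR}^\preceq}\circ\Ubar_n$ on twisted summands, some term of equation (\ref{eqn:Un_def}) is nonzero. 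Induction then supplies valid distributive trees for the smaller pieces, and lemmas \ref{lem:join_valid_distributive} and \ref{lem:graft_valid_distributive} show joining (resp.\ grafting) preserves validity and distributivity.

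A second omission: you never address the terms of $U_n$ of the form $f_{n-j+1}(M_1\otimes\cdots\otimes X_j(M_{k+1}\otimes\cdots\otimes M_{k+j})\otimes\cdots\otimes M_n)$. These do not fit the picture in which ``the leaves carry exactly $M_1,\ldots,M_n$'': the outer operation sees the single class $X_j(\cdots)$ in place of a sub-tensor-product, and to produce an operation tree for $M$ one must graft and relabel every vertex whose label involved that class. Checking that distributivity survives this relabelling is exactly where the paper needs the H-contraction argument (lemma \ref{lem:tightness_H-contraction}, inside lemma \ref{lem:graft_valid_distributive}): twisted and critical pairs of the contracted label remain twisted or critical after the sub-tensor-product is substituted back in. Without this, the bookkeeping you defer to at the end of your proposal does not close.
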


Our final main result gives sufficient conditions on diagrams and trees which ensure a nonzero result; this result is again described explicitly.
\begin{thm} \
\label{thm:third_thm}
\begin{enumerate}
\item
Suppose $M$ has no on-on doubly occupied matched pairs.
If every valid distributive operation tree for $M$ is strictly $f$-distributive, and at least one such tree exists, then $\fbar_n (M) \neq 0$. Moreover, $\fbar_n (M)$ is given by a single diagram $D$, which can be described explicitly.
\item
Suppose $M$ has no twisted or on-on doubly occupied matched pairs. If every valid distributive operation tree for $M$ is strictly $X$-distributive, and at least one such tree exists, then $X_n (M) \neq 0$. Moreover, $X_n (M)$ is given by the homology class of unique tight diagram with appropriate gradings.
\end{enumerate}
\end{thm}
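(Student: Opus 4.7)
The plan is to compute $\bar{f}_n(M)$ and $X_n(M)$ explicitly from the pair-ordering construction of Theorem \ref{thm:first_thm}, organize the resulting sums by operation trees, and use the strict distributivity hypotheses to show the resulting diagrams do not cancel modulo $2$. Since the construction of Theorem \ref{thm:first_thm} proceeds by the standard homotopy transfer recipe parametrised by the pair ordering, I expect both $f_n(M)$ and $X_n(M)$ to decompose naturally as sums indexed by rooted plane binary trees, with each internal vertex performing a multiplication in $\A(\ZZ)$ and with certain edges contracted by a chain homotopy built from the pair ordering. These indexing trees are precisely the operation trees of Proposition \ref{prop:intro_nonzero_trees}, once their vertices are labelled by the partial products of the $M_i$.

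For part (i), I would proceed as follows. Given $M$, first apply the necessary conditions from part (i) of Theorem \ref{thm:second_thm}: any tree that contributes nontrivially must have strand-diagram content with $l$ twisted, $m$ critical, and the remaining tight matched pairs in the right counts, with $m \leq n-2$ and $l + m \geq n-1$. Next I would show that for each valid distributive operation tree $\T$ for $M$, the homotopy-transfer recipe produces, in the quotient to which $\bar{f}_n$ projects, either zero or a single explicit diagram $D_\T$ determined by reading off the composition order prescribed by $\T$. The strict $f$-distributivity hypothesis then forces $D_\T$ to be the same diagram $D$ for every such tree, and ensures that the trees that are \emph{not} strictly $f$-distributive contribute zero in the quotient. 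The hypothesis that $M$ has no on-on doubly occupied matched pairs rules out the combinatorial configurations in which two distinct valid trees could pair up to cancel each other (it forces the local behaviour at each matched pair to be rigid enough that each tree contributes an independent term). Finally, a parity count shows an odd number of strictly $f$-distributive trees produce $D$, so $\bar{f}_n(M) = D \neq 0$.

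For part (ii) the strategy is the same, but one passes to the homology. By the analogous necessary conditions in part (ii) of Theorem \ref{thm:second_thm}, any nonzero contribution must have exactly $n-2$ critical and all other pairs tight, and its homology class is determined by its gradings. Hence showing that at least one strictly $X$-distributive valid distributive operation tree contributes nontrivially immediately pins down $X_n(M)$ as the unique tight homology class with the required gradings. The additional hypothesis that $M$ has no twisted matched pairs (beyond the on-on condition) is needed because in part (ii) we operate entirely within $H(\A(\ZZ))$ rather than $\A(\ZZ)$; twisted pairs would obstruct representing the input by tight diagrams at the level where the $X_n$-recipe is applied. Having reduced to tight representatives, the argument of part (i) yields a single strand diagram whose homology class is the claimed output, and this class is nonzero by the grading uniqueness.

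The main obstacle will be the cancellation analysis: verifying that the sum over \emph{all} binary trees arising from the homotopy transfer collapses, modulo $2$, to the sum over strictly distributive operation trees, and then further that the latter sum has odd cardinality of copies of $D$. This will require a careful combinatorial matching between non-strictly-distributive trees, paired by swapping the order of two commuting operations at a non-distributive vertex, together with a local analysis at each matched pair showing that on-on doubly occupied pairs are the only obstruction to this pairing argument. Theorem \ref{thm:second_thm} and Proposition \ref{prop:intro_nonzero_trees} will be used repeatedly to restrict the combinatorial possibilities and to reduce the identification of the nonzero output to a grading computation.
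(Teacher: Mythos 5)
There is a genuine gap, and it lies exactly where you flag it: the cancellation analysis. Your proposed mechanism for non-vanishing --- that all strictly $f$-distributive trees contribute the \emph{same} diagram $D$ and that ``a parity count shows an odd number of trees produce $D$'' --- is not how the result is forced, and I do not see how it could be. In the paper's proof (theorems \ref{thm:nonzero_f} and \ref{thm:nonzero_X}), one first shows that the $f_\bullet(\cdots X_j(\cdots)\cdots)$ correction terms of $U_n$ vanish (by a grafting argument against theorem \ref{thm:Xn_structure} --- you never address these terms, and the generic ``sum over trees'' transfer formula you invoke is not available off the shelf here, since the homotopy is a creation operator defined only on twisted H-summands and no side conditions are verified). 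One then proves that the nonzero terms $\fbar_i(M_1\otimes\cdots\otimes M_i)\,\fbar_{n-i}(M_{i+1}\otimes\cdots\otimes M_n)$ are in \emph{bijection with the twisted matched pairs of $M$} (this is where strong validity, the map $V_\T$, branch shifts and the ``plenty of trees'' lemma \ref{lem:lotsa_trees} do the work); these contributions are pairwise \emph{distinct} diagrams, each twisted at a different pair and crossed at the rest, so their number equals the number of twisted pairs and need not be odd. No parity count can select the answer: what does is the creation operator of the pair ordering, which annihilates every term except the unique one still twisted at the $\preceq$-minimal twisted pair and converts it into the single diagram $D$. Your account of the role of the on-on doubly occupied hypothesis (``rules out configurations in which two trees cancel'') also misses its actual function: it upgrades validity to strong validity (lemma \ref{lem:valid_not_strongly}), which is what makes $V_\T$ and the disjointness arguments work.

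For part (ii) the gap is sharper. You assert that exhibiting \emph{one} strictly $X$-distributive tree contributing nontrivially ``immediately pins down'' $X_n(M)$, but $X_n(M)$ is the sum over all splittings $j$ of $\fbar_j\fbar_{n-j}$ (lemma \ref{lem:Xn_directly}), and two nonzero contributions can cancel mod $2$: the example of figure \ref{fig:11_doubly_occupied_example} has exactly this behaviour ($\fbar_1\fbar_4=\fbar_4\fbar_1\neq 0$ yet $X_5=0$), which is precisely why on-on doubly occupied pairs are excluded. The paper closes this by a ``principal factors'' argument showing that for the splitting index $i$ coming from the given tree the product is the unique tight diagram, while for every $j\neq i$ some critical pair has its principal factors straddling position $j$, forcing $\fbar_j\fbar_{n-j}=0$; so exactly one term survives. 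Your proposed pairing of non-strictly-distributive trees ``by swapping commuting operations'' is left entirely unsubstantiated and does not engage with either of these two selection mechanisms (creation operator at the minimal twisted pair in (i); uniqueness of the contributing splitting in (ii)), so as written the proof does not go through.
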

Very roughly, ``on-on" and ``doubly occupied" refer to particular configurations of strand diagrams at a matched pair; an operation tree is ``valid" if the labels are ``non-singular" in an appropriate sense; and it is ``distributive" if the contact structures labelling the tree have their ``twistedness" spread across its various leaves in an appropriate sense.

As we will explain, these results are quite partial. The necessary conditions of theorem \ref{thm:second_thm} are far from sufficient, and the sufficient conditions of theorem \ref{thm:third_thm} are far from necessary. Since there are many $A_\infty$ structures on $H(\A(\ZZ))$, we cannot expect a complete characterisation of diagrams which yield zero and nonzero results; still, we hope these results can be improved. 

As is already clear, there is a \emph{lot} of terminology to define. Simply stating these results requires us to describe precisely many aspects of strand diagrams, and their tensor products and homology classes. We must name this world in order to understand it.

\subsection{Construction of A-infinity structures}
\label{sec:intro_construction}

In a certain sense, the $A_\infty$ structures on $CA(\Sigma, Q)$ or $H(\A(\ZZ))$ are already understood. In the 1980 paper \cite{kadeishvili_homology_1980}, Kadeishvili showed how to define an $A_\infty$ structure on the homology $H$ of any differential graded algebra $A$ (provided $H$ is free, which is always true with $\Z_2$ coefficients). Indeed, in this paper we follow this construction, and theorem \ref{thm:first_thm} can be regarded as fleshing out its details when $A = \A(\ZZ)$. The only thing possibly new in theorem \ref{thm:first_thm} is the level of explicitness in the construction.

We briefly recall some facts about $A_\infty$ algebra; we refer to Keller's \cite{keller_introduction_1999} for an introduction to $A_\infty$ algebra, or to Seidel \cite{seidel_fukaya_2008} for further details. An \emph{$A_\infty$ structure} $m$ on a $\Z$-graded $\Z_2$-module $A$ is a collection of operations $m_n \colon A^{\otimes n} \To A$ for each $n \geq 1$, where each $m_n$ has degree $n-2$. We call $m_n$ the \emph{$n$-ary} or \emph{level $n$} operation. The operations $m_i$ satisfy, for each $n \geq 1$,
\[
\sum_{i+j+k=n} m_{i+1+k} \left( 1^{\otimes i} \otimes m_j \otimes 1^{\otimes k} \right) = 0.
\]
This identity for $n=1$ says that $m_1^2 = 0$, so $m_1$ is a differential; then the identity for $n=2$ is the Leibniz rule, with $m_2$ regarded as multiplication. Indeed an $A_\infty$ algebra with all $m_n = 0$ for $n \geq 3$ is precisely a DGA. A \emph{morphism} $f$ of $A_\infty$ algebras $A \To A'$ (where the operations on $A,A'$ are denoted $m_i, m'_i$ respectively) is a collection of $\Z_2$-module homomorphisms $f_n \colon A^{\otimes n} \To A'$, where each $f_n$ has degree $n-1$. We call $f_n$ the \emph{level $n$} map. The maps $f_i$ satisfy, for each $n \geq 1$,
\[
\sum_{i+j+k = n} f_{i+1+k} \left( 1^{\otimes i} \otimes m_j \otimes 1^{\otimes k} \right)
=
\sum_{i_1 + \cdots + i_s = n} m'_s \left( f_{i_1} \otimes f_{i_2} \otimes \cdots \otimes f_{i_s} \right).
\]

Kadeishvili's construction in \cite{kadeishvili_homology_1980} produces an $A_\infty$ structure $X$ on $H$, consisting of operations $X_n \colon H^{\otimes n} \To H$, 
and a morphism $f$ of $A_\infty$ algebras $H \To A$, consisting of maps $f_n \colon H^{\otimes n} \To A$. 
The DGA $A$ is regarded as an $A_\infty$ algebra with trivial $n$-ary operations for $n \geq 3$. The $A_\infty$ structure constructed on $H$ begins with trivial differential $X_1 = 0$, and $X_2$ is the multiplication on $H$ inherited from $A$. If $H$ is free then there is a map $f_1 \colon H \To A$ (possibly many) which is an isomorphism in homology, sending each homology class to a cycle representative. The constructed $f_n$ can be taken to begin with any such $f_1$.

The construction proceeds inductively, producing maps $U_n \colon H^{\otimes n} \To A$ of degree $n-2$ along the way. First, $U_1 = 0$, $X_1 = 0$, and $f_1 \colon H \To A$ are given. Once $U_i, X_i, f_i$ are defined for $i<n$, we define $U_n$ by
\begin{align}
\label{eqn:Un_def}
U_n \left( a_1 \otimes \cdots \otimes a_n \right) &=
\sum_{j=1}^{n-1} m_2 \left( f_j \left( a_1 \otimes \cdots \otimes a_j \right) \otimes f_{n-j} \left( a_{j+1} \otimes \cdots \otimes a_n \right) \right) \nonumber \\
&\quad + \sum_{k=0}^{n-2} \sum_{j=2}^{n-1} f_{n-j+1} \left( a_1 \otimes \cdots \otimes a_k \otimes X_j \left( a_{k+1} \otimes \cdots \otimes a_{k+j} \right) \otimes \cdots \otimes a_n \right),
\end{align}
and $X_n$ is then simply the homology class of $U_n$,
\begin{equation}
\label{eqn:Xn_def}
X_n = [U_n].
\end{equation}
Since $f_1$ selects cycles, $f_1 X_n$ and $U_n$ differ by a boundary; $f_n$ is then defined by
\begin{equation}
\label{eqn:fn_eqn}
f_1 X_n - U_n = \partial f_n.
\end{equation}
It is then shown that such $f_n$ and $X_n$ have the desired properties.

Clearly, in this construction there is a choice for $f_n$ at each stage, but no choice for $U_n$ or $X_n$. This choice amounts to a choice of inverse for the differential $\partial$.

Our construction, detailed in section \ref{sec:construction}, gives an explicit way to choose an $f_n$ at each stage. This choice is made by maps which we call \emph{creation operators}. We regard the differential in $\A(\ZZ)$ as an ``annihilation operator", destroying crossings between strands by resolving them. Creation operators, on the other hand, insert crossings in a controlled way. The idea is shown in figure \ref{fig:creation_example}. We introduce creation operators in section \ref{sec:cycle_selection_creation}. Creation operators satisfy Heisenberg relations (proposition \ref{prop:creation_operator_Heisenberg}); this amounts to a chain homotopy from the identity to zero. In a certain sense, creation operators are the only operators obeying such Heisenberg relations (proposition \ref{prop:chain_homotopy_classification}); however they only form a very small subspace of the space of operators inverting the differential as required in Kadeishvili's construction (proposition \ref{prop:inverting_vs_creation}). Similar ``creation operators" have been put to use elsewhere in contexts related to contact geometry and Floer homology \cite{mathews_strings_2017, mathews_dimensionally_2015}.

\begin{figure}
\begin{center}
\begin{tikzpicture}[scale=1.5]
\strandbackgroundshading
\afterwused
\beforewused
\strandsetupn{}{}
\lefton
\righton
\usea
\useb
\draw (2,0.75) node {$\To$};
\begin{scope}[xshift=3 cm]
\strandbackgroundshading
\afterwused
\beforewused
\strandsetupn{}{}
\lefton
\righton
	\dothorizontals
\useab
\end{scope}
\end{tikzpicture}

\caption{The action of a creation operator.}
\label{fig:creation_example}
\end{center}
\end{figure}
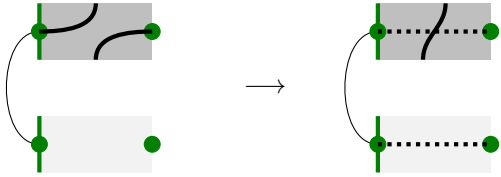

However, there is still choice involved in where to apply creation operators, i.e. where to insert crossings. There is also a choice for the initial cycle selection homomorphism $f_1$. We parametrise such choices through notions of \emph{creation choice functions} and \emph{cycle choice functions} respectively. Our construction in general (theorem \ref{thm:main_thm}) produces an $A_\infty$ structure on $H(\A(\ZZ))$ or $CA(\Sigma, Q)$ from a given cycle choice function and creation choice function. A pair ordering can be used to obtain such choice functions, leading to the formulation of theorem \ref{thm:first_thm}.

In order to define the $A_\infty$ structure $X$ on $H(\A(\ZZ))$, it turns out to be sufficient to work in a particular \emph{quotient} of $\A(\ZZ)$. This simplifies details considerably. We define a two-sided ideal $\F$ in section \ref{sec:ideals}. The maps $\fbar_n$ appearing in theorems \ref{thm:second_thm} and \ref{thm:third_thm} are the images of $f_n$ in the quotient by $\F$. Related ideas appeared in \cite{lipshitz_bimodules_2015}.

Algorithmically, the calculation of an $A_\infty$ map $X_n (M_1 \otimes \cdots \otimes M_n)$, where $M_1, \ldots, M_n$ are homology classes of strand diagrams --- or contact structures --- by the method described above requires the computation of each $f_{j-i+1} (M_i \otimes \cdots \otimes M_j)$ and $X_{j-i+1} (M_i \otimes \cdots \otimes M_j)$, for $1 \leq i \leq j \leq n$. The algorithm therefore has complexity $O(n^2)$ (where we regard each computation of expressions such as (\ref{eqn:Un_def}) as constant time, and the complexity of the arc diagram $\ZZ$, as constant). The contrapositive of theorem \ref{thm:second_thm} provides a set of conditions which imply $X_n (M) = 0$, which are easily checked in constant time. On the other hand, proposition \ref{prop:intro_nonzero_trees} and theorem \ref{thm:third_thm} provide conditions which are much more difficult to check, as the number of operation trees grows much faster with $n$. We regard these results as interesting not because of algorithmic usefulness, but because they perhaps provide some insight into $A_\infty$ operations.

\subsection{Classifications of diagrams, and the many types of twisted}
\label{sec:many_types_twisted}

As already mentioned above, there are many features of strand diagrams which are relevant for our purposes, but which have not been given names in the existing literature. Large parts of this paper, especially sections \ref{sec:algebra_and_anatomy} and \ref{sec:tensor_products}, are devoted to defining and classifying these features, and establishing some of their properties. These are all required for our main theorems. 

Therefore, some of the work here is an exercise in taxonomy. We briefly explain what we need to define and why, and the resulting classifications.

Contact structures naturally come in two types: tight and overtwisted. This dichotomy goes back to Eliashberg's work in the 1980s \cite{eliashberg_classification_1989}. In the present work, consideration of the relationship between strand diagrams and contact structures naturally leads to further distinctions. Roughly speaking, when we look at strand diagrams from a contact-geometric perspective, there are \emph{many} types of ``twisted".

According to the isomorphism (\ref{eqn:main_iso}) of \cite{mathews_strand_2016}, tight contact structures correspond to strand diagrams which are nonzero in homology. Such diagrams are characterised by certain conditions; roughly speaking, they must have an appropriate grading, no crossings, and must not have any matched pair that looks like the left of figure \ref{fig:creation_example}. A strand diagram which fails one or more of these conditions can be regarded as ``overtwisted" in some sense.

The simplest way for a diagram to fail to represent a tight contact structure is by grading: it may lie in a summand of $\A(\ZZ)$ which has no homology. This leads to the notion of \emph{viability} (section \ref{sec:viability}). Only viable strand diagrams can possibly represent contact structures.

It is essential for our purposes to have precise terminology relating to these gradings and summands. We introduce a notion of \emph{H-data}, which combines homological grading and idempotents (section \ref{sec:strand_diagrams}). We also  introduce notions of \emph{on/off} or \emph{1/0} to describe idempotents locally, and \emph{occupation} of various parts of a strand diagram, such as \emph{places} and \emph{steps}, to describe homological grading locally (section \ref{sec:terminology_strand_diagrams}).  Some of this terminology was used in \cite{mathews_strand_2016}.

A viable diagram can still fail to represent a tight contact structures for multiple reasons. In the mildest case, shown in figure \ref{fig:twisted_example}, a strand diagram is the product of strand diagrams corresponding to tight contact structures, but the full contact structure is overtwisted. We define such ``minimally overtwisted" diagrams as \emph{twisted} in section \ref{sec:tight_twisted_crossed_diagrams}.

This contact structure can be described explicitly in terms of contact cubes: see \cite{mathews_strand_2016}. (In fact, stacking only the two relevant cubes yields a tight contact structure; when combined with adjacent cubes however the structure is overtwisted). It can also be described in terms of bypasses. In a future paper we hope to describe the relationship between strands and bypasses systematically.

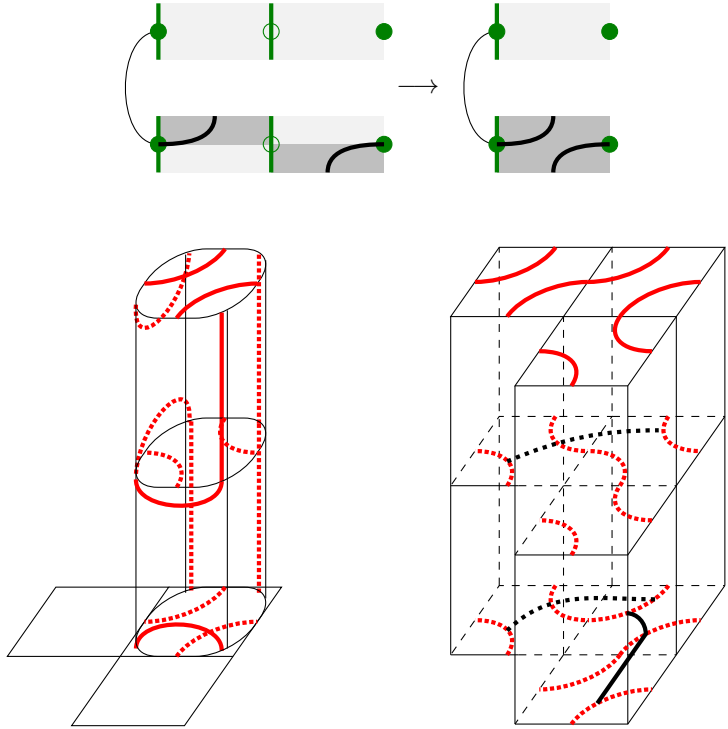
\begin{figure}
\begin{center}
\begin{tikzpicture}[scale=1.5]
\strandbackgroundshading
\tstrandbackgroundshading{1}
\aftervused
\tbeforevused{1}
\strandsetupn{}{}
\tstrandsetup{1}
\lefton
\rightoff
\trighton{1}
\usec
\tused{1}
\draw (2.3,0.75) node {$\To$};
\begin{scope}[xshift=3 cm]
\strandbackgroundshading
\aftervused
\beforevused
\strandsetupn{}{}
\lefton
\righton
\usec
\used
\end{scope}
\end{tikzpicture}
\end{center}

\vspace{0.2 cm}

\begin{center}
\begin{tikzpicture}[mypersp, scale=1.5]
\draw [xyp=3, rounded corners=5mm] (0,0) -- (0,1) -- (1,1) -- (1,0) -- cycle;

\draw [xshift=-1 cm] (0,0) -- (1,0) -- (1,1) -- (0,1) -- cycle;
\draw [xshift=-1 cm] (1,0) -- (2,0) -- (2,1) -- (1,1);
\draw [xshift=-1 cm] (1,0) -- (1,-1) -- (2,-1) -- (2,0);


\draw [red, ultra thick, densely dotted, yzp=0.14] (0,1.62) to [bend left=90] (1.1,1.4);
\draw [red, ultra thick, densely dotted, yzp=0.14] (0,3.12) to [bend right=90] (1.1,2.9);
\draw [red, ultra thick, xzp=0] (0.9,1.55) -- (0.9,3.05);
\draw [red, ultra thick, densely dotted, xzp=1] (0.8,1.45) -- (0.8,2.95);

\draw [xyp=1.5, rounded corners=5mm] (0,0) -- (0,1) -- (1,1) -- (1,0) -- cycle;

\draw [xzp=1] (0.15,-0.05) -- (0.15,2.95);
\draw [xzp=1] (0.86,-0.1) -- (0.86,2.9);
\draw [xzp=0] (0.14,0.12) -- (0.14,3.12);
\draw [xzp=0] (0.95,0.07) -- (0.95,3.07);


\draw [red, ultra thick, densely dotted, xzp=1] (0.8,-0.05) -- (0.8,1.45);
\draw [red, ultra thick, densely dotted, xzp=1] (0.2,-0.03) -- (0.2,1.47);
\draw [red, ultra thick, xzp=0] (0.14,0.07) to [bend left=90] (0.9,0.05);
\draw [red, ultra thick, xzp=0] (0.14,1.57) to [bend right=90] (0.9,1.55);

\draw [xyp=0, rounded corners=5mm] (0,0) -- (0,1) -- (1,1) -- (1,0) -- cycle;

\draw [red, ultra thick, xyp=3] (0.5,0) arc (180:90:0.5);
\draw [red, ultra thick, xyp=3] (0.5,1) arc (0:-90:0.5);

\draw [red, ultra thick, densely dotted, xyp=1.5] (0.5,0) arc (0:90:0.5);
\draw [red, ultra thick, densely dotted, xyp=1.5] (0.5,1) arc (-180:-90:0.5);

\bottomon
\end{tikzpicture}
\hspace{2 cm}
\begin{tikzpicture}[mypersp, scale=1.5]
\draw[xyp=0, dashed] (0.6,0) -- (1,0) -- (1,1) -- (0,1) -- (0,0);
\draw[xyp=0, dashed] (1,1) -- (2,1);
\draw[xyp=0, dashed] (1,0) -- (2,0);
\draw[xyp=0, dashed] (1,0) -- (1,-1);
\draw[xyp=0] (1,-1) -- (2,-1) -- (2,1);
\draw [xyp=0] (0,0) -- (0.6,0);

\draw[xyp=1.5, dashed] (0.6,0) -- (1,0) -- (1,1) -- (0,1) -- (0,0);
\draw[xyp=1.5, dashed] (1,1) -- (2,1);
\draw[xyp=1.5, dashed] (1,0) -- (2,0);
\draw[xyp=1.5, dashed] (1,0) -- (1,-1);
\draw[xyp=1.5] (1,-1) -- (2,-1) -- (2,1);
\draw[xyp=1.5] (0,0) -- (0.6,0);

\draw[xyp=3] (0,0) -- (1,0) -- (1,1) -- (0,1) -- cycle;
\draw[xyp=3] (1,0) -- (2,0) -- (2,1) -- (1,1);
\draw[xyp=3] (1,0) -- (1,-1) -- (2,-1) -- (2,0);

\draw[xzp=0] (0,0) -- (0,3);
\draw[xzp=0, dashed] (1,0) -- (1,3);
\draw[xzp=0] (2,0) -- (2,3);
\draw[xzp=1] (2,0) -- (2,3);
\draw[xzp=1, dashed] (1,0) -- (1,3);
\draw[xzp=1, dashed] (0,0) -- (0,3);
\draw[xzp=-1] (1,0) -- (1,3);
\draw[xzp=-1] (2,0) -- (2,3);

\draw [red, ultra thick, densely dotted] (0.5,0) arc (0:90:0.5);
\draw [red, ultra thick, densely dotted] (0.5,1) arc (-180:-90:0.5);

\draw [red, ultra thick, densely dotted] (1.5,0) arc (180:90:0.5);
\draw [red, ultra thick, densely dotted] (1.5,1) arc (0:-90:0.5);

\draw [red, ultra thick, densely dotted] (1.5,0) arc (0:-90:0.5);
\draw [red, ultra thick, densely dotted] (1.5,-1) arc (180:90:0.5);

\draw [dotted, ultra thick] (0.35, 0.35) to [out=45, in=180] (1.5, 0.8);
\draw [ultra thick] (1.3,0.6) to [out=0, in=90] (1.6,0.3) to (1.6,-0.7);

\draw [xyp=1.5, red, ultra thick, densely dotted] (1.5,-1) arc (0:90:0.5);
\draw [xyp=1.5, red, ultra thick, densely dotted] (2,-0.5) arc (-90:-180:0.5);

\draw [xyp=1.5, red, ultra thick, densely dotted] (0.5,0) arc (0:90:0.5);
\draw [xyp=1.5, red, ultra thick, densely dotted] (0.5,1) arc (-180:-90:0.5);

\draw [xyp=1.5, red, ultra thick, densely dotted] (1.5,0) arc (0:90:0.5);
\draw [xyp=1.5, red, ultra thick, densely dotted] (1.5,1) arc (-180:-90:0.5);

\draw [xyp=1.5, dotted, ultra thick] (0.35, 0.35) to [out=45, in=180] (1.5, 0.8);

\draw [xyp=3, red, ultra thick] (1.5,-1) arc (0:90:0.5);
\draw [xyp=3, red, ultra thick] (2,-0.5) arc (-90:-180:0.5);

\draw [xyp=3, red, ultra thick] (0.5,0) arc (180:90:0.5);
\draw [xyp=3, red, ultra thick] (0.5,1) arc (0:-90:0.5);

\draw [xyp=3, red, ultra thick] (1.5,0) arc (180:90:0.5);
\draw [xyp=3, red, ultra thick] (1.5,1) arc (0:-90:0.5);
\end{tikzpicture}

\caption{Top: A twisted diagram at a matched pair, the product of two tight diagrams. Below left: the corresponding contact cubes. Stacking the two contact cubes corresponding to the matched pair shown yields a contact structure which remains tight, but combined with adjacent cubes the contact structure is overtwisted. Below right: the same contact structure described in terms of bypasses. A bypass is first attached to the bottom dividing set along the solid arc, yielding the intermediate dividing set; then a bypass is added along the dotted attaching arc. The overtwisted disc can be seen by attaching the latter bypass first, as seen on the bottom dividing set.}
\label{fig:twisted_example}
\end{center}
\end{figure}

Viable strand diagrams can also fail to represent tight contact structures because they have \emph{crossings}. Thus, the natural tight/overtwisted classification of contact structures naturally becomes a 3-fold classification of viable strand diagrams into tight/twisted/crossed. This classification is, in a precise sense,  (lemma \ref{lem:tightness_degeneracy}), in ascending order of degeneracy.

When we proceed to \emph{tensor products} of diagrams in section \ref{sec:tensor_products}, there is again a natural notion of viability (section \ref{sec:anatomy_tensor_products}). Diagrams can represent contact structures, and their tensor products can be regarded as ``stacked" contact structures on $\Sigma \times [0,1]$. Viability then incorporates the natural contact-geometric condition that such stacked structures agree along their common boundaries. 

Tensor products of diagrams again have a natural ``tight/twisted" classification, (section \ref{sec:tightness_tensor_products}), but now there are \emph{six} types, which we call \emph{tight}, \emph{sublime}, \emph{twisted}, \emph{crossed}, \emph{critical}, and \emph{singular}, again in an ascending scale of degeneracy.

When we then arrive at tensor products of \emph{homology classes} of diagrams in section \ref{sec:tensor_product_homology}, only \emph{four} of these types of tightness/twistedness remain.

Throughout, it is necessary to consider strand diagrams \emph{locally} at matched pairs; this corresponds to considering contact structures locally at individual cubes of a cubulated contact structure. Indeed, we show that $H(\A(\ZZ))$ decomposes into a \emph{tensor product} over matched pairs; and we have \emph{local} strand algebras, each with their \emph{local} homology at each matched pair.

Here again, we encounter phenomena not yet given a name in the literature. The observed local diagrams, described as ``fragments" of strand diagrams in \cite{mathews_strand_2016}, are not strand diagrams in the usual sense of bordered Floer theory (e.g. \cite{lipshitz_notes_2012}) or bordered sutured Floer theory (e.g. \cite{zarev_bordered_2009}), since strands may ``run off the top of an arc". Therefore, before we can even start our investigations, we must broaden the usual definition of strand diagrams. In section \ref{sec:algebra_and_anatomy} we therefore introduce a  notion of \emph{augmented strand diagram}.

Tensor products of strand diagrams (i.e. elements of $\A(\ZZ)^{\otimes n}$), or their homology classes (i.e. elements of $H(\A(\ZZ))^{\otimes n}$) thus have a tensor decomposition over matched pairs of $\ZZ$, into local diagrams, in addition to their obvious decomposition into tensor factors. We regard these two types of decomposition as ``vertical" and ``horizontal" respectively, and draw pictures accordingly. Contact-geometrically these two types of tensor decomposition correspond to two types of geometric decompositions of stacked contact structures. An element of $CA(\Sigma,Q)^{\otimes n} \cong H(\A(\ZZ))^{\otimes n}$ can be regarded as a stacking of $n$ cubulated contact structures on $\Sigma \times [0,1]$: this can be cut ``horizontally" into $n$ slices, each containing a contact structure on $\Sigma \times [0,1]$; or it can be cut ``vertically" to obtain stacked contact structures on $\square \times [0,1]$, over each square $\square$ of the quadrangulation.

We give a complete classification of viable local strand diagrams in section \ref{sec:terminology_strand_diagrams}, summarised in table \ref{tbl:local_diagrams}. We also give a complete classification of viable local tensor products of strand diagrams in section \ref{sec:enumerate_local_tensor_products}, summarised in table \ref{tbl:local_tensor_products}. We show (proposition \ref{prop:viable_tensor_product_classification}) that any viable tensor product of diagrams, observed locally at a single matched pair, must appear as one of the tensor products in the table, up to a notion of \emph{extension} and \emph{contraction}, which provide ways, trivial in a contact-geometric sense, to grow or shrink a tensor product. This also yields (proposition \ref{prop:homology_tensor_product_classification}) a complete classification of viable local tensor products of homology classes of strand diagrams, in section \ref{sec:tensor_product_homology}.

Having made such definitions and classifications, we also establish several of their basic properties. In order to prove our main theorems, we need to know facts such as which types of tightness/twistedness can occur within others, as ``sub-tensor-products", ``vertically" or ``horizontally". We consider these and several more questions as we proceed.

\subsection{Contact meaning of A-infinity operations}
\label{sec:intro_contact_meaning}

We now attempt to give some idea of what the $A_\infty$ operations $X_n$ ``mean" in terms of contact geometry. For details and background on the precise correspondence between contact structures and strand diagrams, we refer to our previous paper \cite{mathews_strand_2016}. We also intend to expand on the contact-geometric meaning of strand diagrams, particularly in terms of bypasses, in a future paper.

As discussed in \cite{mathews_strand_2016}, a strand diagram $D$ on an arc diagram $\ZZ$ with appropriate grading (each step of $\ZZ$ covered at most once; no crossings) can be interpreted as a contact structure on $\Sigma \times [0,1]$. Each matched pair of $\ZZ$ corresponds to a square of the quadrangulation $Q$, or a cube in the cubulation $Q \times [0,1]$ of $\Sigma \times [0,1]$.

A strand diagram $D$ containing a single moving strand going from one point (``place" in our terminology) of $\ZZ$ to the next can be regarded as a \emph{bypass}: in passing from one strand to the next, the strand affects two places, and the corresponding contact structure is a \emph{bypass addition}, where the bypass is placed along the two cubes. Bypass addition is a basic operation in 3-dimensional contact geometry \cite{honda_classification_2000}, and in a certain sense is the ``simplest" modification one can make to a contact manifold \cite{honda_gluing_2002}. The result is shown in figure \ref{fig:strand_bypass}.

\begin{figure}
\begin{center}
\begin{tikzpicture}[scale=2]
\draw [draw=none, fill=gray!10!white] (0,0) -- (0,1) -- (1,1) -- (1,0) -- cycle;
\draw [draw=none, fill=gray!50!white] (0,0.25) -- (0,0.75) -- (1,0.75) -- (1,0.25) -- cycle;
\draw [ultra thick, color=black!50!green] (0,0) -- (0,1);
\draw [ultra thick, color=black!50!green] (1,0) -- (1,1);
\draw [color=black!50!green, fill=black!50!green] (0,0.25) circle (2 pt);
\draw [color=black!50!green, fill=none] (0,0.75) circle (2 pt);
\draw [color=black!50!green, fill=none] (1,0.25) circle (2 pt);
\draw [color=black!50!green, fill=black!50!green] (1,0.75) circle (2 pt);
\draw [ultra thick] (0,0.25) to [out=0, in=-90] (0.5,0.5);
\draw [ultra thick] (0.5,0.5) to [out=90, in=180] (1,0.75);
\end{tikzpicture}
\hspace{2 cm}
\begin{tikzpicture}[mypersp, scale=1.5]
\drawcube
\draw [xyp=0, color=black!50!green, fill=black!50!green] (1,0) circle (2pt);
\draw [xyp=0, color=black!50!green, fill=black!50!green] (0,1) circle (2pt);
\draw [xyp=0, color=black!50!green, fill=black!50!green] (2,1) circle (2pt);
\draw [xyp=1, color=black!50!green, fill=black!50!green] (1,0) circle (2pt);
\draw [xyp=1, color=black!50!green, fill=black!50!green] (0,1) circle (2pt);
\draw [xyp=1, color=black!50!green, fill=black!50!green] (2,1) circle (2pt);
\leftclean
\backclean
\frontclean
\fill [gray!30!white, draw=none, yzp=1, opacity=0.5] (0,0) -- (1,0) -- (1,1) -- (0,1) -- cycle;
\topon
\bottomoff
\draw [xyp=0, color=black!50!green, ultra thick] (1,0) -- (0,1);
\begin{scope}[xshift=1 cm]
\drawcube
\backclean
\frontclean
\rightclean
\topon
\bottomoff
\draw [xyp=1, color=black!50!green, ultra thick] (0,0) -- (1,1);

\end{scope}
\end{tikzpicture}
\hspace{2 cm}
\begin{tikzpicture}[scale=1]
\draw (0,0) -- (1,0) -- (1,1) -- (0,1) -- cycle;
\draw (1,0) -- (2,0) -- (2,1) -- (1,1);
\draw [red, ultra thick] (0.5,0) arc (0:90:0.5);
\draw [red, ultra thick] (0.5,1) arc (180:270:0.5);
\draw [red, ultra thick] (1.5,0) arc (0:90:0.5);
\draw [red, ultra thick] (1.5,1) arc (180:270:0.5);
\draw [dotted, ultra thick] (0.3535, 0.3535) -- (1.6464, 0.6464);
\draw (1,1.5) node {$\uparrow$};
\begin{scope}[yshift = 2 cm]
\draw (0,0) -- (1,0) -- (1,1) -- (0,1) -- cycle;
\draw (1,0) -- (2,0) -- (2,1) -- (1,1);
\draw [red, ultra thick] (0.5,1) arc (0:-90:0.5);
\draw [red, ultra thick] (0.5,0) arc (180:90:0.5);
\draw [red, ultra thick] (1.5,1) arc (0:-90:0.5);
\draw [red, ultra thick] (1.5,0) arc (180:90:0.5);
\end{scope}
\end{tikzpicture}

\caption{Left: A portion of a strand diagram consisting of a single strand from one place to the next. Centre: The corresponding cubulated contact structure. Right: This contact structure is given by a bypass attachment.}
\label{fig:strand_bypass}
\end{center}
\end{figure}
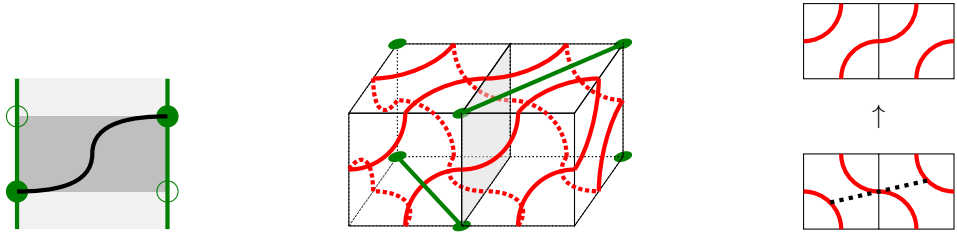

A strand diagram consisting of a longer strand can sometimes be regarded as a product of diagrams with shorter strands, each covering a single step of $\ZZ$ as above. (However, other restrictions may get in the way; for instance arcs of an arc diagram may prevent factorising a longer strand into smaller ones. See e.g. the example of figure 11 of \cite{lipshitz_bimodules_2015}.) 
The corresponding contact structure is given a sequence of bypass additions very closely related to the \emph{bypass systems} of \cite{mathews_chord_2010, mathews_sutured_2011}. See figure \ref{fig:long_strand_bypasses}.

\begin{figure}
\begin{center}
\begin{tikzpicture}[scale=1.5]
\draw [draw=none, fill=gray!10!white] (0,0) -- (0,3) -- (1,3) -- (1,0) -- cycle;
\draw [draw=none, fill=gray!50!white] (0,0.25) -- (0,1.75) -- (1,1.75) -- (1,0.25) -- cycle;
\draw [ultra thick, color=black!50!green] (0,0) -- (0,2);
\draw [ultra thick, color=black!50!green] (1,0) -- (1,2);
\draw [color=black!50!green, fill=black!50!green] (0,0.25) circle (2 pt);
\draw [color=black!50!green, fill=none] (0,0.75) circle (2 pt);
\draw [color=black!50!green, fill=none] (0,1.25) circle (2 pt);
\draw [color=black!50!green, fill=none] (0,1.75) circle (2 pt);
\draw [color=black!50!green, fill=none] (1,0.25) circle (2 pt);
\draw [color=black!50!green, fill=none] (1,0.75) circle (2 pt);
\draw [color=black!50!green, fill=none] (1,1.25) circle (2 pt);
\draw [color=black!50!green, fill=black!50!green] (1,1.75) circle (2 pt);
\draw [ultra thick] (0,0.25) to [out=0, in=180] (1, 1.75);
\end{tikzpicture}
\hspace{3 cm}
\begin{tikzpicture}[scale=.8]
\draw (0,0) -- (1,0) -- (1,1) -- (0,1) -- cycle;
\draw (1,0) -- (2,0) -- (2,1) -- (1,1);
\draw (0,0) -- (0,-1) -- (0.9,-1) -- (1,0);
\draw (1,0) -- (1.1,-1) -- (2,-1) -- (2,0);
\draw [red, ultra thick] (0.5,0) arc (180:90:0.5);
\draw [red, ultra thick] (0.5,1) arc (0:-90:0.5);

\draw [red, ultra thick] (1.5,0) arc (0:90:0.5);
\draw [red, ultra thick] (1.5,1) arc (180:270:0.5);

\draw [red, ultra thick] (0.5,0) arc (0:-90:0.5);
\draw [red, ultra thick] (0.5,-1) arc (180:95:0.5);

\draw [red, ultra thick] (1.5,0) arc (0:-85:0.5);
\draw [red, ultra thick] (1.5,-1) arc (180:90:0.5);

\draw [dotted, ultra thick] (0.6464, -0.6464) -- (0.3535, 0.6464);
\draw [dotted, ultra thick] (0.1, 0.52) to [out=90, in=180] (0.45, 0.8) to [out=0, in=180] (1.55,0.8);
\draw [dotted, ultra thick] (1.52, 0.9) to [out=0, in=90] (1.9,0.55) to [out=270, in=90] (1.9, -0.52);
\draw (1,1.5) node {$\uparrow$};
\begin{scope}[yshift = 3 cm]
\draw (0,0) -- (1,0) -- (1,1) -- (0,1) -- cycle;
\draw (1,0) -- (2,0) -- (2,1) -- (1,1);
\draw (0,0) -- (0,-1) -- (0.9,-1) -- (1,0);
\draw (1,0) -- (1.1,-1) -- (2,-1) -- (2,0);
\draw [red, ultra thick] (0.5,0) arc (180:90:0.5);
\draw [red, ultra thick] (0.5,1) arc (0:-90:0.5);
\draw [red, ultra thick] (1.5,0) arc (0:90:0.5);
\draw [red, ultra thick] (1.5,1) arc (180:270:0.5);
\draw [red, ultra thick] (0.5,0) arc (180:265:0.5);
\draw [red, ultra thick] (0.5,-1) arc (0:90:0.5);
\draw [red, ultra thick] (1.5,0) arc (180:270:0.5);
\draw [red, ultra thick] (1.5,-1) arc (0:85:0.5);
\end{scope}
\end{tikzpicture}

\caption{Left: A portion of a strand diagram consisting of a single strand.  Right: The corresponding contact structure is given by a sequence of bypass attachments.}
\label{fig:long_strand_bypasses}
\end{center}
\end{figure}
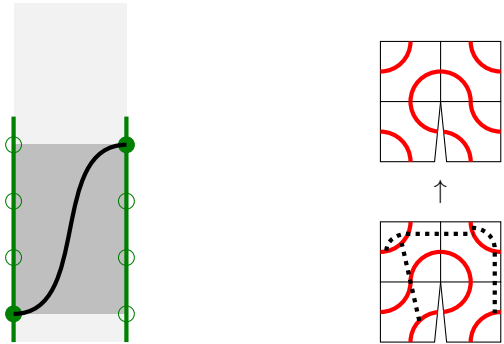

However, when we have a \emph{tensor product} of strand diagrams corresponding to contact structures, the various steps of $\ZZ$ may not be covered in the order in which they would be covered by single strands. If the various diagrams in the tensor product cover the various steps in a matched pair in a ``correct" order, the factors in the tensor product may multiply (using the standard multiplication in $\A(\ZZ)$) to give a diagram which is nonzero in homology. This corresponds to a contact structure built out of bypasses as described above. But if the various diagrams cover the various steps in a different order, then they will not multiply to give something nonzero in homology. Moreover, the \emph{Maslov index} at the matched pair will be lower by $1$ from the ``correct" order.

The simplest example of this phenomenon is shown in figure \ref{fig:twisted_example}. The product of two diagrams, corresponding to tight contact structures, gives an overtwisted contact structure. But if they were multiplied in the opposite order, the result would be tight. For a slightly more complicated example, still ``localised" at a single matched pair, see figure \ref{fig:reordering_bypasses}.

\begin{figure}
\begin{center}

\begin{tikzpicture}[xscale=0.9, yscale=1.5]
\strandbackgroundshading
\tstrandbackgroundshading{1}
\tstrandbackgroundshading{2}
\beforewused
\tafterwused{1}
\tbeforevused{2}
\strandsetupn{}{}
\tstrandsetup{1}
\tstrandsetup{2}
\tstrandsetup{3}
\leftoff
\righton
\trightoff{1}
\trighton{2}
\useb
\tusea{1}
\tused{2}
\end{tikzpicture}
\hspace{2 cm}
\begin{tikzpicture}[xscale=0.9, yscale=1.5]
\strandbackgroundshading
\tstrandbackgroundshading{1}
\tstrandbackgroundshading{2}
\beforevused
\tafterwused{1}
\tbeforewused{2}
\strandsetupn{}{}
\tstrandsetup{1}
\tstrandsetup{2}
\tstrandsetup{3}
\leftoff
\righton
\trightoff{1}
\trighton{2}
\used
\tusea{1}
\tuseb{2}
\end{tikzpicture}

\caption{Left: This tensor product (tight, in our classification) has a nonzero product in $\A(\ZZ)$ or $H(\A(\ZZ))$. Right: This tensor product (critical in our classification) covers the same steps in a different order, and has zero product in $\A(\ZZ)$ or $H(\A(\ZZ))$, but an $A_\infty$ operation may reorder the bypasses and give a nonzero result.}
\label{fig:reordering_bypasses}
\end{center}
\end{figure}
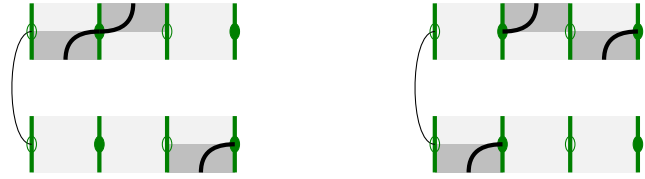

In general, the $A_\infty$ operation $X_n$, when it produces a nonzero result, will effectively reorder the bypasses at $n-2$ matched pairs (since it has grading $n-2$) so as to make their product tight. This is the rough meaning of theorem \ref{thm:second_thm}; the statement is simply an elaboration of this idea, being precise about the various types of tightness/twistedness at each matched pair.

We can say also say a little about how this ``reordering" is achieved. As mentioned above, our construction of the $A_\infty$ structure on $CA(\Sigma, Q)$ or $H(\A(\ZZ))$, following Kadeishvili's method, uses creation operators, whose operation is described locally by figure \ref{fig:creation_example}. A creation operator acts on a local diagram which is \emph{twisted}, i.e. represents a ``minimally overtwisted" contact structure, and makes it \emph{crossed}.

We may then observe a phenomenon which is rather curious from a contact-geometric point of view. Starting from a tensor product which is twisted (or worse), applying a creation operator yields a tensor product of diagrams including crossings --- the most degenerate type of ``twistedness". Yet multiplying out this tensor product may yield a diagram corresponding to a tight contact structure! After multiplication, no crossings remain, nor any twistedness. The result is as if the original diagrams were reordered into the ``correct" order at that matched pair. See figure \ref{fig:A-infinity-reordering} for an example based on the ``badly ordered" tensor product of figure \ref{fig:reordering_bypasses}(right).

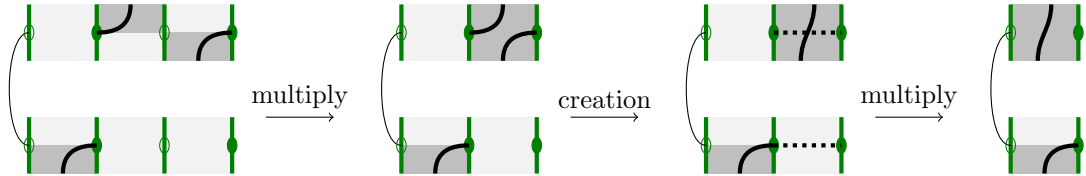
\begin{figure}
\begin{center}

\begin{tikzpicture}[xscale=0.9, yscale=1.5]
\strandbackgroundshading
\tstrandbackgroundshading{1}
\tstrandbackgroundshading{2}
\beforevused
\tafterwused{1}
\tbeforewused{2}
\strandsetupn{}{}
\tstrandsetup{1}
\tstrandsetup{2}
\tstrandsetup{3}
\leftoff
\righton
\trightoff{1}
\trighton{2}
\used
\tusea{1}
\tuseb{2}
\draw [->] (3.5,0.5) -- (4.5,0.5) node [midway, above] {multiply};
\begin{scope}[xshift=5.5 cm]
\strandbackgroundshading
\tstrandbackgroundshading{1}
\beforevused
\tafterwused{1}
\tbeforewused{1}
\strandsetupn{}{}
\tstrandsetup{1}
\tstrandsetup{2}
\leftoff
\righton
\trighton{1}
\used
\tusea{1}
\tuseb{1}
\end{scope}
\draw [->] (8,0.5) -- (9,0.5) node [midway, above] {creation};
\begin{scope}[xshift=10 cm]
\strandbackgroundshading
\tstrandbackgroundshading{1}
\beforevused
\tafterwused{1}
\tbeforewused{1}
\strandsetupn{}{}
\tstrandsetup{1}
\tstrandsetup{2}
\leftoff
\righton
\trighton{1}
\used
\tdothorizontals{1}
\tuseab{1}
\end{scope}
\draw [->] (12.5,0.5) -- (13.5,0.5) node [midway, above] {multiply};
\begin{scope}[xshift=14.5 cm]
\strandbackgroundshading
\beforevused
\afterwused
\beforewused
\strandsetupn{}{}
\tstrandsetup{1}
\leftoff
\righton
\used
\useab
\end{scope}
\end{tikzpicture}

\caption{Mechanics of $A_\infty$ operations, effectively reordering bypasses. Multiplying the last two factors of a critical tensor product yields a twisted diagram. A creation operator turns the twisted diagram into a crossed one, and the tensor product becomes sublime. Multiplication then yields a tight diagram.}
\label{fig:A-infinity-reordering}
\end{center}
\end{figure}

In this way, strand diagrams may pass from being crossed to tight without being twisted along the way. We call this process \emph{sublimation} because of its ``phase-skipping" behaviour. We call a tensor product in which the diagrams are not all tight, but their product is tight, \emph{sublime}.

However, it is not the case that $X_n$ \emph{always} performs reorderings and sublimations in this way; it simply \emph{may} do so. Depending on the various choices involved in the construction, the result may or may not be nonzero on various tensor products. Theorem \ref{thm:second_thm} tells us what the answer must be, if it is nonzero; and gives necessary conditions for it to be nonzero. Theorem \ref{thm:third_thm} does however provide a guarantee that for any $A_\infty$ structures produced by our construction, certain (highly restricted) tensor products always yield a nonzero result.

For lower-level operations, we can say more. We know $X_1 = 0$ and $X_2$ is just multiplication, and we can in fact give an explicit description of $X_3$ (proposition \ref{prop:X3_description}). Beyond that, the multiplicity of choices makes specific statements unwieldy, and theorem \ref{thm:third_thm} is the strongest guarantee of nonzero results that we could find, for now.

For the rest of this paper, we work primarily with strand diagrams. But our approach is heavily influenced by contact geometry, and we regularly comment on the contact-geometric significance of our definitions and results. For these comments, we assume some familiarity with the correspondence between strand algebras and contact structures in \cite{mathews_strand_2016}, and refer there for further details.

\subsection{Relationship to other work}

The strands algebra is a crucial object in bordered Floer theory, appearing in \cite{lipshitz_bordered_2008, lipshitz_slicing_2008, lipshitz_bimodules_2015, lipshitz_notes_2012}. The slightly more general arc diagrams we use here appeared in Zarev's work \cite{zarev_bordered_2009, zarev_joining_2010}. Its homology was explicitly computed in section 4 of \cite{lipshitz_bimodules_2015}. This description was reformulated in \cite{mathews_strand_2016}, where the isomorphism (\ref{eqn:main_iso}) was proved.

The general construction of $A_\infty$ structures on differential graded algebras by Kadeishvili in \cite{kadeishvili_homology_1980} is part of a much larger subject, not one to which the author claims much expertise. There are other methods, such as those of Kontsevich-Soibelman \cite{kontsevich_homological_2001}, Nikolov--Zahariev \cite{nikolov_curved_2013} and Huebschmann \cite{huebschmann_construction_2010}. We do not know of examples where Kadeishvili's construction has been made as absolutely explicit as by the ``creation" operators here. 
In previous work we have found several roles for objects like creation and annihilation operators in contact geometry \cite{mathews_chord_2010, mathews_sutured_2011, mathews_sutured_2013, mathews_contact_2014, mathews_itsy_2014, mathews_twisty_2014, mathews_strings_2017, mathews_dimensionally_2015}.

The various contact-geometric interpretations appearing here derive not only from our previous work \cite{mathews_strand_2016} but also from work on quadrangulated surfaces and their connections to contact geometry, Heegaard Floer theory and TQFT \cite{mathews_itsy_2014, mathews_twisty_2014}. Some of these ideas are also implicit in Zarev's work cited above. Constructions with bypasses go back to Honda's \cite{honda_classification_2000}.

The contact category was introduced by Honda in unpublished work. It has been studied by Cooper \cite{cooper_formal_2015}. Related categorifications have been studied by Tian \cite{tian_categorification_2012, tian_categorification_2014}. The case of discs was considered in our \cite{mathews_chord_2010} and in detail by Honda-Tian in \cite{honda_contact_2016}.

\subsection{Structure of this paper}

As discussed above, there is some work required before we can even properly state our main theorems. First we must define the relevant notions and establish the properties we need.

We begin in section \ref{sec:algebra_and_anatomy} by considering the algebra and anatomy of strand diagrams. We recall existing definitions in section \ref{sec:strand_diagrams}, and generalise them to \emph{augmented} diagrams in section \ref{sec:augmented_diagrams}. We can then define the notion of viability in section \ref{sec:viability}. We consider how augmented diagrams can be cut into local diagrams, and the associated algebra, in section \ref{sec:local_diagrams}. In section \ref{sec:terminology_strand_diagrams} we establish terminology for strand diagrams, including occupation of places and pairs for homological grading, and on/off or 1/0 for idempotents; then (section \ref{sec:tightness}) we define the types of tightness of local diagrams and (section \ref{sec:classification_local_diagrams}) tabulate the various possibilities. In section \ref{sec:local_algebras} we discuss local strand algebras and their homology, and in section \ref{sec:homology_of_strand_algebras} the homology of strand algebras in general. In section \ref{sec:tight_twisted_crossed_diagrams} we define and study the types of tightness for viable augmented diagrams. In section \ref{sec:diagrams_representing_homology} we consider the set of diagrams representing a homology class, and in section \ref{sec:dim_algebras} we calculate the dimensions of various vector spaces related to strand algebras, which we need later. In section \ref{sec:ideals} we introduce the ideal $\F$ and a quotient which simplifies our calculations.

In section \ref{sec:cycle_selection_creation} we then consider objects parametrising the choices involved in constructing $A_\infty$ structures. We discuss cycle selection homomorphisms in section \ref{sec:cycle_selection_homs}. We discuss how different cycle selection maps can differ in section \ref{sec:diff_cycle_selection}. We then introduce creation operators in section \ref{sec:creation_operators}, and discuss how they can invert the differential in section \ref{sec:inverting_differential}. We put them together into global creation operators in section \ref{sec:global_creation}, and discuss how they can be obtained from a pair ordering in section \ref{sec:ordering}.

We then turn to tensor products of strand diagrams in section \ref{sec:tensor_products}. We extend the ``anatomical" notions and terminology for gradings, viability, occupation and idempotents in section \ref{sec:anatomy_tensor_products}. We introduce the six types of tightness/twistedness in section \ref{sec:tightness_tensor_products}. We discuss sub-tensor-products, and the associated notions of extension and contraction, in section \ref{sec:sub_extension_contraction}. We consider the two most curious types of tightness, sublime and singular, in section \ref{sec:sublime_singular}. We can then give a full enumeration of all possible viable local tensor products in section \ref{sec:enumerate_local_tensor_products}. In section \ref{sec:tightness_local_sub} we consider how tightness of tensor products and sub-tensor-products are related. We may then consider tensor products of homology classes of diagrams in section \ref{sec:tensor_product_homology}, discussing their tightness, enumerating the possible local tensor products, and establishing some of their properties. In section \ref{sec:generalised_contraction} we consider a generalised notion of contraction for tensor products of homology classes.

We then have everything we need to construct $A_\infty$ structures explicitly in section \ref{sec:construction}. The construction itself is given in section \ref{sec:constructing_operations}, proving theorem \ref{thm:first_thm}. In section \ref{sec:shorthand} we establish a shorthand notation for tensor products of strand diagrams. In section \ref{sec:low-level_ex} we calculate some examples at low levels of the $A_\infty$ structure.

In section \ref{sec:A-infinity_structures_general} we then discuss some properties of the $A_\infty$ structures we have constructed, and in fact slightly more general $A_\infty$ structures from Kadeishvili's construction. In section \ref{sec:H-data_viability} we discuss how $A_\infty$ operations relate to viability. In section \ref{sec:choices_of_maps} we discuss how the various choices made in Kadeishvili's construction affect the result. Then in section \ref{sec:first_properties_A-infinity} we establish some of the elementary properties of the constructed $A_\infty$ operations, and in section \ref{sec:nontrivial_conditions} prove some necessary conditions for nontrivial $A_\infty$ operations, including those of theorem \ref{thm:second_thm}. In section \ref{sec:low_levels} we establish general properties of the $A_\infty$ maps at levels up to 3.

In a brief section \ref{sec:examples} we calculate some further examples, at levels 3 (section \ref{sec:level_3}) and 4 (section \ref{sec:nec_not_suff}), illustrating some of the complexities which arise.

Finally in section \ref{sec:nontrivial_higher_operations} we consider higher $A_\infty$ operations and when they are nontrivial. We introduce the notion of operation trees in section \ref{sec:operation_trees}, and notions of \emph{validity} and \emph{distributivity} in section \ref{sec:valid_distributive}. In section \ref{sec:join_graft} we discuss some constructions we need on trees (\emph{joining} and \emph{grafting}). Then in section \ref{sec:trees_required} we show how certain trees are required for nonzero results, proving proposition \ref{prop:intro_nonzero_trees}. In section \ref{sec:local_trees} we discuss the operation trees local to a matched pair, and classify them in section \ref{sec:climbing_tree}. In section \ref{sec:strong_validity} we introduce a stronger notion of validity necessary for our results, and after discussing the further operations of \emph{transplantation} and \emph{branch shifts} in section \ref{sec:transplantation}, and introducing a stronger notion of distributivity in section \ref{sec:strict_distributive}, we prove theorem \ref{thm:third_thm} in section \ref{sec:guaranteed_nonzero}.

\subsection{Acknowledgments}

The author thanks Robert Lipshitz for posing the questions that sparked this work, and for helpful conversations. He also thanks Anita for putting up with him. He is supported by Australian Research Council grant DP160103085.

\section{Algebra and anatomy of strand diagrams}
\label{sec:algebra_and_anatomy}

\subsection{Strand diagrams}
\label{sec:strand_diagrams}

We recall the definition of strand diagrams, before proceeding in section \ref{sec:augmented_diagrams} to augment them. We follow our previous paper \cite{mathews_strand_2016}, which in turn is based on Zarev \cite{zarev_bordered_2009}, as well as Lipshitz--Ozsv\'{a}th--Thurston \cite{lipshitz_bordered_2008, lipshitz_bimodules_2015}.  We refer to these papers for further details.

An \emph{arc} diagram consists of a triple $\ZZ = ({\bf Z}, {\bf a}, M)$, where ${\bf Z} = \{ Z_1, \ldots, Z_l \}$ is a set of oriented line segments (intervals), ${\bf a} = (a_1, \ldots, a_{2k})$ is a sequence of distinct points in the interior of the line segments of $\mathbf{Z}$, ordered along the intervals, and $M \colon  {\bf a} \To \{1, 2, \ldots, k\}$ is a 2-to-1 function. Performing oriented surgery on ${\bf Z}$ at all the 0-spheres $M^{-1}(i)$ is required to yield an oriented 1-manifold consisting entirely of arcs (no circles). We say $\ZZ$ is \emph{connected} if the graph obtained from ${\bf Z}$ by identifying each pair $M^{-1}(i)$ is connected.

We call the points of ${\bf a}$ \emph{places}. If $M(a_i) = M(a_j)$ we say $a_i$ and $a_j$ are \emph{twins}; then $a_i, a_j$ form a \emph{matched pair} (or just \emph{pair}). The function $M$ partitions ${\bf a}$ into $k$ such pairs. There is a partial order on ${\bf a}$ where $a_i \leq a_j$ if $a_i, a_j$ lie on the same oriented interval, and are in order along it. 

An \emph{unconstrained strand diagram} over $\ZZ$ is a triple $\mu = (S, T, \phi)$ where $S,T \subseteq \{ a_1, \ldots, a_{2k} \}$ with $|S| = |T|$ and $\phi \colon S \To T$ is a bijection, which is increasing with respect to the partial order on ${\bf a}$ in the sense that $\phi(x) \geq x$ for all $x \in S$. There is a standard way to draw an unconstrained strand diagram in the plane (in fact in $[0,1] \times \mathbf{Z}$), with $|S| = |T|$ \emph{strands}. The strands \emph{begin} at $S$ (drawn at $\{0\} \times S$), \emph{end} at $T$ (drawn at $\{1\} \times T$), and move to the right (in the positive direction along $[0,1]$), never going down, and meeting efficiently without triple crossings. We say $\mu$ \emph{goes from $S$ to $T$}. The points of ${\bf a}$ split ${\bf Z}$ into intervals called \emph{steps}, of two types: \emph{interior} to an interval $Z_i$, and \emph{exterior}, i.e. at the boundary of a $Z_i$. The product $\mu \nu$ of two strand diagrams $\mu = (S,T,\phi)$ and $\nu = (U,V,\psi)$ is given by $(S, V, \psi \circ \phi)$, provided that $T=U$ and the composition $\psi \circ \phi \colon S \to V$ satisfies $\inv(\psi \circ \phi) = \inv(\phi) + \inv (\psi)$; otherwise it is zero. Here $\inv(\mu)$ is the number of inversions, or crossings, in $\mu$. Equivalently, the product $\mu \nu$ is given by concatenating strand diagrams, provided that there are no ``excess inversions", i.e. crossings which can be simplified by a Reidemeister II-type isotopy of strands relative to endpoints. There is a differential $\partial$ which resolves crossings in strand diagrams; $\partial \mu$ is the sum of all strand diagrams obtained from $\mu$ by resolving a crossing so that the number of crossings decreases by exactly $1$. This structure makes the free $\Z_2$-module on strand diagrams over $\ZZ$ into a differential graded algebra over $\Z_2$, which we denote $\widetilde{\A}(\ZZ)$. For each subset $S \subseteq {\bf a}$ there is an idempotent $I(S)$.

A \emph{$\ZZ$-constrained}, or just \emph{constrained}, strand diagram takes into account also the matching $M$ of $\ZZ$. For each $s \subseteq \{1, \ldots, k\}$ we define $I(s) = \sum_S I(S)$, where the sum is over sections $S$ of $s$ under $M$. Here a \emph{section} of $s$ means an $S \subseteq \mathbf{a}$ such that $M|_S$ is a bijection $S \To s$. The $I(s)$ generate a $\Z_2$-subalgebra of $\widetilde{\A}(\ZZ)$. A strand diagram which begins at a section of $s$ and ends at a section of $t$, for $s,t \subseteq \{1, \ldots, k\}$, is said to be \emph{$\ZZ$-constrained}. We say it \emph{begins} at $s$ and \emph{ends} at $t$, or \emph{goes from $s$ to $t$}; we also say $I(s)$, or by abuse of notation just $s$, is the \emph{initial idempotent}, and $I(t)$ or $t$ is the \emph{final idempotent}. Thus a constrained strand diagram begins and ends at subsets of ${\bf a}$ which contain at most one place of each matched pair.	If $I(s) \widetilde{\A}(\ZZ) I(t)$ is nonzero then $|s| = |t| = i$, in which case it is freely generated as a $\Z_2$-module by $\ZZ$-constrained strand diagrams of $i$ strands from $s$ to $t$.

Finally, we symmetrise our strand diagrams with respect to the matched pairs. If $\mu = (S, T, \phi)$ is an unconstrained strand diagram on $\ZZ$ without horizontal strands (i.e. $\phi$ has no fixed points) then we consider adding horizontal strands to $\mu$ at some places $U \subseteq {\bf a} \backslash (S \cup T)$, i.e. adding fixed points to $\phi$ to obtain a function $\phi_U \colon S \cup U \To T \cup U$, which is still a bijection with $\phi(x) \geq x$. We define $a(\mu)$ to be the sum of all strand diagrams that can be obtained from $\mu$ by adding horizontal strands,
\[
a(\mu) = \sum_U \left( S \cup U, T \cup U, \phi_U \right) \in \widetilde{\A}(\ZZ),
\]
and then for each $s,t \subseteq \{1, \ldots, k\}$, $I(s) a(\mu) I(t)$ is either zero, or the sum of all $\ZZ$-constrained strand diagrams from $s$ to $t$ obtained from $\mu$ by adding horizontal strands. (Left-multiplying by $I(s)$ filters for diagrams which start at $s$; right-multiplying by $I(t)$ filters for diagrams which start at $t$; multiplying by both ensures the result is $\ZZ$-constrained.)
Note that if it is possible to add a horizontal strand to $\mu$ at a place $a$ of a matched pair $\{a,a'\}$ to obtain a strand diagram in $I(s) a(\mu) I(t)$, then it is also possible to add a horizontal strand at the twin place $a'$. In this case every diagram in $I(s) a(\mu) I(t)$ contains a horizontal strand at precisely one of $a$ or $a'$; further, for every diagram with a horizontal strand at $a$ appearing in $I(s) a(\mu) I(t)$, the corresponding diagram  with a horizontal strand at $a'$ and otherwise identical will also appear. If there are $j$ such pairs $\{a,a'\}$, then $I(s) a(\mu) I(t)$ is a sum of $2^j$ terms, one for each choice of $a$ or $a'$ in each pair. 

We denote such a sum $I(s) a(\mu) I(t)$ as a single diagram $D$ by drawing the $2j$ horizontal strands involved as dotted and call it a \emph{symmetrised $\ZZ$-constrained strand diagram}. In such a diagram, every horizontal strand is dotted, and dotted horizontal strands come in pairs. So a symmetrised $\ZZ$-constrained strand diagram with $j$ pairs of dotted horizontal strands is in fact a sum of $2^j$ $\ZZ$-constrained strand diagrams. 

The \emph{strand algebra} $\A(\ZZ)$ is the subalgebra of $\widetilde{\A}(\ZZ)$ generated by symmetrised $\ZZ$-constrained strand diagrams. It is preserved by $\partial$ and hence forms a differential graded algebra. This algebra has several gradings. 

The \emph{homological} (also known as the \emph{spin-c grading} or \emph{Alexander} grading), which we abbreviate to \emph{H-grading}, is valued in $H_1 ({\bf Z}, {\bf a})$. Given a strand map $\mu = (S, T, \phi)$ on $\ZZ$, for each $a \in S$, the oriented interval $[a, \phi(a)]$ from $a$ to $\phi(a)$ gives a homology class in $H_1({\bf Z}, {\bf a})$, and the $H$-grading of $\mu$, denoted $h(\mu)$ or just $h$, is the sum of such intervals $[a, \phi(a)]$ over all $a \in S$. In other words, $h$ counts how often each step of $\mathbf{Z}$ is covered. Since horizontal strands cover no steps, a symmetrised constrained diagram $D$ has a well-defined $H$-grading $h(D)$. The H-grading is additive under multiplication of strand diagrams, and $\partial$ preserves $H$-grading. We denote by $\A(\ZZ;h)$ the $\Z_2$-submodule of $\A(\ZZ)$ generated by diagrams with $H$-grading $h$, so we have a direct sum decomposition $\A(\ZZ) = \bigoplus_h \A(\ZZ;h)$.

\begin{defn}
Let $D$ be a (symmetrised constrained) diagram from $s$ to $t$ (where $s,t \subseteq \{1,\ldots,k\}$), with H-grading $h \in H_1({\bf Z}, {\bf a})$. The \emph{H-data} of $D$ is the triple $(h,s,t)$. 
\end{defn}
In other words, the H-data of $D$ consists of its H-grading, i.e. how it covers the steps of $\ZZ$, together with its  initial and final idempotents. Note that $h$ does not in general determine $s$ or $t$. By inspecting $h$ we may deduce that certain strands must start or end at certain points in $\ZZ$: in particular, when the local multiplicity of $h$ changes as we pass from one step to the next, necessarily by $1$ or $-1$, a strand must respectively begin or end. But when the local multiplicity of $h$ does not change from one step to the next, we cannot tell whether strands begin or end. In particular, $h$ does not give any information about horizontal strands. Defining $\A(\ZZ;h,s,t) = I(s) \A(\ZZ;h) I(t)$, we have a decomposition of $\A(\ZZ)$ as a direct sum of $\Z_2$-modules over $H$-data (i.e. both over idempotents $I(s), I(t)$ and over $H$-grading).
\[
\A(\ZZ) = \bigoplus_{h,s,t} \A(\ZZ;h,s,t) = \bigoplus_{h,s,t} I(s) \A(\ZZ;h) I(t)
\]

The \emph{Maslov} grading of $\A(\ZZ)$ is valued in $\frac{1}{2} \Z$. If $\mu$ is a $\ZZ$-constrained strand diagram (not yet symmetrised) from $S$ to $T$ with $H$-grading $h$, then its Maslov grading is
\[
\iota(\mu) = \inv (\mu) - m \left( h , S \right),
\]
where the function
\[
m \colon H_1 ( {\bf Z}, {\bf a}) \times H_0 ({\bf a}) \To \frac{1}{2} \Z
\]
counts local multiplicities of strand diagrams around places. Specifically, for a place $a$ and $h \in H_1  ({\bf Z}, {\bf a})$, $m( h, a )$ is the average of the local multiplicities of $h$ on the steps after and before $a$. Thus $\iota(\mu)$ has a contribution of $+1$ from each crossing; and then non-positive contributions from each place of $S$, depending on the multiplicity of $h$ near that place. It is not difficult to check that all the constrained diagrams in a symmetrised constrained diagram $D$ have the same Maslov grading,
so the Maslov grading $\iota(D)$ of $D$ is the grading of any of the constrained diagrams in it.


The differential $\partial$ does not affect $H$-grading or idempotents, but lowers the number of crossings in a diagram by $1$ (if the result is nonzero), hence has Maslov degree $-1$. The Maslov index does not respect multiplication in $\A(\ZZ)$; rather, for symmetrised $\ZZ$-constrained strand diagrams $D,D'$ with $H$-gradings $h,h'$ we have
\[
\iota(D D') = \iota(D) + \iota(D') + m( h', \partial h ).
\]

The homology of $\A(\ZZ)$ was described by Lipshitz--Ozsv\'{a}th--Thurston \cite[thm. 9]{lipshitz_bimodules_2015}. As the differential respects H-data $(h,s,t)$, the decomposition $\A(\ZZ) = \bigoplus_{h,s,t} \A(\ZZ;h,s,t)$ descends to homology:
\[
H(\A(\ZZ)) = \bigoplus_{h,s,t} H(\A(\ZZ;h,s,t)).
\]
Lipshitz--Ozsv\'{a}th--Thurston showed that the summand $H(\A(\ZZ;h,s,t))$ is nontrivial if and only if there exists a symmetrised $\ZZ$-constrained strand diagram $D$ with $H$-data $(h,s,t)$ \emph{without crossings}, satisfying two conditions:
\begin{enumerate}
\item
the multiplicity of $h$ on every step of $\mathbf{Z}$ is $0$ or $1$; and
\item
if $\{a,a'\}$ is a matched pair with $a$ in the interior of the support of $h$, and $a'$ not in the interior of the support of $h$, then $a$ does not lie in both $s$ and $t$.
\end{enumerate}
Such a $D$, having no crossings, is obviously a cycle and in fact the homology class of any such $D$ generates $H(\A(\ZZ;h,s,t)) \cong \Z_2$.  We use property (i) extensively as a notion of \emph{viability} in the sequel, from section \ref{sec:viability} onwards. We discuss and reformulate the second condition further in section \ref{sec:local_algebras} below; see also \cite[sec. 3.5--3.7]{mathews_strand_2016}. For such $D$, the expression for the Maslov index simplifies to $\iota(D) = -m(h, S)$, where $S$ is the initial idempotent of any $\ZZ$-constrained strand diagram in $D$.

Since we usually work with a single arc diagram $\ZZ$, we often leave $\ZZ$ implicit and use the shorthand
\[
\A = \A(\ZZ), \quad
\A(h,s,t) = \A(\ZZ;h,s,t), \quad
\HH = H(\A), \quad
\HH (h,s,t) = H(\A(h,s,t)).
\]
The homology $\HH$ inherits multiplication from $\A$ and becomes a differential graded algebra with trivial differential. The point of this paper is to extend this DGA structure to $A_\infty$-structures.

\subsection{Augmented strand diagrams}
\label{sec:augmented_diagrams}

In a symmetrised $\ZZ$-constrained strand diagram, strands run between places in $\mathbf{a} = ( a_1, \ldots, a_{2k} )$. Since the places of $\mathbf{a}$ lie in the interior of the intervals $Z_i$ of $\mathbf{Z}$, no strand ever reaches an endpoint of any interval $Z_i$. In other words, strand diagrams only cover interior steps of $\mathbf{Z}$.

In the sequel however we need to consider strand diagrams where strands cover exterior steps of $\mathbf{Z}$ and reach endpoints of the intervals $Z_i$. We describe this as \emph{flying off} an interval. \emph{Augmented} strand diagrams, which we define presently, extend strand diagrams to allow such behaviour.

To define augmented diagrams formally we again use non-decreasing bijections, but now on sets including the endpoints of each interval. Let the endpoints of the interval $Z_i$ be $-\infty_i$ and $+\infty_i$, at the start and end respectively. A strand flies off the top end of an interval $Z_i$ if some $a_j \neq +\infty_i$ is sent to $+\infty_i$, and a strand flies off the bottom if some $a_j \neq -\infty_i$ satisfies $-\infty_i \mapsto a_j$.  A strand may fly off both ends of an interval if $-\infty_i \mapsto +\infty_i$. We also allow horizontal strands at $\pm \infty_i$, but these present a slight subtlety, discussed below: they simply exist for technical reasons.

Let $\mathbf{a}_{\pm \infty} = \mathbf{a} \cup \{-\infty_1, \ldots, -\infty_l\}_{i=1}^l \cup \{+\infty_1, \ldots, +\infty_l\}_{i=1}^l$.  The points of $\mathbf{a}_{\pm \infty}$ are naturally partially ordered by the total order along each interval, extending the partial order on $\mathbf{a}$. 

An \emph{unconstrained augmented strand diagram} over $\ZZ$ is a triple $(S, T, \phi)$, where $S, T \subseteq \mathbf{a}_{\pm\infty}$ and $\phi \colon S \To T$ is a bijection which is increasing with respect to the partial order on $\mathbf{a}_{\pm\infty}$ in the sense that $\phi(x) \geq x$ for all $x \in S$. Again we say $\mu$ \emph{goes from} $S$ to $T$. The product of two such diagrams is given by composition of bijections $\phi$, if such a composition exists and has no excess inversions, otherwise it is zero. Equivalently, the product is given by by concatenating diagrams, if a concatenation exists and has no excess crossings. A differential is again defined by resolving crossings so that the number of crossings decreases by exactly 1. We define $\widetilde{\A}^{aug}(\ZZ)$ to be the free $\Z_2$-module on unconstrained augmented strand diagrams over $\ZZ$; it is a differential graded algebra over $\Z_2$ with an idempotent $I(S)$ for each $S \subseteq \mathbf{a}_{\pm \infty}$.

A subtlety arises here because if an (unconstrained) augmented strand diagram $\mu$ has a strand (say) flying off the end of an interval to $+\infty_i$, it should still be able to give a nonzero result when composed with another diagram on the right, which does not have any strand at $+\infty_i$. We extend our notion of matching to achieve this effect, but it is no longer a function; rather it is a \emph{partial function} (i.e. partially defined).

To this end, we extend the matching $M \colon \mathbf{a} \To \{1, \ldots, k\}$ to the partial function $M^{aug} \colon \mathbf{a}_{\pm \infty} \To \{1, \ldots, k\}$, which is equal to $M$ on $\mathbf{a}$, and is \emph{not defined} on each $-\infty_i$ or $+\infty_i$. 

Given a set $s \subseteq \{1, \ldots, k\}$, a \emph{section} of $s$ under $M^{aug}$ is then any set $S \subseteq \mathbf{a}_{\pm \infty}$ such that the restriction of $M^{aug}$ to $S$ is a (possibly partially defined) function mapping surjectively and injectively to $s$. Thus a section of $s$ under $M^{aug}$ consists of a section of $s$ under $M$, together with any subset of $\{-\infty_i, +\infty_i\}_{i=1}^l$.

For $s \subseteq \{1, \ldots, k \}$, define $I(s) = \sum_S I(S)$, the sum over sections $S$ of $s$ under $M^{aug}$. The $I(s)$ again generate a subalgebra of $\widetilde{\A}^{aug}(\ZZ)$. An augmented strand diagram which begins at a section of $s$ and ends at a section of $t$, for $s,t \subseteq \{1, \ldots, k\}$, is \emph{$\ZZ$-constrained}; we say it \emph{goes from} $s$ to $t$. If $I(s) \widetilde{\A}^{aug}(\ZZ) I(t)$ is nonzero, then there is at least one section $S$ of $s$ under $M^{aug}$, and at least one section $T$ of $t$ under $M^{aug}$, such that there exists an (unconstrained) augmented strand diagram from $S$ to $T$. Note then that $s$ and $t$ need not have the same size, because $S$ and $T$ can also contain points of the form $+ \infty_i$ or $-\infty_i$.

If $\mu = (S, T, \phi)$ is an unconstrained augmented strand diagram on $\ZZ$ without horizontal strands, we again consider adding horizontal strands to $\mu$ at places $U \subseteq \mathbf{a}_{\pm \infty} \backslash (S \cup T)$ (there can be horizontal strands at $\pm \infty_i$), extending $\phi$ by the identity to $\phi_U \colon S \cup U \To T \cup U$, and defining $a(\mu) = \sum_U (S \cup U, T \cup U, \phi_U) \in \widetilde{\A}^{aug}(\ZZ)$. For each $s,t \subseteq \{1, \ldots, k\}$ then $I(s) a(\mu) I(t)$ is the sum of all $\ZZ$-constrained augmented strand diagrams obtained from $\mu$ by adding horizontal strands, possibly at interval endpoints $\pm \infty_i$. As in the non-augmented case, if one such diagram has $j$ horizontal strands at the places of $\mathbf{a}$, these horizontal strands can be swapped with their twins, resulting in $2^j$ possible arrangements of horizontal strands at these places. Unlike the non-augmented case, for any point of the form $-\infty_i$ or $+\infty_i$ not in $S \cup T$, a horizontal strand can be added at this point. Thus if $| \bigcup_{i=1}^l \{-\infty_i, +\infty_i\} \backslash (S \cup T) | = n$, then there are $2^n$ possible arrangements of horizontal strands at these endpoints. 
\begin{defn}
With notation as above, $I(s) a(\mu) I(t)$ is a sum of $2^{j+n}$ $\ZZ$-constrained augmented strand diagrams.  We can draw such a sum as a single diagram $D$ with $2j$ dotted horizontal strands (leaving the possible horizontal strands at $\pm \infty_i$ implicit) and we call it a \emph{symmetrised $\ZZ$-constrained augmented strand diagram} or just \emph{diagram}.
\end{defn}

Multiplication of diagrams is described as follows. Consider the product of two (symmetrised $\ZZ$-constrained augmented strand) diagrams $D,D'$. If no strand in $D$ or $D'$ flies off an interval, then their product $DD'$ as augmented diagrams is given by concatenating strands, just as for non-augmented diagram. Formally the symmetrised augmented diagram is a sum of $2^n$ diagrams, involving possible horizontal strands at $\pm \infty_i$, but the augmented diagram $DD'$ is drawn identically to the diagram of the the product of non-augmented diagrams. Thus, at least at the level of drawing diagrams, multiplication of augmented diagrams is an extension of multiplication of (non-augmented) diagrams.

If on some interval $Z_i$, both $D$ and $D'$ fly off the top end, then $DD' = 0$. This is because, for any $\ZZ$-constrained augmented strand diagram $(S,T,\phi)$ in $D$, and any such diagram $(S',T',\phi')$ in $D'$, $\phi$ has $+\infty_i$ in its image, but $\phi'$ does not have $+\infty_i$ in its domain, so the functions cannot be composed.  Similarly, if both $D,D'$ fly off the negative end, then $DD' = 0$. If $D$ flies off the top end of $Z_i$ but $D'$ does not, then the composition is well defined there: each $(S,T,\phi)$ in $D$ has $+\infty_i$ in the image of $\phi$; and half of the constrained augmented diagrams $(S',T',\phi')$ in $D'$ have $\phi'$ mapping $+\infty_i \mapsto +\infty_i$ (i.e. a horizontal strand at $+\infty_i$), so such $\phi'$ compose with $\phi$ at $+\infty_i$. If $D'$ flies off the top end of $Z_i$ but $D$ does not, then again composition is well defined: each $(S',T',\phi')$ in $D'$ has $+\infty_i$ in its image, but not in its domain; half the $(S,T,\phi)$ in $D$ do not have $+\infty_i$ in the domain or image; and these $\phi$ and $\phi'$ compose without any problems at $+\infty_i$. Thus, if one of $D,D'$ flies off the top end of $Z_i$ and the other does not, then the product $DD'$ is well defined there. Similarly, if one of $D,D'$ flies off the bottom end of $Z_i$ and the other does not, then the product $DD'$ is well defined there. Thus, roughly, if we can concatenate strands of $D$ and $D'$ into another augmented diagram, with at most one strand flying off any end of any interval, then the product $DD'$ is given by concatenating strands, just as for (non-augmented) strand diagrams. Some examples are shown in figure \ref{fig:aug_mult}
.

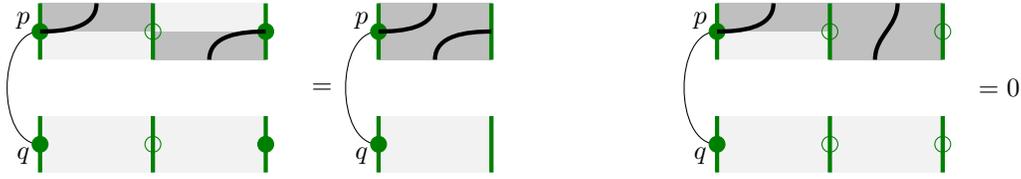
\begin{figure}
\begin{center}
\begin{tikzpicture}[scale=1.5]
\strandbackgroundshading
\tstrandbackgroundshading{1}
\afterwused
\tbeforewused{1}
\strandsetup
\tstrandsetup{1}
\tstrandsetup{2}
\lefton
\rightoff
\trighton{1}
\usea
\tuseb{1}
\draw (2.5,0.75) node {$=$};
\begin{scope}[xshift = 3cm]
\strandbackgroundshading
\afterwused
\beforewused
\strandsetup
\tstrandsetup{1}
\lefton
\usea
\useb
\end{scope}
\begin{scope}[xshift = 6cm]
\strandbackgroundshading
\tstrandbackgroundshading{1}
\afterwused
\tbeforewused{1}
\tafterwused{1}
\strandsetup
\tstrandsetup{1}
\tstrandsetup{2}
\lefton
\rightoff
\trightoff{1}
\usea
\tuseab{1}
\draw (2.5, 0.75) node {$= 0$};
\end{scope}

\end{tikzpicture}
\caption{Multiplication of augmented diagrams.}
\label{fig:aug_mult}
\end{center}
\end{figure}

The \emph{augmented strand algebra} $\A^{aug}(\ZZ)$ is the subalgebra of $\widetilde{\A}^{aug}(\ZZ)$ generated by (symmetrised $\ZZ$-constrained augmented strand) diagrams. It is preserved by $\partial$ and forms a differential graded algebra. There is again an \emph{$H$-grading} $h$ given by taking the sum of oriented intervals $[a, \phi(a)]$, and regarding it as an element of a relative first homology group of the intervals $\mathbf{Z}$. However now the endpoints of $[a, \phi(a)]$ may include the $\pm \infty_i$, so $h \in H_1 (\mathbf{Z}, \mathbf{a}_{\pm \infty} )$. Note that $H_1 (\mathbf{Z}, \mathbf{a}_{\pm \infty})$ naturally contains $H_1 (\mathbf{Z}, \mathbf{a})$ as a subgroup, and we will always regard it as such: $H_1 (\mathbf{Z}, \mathbf{a}) \subset H_1 (\mathbf{Z}, \mathbf{a}_{\pm \infty})$. So we can regard H-grading for augmented diagrams as an extension of H-grading for (non-augmented) diagrams. Again we write $\A^{aug}(\ZZ;h)$ for the submodule of $\A^{aug}(\ZZ)$ with $H$-grading $h$ and have a direct sum decomposition $\A^{aug}(\ZZ) = \bigoplus_h \A^{aug} (\ZZ;h)$. Again if $D$ is an augmented diagram from $s$ to $t$ with $H$-grading $h$, the \emph{$H$-data} of $D$ is the triple $(h,s,t)$. Again we write $\A^{aug}(\ZZ;h,s,t) = I(s) \A^{aug}(\ZZ;h) I(t)$ and then $\A^{aug}(\ZZ) = \bigoplus_{h,s,t} \A^{aug}(\ZZ;h,s,t)$.

A $\ZZ$-constrained augmented diagram $\mu$ (not yet symmetrised) from $S$ to $T$ with $H$-grading $h$ has \emph{Maslov} grading again given by $\iota(\mu) = \iota(\mu) = \inv (\mu) - m \left( h , S \right) \in \frac{1}{2} \Z$, where $\inv$ counts inversions/crossings, and $m \colon H_1 (\mathbf{Z}, \mathbf{a}_{\pm \infty}) \times H_0 (\mathbf{a}) \To \frac{1}{2} \Z$ counts local multiplicities of augmented diagrams around places $a_i$ in $S$. (We use $H_0 (\mathbf{a})$ rather than $H_0 (\mathbf{a}_{\pm \infty})$ so that Maslov grading is additive when we glue arc diagrams together. The points $\pm \infty_i$ are not places like the $a_i$.) Again all the diagrams in a symmetrised diagram have the same Maslov grading. When we add a horizontal strand at a $\pm \infty_i$, the fact that we can add the strand means that there is no strand at $\pm \infty_i$ for the horizontal strand to cross; moreover the horizontal strand at $\pm \infty_i$ does not contribute to $m(h,S)$). Thus Maslov grading is well defined on symmetrised $\ZZ$-constrained augmented diagrams. 

Again the differential $\partial$ respects $H$-data but has Maslov degree $-1$. Maslov index behaves under multiplication as in the non-augmented case. When we have $h \in H_1 (\mathbf{Z}, \mathbf{a}) \subset H_1 (\mathbf{Z}, \mathbf{a}_{\pm \infty})$ then strands do not fly off intervals and we have an isomorphism of differential graded algebras
\[
\A(\ZZ; h,s,t) \cong \A^{aug} (\ZZ;h,s,t).
\]
The isomorphism takes a symmetrised diagram $D \in \A(\ZZ;h,s,t)$ (formally a sum of $2^j$ constrained diagrams) to the element of $\A^{aug}(\ZZ;h,s,t)$ represented by the same diagram (formally a sum of $2^{j+2l}$ constrained diagrams, where $l$ is the number of intervals in $\mathbf{Z}$; all possible horizontal strands at $\pm \infty_i$ are now included). We draw the same diagrams and treat them the same way in both cases.

Accordingly, throughout this paper we regard augmented diagrams as a generalisation of non-augmented diagrams, even though the definition is not formally a generalisation. Alternatively we can regard non-augmented diagrams as augmented diagrams with H-grading zero on exterior steps, in which case augmented diagrams do become a generalisation in a formal sense. 

Thus, we drop the ``aug" from our notation and simply write $\A(\ZZ)$ or $\A$ for the augmented strand algebra. When referring to specific H-data $(h,s,t)$ we do not distinguish between $\A$ and $\A^{aug}$, since the two summands are isomorphic when both are defined; and we write $\A(h,s,t)$ for $\A(\ZZ;h,s,t)$ or $\A^{aug}(\ZZ;h,s,t)$, and $\HH(h,s,t)$ for $H(\A(\ZZ;h,s,t))$ or $H(\A^{aug}(\ZZ;h,s,t))$. 

To summarise: (symmetrised constrained augmented strand) diagrams are a generalisation of symmetrised constrained strand diagrams --- generalising the full differential graded algebra structure of strand diagrams, as well as all gradings and idempotents. 


\subsection{Viability}
\label{sec:viability}

The idea of viability was already mentioned above (section \ref{sec:strand_diagrams}).
\begin{defn} 
\label{def:viability}
Let $\ZZ = (\mathbf{Z}, \mathbf{a}, M)$ be an arc diagram.
\begin{enumerate}
\item
An element $h \in H_1 (\mathbf{Z}, \mathbf{a})$ or $H_1 (\mathbf{Z}, \mathbf{a}_{\pm \infty})$ is \emph{viable} if $h$ has multiplicity $0$ or $1$ on each step of $\mathbf{Z}$.
\item
A set of H-data $(h,s,t)$ is \emph{viable} if $h$ is viable.
\item
A ($\ZZ$-constrained augmented strand) diagram $\mu$ is \emph{viable} if its H-grading is viable.
\item
A summand $\A(h,s,t)$ or $\HH(h,s,t)$ of $\A$ or $\HH$ is \emph{viable} if $h$ is viable.
\item
An element of $\A$ or $\HH$ is \emph{viable} if it lies in a viable summand
 $\A(h,s,t)$ or $\HH(h,s,t)$.
\end{enumerate}
\end{defn}
Thus a diagram $D \in \A$ (or its homology class in $\HH$) is viable iff its H-grading is viable.

Note that a set of H-data $(h,s,t)$ may be viable, even though no augmented diagram exists with that H-data! This subtle point is important in the sequel, from section \ref{sec:tightness_tensor_products} onwards.

\begin{lem}
\label{lem:viable_crossings_horizontal}
In a viable augmented diagram, every crossing is at a horizontal strand.
\end{lem}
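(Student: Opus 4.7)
The plan is to argue by contradiction: suppose a viable diagram $D$ has a crossing between two moving (non-horizontal) strands, and produce a step where the H-grading has multiplicity at least $2$, contradicting viability.

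First, I would unpack the definition of crossing. A crossing in a symmetrised constrained augmented diagram $D$ is an inversion in one of its underlying constrained diagrams $(S,T,\phi)$: a pair $x,y \in S$ with $x < y$ but $\phi(x) > \phi(y)$. Since $<$ is only a partial order on $\mathbf{a}_{\pm\infty}$, the points $x,y,\phi(x),\phi(y)$ must all lie on a common interval $Z_i$, totally ordered along it. Note that dotted horizontal strands may change which constrained summand we look at, but the H-grading $h$ is the same for every summand of $D$, so it suffices to derive the multiplicity contradiction in one such summand.

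Next, assume for contradiction that both strands at this crossing are moving, i.e.\ $x < \phi(x)$ and $y < \phi(y)$. Combining with the inversion $x < y$ and $\phi(y) < \phi(x)$, we get the chain
\[
x \;<\; y \;<\; \phi(y) \;<\; \phi(x)
\]
in $Z_i$. In particular $y < \phi(y)$, so there is at least one step $e$ of $Z_i$ lying strictly between $y$ and $\phi(y)$. This step $e$ is covered by the strand from $y$ to $\phi(y)$, and because $x < y < \phi(y) < \phi(x)$ it is also covered by the strand from $x$ to $\phi(x)$. By definition of the H-grading as the sum of intervals $[a,\phi(a)]$, the local multiplicity of $h$ on the step $e$ is therefore at least $2$. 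This contradicts the viability hypothesis (Definition \ref{def:viability}(i), (iii)), completing the proof.

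The main obstacle, which is really just a bookkeeping point, is keeping straight that the claim is about the symmetrised diagram while crossings and inversions are defined on an underlying constrained summand; once one observes that all summands share the same H-grading, the argument is a direct inversion-versus-multiplicity count.
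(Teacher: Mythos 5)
Your proof is correct and is essentially the paper's own argument, just spelled out: the paper's proof is the one-line observation that a crossing between two non-horizontal strands forces some step to be covered with multiplicity at least two, which is exactly the inversion chain $x < y < \phi(y) < \phi(x)$ and step-multiplicity count you give. The extra bookkeeping about working in a constrained summand of the symmetrised diagram is fine and harmless.
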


\begin{proof}
Any diagram with a crossing involving two non-horizontal strands has a step covered with multiplicity at least two.
\end{proof}

Thus, whenever we apply the differential $\partial$ to a viable diagram, any crossing resolved involves a dotted horizontal strand at some place $p$ of a pair $P = \{p,q\}$. Resolving that crossing affects the strands at $p$, and makes the dotted horizontal strand at $q$ disappear. Thus the differential acts ``locally" on viable diagrams, each resolution at a specific matched pair. We discuss this idea of ``locality" next.

\subsection{Local diagrams}
\label{sec:local_diagrams}

In the arc diagram $\ZZ = (\mathbf{Z}, \mathbf{a}, M)$, consider cutting the intervals $Z_1, \ldots, Z_l$ of $\mathbf{Z}$ into sub-intervals, each containing precisely one place. This cuts $\ZZ$ into disconnected arc diagrams. There is one connected arc diagram for each matched pair. We call the connected arc diagram so obtained containing the matched pair $P$ the \emph{fragment} of $\ZZ$ at $P$, denoted $\ZZ_P$. We can cut $\mathbf{Z}$ at different points between places, but the results are homeomorphic, so $\ZZ_P$ is well defined up to homeomorphism. 

A fragment $\ZZ_P$ contains just one matched pair consisting of two places; it is the smallest possible nontrivial arc diagram, the only arc diagram up to homeomorphism with one matched pair. 

Under the correspondence between arc diagrams $\ZZ$ and quadrangulated surfaces $(\Sigma, Q)$ of \cite{mathews_strand_2016}, cutting $\ZZ$ into fragments corresponds to cutting $\Sigma$ into squares.

Let now $D$ be a (symmetrised $\ZZ$-constrained augmented strand) diagram. When we cut $\ZZ$ into fragments, we would like to cut $D$ into fragments also. Note that even if $D$ is not an augmented diagram, strands may fly off the ends of intervals in fragments, so after cutting into fragments the resulting strand diagram may be augmented.

If $D$ has a crossing involving two non-horizontal strands, then problems arise. For one thing, $D$ can be drawn in various ways, so that the crossing appears at various possible locations in $[0,1] \times \mathbf{Z}$; after cutting there is then no well-defined fragment in which the crossing appears. For another, after cutting, more than one strand may fly off the same end of a fragment, which is not permitted in augmented diagrams.

However, if $D$ is \emph{viable} these problems disappear. By lemma \ref{lem:viable_crossings_horizontal} all crossings occur at horizontal strands, so are localised to specific places. Viability also ensures that each interior step of $\mathbf{Z}$ is covered with multiplicity at most 1, so after cutting $\ZZ$, at most one strand flies off each end of an interval. We therefore obtain a well-defined augmented diagram on each fragment.

\begin{defn}
Let $P = \{p,q\}$ be a matched pair of the arc diagram $\ZZ$,
and let $D$ be a viable diagram on $\ZZ$. The \emph{local diagram} $D_P$ of $D$ at $P$ is the diagram obtained on $\ZZ_P$ after cutting $\ZZ$ into fragments.
It lies in the \emph{local strand algebra} $\A(\ZZ_P)$.
\end{defn}
Note that since a symmetrised diagram $D$ may contain pairs of dotted horizontal arcs at matched pairs, the local diagram $D_P$ may contain a pair of dotted horizontal arcs. 

When the larger arc diagram $\ZZ$ is understood, we can make the following abbreviations for various augmented strand algebras and summands:
\[
\A_P = \A (\ZZ_P), \quad
\A_P (h,s,t) = \A (\ZZ_P;h,s,t).
\]

We observe that the $H$-data of each $D_P$ is just a restriction of the $H$-data of $D$. Maslov gradings are related by $\iota(D) = \sum_P \iota(D_P)$. We write $(h,s,t)$ for the $H$-data of $D$, and $(h_P, s_P, t_P)$ for the $H$-data of $D_P$.

In addition to cutting diagrams into fragments, we can glue fragments together into diagrams. From augmented diagrams $D_P$ on each $\ZZ_P$, which fit together in the sense that strands flying off intervals connect, we can glue them together to obtain a viable diagram on $\ZZ$. Thus, studying viable diagrams locally is equivalent to studying augmented diagrams on a fragment.

For viable H-data $(h,s,t)$ on $\ZZ$, we thus have
\[
\A (h,s,t) \cong \bigotimes_{\text{matched pairs }P} \A_P (h_P, s_P, t_P).
\]
Since the differential of a diagram is the sum of its various resolutions at its crossings, this is an isomorphism of complexes, or differential $\Z_2$-modules. (Note that the isomorphism holds even if $(h,s,t)$ is not the H-data of any diagram! In this case both sides are zero.) It is also an isomorphism of differential graded algebras. In particular, multiplying two diagrams $D,D'$ on $\ZZ$, and then cutting into fragments, yields the same result as cutting $D,D'$ into fragments, and then multiplying the local diagrams --- provided that it makes sense to cut all the diagrams $D,D'$ and $D D'$ into fragments, i.e. that they are all viable. We prove this now.
\begin{lem}
Let $D,D'$ be viable augmented diagrams on $\ZZ$, with local diagrams $D_P, D'_P$ on each fragment $\ZZ_P$. 
Then the product $DD'$ is nonzero and viable if and only if each $D_P D'_P$ is nonzero.
In this case $(DD')_P = D_P D'_P$.
\end{lem}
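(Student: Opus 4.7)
The plan is to use the locality of everything relevant to multiplication of viable diagrams: endpoints, crossings, and obstructions to composition. Since diagrams are multiplied by concatenating strands, checking that (i) the target of $D$ matches the source of $D'$, (ii) no excess crossings appear, and (iii) no two strands fly off the same end of an interval, it suffices to argue each of these conditions decomposes as a conjunction over matched pairs. Cutting $D$ and $D'$ into fragments on each $\ZZ_P$ is compatible with concatenation: at each cut point, the strand in $D$ flying off into a $+\infty_i$ meets the strand in $D'$ entering from a $-\infty_i$, so concatenating the local diagrams $D_P$ and $D'_P$ yields exactly the restriction of the concatenation of $D$ and $D'$ to $\ZZ_P$.

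Next I would reduce the question to the three obstructions above and show each is localized. Endpoint matching is immediate: the target of $D$ on $\ZZ_P$ is just $t_P$, and similarly the source of $D'$ on $\ZZ_P$ is $s'_P$, so $t = s'$ iff $t_P = s'_P$ for all $P$. The two-strands-fly-off-same-end obstruction is by construction a condition on a single interval of $\mathbf{Z}$, hence confined to one fragment. The crucial point is excess crossings, and for this I would invoke Lemma \ref{lem:viable_crossings_horizontal}: since $D$ and $D'$ are viable, every crossing within $D$ or $D'$ already occurs at a horizontal strand at some matched pair $P$. An excess crossing in the concatenation $DD'$ arises from two strands that cross twice; monotonicity of strands forces both crossings to occur at the same place, namely the horizontal strand responsible for them. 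Therefore any excess crossing in the concatenation sits inside a single fragment, and the concatenation $D_P D'_P$ on that fragment already contains it.

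These observations give both directions simultaneously. If every $D_P D'_P$ is nonzero, then each fragment admits matching endpoints, no double fly-offs, and no excess crossings; gluing the $D_P D'_P$ back together produces a well-defined concatenation of $D$ and $D'$ without any of the three obstructions, which is nonzero. Viability is then automatic, since each local product $D_P D'_P$ already lies in $\A_P$ and viability is a condition on step multiplicities that is also local. Conversely, if $DD'$ is nonzero and viable, then each of the three obstructions fails globally, so by the localization arguments above each fails at every fragment $P$, so $D_P D'_P \neq 0$. The identity $(DD')_P = D_P D'_P$ is then just the observation that restriction of the global concatenation to a fragment is the local concatenation, which we already established at the start.

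The main obstacle I anticipate is the bookkeeping for symmetrised diagrams: a symmetrised diagram is really a sum over choices of horizontal strands in each doubly-occupied matched pair and each exterior endpoint, and one needs to check that this symmetrisation commutes with cutting into fragments in the sense that the sum of local representatives equals the local representative of the sum. Once it is verified that horizontal strands at the matched pairs and the $\pm \infty_i$ can be chosen independently on each fragment, the multiplication rule reduces cleanly to the non-symmetrised statements above, and the proof concludes.
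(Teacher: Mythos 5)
Your overall strategy --- cutting into fragments and checking that every obstruction to a nonzero product is local --- is the same as the paper's, which proves the lemma by a case analysis on which of $D,D'$ fly off each fragment $\ZZ_P$. However, there is a false step in your treatment of excess crossings, and it is load-bearing. You claim that monotonicity forces both crossings of an excess crossing to occur at the same place, so that any excess crossing of the concatenation sits inside a single fragment. This is not true. By lemma \ref{lem:viable_crossings_horizontal} each individual crossing involves a horizontal strand, but the horizontal strand witnessing the crossing inside $D$ and the one witnessing the crossing inside $D'$ are in general different. For example, let $D$ have a moving strand running from some place below $x$ up to a place $y>x$ together with a horizontal strand at $x$, and let $D'$ have a moving strand running from $x$ up past $y$ together with a horizontal strand at $y$; idempotents match, and the concatenated pair of strands crosses once in the $D$-half (at $x$) and once in the $D'$-half (at $y$). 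This is an excess crossing whose two crossings lie at distinct places, possibly belonging to different matched pairs, hence different fragments, and neither fragment sees a local excess crossing. So your forward direction (all $D_P D'_P \neq 0$ implies no global excess crossing) does not follow from the localization you assert.

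The gap can be repaired, and the repair explains why the statement reads ``nonzero and viable'' rather than just ``nonzero''. In the configuration above the steps of $\mathbf{Z}$ strictly between $x$ and $y$ are covered by both $D$ and $D'$, so $h(D)+h(D')$ has multiplicity $2$ there; after cutting, both $D_P$ and $D'_P$ fly off the same \emph{cut} end of a sub-interval in the fragment containing such a step, and that local product vanishes --- because of a double fly-off at a cut end, not because of a local crossing. This is also the point your converse glosses over: a local product $D_P D'_P$ can vanish through a double fly-off at a cut end, which corresponds globally not to any of your three obstructions to $DD'\neq 0$ but to non-viability of $DD'$; your stated localization of the fly-off obstruction only concerns the genuine endpoints $\pm\infty_i$ of the intervals of $\mathbf{Z}$. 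You need to say explicitly that viability of $DD'$, i.e. each step being covered at most once by $h(D)+h(D')$, is exactly what excludes double fly-offs at cut ends. With these two repairs --- excess crossings handled via the doubly covered step they force (hence via a cut-end double fly-off at some fragment), and cut-end fly-offs tied to the viability hypothesis --- your argument closes and essentially reproduces the paper's fragment-by-fragment proof.
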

Thus, if under the isomorphism $\A(h,s,t) \cong \bigotimes_P \A_P (h_P, s_P, t_P)$ we have $D = \bigotimes_P D_P$ and $D' = \bigotimes_P D'_P$, then $DD' = \bigotimes_P D_P D'_P$. 

\begin{proof}
Recall the description of multiplication of augmented diagrams in section \ref{sec:augmented_diagrams}. We examine the products $DD'$ and $D_P D'_P$ on fragments $\ZZ_P$. If no strand of $D$ or $D'$ flies off the fragment $\ZZ_P$, then $DD'$ (if nonzero) at $P$ is clearly given by $D_P D'_P$. If strands of both $D,D'$ fly off the top of $\ZZ_P$, then $DD'$ is not viable, and $D_P D'_P = 0$; similarly if strands fly off the bottom. If a strand of $D$ but not $D'$ flies off the top (resp. bottom) of $\ZZ_P$, then as discussed in section \ref{sec:augmented_diagrams}, $D_P D'_P$ is well defined, with a single strand flying off the top (resp. bottom) of $\ZZ_P$, as also does $DD'$ at $P$. The case where a strand of $D'$ but not $D$ flies off $\ZZ_P$ is similar. Gluing together these local results at each matched pair gives the desired result.
\end{proof}

Now for any chain complexes $A,B$ over $\Z_2$ we have $H(A \otimes B) \cong H(A) \otimes H(B)$. (See e.g. \cite[sec. 3.7]{mathews_strings_2017} or  \cite[thm. V.2.1]{hilton_course_1971}.) Thus the homology $H(\A(h,s,t))$ is isomorphic to the tensor product of the $H(\A_P (h_P, s_P, t_P))$, and in fact this isomorphism preserves all gradings. We use the shorthand
\[
\HH_P = H(\A_P), \quad
\HH_P (h_P, s_P, t_P) = H(\A_P (h_P, s_P, t_P))
\]
We call $\HH_P$ the \emph{local homology at $P$}. So we have the following isomorphism, which we often use implicitly in the sequel.
\begin{lem}
\label{lem:homology_decomposition}
For viable $(h,s,t)$ there is an isomorphism of graded $\Z_2$-algebras, respecting H-data and Maslov grading, induced by cutting diagrams into fragments.
\[
\HH(h,s,t) \cong \bigotimes_{\text{matched pairs }P} \HH_P (h_P, s_P, t_P)
\]
\qed
\end{lem}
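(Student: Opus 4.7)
The plan is to deduce the lemma directly from the chain-level isomorphism of differential graded algebras established in the preceding discussion, together with the Künneth formula over $\Z_2$. That discussion already produces an isomorphism
\[
\A(h,s,t) \cong \bigotimes_{\text{matched pairs }P} \A_P(h_P, s_P, t_P)
\]
of DGAs, induced by cutting viable diagrams into fragments, and compatible with H-data and Maslov grading via $\iota(D) = \sum_P \iota(D_P)$. Viability is essential here: by Lemma \ref{lem:viable_crossings_horizontal}, every crossing in a viable diagram sits at a horizontal strand at a specific matched pair, so the differential on $\A(h,s,t)$ corresponds under this isomorphism to the usual tensor-product differential $\sum_P 1 \otimes \cdots \otimes \partial_P \otimes \cdots \otimes 1$, with each summand acting at a single matched pair.

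Next I would apply the Künneth formula for chain complexes of free $\Z_2$-modules, which is exact since $\Z_2$ is a field; there are no Tor corrections and the finitely many matched pairs can be handled iteratively (or in one step using the multi-factor version). This gives an isomorphism of graded $\Z_2$-modules
\[
H\!\left( \bigotimes_P \A_P(h_P, s_P, t_P) \right) \cong \bigotimes_P \HH_P(h_P, s_P, t_P).
\]
Since Künneth is compatible with multiplicative structure --- the algebra structure on $H(A \otimes B)$ induced from the tensor-product DGA structure on $A \otimes B$ coincides with the tensor-product algebra structure on $H(A) \otimes H(B)$ --- this upgrades to an isomorphism of graded $\Z_2$-algebras. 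Composing with the homology of the chain-level isomorphism then produces the desired $\HH(h,s,t) \cong \bigotimes_P \HH_P(h_P, s_P, t_P)$, and since the chain-level map is the one induced by cutting into fragments, so is the resulting map on homology.

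Finally, I would verify that all the gradings transfer correctly. The idempotents $I(s), I(t)$ factor as products of their local restrictions $I(s_P), I(t_P)$, and H-grading is additive over the intervals of $\mathbf{Z}$ and thus decomposes as the collection of its restrictions to the fragments, so H-data on the two sides is identified. Maslov grading is additive over matched pairs by the formula $\iota(D) = \sum_P \iota(D_P)$, which is precisely the total grading on the tensor product of complexes, and Künneth respects this total grading. I do not anticipate any real obstacle: the content of the lemma amounts to applying Künneth to the previous lemma, and the only point that warrants explicit comment is the multiplicative compatibility of Künneth, which is standard but must be invoked since the statement asserts an isomorphism of algebras and not merely of graded modules.
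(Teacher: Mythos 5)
Your proposal is correct and follows essentially the same route as the paper: the paper also combines the chain-level isomorphism $\A(h,s,t) \cong \bigotimes_P \A_P(h_P,s_P,t_P)$ of differential graded algebras from the preceding discussion with the fact that over $\Z_2$ one has $H(A \otimes B) \cong H(A) \otimes H(B)$, noting that the resulting isomorphism respects all gradings. Your additional remarks on multiplicativity of the K\"unneth map and on the gradings are just a more explicit spelling-out of what the paper leaves implicit.
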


Any fragment of any arc diagram is homeomorphic to any other, so all local strand algebras are isomorphic. Hence we may speak of \emph{the} local arc diagram or \emph{the} local strand algebra, without reference to any specific matched pair or arc diagram. They are abusively denoted $\ZZ_P$ and $\A_P$ respectively.

\subsection{Terminology for local strand diagrams}
\label{sec:terminology_strand_diagrams}

We now develop systematic terminology to describe diagrams locally. Throughout this section, $P = \{p,q\}$ is a matched pair of an arc diagram $\ZZ = (\mathbf{Z}, \mathbf{a}, M)$, $h \in H_1 (\mathbf{Z}, \mathbf{a}_{\pm \infty})$,
and $D$ is a diagram with $H$-data $(h,s,t)$.

\begin{defn}[Occupation of places] \
\label{def:occupation_places}
\begin{enumerate}
\item If $h$ has multiplicity $0$ on the steps before and after $p$, then $p$ is \emph{unoccupied} by $h$.
\item If $h$ has multiplicity $1$ on the step before $p$, and $0$ on the step after $p$, then $p$ is \emph{pre-half-occupied} by $h$.
\item If $h$ has multiplicity $0$ on the step before $p$, and $1$ on the step after $p$, then $p$ is \emph{post-half-occupied} by $h$.
\item If $h$ is pre-half-occupied or post-half-occupied, then $p$ is \emph{half-occupied} by $h$.
\item If $h$ has multiplicity $1$ on both steps before and after $p$, then $p$ is \emph{fully occupied} by $h$.
\end{enumerate}
\end{defn}
Although this definition applies to the H-grading $h$, we may equally apply it to a \emph{diagram} $D$. We say that $p$ is unoccupied (resp. pre-half-occupied, post-half-occupied, half-occupied, fully occupied) by $D$, if $p$ is so occupied by its H-grading $h$.

\begin{defn}[Occupation of pairs] \
\label{def:occupation_pairs}
\begin{enumerate}
\item If both $p,q$ are unoccupied by $h$, then $P$ is \emph{unoccupied} by $h$.
\item If $p$ is half-occupied, and $q$ is unoccupied by $h$, then $P$ is \emph{one-half-occupied at $p$} by $h$. Accordingly as $p$ is pre- or post-half-occupied, $P$ is \emph{pre-one-half-occupied} or \emph{post-one-half-occupied}.
\item If both $p,q$ are half-occupied by $h$, then $P$ is \emph{alternately occupied} by $h$.
\item If $p$ is fully occupied, and $q$ is unoccupied by $h$, then $P$ is \emph{once occupied at $p$}.
\item If $p$ is fully occupied and $q$ is half-occupied by $h$, then $P$ is \emph{sesqui-occupied at $p$}. Accordingly as $p$ is pre- or post-half-occupied, $P$ is \emph{pre-sesqui-occupied} or \emph{post-sesqui-occupied}.
\item If $p,q$ are both fully occupied by $h$, then $P$ is \emph{doubly occupied} by $h$.
\end{enumerate}
\end{defn}
Again, we can extend this definition to diagrams: $P$ is unoccupied (resp. sesqui-, pre/post-sesqui-, alternately , one-half-, pre/post-one-half-, doubly occupied) by the diagram $D$, if $P$ is so occupied by its H-grading $h(D)$.

The following definition applies to any set of H-data $(h,s,t)$, i.e. to $h \in H_1 (\mathbf{Z}, \mathbf{a}_{\pm \infty} )$ and idempotents $s,t$.
\begin{defn}[Idempotent terminology] \
\label{def:idempotent_terminology}
\begin{enumerate}
\item If $P \notin s$ and $P \notin t$ we say $P$ is \emph{off-off} or \emph{all-off} or $00$.
\item If $P \notin s$ and $P \in t$ we say $P$ is \emph{off-on} or $01$.
\item If $P \in s$ and $P \notin t$ we say $P$ is \emph{on-off} or $10$.
\item If $P \in s$ and $P \in t$ we say $P$ is \emph{on-on} or \emph{all-on} or $11$.
\end{enumerate}
\end{defn}
(We find this terminology awkward, hence we offer several equally awkward alternatives.)
Again, this definition extends to diagrams. We say $D$ is all-off, off-on, etc., at $P$ if its H-data $(h,s,t)$ is all-off, off-on, etc., at $P$.

We can denote occupation and on/off properties with H-data: for instance, we may say that a pair $P$ is is all-on doubly occupied by $(h,s,t)$, or equivalently that $(h,s,t)$ is all-on doubly occupied at $P$.

Any H-data $(h,s,t)$, including the H-data of a diagram $D$, can be described completely by the terminology of occupation and on/off idempotents. The H-grading $h$ described precisely by the occupation of the various matched pairs. The idempotents $s,t$ are described precisely by the on/off data at each pair.

We can often deduce properties of $D$ simply from its occupation of places, or its on/off/etc properties. For instance, if $p$ is pre-half-occupied in a diagram $D$, then a strand of $D$ must end at $p$, and no strand can begin at $p$; so $D$ is off-on at $P$. 

\subsection{Tightness of a diagram}
\label{sec:tightness}

We now define the tightness or twistedness of a diagram. Throughout this section $D$ is a viable diagram on an arc diagram $\ZZ$, so $D \in \A = \A(\ZZ)$. If $D$ has no crossings then $\partial D = 0$ so $D$ represents a homology class in $\HH$. Thus the following definition makes sense.
\begin{defn}[Tightness of a diagram]
\label{def:tight_twisted_crossed_diagrams}
A viable diagram $D \in \A$ is:
\begin{enumerate}
\item \emph{tight} if it has no crossings and is nonzero in homology;
\item \emph{twisted} if it has no crossings but is zero in homology
\item \emph{crossed} if it has crossings.
\end{enumerate}
\end{defn}
The significance of the tightness of a diagram will become clearer as we proceed. (We gave a rough description in section \ref{sec:many_types_twisted}.)

\begin{lem}[Local-global tightness of diagram]
\label{lem:tightness_degeneracy}
Let $D$ be a viable diagram.
\begin{enumerate}
\item If $D$ is tight iff for each matched pair $P$, $D_P$ is tight.
\item If $D$ is twisted iff for each matched pair $P$, $D_P$ is tight or twisted, and at least one $D_P$ is twisted.
\item If $D$ is crossed iff for some matched pair $P$, $D_P$ is crossed.
\end{enumerate}
\end{lem}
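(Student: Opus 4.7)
The plan is to deduce all three statements directly from the decomposition
\[
\A(h,s,t) \;\cong\; \bigotimes_{P} \A_P(h_P, s_P, t_P), \qquad \HH(h,s,t) \;\cong\; \bigotimes_P \HH_P(h_P, s_P, t_P)
\]
provided by cutting viable diagrams into fragments (as in the discussion preceding lemma \ref{lem:homology_decomposition}), together with lemma \ref{lem:viable_crossings_horizontal}. Under these isomorphisms, $D$ corresponds to the elementary tensor $\bigotimes_P D_P$, and its homology class to $\bigotimes_P [D_P]$.

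First I would handle (iii). Since $D$ is viable, by lemma \ref{lem:viable_crossings_horizontal} every crossing of $D$ occurs at a horizontal strand, hence is localised at a single matched pair $P$ and is preserved (as a crossing of $D_P$) when we cut $\ZZ$ into fragments. Conversely, a crossing in any $D_P$ gives a crossing of $D$. Thus $D$ has crossings iff some $D_P$ does, which is exactly (iii). As a consequence, $D$ has no crossings iff every $D_P$ has no crossings.

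Next I would use that over the field $\Z_2$, an elementary tensor of nonzero elements in a tensor product of free modules is nonzero, and conversely such a tensor vanishes iff at least one factor vanishes. Applied to $[D] = \bigotimes_P [D_P] \in \bigotimes_P \HH_P(h_P, s_P, t_P)$, this yields $[D] \neq 0$ iff every $[D_P] \neq 0$, and $[D] = 0$ iff some $[D_P] = 0$.

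Combining these two observations finishes the proof. For (i), $D$ is tight iff it has no crossings and $[D] \neq 0$, iff every $D_P$ has no crossings and every $[D_P] \neq 0$, iff every $D_P$ is tight. For (ii), $D$ is twisted iff it has no crossings and $[D] = 0$, iff every $D_P$ has no crossings (so each $D_P$ is tight or twisted) and at least one $[D_P] = 0$ (so at least one $D_P$ is twisted). No serious obstacle arises; the only subtle point is the appeal to the fact that tensor products of nonzero elements over a field are nonzero, which is why working over $\Z_2$ (so that all local homology groups are free) is essential here.
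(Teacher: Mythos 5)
Your proposal is correct and follows essentially the same route as the paper: part (iii) via lemma \ref{lem:viable_crossings_horizontal} (crossings in a viable diagram are localised at matched pairs), and parts (i) and (ii) by passing to the tensor decomposition of homology from lemma \ref{lem:homology_decomposition}, where nonvanishing of $[D]=\bigotimes_P [D_P]$ over $\Z_2$ is equivalent to nonvanishing of every factor. The only difference is that you spell out the elementary-tensor argument slightly more explicitly than the paper does.
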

We could equivalently say that $D$ is $X$ iff $D$ is $X$ at all matched pairs, where $X$ is either of the properties ``tight" or ``tight or twisted".
Thus tightness is a ``local-to-global" property, as is ``(tight or twisted)-ness".

This statement indicates an increasing order of degeneracy: tightness means tight everywhere; being twisted somewhere makes the diagram twisted; and then, being crossed somewhere makes the whole diagram crossed.

\begin{proof}
If $D$ has a crossing, then it is local to some $P$ (lemma \ref{lem:viable_crossings_horizontal}), hence $D_P$ is crossed; the converse is clear, proving (iii). So now assume $D$ and all $D_P$ are crossingless. By lemma \ref{lem:homology_decomposition} $\HH(h,s,t) \cong \bigotimes_P \HH_P (h_P, s_P, t_P)$, which is a tensor product of $\Z_2$ vector spaces. So $D$ is nonzero in homology iff all $D_P$ are nonzero in homology; (i) and (ii) follow. 
\end{proof}

\subsection{Local strand diagrams, local algebras and homology}
\label{sec:classification_local_diagrams}
\label{sec:local_algebras}

The local arc diagram $\ZZ_P$ is very simple. There is only a small set of possible H-data, all of which are viable; and given H-data on a fragment, the set of possible diagrams is even smaller. We now explicitly list out the possibilities.

There are only 4 steps in $\ZZ_P$, all exterior, and $h$ determines which are covered by strands. Let $P = \{p,q\}$. The idempotents $s$ and $t$ determine whether a strand begins or ends at $P$ --- though whether the strand lies specifically at $p$ or $q$ (or at both, with dotted strands) may be ambiguous. In most cases, but not all, this is enough to determine the diagram completely.

In table \ref{tbl:local_diagrams} we draw all augmented diagrams on $\ZZ_P$ --- or equivalently, all possible local diagrams of a viable diagram. For each set of H-data, described in terms of occupation and on/off terminology, there are no more than two viable local diagrams; specifying tightness then determines a unique diagram, up to relabelling the twin places $p,q$. This gives the following proposition.
\begin{prop}[Classification of local diagrams]
\label{prop:viable_diagram_classification}
Let $D$ be a diagram on $\ZZ_P$. Then the H-data and tightness of $D$ determines $D$ up to relabelling twins, and $D$ is as shown in table \ref{tbl:local_diagrams}.
\end{prop}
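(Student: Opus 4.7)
The plan is to prove this by systematic enumeration, exploiting the fact that the fragment $\ZZ_P$ has only two places and four exterior steps, so the combinatorial possibilities are very limited.

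First, I would observe that viability forces each of the four steps of $\mathbf{Z}_P$ to have multiplicity $0$ or $1$, so the H-grading $h$ is determined by its values on these four steps, equivalently by the occupation status of each of $p$ and $q$. Each place is in exactly one of four states (unoccupied, pre-half-occupied, post-half-occupied, fully occupied), giving $16$ possible H-gradings; modulo the twin swap $p \leftrightarrow q$, ten patterns remain to be checked. The idempotent data $(s,t)$ contributes at most four further sub-cases ($00, 01, 10, 11$) within each H-grading.

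Second, for each occupation pattern, I would determine which $(s,t)$ are compatible with the existence of an augmented strand diagram, using the local constraints: a post-half-occupied place must host the start of a strand (forcing $P \in s$ after symmetrisation), a pre-half-occupied place must host the end of a strand (forcing $P \in t$), a fully occupied place either carries a through-strand from $-\infty$ to $+\infty$ on its interval (no idempotent contribution) or carries a start plus an end (with $P$ in both $s$ and $t$ via a horizontal). Unoccupied places can carry horizontals iff $P$ is on in both $s$ and $t$. These rules immediately eliminate most of the $16 \times 4$ H-data as non-realisable. Then for each realisable $(h,s,t)$, I would enumerate the concrete strand configurations, noting that because $p$ and $q$ lie on different intervals of $\ZZ_P$ and are therefore incomparable in the partial order on $\mathbf{a}_{\pm\infty}$, the only way strands can cross is at a horizontal dotted strand (by lemma \ref{lem:viable_crossings_horizontal}). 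This severely restricts the possibilities: in most cases a single diagram exists, and in the remaining cases exactly two do, differing either by the presence of a crossing (crossed vs. crossingless) or by which place hosts a horizontal in a way affecting homology (tight vs. twisted, detected by condition (ii) of Lipshitz--Ozsv\'ath--Thurston's characterisation recalled in section \ref{sec:strand_diagrams}).

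Finally, I would collect the outcome of this enumeration into the table, ordered by occupation pattern, then by idempotent pattern $(s,t)$, then by tightness class, and observe that within each triple (H-data, tightness) the diagram is uniquely specified up to the twin swap. The main obstacle is nothing conceptual but purely the care required in the bookkeeping, particularly in the fully-occupied cases where through-strands and horizontal-plus-cap-pair configurations must be carefully distinguished, and in the doubly-occupied and alternately-occupied cases where symmetrisation produces the most ambiguity between $p$ and $q$. Once the enumeration is executed, the proposition follows by direct comparison with table \ref{tbl:local_diagrams}.
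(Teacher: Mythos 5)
Your enumeration strategy for the first half of the statement (listing all augmented diagrams on $\ZZ_P$ by occupation pattern of $p$ and $q$ and by idempotents, and using lemma \ref{lem:viable_crossings_horizontal} to confine crossings to horizontal strands) is essentially the argument the paper makes implicitly by exhibiting table \ref{tbl:local_diagrams}, and your realisability rules for half-occupied and fully occupied places are correct in outline. One bookkeeping slip: in the crossingless once-occupied all-on diagram $w_p$ the pair is on in both idempotents through the moving strands meeting at $p$ itself, not ``via a horizontal''; the horizontal (at $p$ or its twin) only appears in the crossed diagram $c_p$. Relatedly, the tight/twisted distinction is never between two crossingless diagrams sharing the same H-data: for fixed H-data the crossingless diagrams are either all tight (e.g.\ $g_p$ and $g_q$, which are homologous and related by relabelling twins) or the single twisted diagram $w_p$.

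The genuine gap is in how you decide which crossingless diagrams are tight and which are twisted. You propose to read this off from condition (ii) of the Lipshitz--Ozsv\'ath--Thurston characterisation recalled in section \ref{sec:strand_diagrams}, but that result is stated for the non-augmented strand algebra, whereas every local diagram you must classify is genuinely augmented: each interval of the fragment $\ZZ_P$ contains a single place, so a non-augmented diagram on $\ZZ_P$ has only horizontal strands, and all the diagrams in question have strands flying off the ends of the intervals. The paper is explicit that its local computation ``extends'' the Lipshitz--Ozsv\'ath--Thurston calculation to augmented diagrams; that extension is precisely what needs proof here and cannot be quoted. The repair is short and is the paper's actual route: for fixed H-data each summand $\A_P(h,s,t)$ is one of the small complexes $C_P$, $C'_P$, $C''_P$ of definition \ref{def:C_P}; the relation $\partial c_p = w_p$ gives $H(C'_P)=0$, so $w_p$ is the unique crossingless diagram that is zero in homology (hence twisted), while $\partial c_P = g_p + g_q$ shows $g_p$ and $g_q$ are nonzero and homologous, so every other crossingless diagram is tight. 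Substituting this two-line homology computation for the citation, your enumeration does yield the proposition.
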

Since table \ref{tbl:local_diagrams} shows all diagrams on $\ZZ_P$, together with H-data, proposition \ref{prop:viable_diagram_classification} is proved except for the classification into tight, twisted and crossed diagrams; we prove this shortly.

For those H-data admitting more than one diagram, a diagram can alternatively be specified by Maslov grading (rather than tightness), as shown in table \ref{tbl:local_diagrams}. In all such cases, there is a crossed and a non-crossed diagram; the Maslov grading of the former is greater by $1$ than the latter. 

Hence, if we fix viable H-data $(h,s,t)$ on an arc diagram $\ZZ$, using the isomorphism $\A(h,s,t) \cong \bigotimes_P \A_P (h_P, s_P, t_P)$ of section \ref{sec:local_diagrams}, then up to a constant, the Maslov grading of a diagram $D$ is given by the number of matched pairs at which $D$ is crossed.

\begin{table}
\begin{center}
\begin{minipage}{1\textwidth}
\begin{tabular}{cc|c|c|c}
H-data & & Tight & Twisted & Crossed \\
\hline
Unoccupied  & all-off & 
\begin{tikzpicture}[scale=1]
\strandbackgroundshading
\strandsetupn{}{}
\leftoff
\rightoff
\draw (1.5,0.75) node {$0$};
\end{tikzpicture}
 & & \\
\hline
& all-on & 
\begin{tikzpicture}[scale=1]
\strandbackgroundshading
\strandsetupn{}{}
\lefton
\righton
\dothorizontals
\draw (1.5,0.75) node {$0$};
\end{tikzpicture}
 & & \\

\hline
One-half-occupied & Pre- & 
\begin{tikzpicture}[scale=1]
\strandbackgroundshading
\beforewused
\strandsetupn{}{}
\leftoff
\righton
\useb
\draw (1.5,0.75) node {$0$};
\end{tikzpicture}
 & & \\
\hline
 & Post- & 
\begin{tikzpicture}[scale=1]
\strandbackgroundshading
\afterwused
\strandsetupn{}{}
\lefton
\rightoff
\usea
\draw (1.5,0.75) node {$-\frac{1}{2}$};
\end{tikzpicture}
& & \\

\hline
Alternately occupied & all-on & 
\begin{tikzpicture}[scale=1]
\strandbackgroundshading
\beforevused
\afterwused
\strandsetupn{}{}
\lefton
\righton
\usea
\used
\draw (1.5,0.75) node {$-\frac{1}{2}$};
\end{tikzpicture}
 & & \\

\hline
Once occupied 
 & all-off & 
\begin{tikzpicture}[scale=1]
\strandbackgroundshading
\beforewused
\afterwused
\strandsetupn{}{}
\leftoff
\rightoff
\useab
\draw (1.5,0.75) node {$0$};
\end{tikzpicture}
 & & \\
\hline
& all-on & & 

\begin{tikzpicture}[scale=1]
\strandbackgroundshading
\beforewused
\afterwused
\strandsetup
\lefton
\righton
\usea
\useb
\draw (1.5,1.25) node {$w_p$};
\draw (1.5,0.25) node {$-1$};
\end{tikzpicture}
& 
\begin{tikzpicture}[scale=1]
\strandbackgroundshading
\beforewused
\afterwused
\strandsetup
\lefton
\righton
\useab
\dothorizontals
\draw (1.5, 1.25) node {$c_p$};
\draw (1.5,0.25) node {$0$};
\end{tikzpicture}
\\

\hline
Sesqui-occupied & Pre- & 
\begin{tikzpicture}[scale=1]
\strandbackgroundshading
\beforevused
\beforewused
\afterwused
\strandsetupn{}{}
\leftoff
\righton
\useab
\used
\draw (1.5,0.75) node {$0$};
\end{tikzpicture}
& & \\
\hline
 & Post- & 
\begin{tikzpicture}[scale=1]
\strandbackgroundshading
\aftervused
\beforewused
\afterwused
\strandsetupn{}{}
\lefton
\rightoff
\useab
\usec
\draw (1.5,0.75) node {$-\frac{1}{2}$};
\end{tikzpicture}
 & & \\

\hline
Doubly occupied & all-off & 
\begin{tikzpicture}[scale=1]
\strandbackgroundshading
\beforevused
\aftervused
\beforewused
\afterwused
\strandsetupn{}{}
\leftoff
\rightoff
\useab
\usecd
\draw (1.5,0.75) node {$0$};
\end{tikzpicture}
& & \\
\hline
& all-on & 
\begin{tikzpicture}[scale=1]
\strandbackgroundshading
\beforevused
\aftervused
\beforewused
\afterwused
\strandsetup
\lefton
\righton
\usea
\useb
\usecd
\draw (1.5, 1.25) node {$g_p$};
\draw (1.5,0.25) node {$-1$};
\draw (2,0.75) node {or};
\begin{scope}[xshift=2.7 cm]
\strandbackgroundshading
\beforevused
\aftervused
\beforewused
\afterwused
\strandsetupn{}{}
\lefton
\righton
\useab
\usec
\used
\draw (1.5, 1.25) node {$g_q$};
\draw (1.5,0.25) node {$-1$};
\end{scope}
\end{tikzpicture} 
& & 
\begin{tikzpicture}[scale=1]
\strandbackgroundshading
\beforevused
\aftervused
\beforewused
\afterwused
\strandsetup
\lefton
\righton
\dothorizontals
\useab
\usecd
\draw (1.5, 1.25) node {$c_P$};
\draw (1.5,0.25) node {$0$};
\end{tikzpicture}
\end{tabular}

\caption{Possible local diagrams, classified by $H$-data and tightness. Maslov indices are shown, and some diagrams are named.}
\label{tbl:local_diagrams}
\end{minipage}
\end{center}
\end{table}


We now describe the local strand algebra $\A_P$, and its homology, explicitly. Let $(h,s,t)$ be the H-data of a diagram on $\ZZ_P$.

Table \ref{tbl:local_diagrams} shows that $(h,s,t)$ determines a diagram in all cases except two: when $(h,s,t)$ is all-on doubly occupied or all-on once occupied. If $P$ is all-on doubly occupied by $(h,s,t)$, then $3$ diagrams are possible; if $P$ is all-on once occupied by $(h,s,t)$, then $2$ diagrams are possible.

These diagrams are important in the sequel, and so we name then. (Our choice of symbols may seem arbitrary, but there is method in the madness: $c$ stands for ``Crossed", $g$ stands for ``tiGht", and $w$ stands for ``tWisted").
\begin{defn} \
\label{def:C_P}
\begin{enumerate}
\item
If $P$ is all-on doubly occupied by $(h,s,t)$:
\begin{enumerate}
\item $c_P$ is the unique crossed diagram;
\item $g_p$ is the unique crossingless diagram with strands beginning and ending at $p$
\item $g_q$ is the unique crossingless diagram with a strand beginning and ending at $q$.
\end{enumerate}
\item
If $P$ is all-on once occupied at $p$ by $(h,s,t)$:
\begin{enumerate}
\item $c_p$ is the unique crossed diagram;
\item $w_p$ is the unique crossingless diagram.
\end{enumerate}
\item
For any other H-data, denote the unique diagram by $u_P$. 
\end{enumerate}
\end{defn}

Define chain complexes $C''_P$, $C'_P$, $C_P$ by
\[
\begin{array}{ccl}
C''_P : & 0 \To \Z_2 \langle c_P \rangle \To \Z_2 \langle g_p, g_q  \rangle \To 0 
& \text{where $\partial c_P = g_p + g_q$ and $\partial g_p = \partial g_q  = 0$} \\
C'_P : & 0 \To \Z_2 \langle c_p \rangle \To \Z_2 \langle w_p \rangle \To 0,
& \text{where $\partial c_p = w_p$ and $\partial w_p = 0$.} \\
C_P : & 0 \To \Z_2 \langle u_P \rangle \To 0.
\end{array}
\]
As a chain complex, up to a shift giving the correct Maslov grading, each summand $\A_P (h,s,t)$ of $\A_P$ is isomorphic to $C''_P, C'_P$ or $C_P$, accordingly as $(h,s,t)$ is all-on doubly occupied, all-on once occupied, or anything else.


Calculating the homology of these complexes is straightforward.
\begin{itemize}
\item
$H(C''_P) \cong \Z_2$, generated by the homology class of $g_p$ or $g_q$ (equal in homology since $\partial c_P = g_p + g_q$).
\item
$H(C'_P) = 0$.
\item
$H(C_P) \cong \Z_2$, generated by the homology class of $u_P$.
\end{itemize}

\begin{proof}[Proof of proposition \ref{prop:viable_diagram_classification}]
As observed above, it remains to classify diagrams by tightness. Crossed diagrams are clear. Our calculations now show that the only crossingless diagram which is zero in homology is $w_p$; all other crossingless diagrams are tight.
\end{proof}

\subsection{Homology of strand algebras}
\label{sec:homology_of_strand_algebras}

We can now compute the homology of $\A(h,s,t)$ directly, for any viable H-data $(h,s,t)$ on an arc diagram $\ZZ$.

Let $(h,s,t)$ be the H-data of a viable diagram. From section \ref{sec:local_algebras} and the isomorphism of DGAs $\A(h,s,t) \cong \bigotimes_P \A_P (h_P, s_P, t_P)$ (section \ref{sec:local_diagrams}), we have the following isomorphism of chain complexes or differential $\Z_2$-modules:
\[
\A (h,s,t) \cong
\bigotimes_{\text{$P$ all-on doubly occupied}} C''_P \otimes
\bigotimes_{\text{$P$ all-on once occupied}} C'_P \otimes
\bigotimes_{\text{other $P$}} C_P
\]
Homology decomposes as $\HH(h,s,t) \cong \bigotimes_P \HH_P (h_P, s_P, t_P)$ (lemma \ref{lem:homology_decomposition}), and the calculations of section \ref{sec:local_algebras} now give
\[
\HH(h,s,t) \cong \left\{ \begin{array}{ll}
0 & \text{$\ZZ$ has an all-on once occupied pair,} \\
\Z_2 & \text{otherwise.} \end{array} \right.
\]
Moreover, when there are no all-on once occupied pairs, $\HH(h,s,t) \cong \Z_2$ is generated by the homology class of any crossingless diagram, which are therefore all unique in homology.

This calculation recovers the homology calculation of Lipshitz--Ozsv\'{a}th--Thurston \cite{lipshitz_bimodules_2015}, for viable H-data, and extends it to augmented diagrams. Their calculation says that $\HH(h,s,t)$ is nontrivial iff there exists a crossingless diagram $D$ satisfying two conditions (stated above in section \ref{sec:strand_diagrams}), which we can now translate into our terminology. Condition (i) is that $D$ be viable. Condition (ii) is that if $p$ is fully occupied and $q$ is not fully occupied (i.e. $P$ is once occupied at $p$, or sesqui-occupied), then $P$ is not all-on. But sesqui-occupied local diagrams are never all-on (by reference to table \ref{tbl:local_diagrams} or otherwise), so condition (ii) simply rules out crossingless all-on once occupied local diagrams; this is equivalent to ruling out all-on once occupied pairs.

Because of this calculation, the following definition makes sense.
\begin{defn}
\label{defn:Mhst}
Let $(h,s,t)$ be viable H-data. 
\begin{enumerate}
\item
The \emph{homology class} of $(h,s,t)$, denoted $M_{h,s,t}$, is the unique nonzero homology class in $\HH(h,s,t)$, if it exists; otherwise $M_{h,s,t} = 0$.
\item
The \emph{local homology class} of $(h,s,t)$ at $P$, denoted $M_{h,s,t}^P$, is the unique nonzero local homology class in $\HH_P (h,s,t)$, if it exists; otherwise $M_{h,s,t}^P  = 0$.
\end{enumerate}
\end{defn}
This definition applies to any viable (abstract) set of H-data on $\ZZ$, i.e. $(h,s,t)$ where $h \in H_1 (\mathbf{Z}, \mathbf{a}_{\pm \infty})$ covers each step at most once, and $s,t$ are idempotents. Note that $(h,s,t)$ need not be the H-data of a diagram; in this case $\A(h,s,t) = \HH(h,s,t) = 0$.

The following statement encapsulates the above calculations and discussion.
\begin{prop}
\label{prop:homology_summands}
Let $(h,s,t)$ be viable H-data. Then precisely one of the following is true.
\begin{enumerate}
\item There is a tight diagram with H-data $(h,s,t)$; $(h,s,t)$ is the H-data of a diagram with no all-on once occupied pairs; $\HH(h,s,t) \cong \Z_2$, generated by the homology class $M_{h,s,t}$ of any crossingless diagram with H-data $(h,s,t)$.
\item There is a twisted diagram with H-data $(h,s,t)$; $(h,s,t)$ is the H-data of a diagram with an all-on once occupied pair; $\HH(h,s,t) = 0$ but $\A(h,s,t) \neq 0$. 
\item There is no diagram with H-data $(h,s,t)$; $\A(h,s,t) = 0$.
\end{enumerate}
\end{prop}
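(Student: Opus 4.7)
The plan is to use the local-to-global tensor decomposition from lemma \ref{lem:homology_decomposition}, together with the local classification in table \ref{tbl:local_diagrams}, to reduce the global statement to a matched-pair by matched-pair analysis.

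First I would observe that the three cases are pairwise incompatible by inspection: (iii) is disjoint from (i) and (ii) since both of the latter assert the existence of a diagram; while (i) and (ii) are distinguished by the presence of an all-on once occupied pair and by whether $\HH(h,s,t)$ is trivial. So it suffices to show the three cases are exhaustive. To that end I would split on whether some diagram with H-data $(h,s,t)$ exists.

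If no diagram with this H-data exists, then $\A(h,s,t) = 0$, which is case (iii). Suppose then that at least one diagram $D$ with H-data $(h,s,t)$ exists. Cutting $D$ into fragments (using viability and section \ref{sec:local_diagrams}) gives a local diagram $D_P$ with H-data $(h_P, s_P, t_P)$ at every matched pair $P$, and hence every local H-data $(h_P,s_P,t_P)$ is realized by at least one local diagram, so by the classification of proposition \ref{prop:viable_diagram_classification} and table \ref{tbl:local_diagrams} each $(h_P,s_P,t_P)$ falls into one of the listed rows. Now I would split further according to whether any pair is all-on once occupied.

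If no matched pair is all-on once occupied, then for every $P$ the local summand $\A_P(h_P,s_P,t_P)$ is one of the complexes $C''_P$ or $C_P$ from section \ref{sec:local_algebras}, each with homology $\Z_2$; by lemma \ref{lem:homology_decomposition}, $\HH(h,s,t)\cong\bigotimes_P \HH_P(h_P,s_P,t_P) \cong \Z_2$. Choosing a tight local diagram at each $P$ (which exists by the table) and reassembling gives, by lemma \ref{lem:tightness_degeneracy}, a tight diagram with H-data $(h,s,t)$; its homology class generates $\HH(h,s,t)$ and is $M_{h,s,t}$. This is case (i). If instead some pair $P_0$ is all-on once occupied, then $\A_{P_0}(h_{P_0},s_{P_0},t_{P_0})\cong C'_{P_0}$, whose homology is $0$; the tensor decomposition of lemma \ref{lem:homology_decomposition} forces $\HH(h,s,t)=0$. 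To see that $\A(h,s,t)\neq 0$ and that a twisted diagram exists, I would pick the twisted local diagram $w_{p_0}$ at $P_0$ (which is crossingless and zero in local homology) and a tight local diagram at every other pair — which exist because each other $(h_P,s_P,t_P)$ admits a local diagram and, by the table, every non-(all-on once occupied) viable local H-data admitting a diagram admits a tight one. Reassembling via lemma \ref{lem:tightness_degeneracy}(ii) yields a twisted global diagram, proving case (ii).

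The main obstacle is the subtle point, already flagged after definition \ref{def:viability}, that viable H-data need not be realized by any diagram; this is precisely why case (iii) is present and why the argument must first split on existence before invoking the local classification. Once one is careful that ``$(h_P,s_P,t_P)$ is realized locally" is equivalent to ``$(h,s,t)$ is realized globally" (which follows from the cut-and-glue correspondence in section \ref{sec:local_diagrams} applied to viable H-data), the rest of the proof is a direct assembly from the local computations of section \ref{sec:local_algebras}.
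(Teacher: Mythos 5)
Your proposal is correct and follows essentially the same route as the paper: the paper's proof likewise splits on existence of a diagram and then on the presence of an all-on once occupied pair, citing the local tensor decomposition $\A(h,s,t)\cong\bigotimes_P \A_P(h_P,s_P,t_P)$ into $C''_P, C'_P, C_P$ and the resulting homology computation, with the global crossingless (tight or twisted) diagram obtained by gluing local ones exactly as you do. The only minor point worth adding is that statement (i) asserts \emph{any} crossingless diagram generates, which also follows from your local analysis (the only crossingless local diagram that is zero in homology is $w_p$, which cannot occur here), as the paper notes in its preceding homology calculation.
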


\begin{proof}
Clearly if there is no diagram with H-data $(h,s,t)$ then $\A(h,s,t)=0$; so suppose $(h,s,t)$ is the H-data of a diagram, and hence $\A(h,s,t) \neq 0$. In this case a crossingless diagram $D$ with H-data $(h,s,t)$ can always be drawn. If there is an all-on once occupied pair $P$, we calculated $\HH(h,s,t) = 0$, so $D$ is twisted. If there are no all-on once occupied pairs then we calculated $\HH(h,s,t) \cong \Z_2$, so $D$ is tight and generates $\HH(h,s,t)$.
\end{proof}

Proposition \ref{prop:homology_summands} allows us to define the tightness of H-data as follows. 
\begin{defn}[Tightness of H-data]
\label{def:tightness_H-data}
Let $(h,s,t)$ be viable H-data on $\ZZ$.
\begin{enumerate}
\item $(h,s,t)$ is \emph{tight} if there is a tight diagram with H-data $(h,s,t)$.
We denote the set of all viable tight H-data by $\mathbf{g}(\ZZ)$.
\item $(h,s,t)$ is \emph{twisted} if there is a twisted diagram with H-data $(h,s,t)$. We denote the set of all viable twisted H-data on $\ZZ$ by $\mathbf{w}(\ZZ)$.
\item Otherwise, $(h,s,t)$ is \emph{singular}.
\end{enumerate}
\end{defn}
When the arc diagram is understood we simply write $\mathbf{g}$ or $\mathbf{w}$ rather than $\mathbf{g}(\ZZ)$ or $\mathbf{w}(\ZZ)$.

Proposition \ref{prop:homology_summands} in fact gives several equivalent characteristations of tight, twisted or singular H-data. 

Just as for tightness of diagrams, tightness of H-data obeys a ``local-to-global" principle.
\begin{lem}[Local-global tightness of H-data]
\label{lem:local-global_H-data}
Again let $(h,s,t)$ be viable H-data on $\ZZ$.
\begin{enumerate}
\item $(h,s,t)$ is tight iff for all matched pairs $P$, $(h_P,s_P,t_P)$ is tight .
\item $(h,s,t)$ is twisted iff for each matched pair $P$, $(h_P, s_P, t_P)$ is tight or twisted, and at least one $(h_P, s_P, t_P)$ is twisted.
\item $(h,s,t)$ is singular iff for some matched pair $P$, $(h_P, s_P, t_P)$ is singular.
\end{enumerate}
\end{lem}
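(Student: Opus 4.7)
The plan is to reduce all three statements to the trichotomy in Proposition \ref{prop:homology_summands} and the local-to-global principle for diagrams in Lemma \ref{lem:tightness_degeneracy}, via the tensor decomposition
\[
\A(h,s,t) \;\cong\; \bigotimes_P \A_P(h_P,s_P,t_P)
\]
established in section \ref{sec:local_diagrams}. Viability of $(h,s,t)$ immediately implies viability of each $(h_P,s_P,t_P)$, so that trichotomy applies locally at every pair.

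First I would prove (iii), which is the easiest. By Proposition \ref{prop:homology_summands}, $(h,s,t)$ is singular iff $\A(h,s,t)=0$; since $\Z_2$ is a field, a tensor product of vector spaces vanishes iff some factor vanishes. So $\A(h,s,t)=0$ iff $\A_P(h_P,s_P,t_P)=0$ for some $P$, iff some $(h_P,s_P,t_P)$ is singular.

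Next (i). If $(h,s,t)$ is tight, take a tight diagram $D$ with that H-data; by Lemma \ref{lem:tightness_degeneracy}(i) every $D_P$ is tight, so each $(h_P,s_P,t_P)$ is tight. Conversely, if every $(h_P,s_P,t_P)$ is tight, then by Proposition \ref{prop:homology_summands} we can pick a tight local diagram $D_P$ at each $P$; the endpoints of strands match across cuts (since the $(h_P,s_P,t_P)$ come from a single $(h,s,t)$), so the $D_P$ glue to a viable diagram $D$ with H-data $(h,s,t)$, which is tight by Lemma \ref{lem:tightness_degeneracy}(i).

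Finally (ii) follows by elimination, but I would also give the direct argument for clarity. If $(h,s,t)$ is twisted, there is a twisted diagram $D$; by Lemma \ref{lem:tightness_degeneracy}(ii) each $D_P$ is tight or twisted with at least one twisted, so the same holds for the H-data $(h_P,s_P,t_P)$. Conversely, if each local H-data is tight or twisted with at least one twisted, then none is singular, so crossingless local diagrams exist at every $P$, at least one of which may be taken twisted; gluing gives a crossingless diagram $D$ with H-data $(h,s,t)$, twisted by Lemma \ref{lem:tightness_degeneracy}(ii), so $(h,s,t)$ is twisted.

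The only step requiring any care is the ``gluing'' in the converses of (i) and (ii): one must check that chosen local diagrams really do assemble into a global diagram. This is automatic from the construction of $\ZZ_P$ and the matching of H-data across fragment cuts, so I do not anticipate a genuine obstacle; the lemma is essentially a corollary of Proposition \ref{prop:homology_summands}, Lemma \ref{lem:tightness_degeneracy}, and the tensor decomposition.
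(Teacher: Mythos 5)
Your proposal is correct, and it rests on the same foundations as the paper's proof (Proposition \ref{prop:homology_summands} plus the fragment decomposition), though the mechanism differs slightly. The paper disposes of (iii) by noting that $(h,s,t)$ is non-singular iff it is the H-data of a diagram iff each $(h_P,s_P,t_P)$ is (an implicit cut-and-glue step), and then gets (i) and (ii) essentially for free from the criterion in Proposition \ref{prop:homology_summands} that non-singular viable H-data is twisted precisely when it has an all-on once occupied pair --- a condition that is manifestly local, so no further gluing of chosen diagrams is needed. You instead prove (iii) algebraically from the tensor decomposition $\A(h,s,t)\cong\bigotimes_P \A_P(h_P,s_P,t_P)$ (valid here even in the singular case, as the paper notes), and prove (i)--(ii) by choosing local diagrams of the appropriate type, gluing them, and invoking the diagram-level Lemma \ref{lem:tightness_degeneracy}. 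Your gluing step is legitimate for exactly the reason you give --- viability plus the fact that the local H-data restrict a single $(h,s,t)$ ensures strands flying off adjacent fragments connect and idempotents match --- and it is the same fact the paper's ``iff each $(h_P,s_P,t_P)$ is the H-data of a diagram'' relies on. The paper's route is marginally shorter because the once-occupied-pair criterion bypasses any explicit choice of local diagrams; yours is marginally more self-contained at the diagram level. Either argument is acceptable.
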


\begin{proof}
First, $(h,s,t)$ is non-singular iff it is the H-data of a diagram iff each $(h_P, s_P, t_P)$ is the H-data of a diagram, proving (iii). So assume $(h,s,t)$, and hence all $(h_P, s_P, t_P)$, are non-singular, hence tight or twisted. Then $(h,s,t)$ is twisted iff there is an all-on once-occupied pair $P$, in which case this $(h_P, s_P, t_P)$ is twisted. Otherwise, $(h,s,t)$ and all $(h_P, s_P, t_P)$ are tight.
\end{proof}

\subsection{Properties of twisted and crossed diagrams}
\label{sec:tight_twisted_crossed_diagrams}

We now consider some properties of twisted and crossed diagrams. We first consider crossed diagrams.

\begin{lem}[Products of crossed and crossingless diagrams] 
\label{lem:products_crossings}
If two diagrams $D_1$ and $D_2$ are crossingless then $D_1 D_2$ is zero or crossingless. 
Hence the submodule of $\A$ generated by crossingless diagrams forms a subalgebra.
\end{lem}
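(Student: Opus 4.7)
The plan is to reduce to the unsymmetrised, $\ZZ$-constrained level, where the product rule is the most concrete, and then lift back to symmetrised augmented diagrams.

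First, I would unpack what ``crossingless'' means for a symmetrised $\ZZ$-constrained (augmented) diagram. A symmetrised diagram $D$ is a sum of unsymmetrised constrained diagrams $\mu = (S,T,\phi)$, all sharing the same underlying non-horizontal strand structure and differing only in the placement of horizontal strands. Since dotted horizontal strands contain no crossings, $D$ is crossingless iff $\inv(\phi)=0$ for each constituent $\mu$. This is the key translation.

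Next, consider two crossingless diagrams $D_1, D_2$ and their product. By definition, $D_1 D_2$ is computed term by term: each pair $(\mu,\nu)$ of constituent constrained diagrams, with $\mu = (S,T,\phi)$ in $D_1$ and $\nu = (U,V,\psi)$ in $D_2$, contributes $(S,V,\psi\circ\phi)$ when $T=U$ and $\inv(\psi\circ\phi) = \inv(\phi)+\inv(\psi)$, and contributes zero otherwise. Since $\inv(\phi) = \inv(\psi) = 0$, the nonvanishing condition becomes $\inv(\psi\circ\phi)=0$; hence every nonzero contribution is itself a crossingless constrained diagram. Summing these contributions (and repackaging with their horizontal-strand symmetrisation) shows $D_1 D_2$ is either zero or a crossingless symmetrised diagram. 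The augmented case requires no new argument: the only change is that $S,T,U,V$ may contain $\pm\infty_i$, and the composition/inversion count rule and its consequences are unchanged. The situation where both diagrams fly off the same end of an interval simply gives the zero product, so it is consistent with the conclusion.

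For the second sentence, let $\CC \subseteq \A$ be the submodule generated by crossingless diagrams. It is automatically closed under addition and scalar action. Since the product of any two crossingless generators is either $0$ or a single crossingless diagram, and multiplication distributes over sums, $\CC$ is closed under multiplication, hence is a subalgebra. I do not anticipate a substantive obstacle here; the only point requiring care is bookkeeping between symmetrised and constrained diagrams, and checking that the product rule $\inv(\psi\circ\phi)=\inv(\phi)+\inv(\psi)$ directly forces $\inv(\psi\circ\phi)=0$ when $\inv(\phi)=\inv(\psi)=0$.
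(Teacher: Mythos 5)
Your proof is correct, and it takes a mildly different route from the paper's. You reduce to unsymmetrised constrained diagrams and read the conclusion straight off the algebraic definition of the product: since a nonzero product $(S,V,\psi\circ\phi)$ requires $\inv(\psi\circ\phi)=\inv(\phi)+\inv(\psi)$, crossingless factors force $\inv(\psi\circ\phi)=0$, and the symmetrised/augmented bookkeeping changes nothing. The paper instead argues contrapositively at the level of the concatenated picture: a crossing in $D_1D_2$ means one strand starts below and ends above another, so the two strands must change their relative order inside $D_1$ or inside $D_2$ (possibly involving dotted strands), hence one factor is crossed. Both are one-line consequences of the "no excess inversions" multiplication rule; your version makes the dependence on the inversion-additivity condition explicit, while the paper's version is more visual and applies verbatim to the drawn symmetrised diagrams without passing through constituents. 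One small caution in your translation step: dotted horizontal strands \emph{can} participate in crossings (e.g.\ the diagrams $c_p$ and $c_P$ of Table 1 are crossed precisely at horizontal strands), so the justification should not be that horizontal strands contain no crossings, but rather that $\inv(\phi_U)$ already counts all inversions of a constituent, including those involving the added fixed points; with that phrasing your equivalence ``$D$ crossingless iff every constituent has $\inv=0$'' stands, and the rest of the argument goes through as you wrote it.
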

Note this lemma applies to crossingless diagrams in general (not just viable ones). The product of two crossingless viable diagrams, even though nonzero and (by the lemma) crossingless, may not be viable.

\begin{proof}
If $D_1 D_2$ has a crossing, then one strand starts below and ends above another. The two strands must change their order either in $D_1$ or $D_2$ (even if they are dotted strands); so $D_1$ or $D_2$ has a crossing. This proves the first statement; linearity then gives the second.
\end{proof}

Note that the converse of the above lemma is not true: it is possible to have $D_1 D_2$ crossingless, but $D_1$ or $D_2$ crossed. In fact the product of a crossed diagram with another diagram may be tight. We call this phenomenon \emph{sublimation}; see figure \ref{fig:sublimation}. 
As mentioned in the section \ref{sec:intro_contact_meaning}, sublimation occurs repeatedly in $A_\infty$ operations.

\begin{figure}
\begin{center}
\begin{tikzpicture}[scale=1.5]
\strandbackgroundshading
\tstrandbackgroundshading{1}
\beforewused
\afterwused
\taftervused{1}
\strandsetupn{}{}
\tstrandsetup{1}
\tstrandsetup{2}
\lefton
\righton
\trightoff{1}
\useab
\tusec{1}
\dothorizontals
\begin{scope}[xshift = 3.5 cm]
\draw (-0.75, 0.75) node {$=$};
\strandbackgroundshading
\aftervused
\beforewused
\afterwused
\strandsetupn{}{}
\lefton
\rightoff
\useab
\usec
\end{scope}
\end{tikzpicture}
\caption{Sublimation.}
\label{fig:sublimation}
\end{center}
\end{figure}
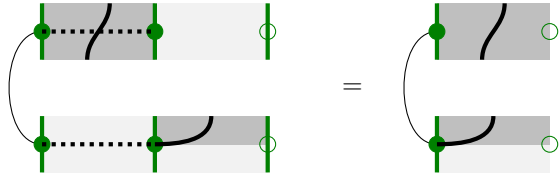

Turning to twisted diagrams, we observe that they are characterised by a specific local diagram $w_p$ (of definition \ref{def:C_P}) at an all-on once occupied pair $P = \{p,q\}$.

Indeed, a viable diagram $D$ is twisted iff each local diagram $D_P$ is tight or twisted, and at least one $D_P$ is twisted (lemma \ref{lem:tightness_degeneracy}); and by the classification of viable local diagrams in proposition \ref{prop:viable_diagram_classification} and table \ref{tbl:local_diagrams}, the only twisted local diagram is $w_p$.

At the pair $P$, one place $p$ must be occupied and its twin $q$ unoccupied, so we can speak of a twisted diagram being twisted not just at a matched pair, but \emph{at a specific place}.
\begin{defn}[Diagram twisted at a place]
\label{def:twisted_at_place}
A viable diagram $D$ such that $D_P$ is twisted at a pair $P = \{p,q\}$, with $p$ occupied, is \emph{twisted at the place $p$}.
\end{defn}

In terms of contact geometry, a contact structure which is ``minimally overtwisted" at a particular square has two bypasses from adjacent squares; these adjacent squares lie around a particular corner of the square, as in figure \ref{fig:twisted_example}.

Note that if $D,D'$ are viable crossingless diagrams, at least one of which is twisted, then their product $DD'$ (if nonzero and viable) is twisted: $DD'$ is crossingless by lemma \ref{lem:products_crossings}, and in homology at least one of $D$ or $D'$ is zero, hence $DD'$ is zero in homology. This corresponds to the contact-geometric phenomenon that a contact manifold containing an overtwisted submanifold is overtwisted.

\subsection{Diagrams representing a homology class}
\label{sec:diagrams_representing_homology}

Let $D$ be a tight diagram with H-data $(h,s,t)$ on $\ZZ$, hence (definition \ref{def:tight_twisted_crossed_diagrams}) nonzero in homology, with homology class $M_{h,s,t}$ (definition \ref{defn:Mhst}), a generator of $\HH(h,s,t)$ (proposition \ref{prop:homology_summands}).

While $D$ determines $M_{h,s,t}$, the diagram $D$ cannot always be recovered from $M_{h,s,t}$. There is ambiguity at all-on doubly occupied pairs, where the local diagrams $g_p, g_q$ (definition \ref{def:C_P}, or see table \ref{tbl:local_diagrams}) are both tight. We say $g_p$ and $g_q$ are related by \emph{strand switching} at the pair $\{p,q\}$; see figure \ref{fig:strand_switching}. 

In fact, $\HH(h,s,t)$ is isomorphic, as an abelian group, to the abelian group generated by tight diagrams with H-data $(h,s,t)$, modulo the subgroup generated by sums of diagrams related by strand switching (these are the boundaries: $\partial c_P = g_p + g_q$). 

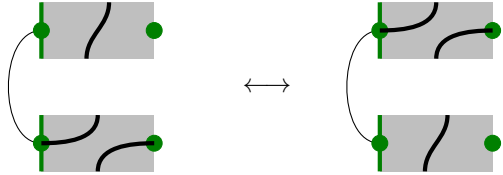
\begin{figure}
\begin{center}
\begin{tikzpicture}[mypersp, scale=1.5]
\begin{scope}[xzp=0]
\strandbackgroundshading
\beforevused
\aftervused
\beforewused
\afterwused
\strandsetupn{}{}
\lefton
\righton
\useab
\usec
\used
\draw (2,0.75) node {$\longleftrightarrow$};
\end{scope}
\begin{scope}[xzp=0, xshift=3 cm]
\strandbackgroundshading
\beforevused
\aftervused
\beforewused
\afterwused
\strandsetupn{}{}
\lefton
\righton
\usea
\useb
\usecd
\end{scope}
\end{tikzpicture}
\caption{Switching strands.}
\label{fig:strand_switching}
\end{center}
\end{figure}

We now determine the number of tight diagrams with given viable H-data $(h,s,t)$. There are none when $(h,s,t)$ is twisted or singular, since then $\HH(h,s,t)=0$ (proposition \ref{prop:homology_summands}). When $(h,s,t)$ is tight, at each pair we have a unique choice of tight local diagram, except at all-on doubly occupied pairs, where we have two tight local diagrams, related by strand switching. For any such choices we can glue local diagrams together into a tight diagram, giving the following statement.
\begin{lem}
\label{lem:selecting_occupied}
Let $(h,s,t)$ be tight viable H-data on the arc diagram $\ZZ$.
Let $L$ be the number of pairs all-on doubly occupied by $(h,s,t)$.
Then there are precisely $2^L$ tight diagrams with $H$-data $(h,s,t)$. Any two such diagrams are related by a sequence of switchings of strands.
\qed
\end{lem}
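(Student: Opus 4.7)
The plan is to reduce the global statement to the local classification of viable diagrams from proposition \ref{prop:viable_diagram_classification} (table \ref{tbl:local_diagrams}), via the cutting-into-fragments decomposition of section \ref{sec:local_diagrams}. Throughout, I would use the local-global principle for tightness, lemma \ref{lem:tightness_degeneracy}(i): a viable diagram is tight iff its local diagram at every matched pair is tight.

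First I would count tight diagrams. Fix tight viable H-data $(h,s,t)$. Any diagram $D$ with this H-data determines, and is determined by, its collection of local diagrams $\{D_P\}$ on the fragments $\ZZ_P$, with H-data $(h_P, s_P, t_P)$. By the local-global principle, $D$ is tight iff each $D_P$ is tight. Now consult the classification in table \ref{tbl:local_diagrams}: for each set of local H-data, there is a unique tight local diagram except when $(h_P,s_P,t_P)$ is all-on doubly occupied, in which case there are exactly two, namely $g_p$ and $g_q$. Since $(h,s,t)$ is tight, lemma \ref{lem:local-global_H-data}(i) guarantees each local H-data is itself tight (not twisted or singular), so each local choice is nonempty. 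Independent choices at each of the $L$ all-on doubly occupied pairs, glued together into a global diagram as in section \ref{sec:local_diagrams}, therefore produce exactly $2^L$ distinct tight diagrams, and every tight diagram with H-data $(h,s,t)$ arises this way.

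Next, I would show any two such tight diagrams $D, D'$ are related by strand switching. Using the decomposition above, $D$ and $D'$ agree at every matched pair except possibly at all-on doubly occupied pairs, since those are the only pairs where the tight local diagram is non-unique. At each all-on doubly occupied pair $P$ where $D_P \ne D'_P$, one of them is $g_p$ and the other is $g_q$, which by definition differ by a strand switch at $P$ (figure \ref{fig:strand_switching}). A single strand switch at $P$ is a local modification that changes $D_P$ between $g_p$ and $g_q$ while leaving all other local diagrams untouched; performing one such switch at each disagreeing pair transforms $D$ into $D'$ through a sequence of tight diagrams.

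I do not expect any real obstacle: both assertions are essentially bookkeeping once the local classification and the cutting-and-gluing correspondence are in hand. The only mild subtlety is verifying that performing a strand switch at an all-on doubly occupied pair is indeed a well-defined operation on global diagrams (that is, that the glued result is still a valid diagram with the same H-data), but this is immediate because the switch does not alter $(h_P, s_P, t_P)$ or any data at places outside $P$, so no compatibility conditions on fragments are disturbed.
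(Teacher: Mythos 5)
Your proof is correct and follows essentially the same route as the paper: the paper also argues by cutting into fragments, noting the unique tight local diagram at every pair except all-on doubly occupied ones (where the two choices $g_p, g_q$ differ by a strand switch), and gluing the $2^L$ independent choices back together. Nothing further is needed.
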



These $2^L$ tight diagrams are precisely the diagrams in $\A$ representing their common homology class $M_{h,s,t}$ in $\HH(h,s,t)$.

\subsection{Dimensions of strand algebras}
\label{sec:dim_algebras}

It will be useful to know the dimension of $\A(h,s,t)$ as a $\Z_2$ vector space, as well as its subspaces of cycles and boundaries. 

Throughout this section let $\ZZ$ be an arc diagram and $(h,s,t)$ be viable non-singular H-data on $\ZZ$, with $L$ all-on doubly occupied pairs, and $N$ all-on once occupied pairs. Dimension always refers to the dimension of a $\Z_2$ vector space.
\begin{lem}
\label{lem:dim_Ahst}
The dimension of $\A(h,s,t)$ is $3^L 2^N$.
\end{lem}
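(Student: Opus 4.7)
The plan is to use the tensor decomposition over matched pairs already established in section \ref{sec:local_diagrams}, and then count dimensions of each local factor using the explicit descriptions in section \ref{sec:local_algebras}.

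More concretely, I would first invoke the isomorphism of chain complexes
\[
\A(h,s,t) \;\cong\; \bigotimes_{\text{matched pairs }P} \A_P(h_P, s_P, t_P),
\]
noting that since $(h,s,t)$ is viable and non-singular, each local H-data $(h_P, s_P, t_P)$ is in fact the H-data of a local diagram, so no local factor is zero. The dimension of a tensor product over $\Z_2$ is the product of dimensions, so it suffices to compute $\dim \A_P(h_P, s_P, t_P)$ for each matched pair $P$.

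Next I would appeal to the classification in section \ref{sec:local_algebras}, which identifies each local summand (up to Maslov grading shift) with one of the three complexes $C''_P$, $C'_P$, or $C_P$: specifically, $C''_P$ when $P$ is all-on doubly occupied (with basis $\{c_P, g_p, g_q\}$, hence dimension $3$); $C'_P$ when $P$ is all-on once occupied (with basis $\{c_p, w_p\}$, hence dimension $2$); and $C_P$ in every other case (with basis $\{u_P\}$, hence dimension $1$). This is essentially a read-off from table \ref{tbl:local_diagrams} combined with definition \ref{def:C_P}.

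Multiplying these local dimensions gives the result: each of the $L$ all-on doubly occupied pairs contributes a factor of $3$, each of the $N$ all-on once occupied pairs contributes a factor of $2$, and every other pair contributes a factor of $1$, yielding $\dim \A(h,s,t) = 3^L 2^N$. There is no real obstacle here; the argument is entirely bookkeeping once the local classification is in hand. The only small thing to be careful about is that non-singularity of $(h,s,t)$ guarantees every local factor is nonzero (otherwise the tensor product would collapse to zero), which is why the statement is phrased for non-singular H-data.
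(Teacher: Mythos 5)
Your proposal is correct and is essentially the paper's own argument: the paper also counts diagrams by specifying them locally at each matched pair via table \ref{tbl:local_diagrams} (equivalently, via the tensor decomposition and the local complexes $C''_P$, $C'_P$, $C_P$), giving factors of $3$, $2$, and $1$ respectively. Your phrasing through the explicit isomorphism $\A(h,s,t) \cong \bigotimes_P \A_P(h_P,s_P,t_P)$ is just a slightly more formal presentation of the same bookkeeping.
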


\begin{proof}
A basis is given by the diagrams with H-data $(h,s,t)$; these may be specified locally at each matched pair $P$. Table \ref{tbl:local_diagrams} shows that if $P$ is all-on once occupied, then there are 3 choices at $P$; if $P$ is all-on doubly occupied, then there are 2 choices; in any other case there is a unique choice. \end{proof}

Now we refine $\A(h,s,t)$ by Maslov grading. As discussed in section \ref{sec:classification_local_diagrams}, with H-data fixed, the Maslov grading of a diagram $D$ is given, up to a constant, by the number of matched pairs at which $D$ is crossed.

We denote by $\A_n (h,s,t)$ the $\Z_2$ vector subspace of $\A(h,s,t)$ spanned by diagrams with crossings at precisely $n$ matched pairs. Then $\A(h,s,t) = \bigoplus_n \A_n (h,s,t)$ and this is the decomposition by Maslov grading.

\begin{lem}
\label{lem:dim_Anhst}
The dimension of $\A_n (h,s,t)$ is given by 
\[
\dim \A_n (h,s,t) = \sum_i 2^{L-i} \binom{L}{i} \binom{N}{n-i}
= \sum_k \binom{L}{k} \binom{N+k}{n}.
\]
\end{lem}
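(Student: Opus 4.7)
The plan is to count, at each matched pair, how many local diagrams with the prescribed local H-data $(h_P, s_P, t_P)$ exist, broken down by whether they are crossed. Using the isomorphism $\A(h,s,t) \cong \bigotimes_P \A_P(h_P, s_P, t_P)$ from section \ref{sec:local_diagrams} and the fact that the Maslov grading of a diagram is determined (up to an overall constant) by the number of matched pairs at which it is crossed (section \ref{sec:classification_local_diagrams}), a basis for $\A_n(h,s,t)$ is given by viable diagrams whose H-data is $(h,s,t)$ and which are crossed at exactly $n$ matched pairs.

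From the classification in proposition \ref{prop:viable_diagram_classification} and table \ref{tbl:local_diagrams}, the local choices at a matched pair $P$ depend on the type of the local H-data:
\begin{itemize}
\item If $P$ is all-on doubly occupied, there is one crossed diagram ($c_P$) and two crossingless diagrams ($g_p$ and $g_q$), for a total of $3$ possibilities.
\item If $P$ is all-on once occupied, there is one crossed diagram ($c_p$) and one crossingless diagram ($w_p$), for a total of $2$ possibilities.
\item At any other pair, there is a unique (crossingless) diagram $u_P$.
\end{itemize}
In particular, crossings can only occur at the $L$ all-on doubly occupied pairs and the $N$ all-on once occupied pairs. To obtain the first form, stratify by $i$, the number of all-on doubly occupied pairs that are crossed; then $n-i$ of the all-on once occupied pairs must be crossed. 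The count is $\binom{L}{i}$ for the choice of which doubly occupied pairs are crossed (each contributing $c_P$), times $2^{L-i}$ for the independent choice of $g_p$ or $g_q$ at each of the remaining $L-i$ doubly occupied pairs, times $\binom{N}{n-i}$ for the choice of which once occupied pairs are crossed; the other once occupied pairs, and all remaining pairs, have no further choice. Summing over $i$ yields
\[
\dim \A_n(h,s,t) = \sum_i 2^{L-i}\binom{L}{i}\binom{N}{n-i}.
\]

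For the equivalence with $\sum_k \binom{L}{k}\binom{N+k}{n}$, apply the Vandermonde identity
\[
\binom{N+k}{n} = \sum_j \binom{N}{n-j}\binom{k}{j},
\]
exchange the order of summation, and use $\binom{L}{k}\binom{k}{j} = \binom{L}{j}\binom{L-j}{k-j}$ to obtain
\[
\sum_k \binom{L}{k}\binom{k}{j} = \binom{L}{j}\sum_k \binom{L-j}{k-j} = \binom{L}{j}2^{L-j},
\]
which reproduces the first formula (with $i=j$). The argument is essentially bookkeeping; the only nontrivial step is the algebraic manipulation connecting the two forms, and this is a standard Vandermonde-type identity.
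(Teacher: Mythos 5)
Your proof is correct. The first equality is established exactly as in the paper: stratify by the number $i$ of crossed all-on doubly occupied pairs, count $\binom{L}{i}$ choices of which those are, $2^{L-i}$ choices of $g_p$ versus $g_q$ at the uncrossed doubly occupied pairs, and $\binom{N}{n-i}$ choices of crossed once occupied pairs, using the local classification of table \ref{tbl:local_diagrams} and the fact that Maslov grading (for fixed H-data) counts crossed pairs. Where you diverge is the second equality: you derive $\sum_k \binom{L}{k}\binom{N+k}{n}$ from the first sum by a purely algebraic manipulation (Vandermonde plus the subset-of-a-subset identity $\binom{L}{k}\binom{k}{j}=\binom{L}{j}\binom{L-j}{k-j}$), and your computation checks out, with the convention $\binom{a}{b}=0$ outside the usual range handling the summation limits. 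The paper instead proves the second form by a second direct count: fix a crossingless reference diagram $D_0$, let $k$ be the number of all-on doubly occupied pairs where a diagram differs from $D_0$ ($\binom{L}{k}$ choices), and then choose which $n$ of those $k$ pairs together with the $N$ once occupied pairs are crossed ($\binom{N+k}{n}$ choices). The paper explicitly remarks that a direct algebraic proof of the equality of the two sums is also possible, which is precisely what you supply; the paper's route has the mild advantage of giving the second sum an intrinsic combinatorial meaning, while yours is shorter and relies only on standard binomial identities. Either way the lemma is fully proved.
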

For integers $a,b$, we regard $\binom{a}{b}$ as zero when $b < 0$ or $b>a$. The summations are over all integers $i$ or $j$; each has only finitely many nonzero terms.

\begin{proof}
Let $i$ be the number of crossed all-on doubly occupied pairs. Then there are $\binom{L}{i}$ choices for the all-on doubly occupied pairs which are to be crossed. At each crossed all-on doubly occupied pair there is a unique diagram, but at the $L-i$ non-crossed all-on doubly occupied pairs, there are 2 possible diagrams, giving $2^{L-i}$ choices at non-crossed all-on doubly occupied pairs. The other $n-i$ pairs with crossings must be all-on once occupied pairs, and there are $\binom{N}{n-i}$ choices for which all-on once occupied pairs will have crossings. Once these are chosen, all local diagrams are uniquely determined, and all such local diagrams glue into a diagram in $\A_n (h,s,t)$. Summing over all $i$ gives the first equality.

For the second equality, fix a reference diagram $D_0$ with H-data $(h,s,t)$ and no crossings. (Such a diagram always exists locally, by table \ref{tbl:local_diagrams}, and the local diagrams glue together.) Consider a diagram $D$ in $\A_n (h,s,t)$ and let $k$ be the number of all-on doubly occupied pairs at which $D$ and $D_0$ differ. There are $\binom{L}{k}$ ways in which we can choose these $k$ pairs. Now the $n$ pairs with crossings must come from the $k$ all-on doubly occupied pairs just chosen, together with the $N$ all-on once occupied pairs. There are $\binom{N+k}{n}$ ways to choose which of these $N+k$ pairs will be crossed. We now observe that once such choices are made, the diagram $D$ is uniquely determined. For at all-on doubly occupied pairs, the diagram either coincides with $D_0$ (and has no crossing), or differs from $D_0$, and if it differs from $D_0$ then we have chosen it to be crossed or not; these choices correspond to the $3$ possible local diagrams at an all-on doubly occupied pair. At all-on once occupied pairs, the diagram either coincides with $D_0$ (and is uncrossed), or is selected to be crossed; these choices correspond to the $2$ possible local diagrams at all-on once occupied pairs. At any other pair there is a unique local diagram. All such choices determine a diagram in $\A_n (h,s,t)$, and all such diagrams are constructed uniquely by such choices. Thus $\dim \A_n (h,s,t) = \sum_k \binom{L}{k} \binom{N+k}{n}$.
\end{proof}

We remark that it is also possible to prove directly that the two summations are equal. 
Combining lemmas \ref{lem:dim_Ahst} and \ref{lem:dim_Anhst}, we have 
\begin{equation}
\label{eqn:binomial_identity}
\sum_n \sum_i 2^{L-i} \binom{L}{i} \binom{N}{n-i}
= \sum_n \sum_k \binom{L}{k} \binom{N+k}{n}
= 3^L 2^N.
\end{equation}

Next, we consider the dimension of the spaces of \emph{boundaries} and \emph{cycles} in $\A_n (h,s,t)$. Let $B_n (h,s,t)$ and $Z_n (h,s,t)$ respectively denote the $\Z_2$ vector subspaces of $\A_n (h,s,t)$ generated by boundaries and cycles. In other words, for any $n \geq 0$, $\partial \colon \A_{n+1} \To \A_n$ has image $B_n$ and kernel $Z_{n+1}$. When $(h,s,t)$ is twisted, $\A(h,s,t)$ has trivial homology, so $B_n (h,s,t) = Z_n (h,s,t)$ for all $n$.

\begin{lem}
\label{lem:dim_Bn}
The dimension of $B_n (h,s,t)$ is given by 
\[
\dim B_n (h,s,t) 
= \sum_i  2^{L-i} \binom{L}{i} \binom{N-1}{n-i}
= \sum_k \binom{L}{k} \binom{N+k-1}{n}
\]
\end{lem}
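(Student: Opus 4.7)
The plan is to exploit the tensor decomposition $\A(h,s,t) \cong \bigotimes_P \A_P(h_P,s_P,t_P)$ from lemma \ref{lem:homology_decomposition}, combined with the explicit local structure described in section \ref{sec:local_algebras}. Write $D$ for the acyclic ``disk'' chain complex (one-dimensional in each of two adjacent crossing-gradings, with nontrivial differential) and $\mathrm{pt}$ for a trivial one-dimensional complex concentrated in a single grading. The nontrivial local factors of $\A(h,s,t)$ are: $L$ copies of $C''_P$, $N$ copies of $C'_P$, and trivial $\mathrm{pt}$ factors at all other matched pairs. One has $C'_P \cong D$, and moreover $C''_P$ splits as a direct sum of chain complexes as $C''_P \cong \Z_2\langle g_p\rangle \oplus \Z_2\langle c_P,\, g_p+g_q\rangle \cong \mathrm{pt} \oplus D$, since both summands are subcomplexes (the first because $\partial g_p = 0$, the second because $\partial c_P = g_p+g_q$ lies inside it).

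Distributing the tensor product $(\mathrm{pt} \oplus D)^{\otimes L} \otimes D^{\otimes N}$ (and ignoring the remaining trivial factors, which do not affect boundary dimensions) yields
\[
\A(h,s,t) \;\cong\; \bigoplus_{k=0}^L \binom{L}{k}\, D^{\otimes(N+k)} \otimes \mathrm{pt}^{\otimes(L-k)},
\]
so $\dim B_n(\A(h,s,t)) = \sum_{k=0}^L \binom{L}{k}\dim B_n(D^{\otimes(N+k)})$. The key computational step is the identity
\[
\dim B_n(D^{\otimes m}) = \binom{m-1}{n},
\]
using the convention $\binom{-1}{j}=0$ for $j\geq 0$. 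For $m=0$ this is trivial, since $D^{\otimes 0} = \mathrm{pt}$ has no boundaries. For $m\geq 1$, the complex $D^{\otimes m}$ is acyclic (a tensor product of acyclic complexes) with total dimension $\binom{m}{n}$ in grading $n$; since $Z_n = B_n$ in the acyclic case, the equation $\dim A_n = \dim B_n + \dim B_{n-1}$ combined with downward induction from $B_{-1}=0$ gives the claim (using Pascal's identity). Substituting delivers the second stated expression $\sum_k \binom{L}{k}\binom{N+k-1}{n}$.

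The first expression then follows by Chu--Vandermonde. Writing $2^{L-i} = \sum_j \binom{L-i}{j}$ and applying $\binom{L}{i}\binom{L-i}{j}=\binom{L}{i+j}\binom{i+j}{i}$, the reparametrisation $k=i+j$ gives
\[
\sum_i 2^{L-i}\binom{L}{i}\binom{N-1}{n-i} = \sum_k \binom{L}{k}\sum_i\binom{k}{i}\binom{N-1}{n-i} = \sum_k \binom{L}{k}\binom{N+k-1}{n},
\]
the last equality being Chu--Vandermonde $\sum_i\binom{k}{i}\binom{N-1}{n-i}=\binom{N+k-1}{n}$. The only substantive obstacle is the disk computation $\dim B_n(D^{\otimes m}) = \binom{m-1}{n}$; everything else is tensor-product bookkeeping and standard binomial algebra.
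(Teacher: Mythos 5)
Your proof is correct, but it takes a genuinely different route from the paper's. The paper never decomposes the complex further: it takes the dimension counts $\dim \A_{n+k}(h,s,t)$ from lemma \ref{lem:dim_Anhst}, uses the fact that homology is concentrated in crossing-degree zero to get $Z_{n+1}=B_{n+1}$, hence $\dim B_n = \dim \A_{n+1} - \dim B_{n+1}$ and the alternating sum $\dim B_n = \sum_{k\geq 1}(-1)^{k+1}\dim\A_{n+k}$, and then evaluates this with the identity $\sum_{k\geq 1}(-1)^{k+1}\binom{a}{b+k}=\binom{a-1}{b}$, obtaining the first displayed expression directly and the second from the identity already established in lemma \ref{lem:dim_Anhst}. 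You instead split the chain complex itself: the observation that $C''_P \cong \mathrm{pt}\oplus D$ (with $D$ the acyclic two-term complex $C'_P$) is not made explicitly in the paper, and together with the tensor decomposition of lemma \ref{lem:homology_decomposition} it reduces everything to $\dim B_n(D^{\otimes m})=\binom{m-1}{n}$, which you prove by essentially the same mechanism (acyclicity plus rank--nullity plus Pascal) that drives the paper's recursion. What your approach buys is structural transparency: the summand decomposition $\bigoplus_k \binom{L}{k}\,D^{\otimes(N+k)}$ makes the term $\binom{L}{k}\binom{N+k-1}{n}$ self-explanatory and in fact re-derives lemma \ref{lem:dim_Anhst} for free; the cost is that you reach the paper's first expression only afterwards, via an extra Chu--Vandermonde manipulation, whereas the paper gets it directly. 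Two minor points: the induction establishing $\dim B_n(D^{\otimes m})=\binom{m-1}{n}$ is upward in $n$ starting from $B_{-1}=0$, so calling it ``downward'' is a slip; and your Vandermonde step $\sum_i\binom{k}{i}\binom{N-1}{n-i}=\binom{N+k-1}{n}$ genuinely needs $N\geq 1$, since with $N=0$ and the convention $\binom{-1}{j}=0$ it can fail --- but in that tight case the lemma's first expression is itself only reliable for $N\geq 1$ under the paper's binomial conventions, and the place the lemma is used (proposition \ref{prop:inverting_vs_creation}) assumes twisted data, so this is an edge-case quibble shared with the paper rather than a gap in your argument; your derivation of the second expression is valid in all cases.
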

Again, summations are over all integers.

\begin{proof}
We know that nonzero homology only arises from diagrams with no crossings, i.e. with $n = 0$. Hence for all $n \geq 0$ we have $Z_{n+1} = B_{n+1}$, so $\dim B_n = \dim \A_{n+1} - \dim B_{n+1}$. Applying this repeatedly we obtain
\[
\dim B_n = \dim \A_{n+1} - \dim \A_{n+2} + \dim \A_{n+3} - \cdots
= \sum_{k=1}^\infty (-1)^{k+1} \dim \A_{n+k}.
\]
From lemma \ref{lem:dim_Anhst} we have $\dim \A_{n+k} = \sum_i 2^{L-i} \binom{L}{i} \binom{N}{n+K-i}$, and hence the above is equal to 
\begin{align*}
\sum_{k=1}^\infty (-1)^{k+1} \sum_i 2^{L-i} \binom{L}{i} \binom{N}{n+K-i} 
&= \sum_i 2^{L-i} \binom{L}{i} \sum_{k=1}^\infty (-1)^{k+1} \binom{N}{n+k-i} \\
&= \sum_i 2^{L-i} \binom{L}{i} \binom{N-1}{n-i},
\end{align*}
where we used the fact that for any integers $a,b$, $\sum_{k=1}^\infty (-1)^{k+1} \binom{a}{b+k} = \binom{a}{b+1} - \binom{a}{b+2} + \cdots = \binom{a-1}{b}$ (easily established, for instance, by induction). We have now proved the first claimed equality; the second follows from the identity of lemma \ref{lem:dim_Anhst}.
\end{proof}

\subsection{An ideal in the strand algebra}
\label{sec:ideals}

We now introduce an ideal $\F$ in $\A = \A(\ZZ)$, which will be important in the sequel.

\begin{defn} \
\label{def:ideal_F}
The $\Z_2$-submodule of $\A$ generated by diagrams which are not viable, or have at least one doubly occupied crossed pair, is denoted $\F$.
\end{defn}

\begin{lem}
$\F$ is a two-sided ideal of $\A$.
\end{lem}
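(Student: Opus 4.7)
The submodule $\F$ is spanned by two kinds of generators --- non-viable diagrams and diagrams with at least one doubly occupied crossed pair --- so by $\Z_2$-linearity it suffices to show that for any such generator $D$ and any diagram $D' \in \A$, both $DD'$ and $D'D$ lie in $\F$. I treat the two kinds of generators in turn, and in each case the arguments for left and right multiplication are mirror images of one another.

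If $D$ is non-viable, then $h(D)$ has multiplicity at least $2$ on some step of $\mathbf{Z}$. Since the H-grading is additive under multiplication and all local multiplicities are non-negative integers, whenever $DD'$ is nonzero we have $h(DD') = h(D) + h(D')$, which still has multiplicity $\geq 2$ at that step, so $DD'$ is either zero or non-viable --- either way in $\F$. The argument for $D'D$ is identical.

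Now suppose $D$ is viable with a doubly occupied crossed pair $P$. By proposition \ref{prop:viable_diagram_classification} and table \ref{tbl:local_diagrams}, the only viable crossed doubly occupied local diagram is $c_P$, which is all-on at $P$; so $D_P = c_P$. We may assume $DD' \neq 0$ and viable (else we are done). Then $h(DD') = h(D) + h(D')$ has multiplicity $\leq 1$ on every step, but $c_P$ already covers all four local steps at $P$ with multiplicity $1$, forcing $D'_P$ to be unoccupied. The only unoccupied local diagrams at $P$ are the empty (all-off) diagram and the all-on idempotent $I(\{P\})$ (a pair of dotted horizontals); the former has starting idempotent all-off, which cannot compose with the all-on ending idempotent of $D_P$, so $D'_P = I(\{P\})$. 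Thus $(DD')_P = c_P \cdot I(\{P\}) = c_P$, and $DD'$ retains a doubly occupied crossed pair at $P$, so $DD' \in \F$. An entirely symmetric argument, using $I(\{P\}) \cdot c_P = c_P$ at $P$, handles $D'D$.

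The principal subtlety is in the doubly-occupied-crossed-pair case: one must rule out the possibility that multiplication by $D'$ resolves or cancels the crossing at $P$, or changes the H-data so that $P$ is no longer doubly occupied. The local analysis above shows that the only way for the product to be simultaneously nonzero and viable is for $D'$ to act locally at $P$ as the relevant idempotent, so that the local diagram $c_P$ is preserved verbatim and the output remains in $\F$.
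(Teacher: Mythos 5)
Your proof is correct and follows essentially the same route as the paper's: split into the non-viable case (handled by additivity of the H-grading) and the viable case with a crossed doubly occupied pair $P$, where viability of the product forces the crossed pair to persist. The paper simply asserts that persistence, whereas you justify it via the local classification and idempotent matching at $P$; this is a harmless elaboration of the same argument rather than a different approach.
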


\begin{proof}
First we claim that if $D,D'$ are diagrams where $D$ is not viable, then $DD'$ and $D'D$ are zero or non-viable. For $D$ then has some step covered by two or more strands, so $DD'$ is either zero, or has a step covered by two or more strands, hence is not viable; similarly for $D'D$.

Now suppose $D$ lies in $\F$ and $D'$ is another diagram. If $D$ is not viable, then the previous paragraph shows $DD', D'D \in \F$. So we may assume $D$ is viable and has a doubly occupied crossed pair $P$. After multiplication on the right (resp. left) by $D'$ the result may become non-viable, in which case $DD'$ (resp. $D'D$) is in $\F$. If the result is viable, then $DD'$ (resp. $D'D$) still has a doubly occupied crossed pair at $P$.
\end{proof}

The quotient $\A/\F$ is freely generated as a $\Z_2$-module by viable diagrams without crossed doubly occupied pairs. 
Products can then be taken as in $\A$, unless the result is non-viable or has a crossed doubly occupied pair, in which case the result is zero.

The decomposition $\A \cong \bigoplus_{h,s,t} \A(h,s,t)$ also descends to the quotient $\A/\F$. (However, the differential $\partial$ does not, as it does not preserve $\F$.) Hence we may make the following definitions.

\begin{defn}\
\label{defn:Abar}
\begin{enumerate}
\item
The $\Z_2$-algebra $\AAbar$ is the quotient algebra $\A/\F$.
\item
The $\Z_2$ vector space $\AAbar(h,s,t)$ is the $(h,s,t)$ graded summand of $\AAbar$.
\item
For $x \in \A$, we denote by $\overline{x}$ its image in $\AAbar$ under the quotient map $\A \To \AAbar$.
\item
For a homomorphism $f$ with image in $\A$, we denote by $\fbar$ the homomorphism obtained by composing $f$ with the quotient map $\A \To \AAbar$.
\item
The \emph{standard form representative} $x \in \A$ of an $\overline{x} \in \AAbar$ is the sum of viable diagrams without crossed doubly occupied pairs whose image under the quotient map $\A \To \AAbar$ is $\overline{x}$.
\end{enumerate}
\end{defn}

The quotient $\AAbar$ is useful for our needs. Non-viable diagrams cannot contribute to homology, and although some crossed diagrams can be ``salvaged" into tight diagrams (thus contributing to homology) via sublimation, sublimation does not apply to crossed doubly occupied pairs. Thus $\AAbar$ is generated by diagrams which are ``salvageable" in this sense.

\section{Cycle selection and creation operators}
\label{sec:cycle_selection_creation}

\subsection{Cycle selection homomorphisms}
\label{sec:cycle_selection_homs}

Throughout this section fix an arc diagram $\ZZ$.

As discussed in section \ref{sec:intro_construction}, the construction of an $A_\infty$ structure on $\HH$ begins from a map $f_1 \colon \HH \To \A$ as follows.
\begin{defn}
\label{def:cycle_selection}
A \emph{cycle selection map} is a $\Z_2$-module homomorphism $f \colon \HH \To \A$ which preserves Maslov and H-gradings, and sends each homology class $x \in \HH$ to a cycle in $\A$ which represents $x$.
\end{defn}
Defining a cycle selection map requires finding diagrams representing each homology class --- as discussed in section \ref{sec:diagrams_representing_homology} for individual homology classes. 

The following constraint is a natural one to make, avoiding a proliferation of diagrams.
\begin{defn}[Diagrammatically simple homomorphisms] \
\label{def:diagrammatically_simple}
\begin{enumerate}
\item
A $\Z_2$-module homomorphism $f \colon \A \To \A$ is \emph{diagrammatically simple} if each for each diagram $D$, $f(D)$ is zero, or a single diagram.
\item
A $\Z_2$-module homomorphism $f \colon \HH \To \A$ is \emph{diagrammatically simple} if for all $M \in \HH$ that can be represented by a single diagram, $f(M)$ is zero, or a single diagram.
\end{enumerate}
\end{defn}

Recall (proposition \ref{prop:homology_summands}) that an H-summand $\HH(h,s,t)$ of $\HH$ is nonzero precisely when $(h,s,t)$ is tight, in which case $\HH(h,s,t) \cong \Z_2$, generated by $M_{h,s,t}$ (definition \ref{defn:Mhst}). To define a diagrammatically simple $f \colon \HH \To \A$, we simply need to select, for each tight H-data $(h,s,t) \in \mathbf{g}$ (definition \ref{def:tightness_H-data}), a tight diagram with H-data $(h,s,t)$. For such $(h,s,t)$, by lemma \ref{lem:selecting_occupied} there are precisely $2^L$ diagrams representing $M_{h,s,t}$, where $L$ is the number of pairs all-on doubly occupied by $(h,s,t)$. Making any of these $2^L$ choices for $f_1 (M_{h,s,t})$, and repeating for each $(h,s,t) \in \mathbf{g}$, results in a diagrammatically simple cycle selection map; and all diagrammatically simple cycle selection maps are of this form.

We now formalise the above and systematically describe all diagrammatically simple cycle selection homomorphisms.

As is standard in set theory, given a set $S$ whose elements are sets, a \emph{set choice function} for $S$ assigns to each $x \in S$ an element of $x$. The set of set choice functions for $S$ is naturally in bijection with the direct product of $S$ (i.e. the direct product of the elements of $S$), denoted $\prod S$, and we regard an element of $\prod S$ as a choice function for $S$. If $S$ is empty, we regard $S$ as having a unique choice function, which is the null function.

\begin{defn}[Pair choice function]
\label{def:pair_choice_function}
For $(h,s,t) \in \mathbf{g}$, let $\mathbf{P}_{h,s,t}$ be the set of all-on doubly occupied pairs of $(h,s,t)$. A \emph{pair choice function} for $(h,s,t)$ is a set choice function for $\mathbf{P}_{h,s,t}$. 
\end{defn}
Note $\mathbf{P}_{h,s,t}$ is a set of sets, each with two elements. 
A pair choice function for $(h,s,t) \in \mathbf{g}$ assigns to each doubly occupied all-on pair $P = \{p,q\}$ of $(h,s,t)$ one of its places $p$ or $q$. If $\left| \mathbf{P}_{h,s,t} \right| = L$ then the number of pair choice functions for $(h,s,t)$ is $\left| \prod \mathbf{P}_{h,s,t} \right| = 2^{|\mathbf{P}_{h,s,t}|} = 2^L$. If $L=0$, then $(h,s,t)$ has a unique (null) pair choice function.

Given a pair choice function $\mathcal{C}(h,s,t)$ for $(h,s,t) \in \mathbf{g}$, we draw a tight diagram with H-data $(h,s,t)$, denoted $D_{\mathcal{C}(h,s,t)}$, as follows. At a matched pair $P$ which is not all-on doubly occupied, we draw the unique tight local diagram with H-data $(h_P, s_P, t_P)$. At an all-on doubly occupied matched pair $P = \{p,q\}$, $\mathcal{C}(h,s,t)$ selects one of the places $p$ or $q$; we denote it $\mathcal{C}(h,s,t)(P)$. There are two possible tight local diagrams $g_p, g_q$ (definition \ref{def:C_P}) with the required H-data at $P$, and we draw $g_{\mathcal{C}(h,s,t)(P)}$, the diagram with strands beginning and ending at $\mathcal{C}(h,s,t)(P)$. Putting these local diagrams together gives $D_{\mathcal{C}(h,s,t)}$.

\begin{defn}[Cycle choice function]
\label{def:cycle_choice_function}
A \emph{cycle choice function} for $\ZZ$ is a function which assigns to each $(h,s,t) \in \mathbf{g}(\ZZ)$ a pair choice function for $(h,s,t)$.
\end{defn}
A cycle choice function can be regarded as an element of the set $\prod_{(h,s,t) \in \mathbf{g}} \prod \mathbf{P}_{h,s,t}$. 

If $\CC$ is a cycle choice function, then we write $\CC(h,s,t)$ for the pair choice function assigned to $(h,s,t) \in \mathbf{g}$; this pair choice function for $(h,s,t)$ then determines a tight diagram $D_{\CC(h,s,t)}$ with H-data $(h,s,t)$ as described above.

Now a cycle choice function $\CC$ determines a map $f^\CC \colon \HH \To \A$ as follows. For $(h,s,t) \in \mathbf{g}$, $\HH(h,s,t)$ contains a single nonzero homology class $M_{h,s,t}$, and we define $f^\CC (M_{h,s,t}) = D_{\CC(h,s,t)}$. Combining such maps over all $(h,s,t) \in \mathbf{g}$ yields a diagrammatically simple cycle selection map $f^\CC \colon \HH \To \A$.
Indeed, all diagrammatically simple cycle selection maps are of this form, and distinct cycle choice functions yield distinct homomorphisms $f^\CC$, giving the following statement.
\begin{lem}
\label{lem:cycle_selection_classification}
Let $f \colon \HH \To \A$ be a cycle selection homomorphism. Then $f$ is diagrammatically simple iff $f=f^\CC$ for a unique cycle choice function $\CC$.
\qed
\end{lem}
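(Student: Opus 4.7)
The plan is to prove both directions by unpacking the definitions and using the classification of tight diagrams representing a fixed homology class (lemma \ref{lem:selecting_occupied}).

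For the \emph{backward direction}, I would check directly that $f^\CC$ is diagrammatically simple. Suppose $M \in \HH$ is nonzero and representable by a single diagram $D$. Since $D$ has specific H-data $(h,s,t)$, the class $M = [D]$ lies in the single summand $\HH(h,s,t)$; as $M$ is nonzero, proposition \ref{prop:homology_summands} gives $(h,s,t) \in \mathbf{g}$ and $M = M_{h,s,t}$. By construction, $f^\CC(M_{h,s,t}) = D_{\CC(h,s,t)}$, which is a single diagram. If $M=0$ then $f^\CC(M) = 0$. This handles all cases.

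For the \emph{forward direction}, suppose $f$ is diagrammatically simple. Since $f$ preserves H-data, it restricts to a map $\HH(h,s,t) \to \A(h,s,t)$ on each summand. For $(h,s,t) \notin \mathbf{g}$, $\HH(h,s,t) = 0$ so there is nothing to specify. For $(h,s,t) \in \mathbf{g}$, $\HH(h,s,t) \cong \Z_2$ is generated by $M_{h,s,t}$, which is representable by a single (tight) diagram, so diagrammatic simplicity forces $f(M_{h,s,t})$ to be zero or a single diagram. It cannot be zero, since $f(M_{h,s,t})$ must represent the nonzero class $M_{h,s,t}$. Thus $f(M_{h,s,t})$ is a single diagram $D$; being a cycle representing $M_{h,s,t}$, $D$ is a tight diagram with H-data $(h,s,t)$. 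By lemma \ref{lem:selecting_occupied}, such a $D$ is determined by, and determines, a choice of one place from each all-on doubly occupied pair $P \in \mathbf{P}_{h,s,t}$, i.e.\ a pair choice function $\CC(h,s,t)$. Collecting these pair choice functions over all $(h,s,t) \in \mathbf{g}$ yields a cycle choice function $\CC$ with $f = f^\CC$.

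Uniqueness is then immediate: if $f = f^{\CC_1} = f^{\CC_2}$, then for every $(h,s,t) \in \mathbf{g}$ we have $D_{\CC_1(h,s,t)} = D_{\CC_2(h,s,t)}$, which by the classification in table \ref{tbl:local_diagrams} (the local diagrams $g_p$ and $g_q$ at an all-on doubly occupied pair $\{p,q\}$ are distinct) forces $\CC_1(h,s,t)(P) = \CC_2(h,s,t)(P)$ for every all-on doubly occupied pair $P$. Hence $\CC_1 = \CC_2$. The main content is really packaged into lemma \ref{lem:selecting_occupied}; there is no substantial obstacle here, just bookkeeping between the definitions of cycle choice functions, pair choice functions, and diagrammatic simplicity.
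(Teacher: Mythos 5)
Your proof is correct and follows essentially the same route as the paper, which establishes the lemma via the discussion preceding it: on each tight summand $\HH(h,s,t)$ a diagrammatically simple cycle selection map must send $M_{h,s,t}$ to a single tight diagram, and lemma \ref{lem:selecting_occupied} identifies such diagrams with pair choice functions, hence the map with a unique cycle choice function. The only difference is presentational: you spell out both directions and the uniqueness argument explicitly, whereas the paper leaves them as immediate consequences of that correspondence.
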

In other words, there is a bijective correspondence between diagrammatically simple cycle selection maps, and cycle choice functions.


We also consider a cycle selection map $f$ in general (not necessarily diagrammatically simple). For $(h,s,t) \in \mathbf{g}$ we have $\HH(h,s,t)$ generated by $M_{h,s,t}$. Then $f(M_{h,s,t})$ need not be a single diagram, but must be a sum of diagrams representing $M_{h,s,t}$, all of the same H-grading and Maslov grading, hence tight diagrams representing $M_{h,s,t}$. Moreover, as $f(M_{h,s,t})$ represents $M_{h,s,t}$, $f(M_{h,s,t})$ must be the sum of an \emph{odd} number of distinct diagrams. Conversely, if for each $(h,s,t) \in \mathbf{g}$ we define $f(M_{h,s,t})$ to be the sum of an odd number of distinct tight diagrams representing $M_{h,s,t}$, we obtain a cycle selection homomorphism.

\subsection{Differences in cycle selection}
\label{sec:diff_cycle_selection}

We now show how the different choices available in cycle selection are related to the ideal $\F$. Fix an arc diagram $\ZZ$ throughout this section.

\begin{lem}
\label{lem:even_num_reps}
Let $D_1, \ldots, D_{2n} \in \A$ be an even number of tight diagrams, all representing the homology class $M \in \HH$. Then we have the following.
\begin{enumerate}
\item $D_1 + \cdots + D_{2n} \in \partial \F$
\item If $g \in \A$ is an element homogeneous in Maslov grading and $H$-data, satisfying $\partial g = D_1 + \cdots + D_{2n}$, then $g \in \F$.
\end{enumerate}
\end{lem}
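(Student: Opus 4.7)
The plan is to exploit the tensor decomposition $\A(h,s,t) \cong \bigotimes_P \A_P(h_P,s_P,t_P)$ from Section~\ref{sec:local_diagrams} together with the resulting Maslov-graded structure. Since each $D_i$ is tight and represents the same class $M$, all $D_i$ share a common tight H-data $(h,s,t)$ and a common Maslov grading $\iota_0$. Let $\mathbf{P} = \mathbf{P}_{h,s,t}$ be the set of all-on doubly occupied pairs. Because $(h,s,t)$ is tight it admits no all-on once occupied pairs, so by the classification in Section~\ref{sec:classification_local_diagrams} the only crossed local diagrams that can appear in any basis element of $\A(h,s,t)$ are the $c_P$ for $P \in \mathbf{P}$. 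Under the tensor decomposition, a basis element with exactly $k$ tensor factors equal to some $c_P$ therefore lies in $\A_k(h,s,t)$, has Maslov grading $\iota_0 + k$, and for $k \geq 1$ has a crossed doubly occupied pair, hence lies in $\F$.

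For (i), I first reduce to an even-sized set of \emph{distinct} tight diagrams: working over $\Z_2$, repetitions among the $D_i$ cancel in pairs while preserving the parity of the number of surviving summands. By Lemma~\ref{lem:selecting_occupied} any two tight diagrams with H-data $(h,s,t)$ are related by a sequence of single strand switchings at pairs of $\mathbf{P}$. A single switch at some $P \in \mathbf{P}$ exchanges the local factor $g_p$ with $g_q$; the two resulting tight diagrams $E, E'$ satisfy $E + E' = \partial C$, where $C$ agrees with $E$ everywhere except at $P$, where its local factor is replaced by $c_P$. This $C$ has a crossed doubly occupied pair at $P$, so $C \in \F$. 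Chaining such switches shows $E + E' \in \partial\F$ for any two tight diagrams with the given H-data, and pairing up the distinct summands of $D_1 + \cdots + D_{2n}$ places the total sum in $\partial\F$.

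For (ii), the homogeneity of $g$ in Maslov grading and H-data, together with the fact that $\partial g = D_1 + \cdots + D_{2n}$ lies in $\A_0(h,s,t)$ (Maslov grading $\iota_0$), forces $g \in \A_1(h,s,t)$. By the first paragraph $g$ is then a $\Z_2$-linear combination of basis elements each having exactly one $c_P$ factor, and each such basis element lies in $\F$. Hence $g \in \F$. The main technical obstacle is the Maslov-graded identification of $\A_1(h,s,t)$ via the tensor decomposition; once that is in place, part (i) reduces to a pairing-plus-chaining argument and part (ii) is a direct grading calculation.
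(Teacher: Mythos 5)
Your proof is correct and follows essentially the same route as the paper's: part (i) realises each strand switch at an all-on doubly occupied pair as $\partial$ of the $c_P$-crossed diagram and then chains and pairs the summands (the paper phrases the chaining as induction on the number of switched pairs), and part (ii) is the same grading argument, using that tight H-data excludes all-on once occupied pairs so any single crossing sits at a doubly occupied pair, forcing each diagram of $g$ into $\F$. Your tensor-decomposition framing of $\A_1(h,s,t)$ is just the paper's local classification in slightly different clothing, so no substantive difference.
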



\begin{proof}
We first prove (i) when $n=1$, so take diagrams $D,D'$ which differ by switching strands at some all-on doubly occupied pairs $P_1, \ldots, P_k$ (lemma \ref{lem:selecting_occupied}). We proceed by induction on $k$. When $k=1$, let $F_1$ be the diagram all-on doubly occupied crossed at $P_1$, and equal to $D$ and $D'$ elsewhere.
\begin{center}
\begin{tikzpicture}[mypersp, scale=1.5]
\begin{scope}[xzp=0, xshift = 0 cm]
\draw (-0.7,0.75) node {$D = $};
\strandbackgroundshading
\beforevused
\aftervused
\beforewused
\afterwused
\cubestrandsetup
\lefton
\righton
\draw [ultra thick] (0,1.25) to [out=0, in=-90] (0.5,1.5);
\draw [ultra thick] (0.5,1) to [out=90, in=180] (1,1.25);
\draw [ultra thick]  (0.4,0) to [out=90, in=-90] (0.6,0.5);
\end{scope}
\begin{scope}[xzp=0, xshift=2.5 cm]
\draw (-0.7, 0.75) node {$D' = $};
\strandbackgroundshading
\beforevused
\aftervused
\beforewused
\afterwused
\cubestrandsetup
\lefton
\righton
\draw [ultra thick] (0,0.25) to [out=0, in=-90] (0.5,0.5);
\draw [ultra thick] (0.5,0) to [out=90, in=180] (1,0.25);
\draw [ultra thick]  (0.4,1) to [out=90, in=-90] (0.6,1.5);
\end{scope}
\begin{scope}[xzp=0, xshift=5 cm]
\draw (-0.7, 0.75) node {$F_1 = $};
\strandbackgroundshading
\beforevused
\aftervused
\beforewused
\afterwused
\cubestrandsetup
\lefton
\righton
\draw [ultra thick, dotted]  (0,1.25) -- (1,1.25); 
\draw [ultra thick]  (0.4,1) to [out=90, in=-90] (0.6,1.5);
\draw [ultra thick, dotted]  (0,0.25) -- (1,0.25); 
\draw [ultra thick]  (0.4,0) to [out=90, in=-90] (0.6,0.5);
\end{scope}
\end{tikzpicture} 
\end{center}
Then $F_1$ is viable, crossed at $P_1$, hence lies in $\F$, and is tight at all other pairs; so $\partial F_1 = D + D'$, as desired.

Now consider $D,D'$ differing at $k$ pairs. Let $D''$ be obtained from $D$ by switching strands at $P_1$. By induction
\[
D+D'' = \partial F_1, \quad
D'' + D' = \partial \left( F_2 + \cdots + F_k \right)
\]
for some viable diagrams $F_1, \ldots, F_k$, with each $F_i$ crossed at $P_i$ (hence in $\F$) and tight elsewhere.
Thus $D+D' = \partial ( F_1 + \cdots + F_k)$, proving (i) when $n=1$. For general $n$, simply split the diagrams $D_1, \ldots, D_{2n}$ into pairs and apply the $n=1$ case.

If $g$ is homogeneous in Maslov grading and H-data and $\partial g = D_1 + \cdots + D_{2n}$, then every diagram $G$ in $g$ has the same Maslov grading and tight H-data as each $F_i$ considered above. Thus $G$ is viable and has precisely one pair with a crossed local diagram. From table \ref{tbl:local_diagrams} we see that crossings can only occur in viable diagrams at pairs which are all-on once occupied or all-on doubly occupied. But having the same viable tight H-data as the $F_i$, $G$ has no all-on once occupied pairs (proposition \ref{prop:homology_summands}). So $G$ has a crossing at an all-on doubly occupied pair, and $G \in \F$. Hence $g \in \F$.
\end{proof}



\subsection{Creation operators}
\label{sec:creation_operators}

Let $(h,s,t)$ be viable H-data on an arc diagram $\ZZ$, which is all-on once occupied at a pair $P = \{p,q\}$, occupied at $p$. We saw in section \ref{sec:local_algebras} that $\A_P (h_P,s_P,t_P)$ is, as a chain complex, given by $C'_P$ (definition \ref{def:C_P}), which has trivial homology:
\[
0 \To \Z_2 \langle c_p \rangle \stackrel{\partial}{\To} \Z_2 \langle w_p \rangle \To 0, \quad \text{where $\partial c_p = w_p$ and $\partial w_p = 0$.}
\]
Here $c_p$ is the unique local crossed diagram, $w_p$ is the unique local twisted diagram.

There is a chain homotopy $A^* \colon C'_P \To C'_P$ from the identity to $0$; in fact, as $C'_P$ is so simple, there is a unique such $A^*$, given as follows.
\begin{defn}[Local creation operator]
The \emph{creation operator} $A^* \colon C'_P \To C'_P$ is the $\Z_2$-module homomorphism given by $A^* (w_p) = c_p$ and $A^* (c_p) = 0$.
\end{defn}
In other words, $A^*$ inserts a crossing, as in figure \ref{fig:creation_example}. The name $A^*$ references creation operators in physics. We have $A^* \partial + \partial A^* = 1$, a ``Heisenberg relation" or a chain homotopy from the identity to 0.


Consider $\A(h,s,t) \cong \bigotimes_{P'} \A_P (h_{P'}, s_{P'}, t_{P'})$ (section \ref{sec:local_diagrams}). We can rewrite this as
\begin{equation}
\label{eqn:tensor_single_out_pair}
\A(h,s,t) \cong \A_P (h_P, s_{P}, t_{P}) \otimes \bigotimes_{P' \neq P} \A_{P'} (h_{P'}, s_{P'}, t_{P'}).
\end{equation}
The first factor contains local diagrams at $P$, and is isomorphic to $C'_P$; the second factor contains local diagrams everywhere else. A diagram $D \in \A(h,s,t)$ is then written as $x \otimes y$, where $x \in \A_P (h_P, s_{P}, t_{P}) \cong C'_{P}$ and $y \in \bigotimes_{P' \neq P} \A_{P'} (h_{P'}, s_{P'}, t_{P'})$. 
\begin{defn}[Creation operator]
\label{def:creation_operator}
Let $P$ be an all-on once occupied pair of the viable H-data $(h,s,t)$. The \emph{creation operator} $A_{P}^* \colon \A(h,s,t) \To \A(h,s,t)$ is given by $A_P^* = A^* \otimes 1$, in the tensor decomposition (\ref{eqn:tensor_single_out_pair}) above.
\end{defn}
In other words, $A_P^*$ inserts a crossing at $P$. Clearly $A_P^*$ is diagrammatically simple (definition \ref{def:diagrammatically_simple}). Note that if $D \in \F$ (i.e. $D$ has a crossed doubly occupied pair: definition \ref{def:ideal_F}), then $A_P^* D \in \F$ also. So $A_P^*$ descends to a map $\Abar_P^* \colon \AAbar(h,s,t) \To \AAbar(h,s,t)$.

\begin{lem}
\label{lem:creation_operators_chain_homotopies}
With $P$ and $(h,s,t)$ as above, $A_P^* \partial + \partial A_P^* = 1$ on $\A(h,s,t)$.
\end{lem}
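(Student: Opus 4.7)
The plan is to reduce the claim to the local statement that $A^* \partial + \partial A^* = 1$ on the two-dimensional complex $C'_P$ (which is immediate, since $A^* c_p = 0$, $A^* w_p = c_p$, $\partial c_p = w_p$, $\partial w_p = 0$, so $A^*\partial(c_p) + \partial A^*(c_p) = A^* w_p = c_p$ and $A^*\partial(w_p) + \partial A^*(w_p) = \partial c_p = w_p$). The main work is to confirm that the differential on $\A(h,s,t)$ behaves correctly under the tensor decomposition that defines $A_P^*$.

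First I would invoke the isomorphism of chain complexes
\[
\A(h,s,t) \;\cong\; \A_P(h_P,s_P,t_P) \otimes \bigotimes_{P' \neq P} \A_{P'}(h_{P'},s_{P'},t_{P'}),
\]
recorded in section \ref{sec:local_diagrams}, which rests on the fact (lemma \ref{lem:viable_crossings_horizontal}) that in viable diagrams every crossing occurs at a dotted horizontal strand, and is therefore localised at a single matched pair. Consequently the global differential $\partial$ on $\A(h,s,t)$ decomposes as the tensor differential
\[
\partial \;=\; \partial_P \otimes 1 \;+\; 1 \otimes \partial_{\neq P},
\]
where $\partial_P$ denotes the differential on $\A_P(h_P,s_P,t_P) \cong C'_P$ and $\partial_{\neq P}$ is the induced differential on the second tensor factor. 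Under the same decomposition, $A_P^* = A^* \otimes 1$ by definition \ref{def:creation_operator}.

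Given these two identifications the computation is a one-liner over $\Z_2$:
\[
A_P^* \partial + \partial A_P^*
= (A^*\partial_P + \partial_P A^*)\otimes 1 + A^*\otimes \partial_{\neq P} + A^*\otimes \partial_{\neq P}
= 1\otimes 1,
\]
where the first term equals $1\otimes 1$ by the local Heisenberg relation $A^*\partial + \partial A^* = 1$ on $C'_P$ noted above, and the remaining two terms cancel mod $2$.

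The only genuine obstacle is the bookkeeping needed to be sure the global differential really does split as $\partial_P\otimes 1 + 1\otimes \partial_{\neq P}$ on this particular summand; that is, one must check that a resolution of a crossing of a diagram in $\A(h,s,t)$ acts on exactly one tensor factor and leaves the others untouched. This is precisely the content of viability plus lemma \ref{lem:viable_crossings_horizontal}, together with the fact that $(h,s,t)$ being viable forces all local H-data $(h_{P'},s_{P'},t_{P'})$ to be viable, so the tensor decomposition of $\A(h,s,t)$ as a differential graded module is legitimate. Once that is noted, the remainder of the argument is purely formal.
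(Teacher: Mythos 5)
Your proposal is correct and follows essentially the same route as the paper: both arguments work in the decomposition $\A(h,s,t) \cong \A_P(h_P,s_P,t_P) \otimes \bigotimes_{P'\neq P}\A_{P'}(h_{P'},s_{P'},t_{P'})$ of (\ref{eqn:tensor_single_out_pair}), use the local relations $\partial c_p = w_p$, $A^*w_p = c_p$, $A^*c_p = 0$, and rely on the fact (from section \ref{sec:local_diagrams}, via lemma \ref{lem:viable_crossings_horizontal}) that $\partial$ acts as the tensor differential so the cross terms $A^*\otimes\partial_{\neq P}$ cancel over $\Z_2$. The only cosmetic difference is that the paper checks the identity element-by-element on $w_p\otimes y$ and $c_p\otimes y$, whereas you phrase the same cancellation at the level of operators.
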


\begin{proof}
Take a diagram in $\A(h,s,t)$ and write it as $x \otimes y$ according to the decomposition (\ref{eqn:tensor_single_out_pair}) above, so $x = c_p$ or $w_p$. Recalling that $\partial c_p = w_p$, $\partial w_p = 0$, $A^* w_p = c_p$, $A^* c_p = 0$, we have
\begin{align*}
\left( A_P^* \partial + \partial A_P^* \right) \left( w_p \otimes y \right)
&= A_P^* \left( w_p \otimes \partial y \right) + \partial \left( c_p \otimes y \right)
= c_p  \otimes \partial y + w_p \otimes y + c_p \otimes \partial y = w_p \otimes y, \\
\left( A_P^* \partial + \partial A_P^* \right) \left( c_P \otimes y \right)
&= A_P^* \left( w_p \otimes y + c_p \otimes \partial y \right) + 0
= c_p \otimes y.
\end{align*}
\end{proof}
This chain homotopy shows directly that $\HH(h,s,t) = 0$ when there is an all-on once occupied pair (proposition \ref{prop:homology_summands}).

In fact, creation operators are the \emph{only} way to obtain a \emph{diagrammatically simple} (definition \ref{def:diagrammatically_simple}) chain homotopy to the identity on a summand $\A(h,s,t)$.
\begin{lem}
\label{lem:chain_homotopies_creations}
Suppose $(h,s,t)$ is viable and non-singular, and $\int \colon \A(h,s,t) \To \A(h,s,t)$ is a diagrammatically simple $\Z_2$-module homomorphism which has pure Maslov degree, satisfying
\[
\int \partial + \partial \int = 1.
\]
Then $\int = A_P^*$, for some all-on once occupied matched pair $P$ of $(h,s,t)$.
\end{lem}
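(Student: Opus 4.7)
The plan is to establish the statement in four steps: extract the basic numerical constraints on $\int$, determine $\int$ on crossingless diagrams, show the resulting index is constant, and then extend to the whole algebra by induction on crossings.

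First, since $\int \partial + \partial \int = 1$, the map $\int$ is a chain homotopy from the identity to zero, forcing $\A(h,s,t)$ to be acyclic; by proposition \ref{prop:homology_summands} this forces $(h,s,t)$ to be twisted, so it has at least one all-on once occupied pair. Balancing Maslov degrees in the Heisenberg equation forces $\int$ to have pure degree $+1$. For a crossingless diagram $D$ with H-data $(h,s,t)$, one has $\partial D = 0$, hence $\partial \int D = D$. Since $\int$ is diagrammatically simple, $\int D$ is a single diagram $E$ with single-term boundary $D$. By inspection of table \ref{tbl:local_diagrams}, the unique crossing of $E$ must lie at an all-on once occupied pair (a crossing at an all-on doubly occupied pair $Q$ would resolve as $\partial c_Q = g_p + g_q$, contributing two terms). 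Thus $\int D = A_{P_{i(D)}}^* D$ for some all-on once occupied pair $P_{i(D)}$.

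Next, the key step: $i(D)$ is independent of $D$. By lemma \ref{lem:selecting_occupied}, any two crossingless diagrams with H-data $(h,s,t)$ are connected by strand switches at all-on doubly occupied pairs, so it suffices to compare $D$ and $D'$ differing at a single such pair $Q$. The diagram $F$ equal to $D$ (and $D'$) off $Q$ with $c_Q$ at $Q$ satisfies $\partial F = D + D'$, and the Heisenberg equation applied to $F$ yields
\[
\partial \int F \;=\; F + A_{P_{i(D)}}^* D + A_{P_{i(D')}}^* D'.
\]
Since $\int F$ is a single diagram whose crossing-resolutions must produce exactly this three-term sum, and since the three terms include $F$ (which has $c_Q$ at $Q$) together with two diagrams lacking $c_Q$, $\int F$ must have precisely two crossings: one at $Q$ (contributing the two terms with $g_p, g_q$ at $Q$) and one at a single all-on once occupied pair $P_j$ (contributing the remaining term). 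Matching the local data at the all-on once occupied pairs then forces $i(D) = i(D') = j$, so a single value $i$ works throughout.

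The final step extends $\int = A_{P_i}^*$ to all of $\A(h,s,t)$. Set $\phi = \int - A_{P_i}^*$; since both maps satisfy the Heisenberg relation, $\phi$ is a chain map of degree $+1$ vanishing on crossingless diagrams. Induct on the number of crossings: for $E$ with $n \geq 1$ crossings, the inductive hypothesis gives $\phi(\partial E) = 0$, hence $\partial \phi(E) = 0$, so $\phi(E)$ is a cycle of Maslov degree $\iota(E)+1 > \iota_{\min}$ consisting of at most two diagrams. Distinct crossings in a single diagram resolve to distinct boundary terms and cannot cancel within that diagram, so any single-diagram cycle must be crossingless and hence of the minimum Maslov degree $\iota_{\min}$; this rules out $\phi(E)$ being a one-term cycle, leaving only the cases $\phi(E) = 0$ or $\phi(E) = \int E + A_{P_i}^* E$ a two-term cycle with distinct nonzero terms.

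The main obstacle is exactly this two-term case. Such cycles do genuinely exist in $\A(h,s,t)$, for example $c_{p_1} w_{p_2} + w_{p_1} c_{p_2} = \partial(c_{p_1} c_{p_2})$ when $L = 0$ and $N \geq 2$, so the contradiction must come from further use of diagrammatic simplicity, not from ruling out two-term cycles abstractly. The strategy is to assume a hypothetical alternative value $\int E = E'$ distinct from $A_{P_i}^* E$ with the same boundary, and to propagate the Heisenberg equation to a carefully chosen neighbor of $E$---typically a diagram obtained from $E$ by adding a crossing at a pair different from $P_i$, or a crossingless diagram in the boundary chain---where the value of $\int$ has already been pinned down in the earlier steps and diagrammatic simplicity forbids the value that $E'$ would require. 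Tracing these cascading constraints cleanly, and confirming that they indeed force $\int E = A_{P_i}^* E$ in every two-term scenario, is the delicate part of the argument.
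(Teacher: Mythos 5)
Your first two steps are sound and coincide with the paper's own argument: acyclicity forces the H-data to be twisted and $\int$ to have Maslov degree $+1$; diagrammatic simplicity on a crossingless $D$ gives $\int D = A_P^* D$ for some all-on once occupied pair, and your computation with the diagram $F$ carrying $c_Q$ at a doubly occupied pair correctly shows the pair is the same for all crossingless diagrams. The genuine gap is the extension to crossed diagrams, and you have flagged it yourself: after setting $\phi = \int + A_{P_i}^*$ and observing that $\phi(E)$ is a cycle with at most two terms, you rule out one-term cycles but then leave the two-term case to a ``strategy'' of propagating constraints to unspecified neighbours. That is not an argument; two distinct single diagrams can share the same differential (your own example $c_{p_1}w_{p_2}$ and $w_{p_1}c_{p_2}$), so knowing $\partial\int E = \partial A_{P_i}^* E$ plus diagrammatic simplicity does not by itself force $\int E = A_{P_i}^* E$, and the proof is incomplete exactly at its last step.

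The case closes not by cycle-counting but by using the explicit form of $\partial \int E = E + \int\partial E$ at the distinguished pair $P = P_i$, which is how the paper finishes. Induct on the number of crossed pairs and split according to the local diagram of $E$ at $P$. If $E = w_p \otimes x$ (so $x$ is crossed at $\geq 1$ pairs, hence $\partial x \neq 0$), the inductive hypothesis applied to $\partial E = w_p \otimes \partial x$ gives $\int \partial E = c_p \otimes \partial x$, so $\partial \int E = w_p \otimes x + c_p \otimes \partial x$. A single diagram with this differential must have $c_p$ at $P$ (a diagram with $w_p$ at $P$ has no terms with $c_p$ at $P$ in its boundary, yet $c_p \otimes \partial x \neq 0$), and comparing the terms with $w_p$ at $P$ then forces $\int E = c_p \otimes x = A_P^* E$ -- no two-term ambiguity can occur, because the boundary to be matched contains $E$ itself. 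If $E = c_p \otimes x$, the inductive hypothesis gives $\int \partial E = E$, so $\partial \int E = 0$, and since a crossed single diagram is never a cycle, $\int E = 0 = A_P^* E$. Inserting this in place of your final ``delicate part'' completes the proof; without it, the lemma is not established.
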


\begin{proof}
The existence of the chain homotopy $\int$ implies $\HH(h,s,t) = 0$; being non-singular then $(h,s,t)$ is twisted, so there is an all-on once occupied pair. With $(h,s,t)$ fixed, Maslov degree is given, up to a constant, by the number of pairs at which a diagram is crossed (section \ref{sec:classification_local_diagrams}). Since $\int \partial + \partial \int = 1$ and $\partial$ has Maslov degree $-1$, $\int$ has Maslov degree $1$. 

We use the decomposition $\A(h,s,t) \cong \bigotimes_P \A_P (h_P, s_P, t_P)$, noting (section \ref{sec:local_algebras}) that each $\A_P (h_P, s_P, t_P)$ is isomorphic (as a chain complex) to $C_P, C'_P$ or $C''_P$ (definition \ref{def:C_P}).

Take an arbitrary crossingless diagram $D_0$ with H-data $(h,s,t)$. Then $D_0$ is twisted at each all-on once occupied pair. As $\int$ is diagrammatically simple, $\int D_0$ is a single diagram or $0$. Since $\partial D_0 = 0$ and $\int \partial + \partial \int = 1$ we obtain $\partial \int D_0 = D_0$. Thus $\int D_0$ is a single diagram whose differential is $D_0$. The only such diagrams are those obtained from $D_0$ by inserting a crossing at an all-on once occupied pair $P = \{p,p'\}$ (say occupied at $p$). That is, $\int D_0 = A_P^* D_0$.

We claim that for \emph{any} diagram $D$ with H-data $(h,s,t)$, $\int D = A_P^* D$. The proof is by induction on the number $k$ of pairs at which $D$ is crossed (i.e., up to a constant, Maslov grading).

Suppose $D,D'$ are distinct crossingless diagrams with H-data $(h,s,t)$, which differ by switching strands at a single all-on doubly occupied pair $Q = \{q,q'\}$. The argument above shows that $\int D = A_R^* D$ for some all-on once occupied pair $R = \{r,r'\}$ (occupied at $r$), and similarly that $\int D' = A_V^* D'$ for some all-on once occupied pair $V = \{v,v'\}$ (occupied at $v$). We claim $R=V$. To see why, suppose $R \neq V$ and consider $\A(h,s,t)$ as a tensor product. We may write
\begin{gather*}
D = g_q \otimes w_r \otimes w_v \otimes z, \quad D' = g_{q'} \otimes w_r \otimes w_v \otimes z, \\
\int D = g_q \otimes c_r \otimes w_v \otimes z, \quad
\int D' = g_{q'} \otimes w_r \otimes c_v \otimes z,
\end{gather*}
where the first tensor factor is $C''_Q$, the second is $C'_R$, the third is $C'_V$, and the last is the tensor product given by all other matched pairs. Now we consider the diagram $E = c_Q \otimes w_r \otimes w_v \otimes z$, obtained from either $D$ or $D'$ by inserting a pair of crossing dotted horizontal strands at $Q$. We compute
\[
\int \partial E 
= \int \left( D + D' \right)
= g_q \otimes c_r \otimes w_v \otimes z + g_{q'} \otimes w_r \otimes c_v \otimes z,
\]
and hence $\int E$ is a single diagram (by diagrammatic simplicity) whose differential is
\[
\partial \int E
= \left( \int \partial + 1 \right) E
= g_q \otimes c_r \otimes w_v \otimes z + g_{q'} \otimes w_r \otimes c_v \otimes z
+ c_Q \otimes w_r \otimes w_v \otimes z.
\]
The three diagrams on the right respectively have crossings at $R$, $V$ and $Q$. Hence $\int E$ must have crossings at $R, V$ and $Q$, contradicting the fact that $\int$ has Maslov degree $1$. We conclude that $R=V$.

Thus, if $D,D'$ are two crossingless diagrams with H-data $(h,s,t)$, which differ by switching strands at a single all-on doubly occupied pair, then $\int D$ and $\int D'$ are both given by applying a creation operator $A_P^*$ at the same matched pair $P$. Now any two crossingless diagrams with H-data $(h,s,t)$ can be related by switching strands at some all-on doubly occupied pairs. Repeatedly applying this fact, we see that for any crossingless diagram $D$ with $H$-data $(h,s,t)$, $\int D = A_P^* D$. This proves the result when $k=0$.

Now take a $k \geq 0$ and suppose that, for all diagrams $D$ with H-data $(h,s,t)$ and crossings at $\leq k$ pairs, $\int D = A_P^* D$. Consider a diagram $D$ with H-data $(h,s,t)$, crossed at $k+1$ pairs. Then $D = w_p \otimes x$ or $c_p \otimes x$, where the first tensor factor refers to the complex $C'_P$ for the pair $P$, and the second factor refers to the tensor product given by all other matched pairs.

If $D = w_p \otimes x$ then $\partial D = w_p \otimes \partial x$, which contains diagrams crossed at $k$ pairs. By induction then
\[
\int \partial D = A_P^* \partial D = A_P^* \left( w_p \otimes \partial x \right) = c_p \otimes \partial x.
\]
It follows that $\int D$ is a single diagram (by diagrammatic simplicity) whose differential is
\[
\partial \int D = \left( \int \partial + 1 \right) D = c_p \otimes \partial x + w_p \otimes x.
\]
There is only one such diagram, namely $c_p \otimes x$. Thus $\int D = c_p \otimes x = A_P^* \left( w_p \otimes x \right) = A_P^* D$.

If $D = c_p \otimes x$ then $\partial D = w_p \otimes x + c_p \otimes \partial x$ and so by induction $\int \partial D = A_P^* \partial D = c_p \otimes x = D$. We then have $\partial \int D = \int \partial D + D = 0$, so $\int D$ is a single diagram crossed at $k+1 \geq 1$ pairs, or zero, whose differential is zero. Thus $\int D = 0 = A_P^* D$.

Thus, in any case, $\int D = A_P^* D$. By induction then $\int = A_P^*$.
\end{proof}

It is not difficult to see that, if we drop the requirement that $\int$ be diagrammatically simple, the result no longer holds: there are many $\Z_2$-module homomorphisms $\A(h,s,t) \To \A(h,s,t)$ of pure Maslov degree satisfying $\partial \int + \int \partial = 1$, which are not creation operators, as we see next. 

\subsection{Inverting the differential}
\label{sec:inverting_differential}

The following straightforward lemma shows how a creation operator $A_P^*$ ``integrates", i.e. finds partial inverses of the differential $\partial$ (hence the notation $\int$ of section \ref{sec:creation_operators}), as is required in constructing an $A_\infty$ structure (section \ref{sec:intro_construction}).

\begin{lem}
\label{lem:creations_invert_differential}
Suppose the viable H-data $(h,s,t)$ contains an all-on once occupied pair $P$. If $x \in \A(h,s,t)$ is a cycle, then $x = \partial A_P^* x$.
\end{lem}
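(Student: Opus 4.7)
The plan is to apply the chain homotopy identity of lemma \ref{lem:creation_operators_chain_homotopies} directly. That lemma, proved by a case check on the two local generators $w_p$ and $c_p$ at $P$, establishes that
\[
A_P^* \partial + \partial A_P^* = 1
\]
as endomorphisms of $\A(h,s,t)$, where the all-on once occupied pair $P$ is what allows $A_P^*$ to be defined in the first place (definition \ref{def:creation_operator}).

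Applying this identity to a cycle $x \in \A(h,s,t)$, the hypothesis $\partial x = 0$ kills the first term on the left, leaving $x = \partial A_P^* x$ immediately. There is no obstacle here; this lemma really is just recording a corollary of the Heisenberg/chain-homotopy relation, specialised to cycles, and no further case analysis or induction is needed. The role of the lemma in the sequel is to provide the explicit primitive $A_P^* x$ whenever we need to invert $\partial$ in Kadeishvili's construction, as flagged in section \ref{sec:intro_construction}.
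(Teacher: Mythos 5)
Your proposal is correct and is exactly the paper's argument: apply the Heisenberg relation $A_P^* \partial + \partial A_P^* = 1$ of lemma \ref{lem:creation_operators_chain_homotopies} to the cycle $x$ and use $\partial x = 0$ to discard the first term. Nothing further is needed.
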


\begin{proof}
As $x$ is a cycle, $\partial x = 0$. Hence $x = (A_P^* \partial + \partial A_P^*) x = \partial A_P^* x$.
\end{proof}

Recall from section \ref{sec:dim_algebras} the decomposition $\A(h,s,t) = \bigoplus_n \A_n (h,s,t)$ over Maslov grading, where $\A_n (h,s,t)$ contains diagrams with crossings at $n$ pairs, and the subspaces $Z_n (h,s,t)$ of cycles and $B_n (h,s,t)$ of boundaries. We are interested in maps obeying the following property.

\begin{defn}[Inverting differential]
A $\Z_2$-module homomorphism $\int \colon Z_n (h,s,t) \To \A_{n+1} (h,s,t)$ \emph{inverts the differential} if, for all $x \in Z_n (h,s,t)$, the equation $x = \partial \int x$ holds.
\end{defn}
If we have maps inverting the differential on $Z_n (h,s,t)$ for all $n$, of course these can be combined into a map $Z (h,s,t) \To \A (h,s,t)$ of Maslov degree 1 such that $\partial \int = 1$.

Lemma \ref{lem:creations_invert_differential} says that $A^*_P$ (more precisely, its restriction to $Z_n (h,s,t)$) inverts the differential.

When we have viable H-data $(h,s,t)$ with several all-on once occupied matched pairs $P_1, P_2, \ldots$, there are several creation operators $A^*_{P_1}, A^*_{P_2}, \ldots$ on $\A(h,s,t)$, and hence many ways to invert the differential. However, not every operator which inverts the differential is a creation operator. 

For one thing, we can simply choose different creation operator on each Maslov summand. 
For another, we can also replace a creation operator with a sum of an odd number of creation operators. 
More fundamentally, however, not every operator $\int \colon Z_n (h,s,t) \To \A_{n+1} (h,s,t)$ inverting the differential is a sum of creation operators.
\begin{prop}
\label{prop:inverting_vs_creation}
Let $(h,s,t)$ be twisted H-data with $L \geq 0$ all-on doubly occupied pairs and $N \geq 1$ all-on once occupied pairs. Then the set of $\Z_2$-module homomorphisms $\int \colon Z_n (h,s,t) \To \A_{n+1} (h,s,t)$ inverting the differential is an affine $\Z_2$ vector space of dimension
\begin{equation}
\label{eqn:dim_inverting_differential}
\left[ \sum_k \binom{L}{k} \binom{N+k-1}{n} \right]
\left[ \sum_k \binom{L}{k} \binom{N+k-1}{n+1} \right],
\end{equation}
while the set of linear combinations of creation operators is a $\Z_2$ vector space of dimension $N$.
\end{prop}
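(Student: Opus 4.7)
The plan is to reduce the first claim to a standard splitting count in linear algebra over $\Z_2$, and to handle the second claim by exhibiting an explicit evaluation that detects each creation operator.

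The key input for the first claim is the twistedness hypothesis. Since $(h,s,t)$ is twisted, proposition \ref{prop:homology_summands} gives $\HH(h,s,t)=0$, so $Z_n = B_n$ for every $n \geq 0$. Therefore $\partial \colon \A_{n+1}(h,s,t) \To Z_n(h,s,t)$ is a surjection of finite-dimensional $\Z_2$-vector spaces with kernel $Z_{n+1}(h,s,t)$, and a homomorphism $\int \colon Z_n \To \A_{n+1}$ inverts the differential precisely when it is a linear splitting of this surjection. Such splittings exist (every surjection over a field splits), and the set of splittings is an affine space over $\Hom_{\Z_2}(Z_n, Z_{n+1})$: any two splittings differ by a map into $\ker \partial = Z_{n+1}$, and conversely adding any such map to a splitting produces another splitting. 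Thus this affine space has dimension $\dim Z_n \cdot \dim Z_{n+1}$. Using $Z_n = B_n$ and $Z_{n+1} = B_{n+1}$ together with lemma \ref{lem:dim_Bn} gives exactly the product in (\ref{eqn:dim_inverting_differential}).

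For the second claim, I would enumerate the $N$ creation operators $A^*_{P_1}, \ldots, A^*_{P_N}$, one for each all-on once occupied pair, and show they are $\Z_2$-linearly independent as maps on $\A(h,s,t)$. The test vector is any crossingless diagram $D_0$ with H-data $(h,s,t)$, which exists by proposition \ref{prop:homology_summands} (and is twisted). By definition \ref{def:creation_operator} and the tensor decomposition of $\A(h,s,t)$ across matched pairs, $A^*_{P_i} D_0$ is the single diagram obtained from $D_0$ by inserting a crossing at $P_i$ and leaving all other pairs alone; these $N$ diagrams are pairwise distinct basis elements of $\A(h,s,t)$. Hence any nontrivial $\Z_2$-combination $\sum_i c_i A^*_{P_i}$ sends $D_0$ to a nonzero element, so the span of the $A^*_{P_i}$ has dimension exactly $N$.

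There is no real obstacle to this argument: the first part is a rank count for splittings of a surjection between finite-dimensional $\Z_2$-spaces, the second a one-vector linear independence check using the diagrammatic simplicity and locality of $A^*_P$. The only point deserving care is verifying that the hypotheses of lemma \ref{lem:dim_Bn} (viable non-singular H-data, $N \geq 1$) apply, which is exactly what twistedness together with the standing assumption $N \geq 1$ provide, and confirming that the identification $Z_n = B_n$ (hence $\dim Z_n = \dim B_n$) is used uniformly for both factors.
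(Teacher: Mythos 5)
Your proof is correct and follows essentially the same route as the paper: both identify the inverting maps as an affine space modelled on $\Hom_{\Z_2}(Z_n, Z_{n+1})$ (the paper anchors it at the explicit creation operator $A_P^*$, you at a generic splitting of the surjection $\partial\colon \A_{n+1}\To Z_n$, which exists for the same reason), then use twistedness to get $Z_n = B_n$ and apply lemma \ref{lem:dim_Bn}. Your test-vector argument with a crossingless diagram $D_0$ just makes explicit the paper's remark that the $A^*_{P_i}$ are independent because they affect distinct matched pairs.
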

Clearly the expression (\ref{eqn:dim_inverting_differential}) is in general much larger than $N$, so there exist many more maps inverting the differential than linear combinations of creation operators, as mentioned in section \ref{sec:intro_construction}.

For instance, taking $N=1,L=1,n=0$ the dimensions are 1 and 2; taking $N=4,L=0,n=1$, the dimensions are $4$ and $9$; and so on.

\begin{proof}
Since $(h,s,t)$ is twisted, $\HH(h,s,t)=0$ (proposition \ref{prop:homology_summands}), so $Z_n (h,s,t) = B_n (h,s,t)$ for all $n$. Let $P$ be an arbitrarily chosen all-on once occupied pair. 

Let $\mathcal{S}$ be the set of $\Z_2$-module homomorphisms $Z_n (h,s,t) \To \A_{n+1} (h,s,t)$ that invert the differential. Denoting the restriction of $A_P^*$ to $Z_n (h,s,t)$ (somewhat abusively) again by $A_P^*$, by lemma \ref{lem:creations_invert_differential} $A_P^* \in \mathcal{S}$. 

Let $\mathcal{T}$ be the set of $\Z_2$-module homomorphisms $Z_n (h,s,t) \To \A_{n+1} (h,s,t)$ with image in $Z_{n+1} (h,s,t)$. A $\Z_2$-module homomorphism $T \colon Z_n (h,s,t) \To \A_{n+1} (h,s,t)$ lies in $\mathcal{T}$ if and only if $\partial T = 0$. 

We now observe that $T \in \mathcal{T}$ iff $A_P^* + T \in \mathcal{S}$. Indeed, since $\partial A_P^* = 1$ we have $\partial T = 0$ iff $\partial (A_P^* + T) = 1$. Thus $\mathcal{S} = A_P^* + \mathcal{T}$.

Now the space of all linear combinations of creation operators has dimension $N$: it has basis given by 
operators $A_{P'}^*$, over each of the $N$ all-on once occupied pairs $P'$. (They are linearly independent as they affect distinct matched pairs.)

On the other hand, $\mathcal{T}$ is nothing but the set of $\Z_2$-module homomorphisms $Z_n (h,s,t) \To Z_{n+1} (h,s,t)$. Hence $\mathcal{T}$ is a $\Z_2$ vector space of dimension $\dim Z_n (h,s,t) \dim Z_{n+1} (h,s,t)$, and $\mathcal{S}$ is an affine vector space of the same dimension. Lemma \ref{lem:dim_Bn} gives the dimension of $Z_n (h,s,t) = B_n (h,s,t)$ as $\sum_k \binom{L}{k} \binom{N+k-1}{n}$, which yields the desired result.
\end{proof}



The above deals with inverting the differential when $(h,s,t)$ is twisted, i.e. there is at least one all-on once occupied pair. When there are \emph{no} all-on once occupied pairs, i.e. $(h,s,t)$ is tight, the H-data only permits crossings in places which immediately land us in $\F$.

\begin{lem}
\label{lem:no_FOC_pairs_integral_in_F}
Suppose $(h,s,t)$ is tight. If $x \in \A(h,s,t)$ has pure Maslov grading, and $x = \partial f$ for some $f \in \A(h,s,t)$ also of pure Maslov grading, then $f \in \F$.
\end{lem}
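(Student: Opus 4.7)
The plan is to use tightness of $(h,s,t)$ to locate all possible crossings at all-on doubly occupied pairs, and then to use pure Maslov grading to force every diagram in $f$ to actually have such a crossing.

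First, I would extract the structural consequences of the hypotheses. By Proposition \ref{prop:homology_summands}, tightness of $(h,s,t)$ is equivalent to $(h,s,t)$ having no all-on once occupied pairs. Viability means $h$ has multiplicity at most $1$ on every step of $\mathbf{Z}$, so every diagram in $\A(h,s,t)$, and in particular every diagram appearing in $f$, is automatically viable. Inspecting Table \ref{tbl:local_diagrams} one sees that a viable local diagram can be crossed only if the matched pair is all-on once occupied or all-on doubly occupied; ruling out the former, every crossing in every diagram in $f$ must lie at an all-on doubly occupied pair.

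Next, I would apply the fact recalled in section \ref{sec:classification_local_diagrams}: with H-data fixed, the Maslov grading of a viable diagram equals a fixed constant plus the number of matched pairs at which it is crossed. The pure-Maslov-grading hypothesis therefore implies that every diagram in $f$ has the same number $k$ of crossed pairs. If $k \geq 1$, each such diagram has an all-on doubly occupied crossed pair and therefore lies in $\F$ by Definition \ref{def:ideal_F}, whence $f \in \F$ and we are done. The only remaining case is $k = 0$, in which every diagram in $f$ is crossingless and hence tight (each local H-data is tight by Lemma \ref{lem:local-global_H-data}); then $\partial f = 0$, so $x = 0$, a case excluded by the natural reading of the hypothesis that $x$ has a pure Maslov grading (equivalently, that $x$ is a genuinely nonzero element).

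The main obstacle is conceptual rather than computational: one must cleanly combine viability (which restricts H-gradings), tightness (which kills all-on once occupied pairs), and pure Maslov grading (which homogenises crossing counts) so as to funnel all crossings onto all-on doubly occupied pairs. Once this structural observation is in place, the conclusion follows immediately from the definition of $\F$.
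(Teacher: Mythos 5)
Your argument is essentially the paper's own proof: viability plus tightness of $(h,s,t)$ funnels every possible crossing onto all-on doubly occupied pairs, and the pure Maslov grading forces each diagram of $f$ to carry at least one crossing, hence to lie in $\F$. The paper gets the ``at least one crossing'' step slightly differently, noting that $\partial$ has Maslov degree $-1$ so the diagrams of $f$ are crossed at one more pair than those of $x$; your explicit treatment of the $k=0$ (i.e.\ $x=0$) case corresponds to the same implicit assumption $x\neq 0$ in the paper, which is how the lemma is actually applied there.
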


\begin{proof}
Since $f$ has pure Maslov grading and $\partial f = x$, $f$ is a sum of viable diagrams, each crossed at one more matched pair than $x$. From proposition \ref{prop:viable_diagram_classification} and table \ref{tbl:local_diagrams}, crossings can only occur at all-on once or doubly occupied pairs. But tight $(h,s,t)$ have none of the former (proposition \ref{prop:homology_summands}), so any crossing occurs at a doubly occupied pair. Hence all diagrams in $f$ lie in $\F$.
\end{proof}

\subsection{Global creation operators}
\label{sec:global_creation}

For any twisted H-data $(h,s,t)$ on the arc diagram $\ZZ$ (i.e. $(h,s,t) \in \mathbf{w}(\ZZ)$: definition \ref{def:tightness_H-data}), there is an all-on once occupied pair $P$ (proposition \ref{prop:homology_summands}), and hence a creation operator $A_P^*$ on $\A(h,s,t)$ (definition \ref{def:creation_operator}) which inverts the differential (lemma \ref{lem:creations_invert_differential}).

We now introduce formalism to piece together such operators into a ``global" operator on all twisted summands. 
\begin{defn}[Creation choice function]
\label{def:creation_choice_function}
A \emph{creation choice function} for $\ZZ$ assigns to each $(h,s,t) \in \mathbf{w}(\ZZ)$ one of its all-on once occupied matched pairs.
\end{defn}
If $\CC$ is a creation choice function and $(h,s,t)$ is twisted, we write $\CC(h,s,t)$ for the all-on once occupied matched pair chosen by $\CC$. Hence there is a creation operator
\[
A_{\CC(h,s,t)}^* \colon \A(h,s,t) \To \A(h,s,t).
\]

\begin{defn}
\label{def:global_creation_operator}
Let $\CC$ be a creation choice function for $\ZZ$.  The \emph{creation operator} of $\CC$ is the $\Z_2$-module homomorphism
\[
A_\CC^* \colon \bigoplus_{(h,s,t) \in \mathbf{w}} \A(h,s,t) \To \bigoplus_{(h,s,t) \in \mathbf{w}} \A(h,s,t)
\quad \text{given by} \quad
A_\CC^* = \bigoplus_{(h,s,t) \in \mathbf{w}} A_{\CC(h,s,t)}^*.
\]
\end{defn}

Putting together what we know on each summand, in particular the Heisenberg relation (lemma \ref{lem:creation_operators_chain_homotopies}) and differential  inversion (lemma \ref{lem:creations_invert_differential}), we immediately obtain the following. 
\begin{prop}
\label{prop:creation_operator_Heisenberg}
The creation operator $A_\CC^*$ of a creation choice function $\CC$ preserves H-grading, has Maslov degree $1$, and satisfies 
\[
A_\CC^* \partial + \partial A_\CC^* = 1.
\]
Moreover, for any cycle $x \in \bigoplus_{(h,s,t) \in \mathbf{w}} \A(h,s,t)$, we have $x = \partial A_\CC^* x$.
\qed
\end{prop}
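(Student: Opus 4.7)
The plan is to verify each claim summand by summand, using the results already established for a single all-on once occupied matched pair, and then assemble everything via the direct sum decomposition over $\mathbf{w}(\ZZ)$. The key observation making this work is that the differential $\partial$ respects the H-data decomposition $\A = \bigoplus_{h,s,t} \A(h,s,t)$, since resolving a crossing changes neither the H-grading nor the idempotents. Hence $\partial$ restricts to each summand $\A(h,s,t)$, and in particular to $\bigoplus_{(h,s,t) \in \mathbf{w}} \A(h,s,t)$, so it makes sense to ask about its interaction with $A_\CC^*$.

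First I would verify the grading statements. By construction $A_\CC^*$ is the direct sum of maps $A_{\CC(h,s,t)}^*$ each acting on a single summand $\A(h,s,t)$, so H-data (in particular H-grading) is preserved. For the Maslov degree, recall that with H-data fixed, Maslov grading is determined up to a constant by the number of crossed matched pairs (section \ref{sec:classification_local_diagrams}). A local creation operator inserts a crossing at one all-on once occupied pair --- sending $w_p \mapsto c_p$ and $c_p \mapsto 0$ --- and so raises Maslov grading by $1$ on $\A(h,s,t)$, and kills anything already crossed there.

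Next I would establish the Heisenberg relation $A_\CC^* \partial + \partial A_\CC^* = 1$. Because both $\partial$ and $A_\CC^*$ preserve the decomposition $\bigoplus_{(h,s,t) \in \mathbf{w}} \A(h,s,t)$, it suffices to check the relation on each summand. On $\A(h,s,t)$ with $(h,s,t) \in \mathbf{w}$, $A_\CC^*$ acts as $A_{\CC(h,s,t)}^*$, so the claim reduces exactly to Lemma \ref{lem:creation_operators_chain_homotopies}. Summing over all $(h,s,t) \in \mathbf{w}$ gives the global identity.

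Finally, for a cycle $x \in \bigoplus_{(h,s,t) \in \mathbf{w}} \A(h,s,t)$, decompose $x = \sum_{(h,s,t) \in \mathbf{w}} x_{h,s,t}$ with $x_{h,s,t} \in \A(h,s,t)$; each $x_{h,s,t}$ is itself a cycle because $\partial$ respects the decomposition. Then Lemma \ref{lem:creations_invert_differential} gives $x_{h,s,t} = \partial A_{\CC(h,s,t)}^* x_{h,s,t}$, and summing yields $x = \partial A_\CC^* x$. There is no real obstacle here: the whole proof is essentially a bookkeeping exercise that promotes the local Heisenberg identities to the direct sum, with the only substantive point being that $\partial$ preserves H-data so the summand-wise arguments glue together trivially.
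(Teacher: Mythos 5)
Your proposal is correct and follows exactly the paper's route: the paper obtains the proposition immediately by assembling the summand-wise Heisenberg relation (lemma \ref{lem:creation_operators_chain_homotopies}) and differential inversion (lemma \ref{lem:creations_invert_differential}) across the decomposition $\bigoplus_{(h,s,t) \in \mathbf{w}} \A(h,s,t)$, using that $\partial$ preserves H-data. Your grading check and summand-wise bookkeeping simply make explicit what the paper treats as immediate.
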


Moreover, applying lemma \ref{lem:chain_homotopies_creations} over all $(h,s,t) \in \mathbf{w}$, we immediately have the following.
\begin{prop}
\label{prop:chain_homotopy_classification}
Suppose $\int \colon \bigoplus_{(h,s,t) \in \mathbf{w}} \A(h,s,t) \To \bigoplus_{(h,s,t) \in \mathbf{w}} \A(h,s,t)$ is a diagrammatically simple $\Z_2$-module homomorphism which preserves H-data, has pure Maslov degree, and satisfies
\[
\int \partial + \partial \int = 1.
\]
Then $\int$ is the creation operator $A_\CC^*$ of a creation choice function $\CC$.
\qed
\end{prop}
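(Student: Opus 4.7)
The plan is to reduce the global statement to its local analogue, Lemma \ref{lem:chain_homotopies_creations}. The crucial observation is that because $\int$ preserves H-data, it respects the direct sum decomposition, so
\[
\int = \bigoplus_{(h,s,t) \in \mathbf{w}} \int_{h,s,t},
\]
where each $\int_{h,s,t}$ is the restriction of $\int$ to the summand $\A(h,s,t)$.

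Next I would check that each $\int_{h,s,t}$ inherits all hypotheses of Lemma \ref{lem:chain_homotopies_creations}. Diagrammatic simplicity is immediate: every diagram $D$ lies in a unique $\A(h,s,t)$, and there $\int D = \int_{h,s,t} D$ is zero or a single diagram. Pure Maslov degree transfers trivially. For the Heisenberg relation, since $\partial$ preserves H-data, the identity $\int \partial + \partial \int = 1$ restricts summand-by-summand to give $\int_{h,s,t} \partial + \partial \int_{h,s,t} = 1$ on $\A(h,s,t)$. Since $(h,s,t) \in \mathbf{w}$ is twisted and in particular non-singular (Definition \ref{def:tightness_H-data}), Lemma \ref{lem:chain_homotopies_creations} applies, producing an all-on once occupied pair $P_{h,s,t}$ of $(h,s,t)$ with $\int_{h,s,t} = A^*_{P_{h,s,t}}$.

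Finally, I would define a creation choice function $\CC$ (Definition \ref{def:creation_choice_function}) by $\CC(h,s,t) = P_{h,s,t}$; the existence of such a pair for every $(h,s,t) \in \mathbf{w}$ (Proposition \ref{prop:homology_summands}) ensures this is well-defined. Reassembling via Definition \ref{def:global_creation_operator},
\[
A^*_\CC = \bigoplus_{(h,s,t) \in \mathbf{w}} A^*_{\CC(h,s,t)} = \bigoplus_{(h,s,t) \in \mathbf{w}} \int_{h,s,t} = \int.
\]
There is no real obstacle here: all the substantive work has already been done in Lemma \ref{lem:chain_homotopies_creations}, and the only thing to verify is that the three hypotheses localise cleanly to each H-summand, which they do because $\partial$ and the H-grading decomposition are compatible. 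This is why the excerpt claims the result follows immediately.
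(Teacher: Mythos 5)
Your proposal is correct and is exactly the paper's argument: the paper obtains the proposition by "applying lemma \ref{lem:chain_homotopies_creations} over all $(h,s,t) \in \mathbf{w}$", i.e. the same summand-by-summand reduction you spell out, with the pairs chosen on each twisted summand assembled into a creation choice function. Your write-up just makes explicit the routine verification that the hypotheses restrict to each H-summand, which the paper leaves implicit.
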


\subsection{Cycle selection and creation operators via ordering}
\label{sec:ordering}

In section \ref{sec:cycle_selection_homs} we defined a diagrammatically simple cycle selection homomorphism $f^\CC \colon \HH \To \A$, for any cycle choice function $\CC$. 
Indeed, we saw (lemma \ref{lem:cycle_selection_classification}) that any diagrammatically simple cycle selection homomorphism is of this form.

Quite separately, in section \ref{sec:global_creation} we defined a creation operator $A_\CC^*$, 
for any creation choice function $\CC$. 
Moreover, we showed (proposition \ref{prop:chain_homotopy_classification}) that any diagrammatically simple chain homotopy from the identity to with appropriate gradings is of the form $A_\CC^*$ for some creation choice function $\CC$.

We now discuss a useful method to obtain such ``choice functions", of both types. 
\begin{defn}
\label{defn:pair_ordering}
Let the pairs of $\ZZ$ be $P_1 = \{p_1, p'_1\}, \ldots, P_k = \{p_k, p'_k\}$. A \emph{pair ordering} on $\ZZ$ consists of a total order on each of the sets
\[
\{ P_1, \ldots, P_k \}, P_1, P_2, \ldots, P_k.
\]
\end{defn}
Thus a pair ordering puts the pairs of $\ZZ$ in some order; and also puts the two places of each pair in some order. We denote a pair ordering by $\preceq$, and use this symbol for each of the total orders involved.

We note that $\ZZ$ comes with several naturally ordered sets that can be used to give a pair ordering. Recall that $\mathbf{Z}$ consists of $l$ intervals $Z_1, \ldots, Z_l$. Each interval is naturally totally ordered. Further, a total ordering of these intervals is implicitly given in listing them as $Z_1, \ldots, Z_l$. Then $\mathbf{Z}$ is totally ordered, and as $\mathbf{a}$ and each $P_i$ are subsets of $\mathbf{Z}$, they inherit a total order. The ordering on places can also be used to obtain an ordering on the set $\{P_1, \ldots, P_k\}$, in various reasonable ways; for instance if $P_i = \{p_i, p'_i\}$ and $P_j = \{p_j, p'_j\}$, we could define $P_i \prec P_j$ when $\min_{\preceq} \{p_i, p'_i\} \prec \min_{\preceq} \{p_j, p'_j\}$. Thus we obtain a pair ordering. But there is nothing natural about this way to order pairs, just as there is nothing natural about the ordering $Z_1, \ldots, Z_l$ of intervals; reordering the $Z_i$ yields a homeomorphic arc diagram, but an entirely different pair ordering.

Nonetheless, once we have a pair ordering, we naturally obtain a cycle choice function and a creation choice function, as follows.
\begin{defn}
\label{def:choice_functions_from_ordering}
Let $\preceq$ be a pair ordering on $\ZZ$.
\begin{enumerate}
\item
The \emph{cycle choice function of $\preceq$}, denoted $\mathcal{CY}^{\preceq}$, assigns to each set of tight H-data $(h,s,t) \in \mathbf{g}(\ZZ)$ the pair choice function on $\mathbf{P}_{h,s,t}$ which chooses from each all-on doubly occupied pair its $\preceq$-minimal place.
\item
The \emph{creation choice function of $\preceq$}, denoted $\mathcal{CR}^{\preceq}$, assigns to each twisted set of H-data $(h,s,t) \in \mathbf{w}(\ZZ)$ its $\preceq$-minimal all-on once occupied matched pair.
\end{enumerate}
\end{defn}
Note the definition of $\mathcal{CY}^\preceq$ uses the ordering on the $P_i$, while the definition of $\mathcal{CR}^\preceq$ uses the ordering on $\{P_1, \ldots, P_k\}$.

Suppose $P_i = \{p_i, p'_i\}$ is an all-on doubly occupied pair for the tight H-data $(h,s,t)$, with $p_i \prec p'_i$. Since $\mathcal{CY}^{\preceq}(h,s,t)$ chooses the $\preceq$-minimal place at each all-on doubly occupied pair, the resulting cycle selection homomorphism $f^{\mathcal{CY}^\preceq}$ always chooses a diagram with strands beginning and ending at $p_i$ rather than $p'_i$.

Now suppose the pairs of $\ZZ$ are ordered as $P_1 \prec P_2 \prec \cdots \prec P_k$. Since $\mathcal{CR}^{\preceq}$ chooses, for given twisted H-data $(h,s,t)$, the all-on once occupied matched pair which is minimal in the ordering of $\{P_1, \ldots, P_k\}$. So the resulting creation operator $A^*_{\mathcal{CR}^\preceq}$ inserts a crossing at $P_1$, if it is all-on once occupied; otherwise at $P_2$, if it is all-on once occupied; and so on. 

Clearly not every cycle choice function arises from a pair ordering, nor does every creation choice function. Nonetheless pair orderings provide a useful method to construct cycle choice functions and creation choice functions, and thus to construct $A_\infty$ structures on $\HH$.

\section{Tensor products of strand diagrams}
\label{sec:tensor_products}

\subsection{Anatomy and terminology for tensor products}
\label{sec:anatomy_tensor_products}

We now consider tensor powers of $\A$ and $\HH$ in some detail. Since $\A$ is freely generated (as a $\Z_2$ vector space) by (symmetrised $\ZZ$-constrained augmented strand) diagrams on $\ZZ$, its tensor power $\A^{\otimes n}$ is freely generated by tensor products $D_1 \otimes \cdots \otimes D_n$ of such diagrams.

The Maslov and H-gradings on $\A$ naturally carry over to $\A^{\otimes n}$, in such a way that the gradings of $D_1 \otimes \cdots \otimes D_n$ agree with those of the product $D_1 \cdots D_n$ (when it is nonzero) in $\A$. Recall (section \ref{sec:strand_diagrams}) that $h(D), \iota(D)$ denote the H- and Maslov gradings of a diagram $D$, and $m \colon H_1 (\mathbf{Z}, \mathbf{a}) \times H_0 (\mathbf{a}) \To \frac{1}{2} \Z$ counts average local multiplicities around places.

Throughout this section, let $D_1, \ldots, D_n$ be diagrams with H-data $(h_1, s_1, t_1), \ldots, (h_n, s_n, t_n)$, and homology classes $M_1, \ldots, M_n$.

We saw in section \ref{sec:strand_diagrams} that $\iota(D_1 D_2) = \iota(D_1) + \iota(D_2) + m(h_2, \partial h_1)$ and $h(D_1 D_2) = h(D_1) + h(D_2)$. Applying this result repeatedly shows that
\[
h \left( D_1 \cdots D_n \right) = \sum_{i=1}^n h(D_i) = \sum_{i=1}^n h_i
\quad \text{and} \quad
\iota \left( D_1 \cdots D_n \right) = \sum_{i=1}^n \iota(D_i) + \sum_{1 \leq j < k \leq n} m \left( h_k, \partial h_j \right),
\]
motivating the following definition.

\begin{defn}[Gradings for tensor products] \
\begin{enumerate}
\item
The \emph{H-grading} of $D_1 \otimes \cdots \otimes D_n$ or $M_1 \otimes \cdots \otimes M_n$ is
\[
h \left( D_1 \otimes \cdots \otimes D_n \right) 
= h \left( M_1 \otimes \cdots \otimes M_n \right)
= \sum_{i=1}^n h_i \in  H_1 ( {\bf Z}, {\bf a}).
\]
\item
The \emph{Maslov grading} of $D_1 \otimes \cdots \otimes D_n$ or $M_1 \otimes \cdots \otimes M_n$ is 
\[
\iota \left( D_1 \otimes \cdots \otimes D_n \right) 
= \iota \left( M_1 \otimes \cdots \otimes M_n \right)
= \sum_{i=1}^n \iota(D_i) + \sum_{1 \leq j < k \leq n} m \left( h_k, \partial h_j \right).
\]
\end{enumerate}
\end{defn}

The notion of viability (section \ref{sec:viability}) also extends usefully to $\A^{\otimes n}$. We add a requirement that idempotents must match. Thus, viability of tensor products is fundamentally a property of \emph{H-data} (not just H-grading).

The definition uses the decomposition of $\A^{\otimes n}$ over H-data: from $\A = \bigoplus_{(h,s,t)} \A(h,s,t)$, we have
\[
\A^{\otimes n} = \bigoplus_{(h_1, s_1, t_1), \ldots, (h_n, s_n, t_n)} 
\Big( \A(h_1, s_1, t_1) \otimes \A(h_2, s_2, t_2) \otimes \cdots \otimes \A(h_n, s_n, t_n) \Big).
\]
There is a similar decomposition of the homology $\HH^{\otimes n}$.

\begin{defn}[Viability for tensor products] \
\label{defn:viability_tensor_products}
\begin{enumerate}
\item
A sequence of H-data $(h_1, s_1, t_1), \ldots, (h_n, s_n, t_n)$ is \emph{viable} if the following conditions hold:
\begin{enumerate}
\item
for each $1 \leq i \leq n-1$, $t_i = s_{i+1}$; and
\item
$h_1 + \cdots + h_n$ is viable (i.e. has multiplicity $0$ or $1$ on each step of $\mathbf{Z}$).
\end{enumerate}
\item
A summand $\A(h_1, s_1, t_1) \otimes \cdots \otimes \A(h_n, s_n, t_n)$ of $\A^{\otimes n}$, or a summand $\HH(h_1, s_1, t_1) \otimes \cdots \otimes \HH(h_n, s_n, t_n)$ of $\HH^{\otimes n}$, is \emph{viable} if the sequence of H-data $(h_1, s_1, t_1)$, $\ldots$, $(h_n, s_n, t_n)$ is viable.
\item
An element of $\A^{\otimes n}$ or $\HH^{\otimes n}$ is \emph{viable} if it lies in a viable summand.
\end{enumerate}
\end{defn}
We refer to the first condition of (i), that all $t_i = s_{i+1}$, as \emph{idempotent matching}. When it fails, we say we have an \emph{idempotent mismatch}. 

Thus the tensor product $D_1 \otimes \cdots \otimes D_n \in \A^{\otimes n}$ or $M_1 \otimes \cdots \otimes M_n \in \HH^{\otimes n}$ is viable iff the sets of H-data $(h_1, s_1, t_1), \ldots, (h_n, s_n, t_n)$ form a viable sequence.

We observe that when $n=1$, the notions of viability discussed above reduce to those previously discussed. The notion of H-data then naturally extends to a tensor product.

\begin{defn}[H-data of tensor product]
\label{def:H-data_tensor_product}
If $D_1 \otimes \cdots \otimes D_n$ or $M_1 \otimes \cdots \otimes M_n$ is viable, its \emph{H-data} is the triple $(h_1 + \cdots + h_n, s_1, t_n)$.
\end{defn}

The notions of occupation of places (definition \ref{def:occupation_places}) and pairs (definition \ref{def:occupation_pairs}) only depend on H-grading; and on/off terminology (definition \ref{def:idempotent_terminology}) only depends on idempotents. Hence these notions naturally extend to tensor products.
\begin{defn}[Occupation of places and pairs by tensor product] \
\begin{enumerate}
\item
A place is unoccupied, pre-half-occupied, post-half-occupied, half-occupied, or fully occupied by $D_1 \otimes \cdots \otimes D_n$ (resp. $M_1 \otimes \cdots \otimes M_n$) if it is so occupied by $h(D_1 \otimes \cdots \otimes D_n)$ (resp. $h(M_1 \otimes \cdots \otimes M_n)$).
\item
A pair $P$ is unoccupied, one-half-occupied, pre/post-one-half-occupied, alternately occupied, sesqui-occupied, pre/post-sesqui occupied, or doubly occupied by $D_1 \otimes \cdots \otimes D_n$ (resp. $M_1 \otimes \cdots \otimes M_n$) if it is so occupied by $h(D_1 \otimes \cdots \otimes D_n)$ (resp. $h(M_1 \otimes \cdots \otimes M_n)$).
\end{enumerate}
\end{defn}

\begin{defn}[Idempotent terminology for tensor products]
A viable tensor product $D_1 \otimes \cdots \otimes D_n$ or $M_1 \otimes \cdots \otimes M_n$ is off-off/00, off-on/01, on-off/10 or on-on/11 at a pair $P$ accordingly as the H-data of $D_1 \otimes \cdots \otimes D_n$ or $M_1 \otimes \cdots \otimes M_n$ is off-off/00, off-on/01, on-off/10 or on-on/11 at $P$.
\end{defn}
Thus we describe the on/off/0/1 status of $D_1 \otimes \cdots \otimes D_n$ at $P$ via definition \ref{def:idempotent_terminology} using $s=s_1$ and $t=t_n$. In other words, we use the idempotents $s_1, t_n$ at the beginning and end of the 	tensor product; the ``interior" idempotents $s_2, \ldots, s_{n-1}$ are irrelevant. (For this reason we avoid the terminology ``all-on" or ``all-off" in this context, and prefer the more awkward but less misleading ``on-on", $11$, etc.)

We can also extend the notion of a local diagram to tensor products.
\begin{defn}[Local tensor product]
\label{def:local_tensor_product}
Let $P$ be a matched pair of $\ZZ$ and $D_1 \otimes \cdots \otimes D_n \in \A(\ZZ)^{\otimes n}$ a viable tensor product. The \emph{local tensor product} $(D_1 \otimes \cdots \otimes D_n)_P$ is the tensor product of local diagrams on $\ZZ_P$
\[
\left( D_1 \otimes \cdots \otimes D_n \right)_P
= \left( D_1 \right)_P \otimes \cdots \otimes \left( D_n \right)_P
\in \A_P^{\otimes n}.
\]
Similarly for homology classes,
\[
\left( M_1 \otimes \cdots \otimes M_n \right)_P
= \left( M_1 \right)_P \otimes \cdots \otimes \left( M_n \right)_P \in \HH_P^{\otimes n}.
\]
\end{defn}

An important property of viability is the following.
\begin{lem} \
\label{lem:nonzero_viable_tensor_product}
\begin{enumerate}
\item 
If the product $D_1 \cdots D_n$ is a nonzero viable diagram, then $D = D_1 \otimes \cdots \otimes D_n$ is viable.
\item
If the product $M_1 \cdots M_n$ is a nonzero homology class, then $M = M_1 \otimes \cdots \otimes M_n$ is viable.
\end{enumerate}
\end{lem}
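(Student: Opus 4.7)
The plan is to check directly the two conditions of viability in definition \ref{defn:viability_tensor_products}: idempotent matching, and viability of the total H-grading $h_1+\cdots+h_n$.

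First I would verify idempotent matching for both parts simultaneously. The idempotents $I(s)$, as $s$ ranges over subsets of $\{1,\ldots,k\}$, are pairwise orthogonal in $\A$: since each $I(s)$ is a sum of idempotents $I(S)$ over sections $S$ of $s$ under $M$, and since $I(S)I(T)$ vanishes for $S \neq T$ while a single $S \subseteq \mathbf{a}$ is a section of at most one subset of $\{1,\ldots,k\}$, one obtains $I(s)I(t) = \delta_{s,t} I(s)$. Hence for any $D_i \in \A(h_i, s_i, t_i) = I(s_i) \A(h_i) I(t_i)$, the product $D_1 \cdots D_n$ contains internal factors $I(t_i) I(s_{i+1})$, each of which vanishes unless $t_i = s_{i+1}$. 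So if $D_1 \cdots D_n \neq 0$, then idempotents match. The same applies in $\HH$, which inherits the idempotent decomposition from $\A$; so nonvanishing of $M_1 \cdots M_n$ likewise forces $t_i = s_{i+1}$ for all $i$.

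Next I would check the H-grading condition. Under the matched-idempotent hypothesis, H-grading is additive under multiplication (section \ref{sec:strand_diagrams}), so $D_1 \cdots D_n \in \A(h_1+\cdots+h_n, s_1, t_n)$. For (i), the assumption that this product is viable means, by definition \ref{def:viability}, that its H-grading $h_1+\cdots+h_n$ has multiplicity $0$ or $1$ on every step of $\mathbf{Z}$ — which is exactly the second clause of viability for the tensor product $D$. For (ii), nonvanishing of $M_1 \cdots M_n \in \HH(h_1+\cdots+h_n, s_1, t_n)$ means this summand is nonzero; by proposition \ref{prop:homology_summands}, the H-data $(h_1+\cdots+h_n, s_1, t_n)$ must then be tight, and tight H-data is in particular viable, so again $h_1+\cdots+h_n$ is viable.

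There is no real obstacle here: the content of the lemma is just that both clauses of tensor-product viability --- idempotent matching and additive H-grading viability --- are already forced by nonvanishing of the corresponding algebra/homology product. The only thing to watch is to cite orthogonality of the $I(s)$ and the characterisation of nonzero homology summands (proposition \ref{prop:homology_summands}) to close the argument on the homology side.
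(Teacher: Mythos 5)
Your argument is correct and takes essentially the same route as the paper: an idempotent mismatch forces the product to vanish, additivity of the H-grading settles part (i), and for part (ii) a nonzero homology class can only live in a summand with viable H-grading. One small citation fix: Proposition \ref{prop:homology_summands} is stated only for viable H-data, so it cannot itself be used to deduce viability; for (ii) you should instead appeal to the homology computation recalled in section \ref{sec:strand_diagrams} (Lipshitz--Ozsv\'{a}th--Thurston), which shows that $\HH(h,s,t)=0$ whenever $(h,s,t)$ is not viable --- exactly the fact the paper's proof uses.
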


\begin{proof}
In both cases, if idempotents don't match then the product is zero. In the first case, viability of $D_1 \cdots D_n$ then implies viability of $D$. In the second case, $\HH(h,s,t)$ is only nonzero for viable $(h,s,t)$, so $M_1 \cdots M_n$ nonzero then implies viability of $M$.
\end{proof}
Note that neither of the converses to lemma \ref{lem:nonzero_viable_tensor_product}(i) or (ii) is true: there exist viable $D_1 \otimes \cdots \otimes D_n$ with $D_1 \cdots D_n = 0$, and viable $M_1 \otimes \cdots \otimes M_n$ with $M_1 \cdots M_n = 0$. In fact, $D_1 \otimes \cdots \otimes D_n$ and $M_1 \otimes \cdots \otimes M_n$ may be viable, yet there may not exist any strand diagram with its H-data! See e.g. figure \ref{fig:singular_example}. We introduce the notions of ``critical" and ``singular" to describe these phenomena below.

\subsection{Tightness of tensor products of diagrams}
\label{sec:tightness_tensor_products}

As usual, throughout this section $D_1, \ldots, D_n$ are diagrams on $\ZZ$. We extend our notions of tightness/twistedness to tensor products. Note that if $D_1 \otimes \cdots \otimes D_n$ is viable, and the product $D_1 \cdots D_n$ is nonzero, then $D_1 \cdots D_n$ is a viable diagram, so (definition \ref{def:tight_twisted_crossed_diagrams}) is tight, twisted, or crossed.

\begin{defn}[Tightness of tensor product]
\label{def:tensor_product_tightness}
Suppose $D = D_1 \otimes \cdots \otimes D_n$ is viable, with H-data $(h,s,t)$.
\begin{enumerate}
\item If $D_1 \cdots D_n$ is nonzero and tight, and all $D_i$ are tight, then $D$ is \emph{tight}.
\item If $D_1 \cdots D_n$ is nonzero and tight, but not all $D_i$ are tight, then $D$ is \emph{sublime}.
\item If $D_1 \cdots D_n$ is nonzero and twisted, then $D$ is \emph{twisted}.
\item If $D_1 \cdots D_n$ is nonzero and crossed, then $D$ is \emph{crossed}.
\item If $D_1 \cdots D_n = 0$, but $\A(h,s,t) \neq 0$, then $D$ is \emph{critical}.
\item If $D_1 \cdots D_n = 0$ and $\A(h,s,t) = 0$, then $D$ is \emph{singular}.
\end{enumerate}
\end{defn}
Definition \ref{def:tensor_product_tightness} presents tightness as as a list of things that go increasingly wrong. First a diagram is not tight; then the resulting diagram is twisted; then crossed; then zero; and then its existence is nonsensical. It is clear that any viable $D$ falls into precisely one of these types. 

Note that this definition applies equally well if we consider $D$ at a single matched pair $P = \{p,p'\}$, or as a whole. So we may say that $D$ is tight (sublime, twisted, etc.) at $P$, meaning that $D_P$ is tight (sublime, twisted, etc.) on $\ZZ_P$. 

The condition that $\A(h,s,t) \neq 0$ is equivalent to the existence of a diagram with H-data $(h,s,t)$. Thus when $D$ is singular, its H-data $(h,s,t)$ is also singular: no diagram exists with this H-data.

When $n=1$, the notions of tightness reduce to those of definition \ref{def:tight_twisted_crossed_diagrams}; the sublime, critical and singular cases do not arise.

When $D$ is twisted at a pair $P = \{p,p'\}$, then $D_1 \cdots D_n$ is twisted at one of the places $p$ or $p'$ (definition \ref{def:twisted_at_place}), and we say $D$ is twisted at $p$ or $p'$ accordingly.

When $D$ is viable, since idempotents match according to $t_i = s_{i+1}$, we can draw $D$ as a sequence of strand diagrams, side by side, where the right hand side of $D_i$ coincides with the left hand side of $D_{i+1}$. Thus we regard $\A^{\otimes n}$ as a ``horizontal" tensor product, and the local decomposition $\A = \bigotimes \A_P$ as a ``vertical" tensor product.

For instance, figure \ref{fig:sesqui_tensor_product} depicts a viable tensor product of 6 diagrams, at a pair which is sesqui-occupied. 

\begin{figure}
\begin{center}
\begin{tikzpicture}[scale=1.5]
\strandbackgroundshading
\tstrandbackgroundshading{1}
\tstrandbackgroundshading{2}
\tstrandbackgroundshading{3}
\tstrandbackgroundshading{4}
\tstrandbackgroundshading{5}
\afterwused
\tbeforewused{2}
\taftervused{5}
\strandsetup
\tstrandsetup{1}
\tstrandsetup{2}
\tstrandsetup{3}
\tstrandsetup{4}
\tstrandsetup{5}
\lefton
\rightoff
\trightoff{1}
\trighton{2}
\trighton{3}
\trighton{4}
\trightoff{5}
\usea
\tuseb{2}
\tdothorizontals{3}
\tdothorizontals{4}
\tusec{5}
\end{tikzpicture}
\caption{A sesqui-occupied tight tensor product of 6 diagrams.}
\label{fig:sesqui_tensor_product}
\end{center}
\end{figure}
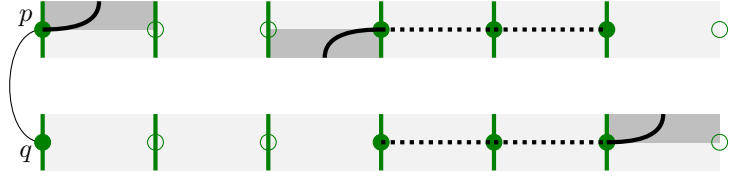

The following lemma generalises lemma \ref{lem:tightness_degeneracy}.
\begin{lem}[Local-global tightness of tensor product]
Let $D = D_1 \otimes \cdots \otimes D_n$ be viable.
\label{lem:tensor_product_local_tightness}
\begin{enumerate}
\item $D$ is tight iff $D$ is tight at all matched pairs.
\item $D$ is sublime iff $D$ is tight or sublime at all matched pairs, and sublime at $\geq 1$ matched pair.
\item $D$ is twisted iff $D$ is tight, sublime or twisted at all matched pairs, and twisted at $\geq 1$ matched pair.
\item $D$ is crossed iff $D$ is tight, sublime, twisted or crossed at all matched pairs, and crossed at $\geq 1$ matched pair.
\item $D$ is critical iff $D$ is tight, sublime, twisted, crossed or critical at all matched pairs, and critical at $\geq 1$ matched pair.
\item $D$ is singular iff $D$ is singular at $\geq 1$ matched pair.
\end{enumerate}
\end{lem}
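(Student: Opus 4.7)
The plan is to assemble three local-to-global ingredients that are already in hand: lemma \ref{lem:tightness_degeneracy} (tightness of a viable diagram), lemma \ref{lem:local-global_H-data} (tightness of viable H-data), and the fact (established at the end of section \ref{sec:local_diagrams}) that for viable diagrams the operations of multiplying and of cutting into fragments commute, so that $(D_1 \cdots D_n)_P = (D_1)_P \cdots (D_n)_P$ whenever both sides make sense. Each of the six conditions of definition \ref{def:tensor_product_tightness} is built from three yes/no tests: (a) whether $D_1 \cdots D_n = 0$; (b) if nonzero, whether the resulting viable diagram is tight, twisted, or crossed; (c) whether each factor $D_i$ is tight; and in the zero case (d) whether $\A(h,s,t) = 0$. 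I will verify that all four tests are themselves local-to-global, after which the six equivalences fall out by bookkeeping.

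For the "nonzero product" cases (i)--(iv), I would first argue that $D_1 \cdots D_n \neq 0$ if and only if $(D_1)_P \cdots (D_n)_P \neq 0$ for every $P$: the forward direction is immediate from the commutativity of multiplication with local restriction, and the backward direction is given by the same result (which says viability plus nonzero local products forces the global product to be nonzero and viable). Once nonzero, apply lemma \ref{lem:tightness_degeneracy} to $D_1 \cdots D_n$ to translate global tight/twisted/crossed into the pattern of local tight/twisted/crossed; and apply lemma \ref{lem:tightness_degeneracy} to each individual $D_i$ to translate "some $D_i$ is not tight" into "at some $P$, some $(D_i)_P$ is not tight". Combining these two translations gives (i) (all $D_i$ tight and $D_1\cdots D_n$ tight nonzero iff tight at every $P$), (ii) (same product condition but some $(D_i)_P$ not tight iff tight or sublime everywhere and sublime somewhere), (iii), and (iv) in turn. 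The only subtlety to check is that at a pair $P$ where the local product is tight but some $(D_i)_P$ is non-tight, this is exactly the definition of "$D$ sublime at $P$"; similarly at a twisted or crossed $P$ nothing prevents some $(D_i)_P$ from being non-tight, but this does not alter the classification since the product's tightness already controls cases (iii)--(iv).

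For the "zero product" cases (v) and (vi), I would use test (d). By lemma \ref{lem:local-global_H-data}, $(h,s,t)$ is singular iff some $(h_P, s_P, t_P)$ is singular, i.e. iff $\A_P(h_P, s_P, t_P) = 0$ for some $P$, iff $D$ is singular at some $P$: that is (vi). If $(h,s,t)$ is non-singular (so no local singularity) but $D_1 \cdots D_n = 0$, then by test (a) some local product $(D_1)_P \cdots (D_n)_P$ vanishes, while $\A_P(h_P, s_P, t_P) \neq 0$ at that pair, which is exactly "$D$ is critical at $P$"; and at every other pair $D$ falls into one of the first five cases, which together with the critical pair gives (v). Conversely, if $D$ is critical at some pair and at worst critical at every pair, then all local H-data are non-singular (yielding $\A(h,s,t) \neq 0$) and at least one local product vanishes (yielding $D_1 \cdots D_n = 0$).

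The argument is essentially a case analysis; the main pitfall to guard against is keeping the two forms of "local-to-global" straight — that for a single diagram (lemma \ref{lem:tightness_degeneracy}) versus that for the H-data of a tensor product (lemma \ref{lem:local-global_H-data}) — and making sure that when a local product is nonzero but some local factor is degenerate, it is classified correctly as sublime rather than tight. Once the four tests are shown to be local-to-global, the equivalences in (i)--(vi) match up mechanically.
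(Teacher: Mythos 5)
Your proposal is correct and follows essentially the same route as the paper: both reduce the statement to lemma \ref{lem:tightness_degeneracy} applied to $D_1\cdots D_n$ and to each $D_i$, lemma \ref{lem:local-global_H-data} for the singular/critical bookkeeping, and the compatibility of multiplication with cutting into fragments (so that the product is nonzero iff all local products are), the only cosmetic difference being that you let lemma \ref{lem:tightness_degeneracy}(iii) absorb the crossing-localization step where the paper cites lemma \ref{lem:viable_crossings_horizontal} directly.
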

Thus decomposing tensor products of diagrams according to matched pairs, we see that there is an increasing order of degeneracy
\[
\text{tight} < \text{sublime} < \text{twisted} < \text{crossed} < \text{critical} < \text{singular},
\]
and the tightness of a tensor product of diagrams is given by its ``most degenerate" local tensor product. 

An equivalent ``local-to-global" formulation (as in section \ref{sec:tightness}) is that $D$ is $X$ iff $D$ is $X$ at all matched pairs, where $X$ is any of the following ascending properties: tight; tight or sublime; tight, sublime or twisted; etc.

\begin{proof}
Let $D$ have H-data $(h,s,t)$. First, $D$ is singular iff $\A(h,s,t)=0$, iff $(h,s,t)$ is singular, iff some $(h_P, s_P, t_P)$ is singular (lemma \ref{lem:local-global_H-data}), iff some $\A_P (h_P, s_P, t_P)=0$, iff some $D_P$ is singular. So we may now assume $D$ and all $D_P$ are non-singular, hence diagrams exist with H-data $(h,s,t)$.

If $D$ is critical then $D_1 \cdots D_n = \bigotimes_P (D_1 \cdots D_n)_P = 0$ so some $(D_1 \cdots D_n)_P = 0$, hence $D$ is critical at some matched pair. Conversely, if $D$ is critical at $P$ then $(D_1 \cdots D_n)_P = 0$ so $D_1 \cdots D_n = 0$ and $D$ is critical.

If $D$ is crossed then $D_1 \cdots D_n$ is nonzero and crossed. By viability (lemma \ref{lem:viable_crossings_horizontal}), each crossing occurs at some matched pair $P$, hence $D$ is crossed at $P$. Moreover, from above no matched pair is critical; hence $D$ is tight, sublime, twisted or crossed at each pair. Conversely, suppose $D$ is crossed at $P$ and is not critical at any pair. Then $D_1 \cdots D_n$ is crossed at $P$, and nonzero at every matched pair, so $D_1 \cdots D_n$ is nonzero and crossed, hence $D$ is crossed.

If $D$ is twisted then $D_1 \cdots D_n$ is nonzero and twisted; hence (lemma \ref{lem:tightness_degeneracy}) $D_1 \cdots D_n$ is twisted at some pair $P$ but not crossed at any pair. Thus $D$ is twisted at $P$, but from above, $D$ is not critical or crossed at any matched pair; hence $D$ is tight, sublime or twisted at each matched pair. Conversely, suppose $D$ is tight, sublime, or twisted at each matched pair, and twisted at some matched pair $P$. Then $D_1 \cdots D_n$ is nonzero and crossingless at each matched pair, and twisted at $P$. Thus $D_1 \cdots D_n$ is nonzero and twisted, so $D$ is twisted.

If $D$ is sublime then $D_1 \cdots D_n$ is nonzero and tight, but some $D_i$ is not tight. Since the product $D_1 \cdots D_n$ is tight, it is tight at each matched pair (lemma \ref{lem:tightness_degeneracy}), so each $D_P$ is tight or sublime. But if all $D_P$ are tight, then all $(D_i)_P$ are tight, so all $D_i$ are tight. Hence at least one $D_P$ must be sublime. Conversely, suppose $D$ is tight or sublime at all matched pairs, and sublime at some matched pair. Then $D_1 \cdots D_n$ is tight at each pair, hence tight. However if all $D_i$ are tight, then all $(D_i)_P$ are tight, so all $D_P$ are tight. Thus some $D_i$ is not tight, so $D$ is sublime.

If $D$ is tight then the product $D_1 \cdots D_n$ is tight, and each $D_i$ is tight. Hence $D_1 \cdots D_n$ and each $D_i$ are tight at each matched pair. Thus $D$ is tight at each matched pair (lemma \ref{lem:tightness_degeneracy}). Conversely, if $D$ is tight at each matched pair then $D_1 \cdots D_n$ is tight at each matched pair, as is each $D_i$. Thus $D_1 \cdots D_n$ is tight, as are the $D_i$. So $D$ is tight.
\end{proof}

\subsection{Sub-tensor-products, extension and contraction}
\label{sec:sub_extension_contraction}

It is useful to consider the following notions regarding tensor products.

\begin{defn} 
Let $D_1, \cdots, D_n$ be diagrams, with homology classes $M_1, \cdots, M_n$. Consider the tensor products $D = D_1 \otimes \cdots \otimes D_n \in \A^{\otimes n}$ and $M = M_1 \otimes \cdots \otimes M_n \in \HH^{\otimes n}$.
\begin{enumerate}
\item
A \emph{sub-tensor-product} of $D$ is a tensor product $D' = D_i \otimes D_{i+1} \otimes \cdots \otimes D_{j-1} \otimes D_j$, where $1 \leq i \leq j \leq n$. 
\item
A \emph{sub-tensor-product} of $M$ is a tensor product $M' = M_i \otimes M_{i+1} \otimes \cdots \otimes M_{j-1} \otimes M_j$, where $1 \leq i \leq j \leq n$.
\end{enumerate}
\end{defn}
Clearly if $D$ (resp. $M$) is a viable tensor product, then any sub-tensor-product $D'$ (resp. $M'$) is also viable.

Now a diagram is an idempotent iff all its strands are horizontal. Idempotents can be inserted into a tensor product of strand diagrams to ``extend" it, as in the following straightforward statement, which also gives a method to ``contract" it.
\begin{lem}[Extending and contracting tensor products]
\label{lem:extending_contracting}
Let $D = D_1 \otimes \cdots \otimes D_n$ be a viable tensor product of diagrams. Let $M_i$ be the homology class of $D_i$, and let $M = M_1 \otimes \cdots \otimes M_n$. Let $D_i^*$ be the unique idempotent diagram consisting of dotted horizontal strands at all places of $t_i = s_{i+1}$, and let $M_i^*$ be its homology class.
\begin{enumerate}
\item
\begin{enumerate}
\item
The tensor product $D' = D_1 \otimes \cdots \otimes D_i \otimes D_i^* \otimes D_{i+1} \otimes \cdots \otimes D_n$ is also viable.
\item
Suppose that for some $1 \leq i \leq j \leq n$, the product $D_i D_{i+1} \cdots D_j$ is nonzero. Then $D'' = D_1 \otimes \cdots \otimes D_{i-1} \otimes (D_i \cdots D_j) \otimes D_{j+1} \otimes \cdots \otimes D_n$ is also viable.
\end{enumerate}
\item
\begin{enumerate}
\item
Suppose all $M_i$ are nonzero. Then $M' = M_1 \otimes \cdots M_i \otimes M_i^* \otimes M_{i+1} \otimes \cdots \otimes M_n$ is also viable.
\item
Suppose that for some $1 \leq i \leq j \leq n$, the product $M_i M_{i+1} \cdots M_j$ is nonzero. Then $M'' = M_1 \otimes \cdots \otimes M_{i-1} \otimes (M_i \cdots M_j) \otimes M_{j+1} \otimes \cdots \otimes M_n$ is also viable.
\end{enumerate}
\end{enumerate}
\qed
\end{lem}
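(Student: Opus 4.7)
The plan is to reduce everything to directly checking the two conditions in Definition \ref{defn:viability_tensor_products}: idempotent matching between consecutive factors, and viability of the total H-grading $\sum h_i$. In all four sub-statements, exactly one of these needs attention, and the other is preserved trivially.

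First I would record the following about the diagrams $D_i^*$: they consist entirely of horizontal strands at the places of $t_i = s_{i+1}$, so $h(D_i^*) = 0$, and they have initial and final idempotents both equal to $t_i = s_{i+1}$. In particular $D_i^*$ is crossingless, viable, and has no all-on once occupied pair (since its H-grading is zero, no pair is occupied at all), so by Proposition \ref{prop:homology_summands} its homology class $M_i^*$ is well-defined and nonzero. This legitimises the statement of (ii)(a).

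For parts (a) on extension, I would argue as follows. Write the H-data of $D_i^*$ as $(0, t_i, t_i)$. Inserting this factor between positions $i$ and $i+1$ does not alter the sum of H-gradings, so $h(D') = h(D)$, which is viable by hypothesis. For idempotent matching, the original relations $t_k = s_{k+1}$ for $k \neq i$ are untouched, while the two new adjacencies give $t_i = t_i$ (trivially) and $t_i = s_{i+1}$ (by the original matching at position $i$). Hence $D'$ is viable, and the identical argument using H-data rather than diagrams establishes (ii)(a).

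For parts (b) on contraction, suppose $D_i D_{i+1} \cdots D_j \neq 0$ in $\A$. Then in particular the intermediate idempotents match, and the product, being nonzero, has H-data $(h_i + \cdots + h_j,\, s_i,\, t_j)$, since the H-grading is additive under multiplication of diagrams (Section \ref{sec:strand_diagrams}). Substituting this single factor in place of $D_i, \ldots, D_j$ yields a tensor product $D''$ whose total H-grading is still $h_1 + \cdots + h_n$, hence viable, and whose consecutive idempotent matchings reduce to $t_{i-1} = s_i$ and $t_j = s_{j+1}$, both inherited from viability of $D$. So $D''$ is viable. Exactly the same argument, with $M_i \cdots M_j \neq 0$ in $\HH$ guaranteeing idempotent compatibility and a well-defined H-data for the product, gives (ii)(b).

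There is no real obstacle; the only subtle points to flag are (a) that the hypotheses in (i)(b) and (ii)(b) are needed precisely to guarantee that the contracted factor has a well-defined H-data to substitute, and (b) that in (ii)(a) one must separately check that $M_i^*$ is a genuine nonzero homology class, which follows because an idempotent diagram is tight.
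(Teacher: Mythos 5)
Your proof is correct: checking the two conditions of Definition \ref{defn:viability_tensor_products} (idempotent matching and viability of the summed H-grading) is exactly the routine verification the paper has in mind, which is why it states the lemma with a \qed and no written proof. Your extra care about $M_i^*$ being nonzero and about the contracted factor having well-defined H-data is consistent with the paper's setup and introduces nothing beyond what the lemma's hypotheses already guarantee.
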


\begin{defn} \
\label{def:extending_contracting}
\begin{enumerate}
\item
In lemma \ref{lem:extending_contracting}(i), we say $D'$ is obtained from $D$ by \emph{extension} by $D^*$, and $D''$ is obtained from $D$ by \emph{contraction} of $D_i \otimes \cdots \otimes D_j$. 
\item
In lemma \ref{lem:extending_contracting}(ii), we say $M'$ is obtained from $M$ by \emph{extension} by $M^*$, and $M''$ is obtained from $M$ by \emph{contraction} of $M_i \otimes \cdots \otimes M_j$.
\item
We say one tensor product (of diagrams, or homology classes) is obtained from another by \emph{extension-contraction}, if it is obtained by some sequence of extensions and contractions.
\end{enumerate}
\end{defn}
Observe that extension and contraction of a tensor product preserve H-data and Maslov grading.



Figure \ref{fig:extension-contraction} shows an extension-contraction of figure \ref{fig:sesqui_tensor_product}.
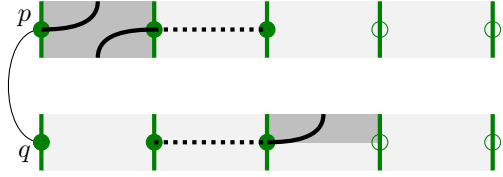
\begin{figure}
\begin{center}
\begin{tikzpicture}[scale=1.5]
\strandbackgroundshading
\tstrandbackgroundshading{1}
\tstrandbackgroundshading{2}
\tstrandbackgroundshading{3}
\afterwused
\beforewused
\taftervused{2}
\strandsetup
\tstrandsetup{1}
\tstrandsetup{2}
\tstrandsetup{3}
\tstrandsetup{4}
\lefton
\righton
\trighton{1}
\trightoff{2}
\trightoff{3}
\usea
\useb
\tdothorizontals{1}
\tusec{2}
\end{tikzpicture}
\caption{An extension-contraction of figure \ref{fig:sesqui_tensor_product}.}
\label{fig:extension-contraction}
\end{center}
\end{figure}

Note that extensions by idempotents may be reversed by contraction, and contractions of idempotents may be reversed by extension. But a contraction involving more than one factor with non-horizontal strands cannot be reversed by extension; hence the following definition.
\begin{defn}
\label{def:trivial_contraction}
If two or more of $D_i, D_{i+1}, \ldots, D_j$ (resp. $M_i, M_{i+1}, \ldots, M_j$) contain non-horizontal strands (i.e. are not idempotents), then contraction of $D_i \otimes \cdots \otimes D_j$ in $D$ (resp. of $M_i \otimes \cdots \otimes M_j$ in $M$) is \emph{nontrivial}. Otherwise, the contraction is \emph{trivial}.
\end{defn}
Thus, in a trivial contraction, either all of $D_i, \ldots, D_j$ are idempotents, as is their product; or precisely one diagram $D_k$ among $D_i, \ldots, D_j$ has non-horizontal strands, in which case $D_i \cdots D_j = D_k$.

\subsection{Sublime and singular tensor products}
\label{sec:sublime_singular}


We now collect a couple of useful facts about sublime and singular tensor products.

\begin{lem}[Sublime contains crossed]
\label{lem:sublime_contains_crossed}
If the viable tensor product $D= D_1 \otimes \cdots \otimes D_n$ is sublime, then some $D_i$ is 11 once occupied crossed at some matched pair.
\end{lem}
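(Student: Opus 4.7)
The plan is to localise and argue by contradiction. Since $D$ is sublime, Lemma \ref{lem:tensor_product_local_tightness}(ii) supplies a matched pair $P$ at which $D$ is sublime, meaning the local product $\pi = (D_1)_P (D_2)_P \cdots (D_n)_P$ is tight while some $(D_i)_P$ is non-tight. By the classification in Proposition \ref{prop:viable_diagram_classification} and Table \ref{tbl:local_diagrams}, the non-tight local diagrams are the 11 once-occupied twisted $w_p, w_q$, the 11 once-occupied crossed $c_p, c_q$, and the 11 doubly-occupied crossed $c_P$. I would assume for contradiction that no $(D_j)_P$ equals $c_p$ or $c_q$, and derive a contradiction in the two remaining families.

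First I would rule out $c_P$ appearing among the factors. The diagram $c_P$ covers all four steps of $\ZZ_P$ with multiplicity one, so viability forces every other factor to be unoccupied at $P$. Inspecting Table \ref{tbl:local_diagrams}, unoccupied local diagrams carry only $00$ or $11$ idempotents, so since $c_P$ has $11$ idempotents, idempotent matching along the tensor product forces every other factor to be Unoccupied $11$. Then $\pi = c_P$, which is crossed rather than tight, contradicting sublimity.

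The main case is ruling out $w_p$ (the case $w_q$ being symmetric). Suppose $(D_i)_P = w_p$. Its H-grading covers both $p$-adjacent steps, so viability forces every other factor to have $p$ unoccupied. The constrained section of $w_p$ at $P$ is $\{p\}$ at both ends, so section-matching requires $(D_{i\pm 1})_P$ to present a constrained $\{p\}$-section at the edge adjacent to $w_p$. Scanning the tight local diagrams with $p$ unoccupied---Unoccupied $00$, Unoccupied $11$, Pre-/Post-one-half at $q$, and Once-occupied $q$ all-off---only Unoccupied $11$ admits a constrained $\{p\}$-section, via its symmetrised horizontal strand at $p$; all the others have sections only in $\{\emptyset, \{q\}\}$. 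Propagating this observation outwards, any factor section-connected to $w_p$ via a chain of matching $\{p\}$-sections must itself be Unoccupied $11$. Escaping this regime would require a tight factor whose initial or final section at $P$ is $\{p\}$ but whose other section is $\emptyset$ or $\{q\}$; the candidates (Pre-/Post-one-half at $p$, Pre-/Post-sesqui-occupied, $g_p$, $g_q$, and alternately-occupied $11$) all cover at least one $p$-step and therefore violate viability with $w_p$. Consequently every factor is either Unoccupied $11$ or $w_p$ itself, so $\pi = w_p$, which is twisted rather than tight---contradiction. The simultaneous presence of both $w_p$ and $w_q$ is blocked by the same section-matching obstruction: no tight factor compatible with both viabilities can bridge $\{p\}$- and $\{q\}$-sections at $P$.

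The hard part is the second step: tracing carefully through Table \ref{tbl:local_diagrams} to verify that, under the viability constraint imposed by $w_p$, no tight local diagram admits the combination of constrained sections and H-grading profile required to break the $\{p\}$-section chain emanating from $w_p$. Given that check, the argument concludes that some $(D_j)_P$ must be $c_p$ or $c_q$, i.e., some $D_j$ is $11$ once-occupied crossed at $P$.
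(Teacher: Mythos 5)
Your proof is correct, but it takes a genuinely different route from the paper's. The paper argues globally and homologically: if every $D_i$ were crossingless, then each $D_i$ and the product $D_1\cdots D_n$ would represent homology classes, and tightness of the product (nonzero in homology) would force every factor to be nonzero in homology, i.e.\ tight, contradicting sublimity; having thus produced a crossed factor, it cites table \ref{tbl:local_diagrams} to see the crossing sits at an 11 once or 11 doubly occupied pair, and kills the doubly occupied case exactly as you do (viability makes all other factors idempotent there, so the product would be crossed). You instead localise first, using lemma \ref{lem:tensor_product_local_tightness} to find a pair $P$ where $D_P$ is sublime, and then run a case analysis on which non-tight local factor occurs, eliminating $c_P$ and $w_p$ (and the mixed $w_p,w_q$ configuration) by viability together with an explicit idempotent/section-matching propagation showing the local product would be $c_P$ or $w_p$ or zero, never tight. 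Your ``hard part'' check is right: among tight local diagrams with $p$ unoccupied, only the unoccupied 11 idempotent presents a $\{p\}$-section, so the chain emanating from $w_p$ cannot be broken. The trade-off: the paper's homology argument is shorter and avoids any analysis of how local diagrams multiply, while your argument is more elementary (it never invokes multiplicativity of homology classes) and, as a by-product, essentially re-derives the local structure of sublime tensor products that the paper only establishes later in lemma \ref{lem:tightness_local_factors}(ii); it is, however, considerably longer and leans on a careful, easy-to-fumble scan of table \ref{tbl:local_diagrams}.
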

In other words, any sublimation arises by the multiplication of a crossed diagram by another diagram (or diagrams) to undo the crossing, eventually arriving at a tight diagram, as in figure \ref{fig:sublimation}.

\begin{proof}
Assume for contradiction that all $D_i$ are crossingless. Since all the $D_i$, as well as $D_1 \cdots D_n$ are crossingless ($D$ sublime implies $D_1 \cdots D_n$ is tight), all these diagrams have homology classes. In homology, $D_1 \cdots D_n$ is nonzero, (definition \ref{def:tight_twisted_crossed_diagrams}), hence so are all $D_i$. Thus all $D_i$ are tight, contradicting $D$ being sublime; hence  some $D_i$ is crossed at some pair $P$. From table \ref{tbl:local_diagrams}, $P$ is 11 once occupied or 11 doubly occupied by $D_i$. But in the latter case, all steps at $P$ are occupied by $D_i$, so by viability all other $D_j$ are idempotent at $P$, so $D_1 \cdots D_n$ cannot be tight, contradicting $D$ being sublime.
\end{proof}

A singular tensor product $D = D_1 \otimes \cdots \otimes D_n$ is rather pathological: its H-data, even though arising from a viable tensor product of diagrams, is not the H-data of any single diagram. 
Lemma \ref{lem:tensor_product_local_tightness} says $D$ is singular iff some $D_P$ is singular. As we now show, $D_P$ must take a very specific form.
\begin{lem}[Structure of singular tensor products]
\label{lem:singular_structure}
Let $P = \{p,q\}$ be a matched pair, and let $D = D_1 \otimes \cdots \otimes D_n$ be a viable singular tensor product of local diagrams on $\ZZ_P$. Then $P$ is 00 alternately occupied by $D$. Precisely two factors $D_i, D_j$ (with $i < j$) contain non-horizontal strands; both are tight. All other $D_k$ are idempotents. After possibly relabelling $p$ and $q$, $P$ is pre-one-half-occupied at $p$ by $D_i$, and post-one-half-occupied at $q$ by $D_j$.
\end{lem}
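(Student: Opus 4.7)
The plan is to isolate the possible singular H-data via a single parity invariant, then reconstruct the tensor product locally.

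For $h \in H_1(\mathbf{Z}_P, \mathbf{a}_{\pm\infty})$, define
\[
e(h) := h(\text{step before }p) + h(\text{step before }q) - h(\text{step after }p) - h(\text{step after }q).
\]
Row-by-row inspection of table \ref{tbl:local_diagrams} shows that every local diagram $E$ with H-data $(h_E, s_E, t_E)$ satisfies $e(h_E) = t_E - s_E$, where $s_E, t_E \in \{0, 1\}$ record whether $P$ lies in the respective idempotent: off-on entries (pre-one-half and pre-sesqui) give $+1$, on-off entries give $-1$, and off-off and on-on entries give $0$, matching $t_E - s_E$ in each case.

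Applied to each factor of a viable $D = D_1 \otimes \cdots \otimes D_n$ and telescoped using $h = \sum_k h_k$ and the idempotent-matching condition $t_k = s_{k+1}$, this gives
\[
e(h) = \sum_{k=1}^n e(h_k) = \sum_{k=1}^n (t_k - s_k) = t_n - s_1 = t - s,
\]
so every viable local tensor product satisfies $e(h) = t - s$. Next, I will enumerate the viable singular H-data on $\ZZ_P$ --- exactly the viable triples $(h,s,t)$ omitted from table \ref{tbl:local_diagrams} --- and intersect with the constraint $e(h) = t - s$. Checking the sixteen viable values of $h$ case by case, the only singular triples satisfying the parity constraint are those with $h$ alternately occupied and $(s,t) = (0,0)$, proving the first assertion of the lemma.

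To reconstruct the local structure, after possibly relabelling $p \leftrightarrow q$ assume $h$ covers the step before $p$ and the step after $q$. Since each $h_k$ is a viable non-negative sub-multiplicity of $h$, table \ref{tbl:local_diagrams} leaves only four possibilities for each $D_k$: unoccupied (idempotent), pre-one-half-occupied at $p$ (off-on), post-one-half-occupied at $q$ (on-off), or alternately-occupied $11$ with $h_k = h$ (on-on). A single alternately-occupied factor forces the chain to start and end at $\sigma = 1$, giving $(s,t) = (1,1)$ and contradicting $(0,0)$; so there must be exactly two non-idempotent factors $D_i, D_j$ with $i < j$, one pre-one-half at $p$ and one post-one-half at $q$. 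Tracing the idempotent chain from $\sigma_0 = 0$ back to $\sigma_n = 0$ forces the off-on factor to precede the on-off factor, so $D_i$ is pre-one-half at $p$ and $D_j$ is post-one-half at $q$. Both are tight by table \ref{tbl:local_diagrams}, and all other $D_k$ have $h_k = 0$, hence are unoccupied idempotents. The main obstacle will be the book-keeping when verifying the parity identity row-by-row, especially for the sesqui-occupied entries whose idempotent data must be read off implicitly from the table.
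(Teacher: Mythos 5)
Your proposal is correct, and it takes a somewhat different (and arguably cleaner) route than the paper. The paper's proof uses only a mod-2 observation -- a local diagram covering an even number of steps of $\ZZ_P$ is $00$ or $11$, an odd number is $01$ or $10$ -- applied factor-by-factor, plus the remark that a single factor cannot carry all of $h$; this forces a case analysis on whether $h$ covers $2$, $3$ or $4$ steps, and the $3$-step case (the would-be singular $10$ pre-sesqui-occupied and $01$ post-sesqui-occupied data) requires a separate hands-on argument about how three steps can be distributed over two or three factors. Your signed invariant $e(h)=t-s$ is a strict refinement of that parity observation: it distinguishes $01$ from $10$, so after telescoping along the idempotent chain it kills the sesqui-occupied singular data (and all other singular data except $00$ alternately occupied) in one enumeration, with no special treatment of the $3$-step case. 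I checked the identity $e(h_E)=t_E-s_E$ against every row of table \ref{tbl:local_diagrams} and the enumeration of viable singular triples against the constraint; both claims are correct, so the first assertion of the lemma does follow as you say. Your reconstruction step is essentially the paper's (the paper likewise identifies a $01$ pre-one-half factor preceding a $10$ post-one-half factor), except that you exclude a single factor carrying all of $h$ via the idempotent chain forcing $(s,t)=(1,1)$, whereas the paper excludes it on the grounds that $D$ would then be non-singular; both work.

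Two small points to spell out when writing this up: (1) the jump to ``exactly two non-idempotent factors'' needs the one-line remark that $h$ has multiplicity exactly $1$ on each of the two covered steps, so exactly one factor covers each, and these factors are distinct once the alternately-occupied-$11$ possibility is excluded; (2) the symbols $\sigma_0,\dots,\sigma_n$ for the on/off status of $P$ in the successive idempotents should be introduced explicitly, together with the observation that unoccupied idempotent factors preserve this status while $01$, $10$ factors flip it in the indicated direction. Neither is a gap in the argument, only in the exposition.
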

In other words, any singular local tensor product is an extension (definition \ref{def:extending_contracting}) of figure \ref{fig:singular_example}. For the diagram $D_1 \otimes D_2$ of the figure, $D_1 D_2 = 0$ and there is no diagram with its H-data: there is no such thing as a 00 alternately occupied pair in a strand diagram, as proposition \ref{prop:viable_diagram_classification} and table \ref{tbl:local_diagrams} make clear.

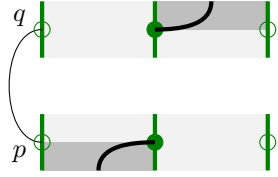
\begin{figure}
\begin{center}
\begin{tikzpicture}[scale=1.5]
\strandbackgroundshading
\tstrandbackgroundshading{1}
\beforevused
\tafterwused{1}
\strandsetupn{p}{q}
\tstrandsetup{1}
\tstrandsetup{2}
\leftoff
\righton
\trightoff{1}
\used
\tusea{1}
\end{tikzpicture}
\caption{A singular tensor product. By lemma \ref{lem:singular_structure}, up to extension, all local singular tensor products are of this form.}
\label{fig:singular_example}
\end{center}
\end{figure}

\begin{proof}
Let $D$ have H-data $(h,s,t)$. For a viable diagram $D_0$ on $\ZZ_P$, we observe that if $D_0$ covers an even number of the 4 steps of $\ZZ_P$, then $P$ is $00$ or $11$; and if $D_0$ covers an odd number of these steps, then $P$ is $01$ or $10$. This follows, for instance, from proposition \ref{prop:viable_diagram_classification}, from inspecting table \ref{tbl:local_diagrams}, or by analysing directly how strands must proceed in such a diagram. Applying this observation to each $D_i$ in $D$, we see that if $h$ covers an even number of steps of $\ZZ_P$, then $P$ is $00$ or $11$; and if $h$ covers an odd number of steps of $\ZZ_P$, then $P$ is $01$ or $10$.

Moreover, all steps covered by $h$ cannot be covered by a single $D_i$. For then all other $D_j$ are idempotents, so $D_i$ has the H-data $(h,s,t)$ of $D$, contradicting $D$ being singular. In particular, $h$ must cover at least two steps of $P$.

If $h$ covers 2 steps, then all possible H-data $(h,s,t)$ satisfying the conditions above appear in table \ref{tbl:local_diagrams}, hence are non-singular, except if $P$ is 00 alternately occupied. In this case, the 2 steps covered must be covered by two diagrams $D_i, D_j$, where $i<j$; all other diagrams must be idempotents. Then $P$ must be 01 one-half-occupied by $D_i$, hence pre-one-half-occupied, and 10 one-half-occupied by $D_j$, hence post-one-half-occupied. Thus $D$ has the structure claimed.

If $h$ covers 3 steps, then the only possible H-data not appearing in table \ref{tbl:local_diagrams} are where $P$ is 10 pre-sesqui-occupied or 01 post-sesqui-occupied. We consider the first case; the second is similar. Without loss of generality suppose $p$ is pre-half-occupied and $q$ is fully occupied. If the 3 steps are covered by two diagrams $D_i, D_j$, where $D_i$ covers one step and $D_j$ covers two steps, then $P$ is one-half-occupied by $D_i$. Moreover, by our initial observation, $D_j$ is 00 or 11, so by viability $D_i$ must be 10, hence $P$ is post-one-half-occupied by $D_i$. Thus both $p$ and $q$ are pre-half-occupied by $D_j$, but there is no diagram which does so. If the three steps are covered by three diagrams, then $P$ is pre-one-half-occupied by two diagrams (which must be 01) and post-one-half-occupied by one diagram (which must be 10), and all other diagrams are idempotents. But there is no way to combine the idempotent data 01, 01, 10 of these three diagrams viably so that $P$ is 10 in the tensor product. Hence no such $D$ exists.

If $h$ covers all 4 steps, all possible H-data already appear in table \ref{tbl:local_diagrams} so $D$ cannot be singular.
\end{proof}

\subsection{Enumeration of local tensor products}
\label{sec:enumerate_local_tensor_products}

We now enumerate all viable local tensor products of diagrams. So let $D = D_1 \otimes \cdots \otimes D_n$ be a viable tensor product of diagrams on $\ZZ_P$, where $P = \{p,q\}$. 
Viability implies that each of the 4 steps of $\ZZ_P$ is covered at most once. Hence at most $4$ of $D_1, \ldots, D_n$ can contain non-horizontal strands; the rest are idempotents. 

Enumerating the possible such $D$ is assisted by the following lemma, describing the tightness of the $D_i$.
\begin{lem}[Tightness of local sub-tensor-products and tensor factors]
\label{lem:tightness_local_factors}
Let $D$ be a viable tensor product of local diagrams on $\ZZ_P$.
\begin{enumerate}
\item If $D$ is tight, then each $D_i$ is tight.
\item If $D$ is sublime, then one $D_i$ is crossed 11 once occupied, and for the remaining factors $D_j$:
\begin{enumerate}
\item one $D_j$ is twisted, and all other $D_j$ are idempotents; or
\item one or two $D_j$ are tight with non-horizontal strands, and all other $D_j$ are idempotents.
\end{enumerate}
\item If $D$ is twisted, then either:
\begin{enumerate}
\item precisely two $D_i$ are tight, and all other $D_j$ are idempotents; or
\item precisely one $D_i$ is twisted, and all other $D_j$ are idempotents.
\end{enumerate}
\item If $D$ is crossed, then precisely one or two $D_i$ are crossed, and all other factors $D_j$ are idempotents.
\item If $D$ is critical, then no $D_i$ are crossed, $W$ of the $D_i$ are twisted, $G$ of the $D_i$ are tight with non-horizontal strands, and all other $D_i$ are idempotents, where $2W+G \leq 4$ and $W+G \geq 2$.
\item If $D$ is singular, then precisely two $D_i$ are tight, and all other $D_j$ are idempotents.
\end{enumerate}
\end{lem}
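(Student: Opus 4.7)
The plan is to verify each of the six statements by case analysis, leveraging the classification of viable local diagrams in table \ref{tbl:local_diagrams} and the viability bound that the four steps at $P$ are each covered with multiplicity at most one. Statement (i) is immediate from the definition of tightness for tensor products (definition \ref{def:tensor_product_tightness}), and (vi) follows at once from the structural description already established in lemma \ref{lem:singular_structure}. For statements (ii)--(iv), the product $D_1 \cdots D_n$ equals a specific nonzero local diagram of the designated tightness type as listed in the table, so the task reduces to enumerating how the factors can combine to yield that particular diagram.

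The core structural observation driving (ii)--(iv) is that a through-strand at a place $p$ in the product arises either from a single 11-once-occupied factor ($c_p$ or $w_p$) covering both steps at $p$, or from a pre-half-occupied (01) factor together with a post-half-occupied (10) factor combining at $p$; the order of these two half-factors determines whether the combined through-strand is twisted $w_p$ or the crossingless 00 tight through-strand. For (ii), lemma \ref{lem:sublime_contains_crossed} supplies a crossed factor which must be 11-once-occupied, and uniqueness follows because two such crossed factors at distinct places would propagate both crossings into the product to form $c_P$, contradicting its tightness. The remaining factors then contribute the $q$-portion of a tight product; by idempotent matching with the 11-endpoints of the crossed factor, this contribution must be either a single twisted $w_q$ or one or two tight half-occupied factors, with all other factors idempotent. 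Cases (iii) and (iv) are handled similarly: the target products $w_p$, $c_p$, and $c_P$ each decompose uniquely into the listed factor types, with the subtlety in (iii)(a) that the two half-factors must appear in the post-then-pre order for the combination to be twisted rather than a tight 00 through-strand.

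Case (v) is handled by elimination. If $D$ is critical then no $D_i$ is crossed: viability forces a hypothetical crossed factor (say $c_p$) to be the sole contributor at $p$, and a short case split on the coverage of $q$ by $D$'s H-data (unoccupied, half-occupied, or fully occupied) shows the product is nonzero in every situation --- equalling $c_p$, a tight sesqui diagram (by sublimation), or one of $c_P, g_p, g_q$ respectively. Hence every factor is tight, twisted, or idempotent. If $W + G \leq 1$ the product collapses to a single non-idempotent factor (or an idempotent), which is nonzero, contradicting criticality; so $W + G \geq 2$. The bound $2W + G \leq 4$ is simply the viability count, since each twisted factor covers exactly 2 steps and each tight factor with non-horizontal strands covers at least 1 step, summing to at most the 4 steps at $P$.

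The main obstacle will be the verifications for case (v) that every configuration involving a crossed factor yields a nonzero product via sublimation --- particularly the identities $c_p \cdot w_q = g_q$ and $c_p \cdot c_q = c_P$ together with their analogues for the various sesqui configurations. These require matching symmetrised sections and composing bijections directly in the augmented strand algebra, and the same computations, together with careful idempotent matching (especially when 01 or 10 half-occupied factors appear adjacent to 11 factors like $w_p$ or $c_p$), also underpin the uniqueness claims in (ii)--(iv).
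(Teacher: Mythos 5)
Your proposal is correct and follows essentially the same route as the paper: (i) from the definition of tightness, (vi) from lemma \ref{lem:singular_structure}, and (ii)--(v) by viability step-counting at the four steps of $\ZZ_P$ combined with case analysis against table \ref{tbl:local_diagrams}, using lemma \ref{lem:sublime_contains_crossed} for the sublime case and the sublimation observation (a crossed factor times a tight or twisted factor is nonzero) to rule out crossed factors in the critical case. The only difference is cosmetic: you make the product identities (e.g.\ $c_p\cdot w_q = g_q$, $c_p\cdot c_q = c_P$) explicit where the paper argues by counting covered steps, but the underlying argument is the same.
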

In the critical case, the inequalities on $W$ and $G$ imply $(W,G) \in \{(2,0), (1,1), (1,2), (0,2),(0,3), (0,4)\}$. (In fact the case $(W,G) = (0,2)$ never arises; such tensor products turn out to be singular.)

\begin{proof}
Part (i) follows by definition \ref{def:tensor_product_tightness}.

If $D$ is sublime then by lemma \ref{lem:sublime_contains_crossed} some $D_i$ is crossed 11 once occupied. Each crossed $D_i$ thus covers exactly $2$ of the $4$ steps in $\ZZ_P$, so there are at most two crossed $D_i$. If two factors $D_i, D_j$ are crossed, then by viability $P$ all other factors are idempotents and $D_1 \cdots D_n$ is crossed, contradicting $D$ being sublime. So there is one crossed diagram $D_i$. Only 2 steps of $\ZZ_P$ are not covered by $D_i$, and so there are at most 2 other factors $D_j$ with non-horizontal strands, which are tight or twisted. A twisted $D_j$ would cover both the remaining steps, the only possibilities are (a) and (b) as claimed.

If $D$ is twisted then (definition \ref{def:tensor_product_tightness}) $D_1 \cdots D_n$ is twisted, hence (table \ref{tbl:local_diagrams}) only two steps of $\ZZ_P$ are covered. Thus at most $2$ of the $D_i$ are not idempotents. If one $D_i$ is non-idempotent, then $D_i$ is twisted. If two $D_i$ are non-idempotent, then each must cover one step, hence both are tight.

If $D_1 \cdots D_n$ is crossed, then at least one $D_i$ is crossed (lemma \ref{lem:products_crossings}); and by viability, as any crossed diagram covers at least two steps, there are at most two crossed $D_i$. If there are two crossed factors, then they cover all steps, so all other factors are idempotents. If only one $D_i$ is crossed, we observe that any viable multiplication of $D_i$ with any tight or twisted diagram results in a tight diagram, so all other factors must be idempotents.

For (v), we first claim no $D_i$ is crossed. As noted above at most two $D_i$ are crossed. If two $D_i$ are crossed then all other factors are idempotents, so that $D_1 \cdots D_n$ is nonzero crossed; if one $D_i$ is crossed, then any viable product of $D_i$ with a tight or twisted diagram is nonzero; either way contradicting criticality of $D$, proving the claim. Hence each non-idempotent $D_i$ is twisted or tight. Each twisted factor covers exactly 2 steps (table \ref{tbl:local_diagrams}); each tight factor covers at least 1 step. These factors altogether cover $\geq 2W+G$ steps. Since $\ZZ_P$ has 4 steps, $2W+G \leq 4$.  On the other hand $W+G \geq 2$ since there must be at least 2 non-idempotent factors; otherwise the single non-idempotent $D_i = D_1 \cdots D_n \neq 0$, contradicting criticality.

Lemma \ref{lem:singular_structure} gives the final part.
\end{proof}

Using the structure provided by lemma \ref{lem:tightness_local_factors}, or otherwise, we can enumerate viable tensor products of diagrams on $\ZZ_P$ and obtain the following generalisation of proposition \ref{prop:viable_diagram_classification}.
\begin{prop}[Classification of viable local tensor products]
\label{prop:viable_tensor_product_classification}
Let $D = D_1 \otimes \cdots \otimes D_n$ be a viable tensor product of diagrams on $\ZZ_P$. Then $D$ is an extension-contraction of a diagram shown in table \ref{tbl:local_tensor_products}, and its H-data and tightness are as shown.
\qed
\end{prop}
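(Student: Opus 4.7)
The plan is a reduction-then-enumeration argument. First, I would strip redundancy by applying \emph{trivial contraction} (definition \ref{def:trivial_contraction}) to every tensor factor $D_i$ that is a pure idempotent. Such factors carry no H-grading and merge losslessly with any adjacent non-idempotent factor, so after this step we may assume every $D_i$ contains at least one non-horizontal strand (the only exception being when $D$ as a whole is a single idempotent diagram). Because the fragment $\ZZ_P$ has exactly four steps and viability (definition \ref{defn:viability_tensor_products}) forbids double coverage, the non-idempotent factors jointly cover between $0$ and $4$ steps; since each such factor covers at least $1$ step, the reduced tensor product has at most four factors.

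Second, I would apply lemma \ref{lem:tightness_local_factors} to bound the \emph{types} of the remaining factors in terms of the tightness of $D$, and then combine this with the full list of local diagrams (table \ref{tbl:local_diagrams}) and the idempotent-matching condition $t_i = s_{i+1}$ to enumerate all possibilities. Case by case: the tight case is governed by lemma \ref{lem:tightness_local_factors}(i) and one directly checks that the allowed step-coverings and idempotent patterns yield the tight entries of the table; the sublime case uses lemma \ref{lem:sublime_contains_crossed} to pin down the crossed $11$-once-occupied factor, while lemma \ref{lem:tightness_local_factors}(ii) describes the remaining tight or twisted factors on the complementary two steps; the twisted and crossed cases are immediate from lemma \ref{lem:tightness_local_factors}(iii)--(iv); the critical case runs through each $(W,G)$ with $2W+G \le 4$ and $W+G \ge 2$, also ruling out the $(W,G)=(0,2)$ configuration on the grounds that two tight factors covering disjoint steps with matching idempotents would multiply to a single tight diagram rather than zero; the singular case is supplied verbatim by lemma \ref{lem:singular_structure}.

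In each case the enumerated reduced tensor products coincide exactly with the entries of table \ref{tbl:local_tensor_products}, and the H-data and tightness columns match by construction since both extension and contraction preserve H-data (and tightness is determined by H-data together with the non-idempotent factor structure recorded in the table). The main obstacle is the combinatorial case-check in the critical and sublime rows, where several distinct $(W,G)$ or (crossed-factor plus supporting-factor) configurations coexist and each must be separately chased through idempotent matching and step-coverage; the bookkeeping is routine but needs to be exhaustive so as to be sure that no configuration has been missed and that the ones listed in table \ref{tbl:local_tensor_products} actually are realised.
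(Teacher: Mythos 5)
Your overall strategy---strip the idempotent factors by trivial contraction, bound the number and types of the remaining factors via lemma \ref{lem:tightness_local_factors} (with lemma \ref{lem:sublime_contains_crossed} for the sublime case and lemma \ref{lem:singular_structure} for the singular case), and then enumerate against table \ref{tbl:local_diagrams} using viability and idempotent matching---is exactly the argument the paper intends; the proposition is stated with a \qed precisely because it is this enumeration, so in approach you are on the paper's route.

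However, one step of your case-check is justified by a false claim, even though its conclusion is correct. To exclude critical tensor products with $(W,G)=(0,2)$ you assert that two tight factors covering disjoint steps with matching idempotents ``would multiply to a single tight diagram rather than zero.'' This is not true: such a product can be twisted (two tight factors covering $p_+$ and then $p_-$ at an on-on once occupied pair multiply to the twisted diagram $w_p$---this is exactly the twisted row of table \ref{tbl:local_tensor_products}), and it can be zero, namely when one factor ends at a place and the next begins at its twin, which is the 00 alternately occupied configuration of figure \ref{fig:singular_example}. The correct reason the critical $(W,G)=(0,2)$ case never arises (as the paper notes parenthetically after lemma \ref{lem:tightness_local_factors}) is that criticality forces the product to vanish, and a short check of the same kind as the rest of your enumeration shows that a vanishing product of two tight local factors with pair-level idempotent matching and disjoint step coverage forces precisely the 00 alternately occupied H-data of lemma \ref{lem:singular_structure}; such a tensor product is therefore singular, not critical. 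Equivalently: with nonsingular H-data the product of the two tight factors is nonzero (tight or twisted), contradicting criticality. You should replace the nonvanishing claim by this vanishing analysis. A smaller point: in the tight column your reduced tensor products need not literally coincide with the table entries (which are maximally split, one step per factor); they are contractions of them, which is all the extension-contraction statement of the proposition requires.
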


Note that in this proposition, $D$ may be an extension-contraction of more than one of the possibilities: a contraction of a sublime tensor product may coincide with the contraction of a tight tensor product.

Table \ref{tbl:local_tensor_products} also shows Maslov gradings with each local tensor product. As mentioned in section \ref{sec:sub_extension_contraction}, Maslov grading is preserved under extension and contraction. Observe that, for any given viable H-data, if there is a critical tensor product, then there is also a tight tensor product, and the Maslov grading of the latter is $1$ greater than the former.


\begin{table}
\begin{center}
\begin{minipage}{1\textwidth}

\begin{tabular}{>{\centering\arraybackslash}m{2.2cm}|>{\centering\arraybackslash}m{2.1cm}|>{\centering\arraybackslash}m{2.3cm}|>{\centering\arraybackslash}m{1.1cm}|>{\centering\arraybackslash}m{1.1cm}|>{\centering\arraybackslash}m{2.1cm}|>{\centering\arraybackslash}m{1.1cm}} 
$H$-data & Tight & Sublime & Twisted & Crossed & Critical & Singular \\
\hline 
\begin{tabular}{c} Unoccupied  \\ $00$ \end{tabular}
& 
\begin{tikzpicture}[xscale=0.48, yscale=0.8]
\strandbackgroundshading
\strandsetupn{}{}
\leftoff
\rightoff
\draw (1.5,0.75) node {$0$};
\end{tikzpicture}
 & & & & & \\
\hline
\begin{tabular}{c} Unoccupied  \\ $11$ \end{tabular}
 & 
\begin{tikzpicture}[xscale=0.48, yscale=0.8]
\strandbackgroundshading
\strandsetupn{}{}
\lefton
\righton
\dothorizontals
\draw (1.5,0.75) node {$0$};
\end{tikzpicture}
& & & & & \\
\hline
\begin{tabular}{c} Pre-one-half- \\ occupied 01 \end{tabular}
 & 
\begin{tikzpicture}[xscale=0.48, yscale=0.8]
\strandbackgroundshading
\beforewused
\strandsetupn{}{}
\leftoff
\righton
\useb
\draw (1.5,0.75) node {$0$};
\end{tikzpicture}
& & & & &\\
\hline
\begin{tabular}{c} Post-one-half- \\ occupied 10 \end{tabular}
 & 
\begin{tikzpicture}[xscale=0.48, yscale=0.8]
\strandbackgroundshading
\afterwused
\strandsetupn{}{}
\lefton
\rightoff
\usea
\draw (1.5,0.75) node {$-\frac{1}{2}$};
\end{tikzpicture}
& & & & &\\
\hline
\begin{tabular}{c} Alternately \\ occupied \\ 00 \end{tabular}
& & & & & &
\begin{tikzpicture}[xscale=0.48, yscale=0.8]
\strandbackgroundshading
\tstrandbackgroundshading{1}
\beforevused
\tafterwused{1}
\strandsetupn{}{}
\tstrandsetup{1} 
\tstrandsetup{2}
\leftoff
\righton
\trightoff{1}
\used
\tusea{1}
\draw (2.5,0.75) node {$-\frac{1}{2}$};
\end{tikzpicture}
\\
\hline
\begin{tabular}{c} Alternately \\ occupied \\ 11 \end{tabular}
& 
\begin{tikzpicture}[xscale=0.48, yscale=0.8]
\strandbackgroundshading
\tstrandbackgroundshading{1}
\afterwused
\tbeforevused{1}
\strandsetupn{}{}
\tstrandsetup{1}
\tstrandsetup{2}
\lefton
\rightoff
\trighton{1}
\usea
\tused{1}
\draw (2.5,0.75) node {$-\frac{1}{2}$};
\end{tikzpicture}
& & & & & \\
\hline
\begin{tabular}{c} Once \\ occupied \\ $00$ \end{tabular}
 & 
\begin{tikzpicture}[xscale=0.48, yscale=0.8]
\strandbackgroundshading
\tstrandbackgroundshading{1}
\beforewused
\tafterwused{1}
\strandsetupn{}{}
\tstrandsetup{1}
\tstrandsetup{2}
\leftoff
\righton
\trightoff{1}
\useb
\tusea{1}
\draw (2.5,0.75) node {$0$};
\end{tikzpicture}
 & & & & & \\
\hline
\begin{tabular}{c} Once \\ occupied \\ $11$ \end{tabular}
  & & & 
\begin{tikzpicture}[xscale=0.48, yscale=0.8]
\strandbackgroundshading
\tstrandbackgroundshading{1}
\afterwused
\tbeforewused{1}
\strandsetupn{}{}
\tstrandsetup{1}
\tstrandsetup{2}
\lefton
\rightoff
\trighton{1}
\usea
\tuseb{1}
\draw (0.75,-0.3) node {$-1$};
\end{tikzpicture}
& 
\begin{tikzpicture}[xscale=0.48, yscale=0.8]
\strandbackgroundshading
\beforewused
\afterwused
\strandsetupn{}{}
\lefton
\righton
\useab
\dothorizontals
\draw (0.5,-0.3) node {$0$};
\end{tikzpicture}
& & \\
\hline
\begin{tabular}{c} Pre-sesqui- \\ occupied 01 \end{tabular}
 & 
\begin{tikzpicture}[xscale=0.48, yscale=0.8]
\strandbackgroundshading
\tstrandbackgroundshading{1}
\tstrandbackgroundshading{2}
\beforewused
\tafterwused{1}
\tbeforevused{2}
\strandsetupn{}{}
\tstrandsetup{1}
\tstrandsetup{2}
\tstrandsetup{3}
\leftoff
\righton
\trightoff{1}
\trighton{2}
\useb
\tusea{1}
\tused{2}
\draw (1.5,-0.3) node {$0$};
\end{tikzpicture}
& 
\begin{tikzpicture}[xscale=0.48, yscale=0.8]
\strandbackgroundshading
\tstrandbackgroundshading{1}
\beforevused
\tbeforewused{1}
\tafterwused{1}
\strandsetupn{}{}
\tstrandsetup{1}
\tstrandsetup{2}
\leftoff
\righton
\trighton{1}
\used
\tuseab{1}
\tdothorizontals{1}
\draw (1,-0.3) node {$0$};
\end{tikzpicture}
& & & 
\begin{tikzpicture}[xscale=0.48, yscale=0.8]
\strandbackgroundshading
\tstrandbackgroundshading{1}
\tstrandbackgroundshading{2}
\beforevused
\tafterwused{1}
\tbeforewused{2}
\strandsetupn{}{}
\tstrandsetup{1}
\tstrandsetup{2}
\tstrandsetup{3}
\leftoff
\righton
\trightoff{1}
\trighton{2}
\used
\tusea{1}
\tuseb{2}
\draw (1.5,-0.3) node {$-1$};
\end{tikzpicture}
& \\
\hline
\begin{tabular}{c} Post-sesqui- \\ occupied 10 \end{tabular}
& 
\begin{tikzpicture}[xscale=0.48, yscale=0.8]
\strandbackgroundshading
\tstrandbackgroundshading{1}
\tstrandbackgroundshading{2}
\aftervused
\tbeforewused{1}
\tafterwused{2}
\strandsetupn{}{}
\tstrandsetup{1}
\tstrandsetup{2}
\tstrandsetup{3}
\lefton
\rightoff
\trighton{1}
\trightoff{2}
\usec
\tuseb{1}
\tusea{2}
\draw (1.5,-0.3) node {$-\frac{1}{2}$};
\end{tikzpicture}
 & 
\begin{tikzpicture}[xscale=0.48, yscale=0.8]
\strandbackgroundshading
\tstrandbackgroundshading{1}
\beforewused
\afterwused
\taftervused{1}
\strandsetupn{}{}
\tstrandsetup{1}
\tstrandsetup{2}
\lefton
\righton
\trightoff{1}
\useab
\dothorizontals
\tusec{1}
\draw (1,-0.3) node {$-\frac{1}{2}$};
\end{tikzpicture}
 & & & 
\begin{tikzpicture}[xscale=0.48, yscale=0.8]
\strandbackgroundshading
\tstrandbackgroundshading{1}
\tstrandbackgroundshading{2}
\afterwused
\tbeforewused{1}
\taftervused{2}
\strandsetupn{}{}
\tstrandsetup{1}
\tstrandsetup{2}
\tstrandsetup{3}
\lefton
\rightoff
\trighton{1}
\trightoff{2}
\usea
\tuseb{1}
\tusec{2}
\draw (1.5,-0.3) node {$-\frac{3}{2}$};
\end{tikzpicture}
 & \\
\hline
\begin{tabular}{c} Doubly \\ occupied \\ $00$ \end{tabular}
 & 
\begin{tikzpicture}[xscale=0.48, yscale=0.8]
\strandbackgroundshading
\tstrandbackgroundshading{1}
\tstrandbackgroundshading{2}
\tstrandbackgroundshading{3}
\beforewused
\tafterwused{1}
\tbeforevused{2}
\taftervused{3}
\strandsetupn{}{}
\tstrandsetup{1}
\tstrandsetup{2}
\tstrandsetup{3}
\tstrandsetup{4}
\leftoff
\righton
\trightoff{1}
\trighton{2}
\trightoff{3}
\useb
\tusea{1}
\tused{2}
\tusec{3}
\draw (2,-0.3) node {$0$};
\end{tikzpicture}
 & 
\begin{tikzpicture}[xscale=0.48, yscale=0.8]
\strandbackgroundshading
\tstrandbackgroundshading{1}
\tstrandbackgroundshading{2}
\beforewused
\tbeforevused{1}
\taftervused{1}
\tafterwused{2}
\strandsetupn{}{}
\tstrandsetup{1}
\tstrandsetup{2}
\tstrandsetup{3}
\leftoff
\righton
\trighton{1}
\trightoff{2}
\useb
\tusecd{1}
\tdothorizontals{1}
\tusea{2}
\draw (1.5,-0.3) node {$0$};
\end{tikzpicture}
 & & & 
\begin{tikzpicture}[xscale=0.48, yscale=0.8]
\strandbackgroundshading
\tstrandbackgroundshading{1}
\tstrandbackgroundshading{2}
\tstrandbackgroundshading{3}
\beforewused
\taftervused{1}
\tbeforevused{2}
\tafterwused{3}
\strandsetupn{}{}
\tstrandsetup{1}
\tstrandsetup{2}
\tstrandsetup{3}
\tstrandsetup{4}
\leftoff
\righton
\trightoff{1}
\trighton{2}
\trightoff{3}
\useb
\tusec{1}
\tused{2}
\tusea{3}
\draw (2,-0.3) node {$-1$};
\end{tikzpicture}
 & \\
\hline
\begin{tabular}{c} Doubly \\ occupied \\ $11$ \end{tabular}
 & 
\begin{tikzpicture}[xscale=0.48, yscale=0.8]
\strandbackgroundshading
\tstrandbackgroundshading{1}
\tstrandbackgroundshading{2}
\tstrandbackgroundshading{3}
\afterwused
\tbeforevused{1}
\taftervused{2}
\tbeforewused{3}
\strandsetupn{}{}
\tstrandsetup{1}
\tstrandsetup{2}
\tstrandsetup{3}
\tstrandsetup{4}
\lefton
\rightoff
\trighton{1}
\trightoff{2}
\trighton{3}
\usea
\tused{1}
\tusec{2}
\tuseb{3}
\draw (2,-0.3) node {$-1$};
\end{tikzpicture}
 & 
\begin{tabular}{c}
\begin{tikzpicture}[xscale=0.48, yscale=0.8]
\strandbackgroundshading
\tstrandbackgroundshading{1}
\tstrandbackgroundshading{2}
\beforevused
\aftervused
\tafterwused{1}
\tbeforewused{2}
\strandsetupn{}{}
\tstrandsetup{1}
\tstrandsetup{2}
\tstrandsetup{3}
\lefton
\righton
\trightoff{1}
\trighton{2}
\usecd
\dothorizontals
\tusea{1}
\tuseb{2}
\draw (3.5,0.75) node {$-1$};
\end{tikzpicture}
\\ 
\begin{tikzpicture}[xscale=0.48, yscale=0.8]
\strandbackgroundshading
\tstrandbackgroundshading{1}
\tstrandbackgroundshading{2}
\afterwused
\tbeforewused{1}
\tbeforevused{2}
\taftervused{2}
\strandsetupn{}{}
\tstrandsetup{1}
\tstrandsetup{2}
\tstrandsetup{3}
\lefton
\rightoff
\trighton{1}
\trighton{2}
\usea
\tuseb{1}
\tusecd{2}
\tdothorizontals{2}
\end{tikzpicture}
\end{tabular}
& & 
\begin{tikzpicture}[xscale=0.48, yscale=0.8]
\strandbackgroundshading
\tstrandbackgroundshading{1}
\beforewused
\afterwused
\tbeforevused{1}
\taftervused{1}
\strandsetupn{}{}
\tstrandsetup{1}
\tstrandsetup{2}
\lefton
\righton
\trighton{1}
\useab
\dothorizontals
\tusecd{1}
\tdothorizontals{1}
\draw (1,-0.3) node {$0$};
\end{tikzpicture}
  & 
\begin{tikzpicture}[xscale=0.48, yscale=0.8]
\strandbackgroundshading
\tstrandbackgroundshading{1}
\tstrandbackgroundshading{2}
\tstrandbackgroundshading{3}
\afterwused
\tbeforewused{1}
\taftervused{2}
\tbeforevused{3}
\strandsetupn{}{}
\tstrandsetup{1}
\tstrandsetup{2}
\tstrandsetup{3}
\tstrandsetup{4}
\lefton
\rightoff
\trighton{1}
\trightoff{2}
\trighton{3}
\usea
\tuseb{1}
\tusec{2}
\tused{3}
\draw (2,-0.3) node {$-2$};
\end{tikzpicture}
& \\
\hline
\end{tabular}

\caption{Possible local tensor products, by H-data and tightness. Maslov gradings also shown.}
\label{tbl:local_tensor_products}
\end{minipage}
\end{center}
\end{table}

\subsection{Tightness of local and sub-tensor products}
\label{sec:tightness_local_sub}

In the sequel we need to know about the behaviour of tightness when we decompose or extend or contract tensor products. We saw in lemma \ref{lem:tensor_product_local_tightness} that when we consider tensor products of diagrams locally (i.e. decompose ``vertically"), tightness behaves in an ordered way. However, when we decompose according to the ``horizontal" tensor product, tightness is not so well behaved. 

By proposition \ref{prop:viable_tensor_product_classification}, a viable local tensor product is an extension-contraction of one shown in table \ref{tbl:local_tensor_products}. We can then enumerate the tightness of sub-tensor-products in each case, and obtain the following result.
\begin{lem}[Tightness of local sub-tensor-products]
\label{lem:local_sub-tensor-product_tightness}
Let $D$ be a viable tensor product of diagrams on $\ZZ_P$, and let $D'$ be a sub-tensor-product of $D$. Then the possible tightness types of $D$ and $D'$ are as shown in table \ref{tbl:local_sub-tensor-product_tightness}.
\qed
\end{lem}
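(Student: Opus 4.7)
The plan is to reduce to a case analysis based on the classification of viable local tensor products in proposition \ref{prop:viable_tensor_product_classification} and table \ref{tbl:local_tensor_products}. Since any viable local tensor product $D$ on $\ZZ_P$ is an extension-contraction of one of the tensor products in the table, and since extension by idempotents and trivial contractions affect neither the tightness of $D$ nor the tightness of any sub-tensor-product (they simply insert or remove idempotent factors, which trivially behave like the unit in the tensor product), it suffices to check the claim on the finite list of ``representative'' local tensor products appearing in table \ref{tbl:local_tensor_products}, and then verify that sub-tensor-products of any extension-contraction of these correspond, again up to extension-contraction, to sub-tensor-products of the representatives.

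First I would address the trivial case of extension: if $D'$ is a sub-tensor-product of $D$, and we extend $D$ by inserting an idempotent $D_i^*$ (as in lemma \ref{lem:extending_contracting}), then any sub-tensor-product of the extended $D$ is an extension of a sub-tensor-product of $D$ by an idempotent tensor factor. Since idempotent factors do not affect H-data, viability, or the product of the factors with non-horizontal strands, they do not change tightness. A similar argument handles trivial contractions (definition \ref{def:trivial_contraction}), where at most one factor among those being contracted is non-idempotent. Thus, up to relabelling of idempotent factors, the tightness of all sub-tensor-products of $D$ is determined by the tightness of sub-tensor-products of a representative in table \ref{tbl:local_tensor_products}.

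Next I would enumerate, for each entry in table \ref{tbl:local_tensor_products}, all possible sub-tensor-products and apply definition \ref{def:tensor_product_tightness} (assisted by lemma \ref{lem:tightness_local_factors}) to classify each. For the tight entries, each sub-tensor-product is itself viable and its product is tight or the factors are all idempotent, so sub-tensor-products are tight. For the sublime entries, lemma \ref{lem:sublime_contains_crossed} ensures that any sub-tensor-product containing the crossed factor is either crossed or, after multiplication with a matching tight factor, tight or sublime; whereas sub-tensor-products missing the crossed factor cover only steps complementary to the crossing and are tight. The twisted, crossed and critical cases each involve only a small number of non-idempotent factors (lemma \ref{lem:tightness_local_factors}), so there are only a handful of essentially distinct sub-tensor-products to check; for each one we read off the H-data and appeal to table \ref{tbl:local_diagrams} or \ref{tbl:local_tensor_products} to determine tightness. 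Finally, for the singular entry (of the form in figure \ref{fig:singular_example}), lemma \ref{lem:singular_structure} gives a precise description of the only possible shape, and direct inspection shows that any proper sub-tensor-product is tight (consisting of a single non-idempotent factor together with idempotents), while the full tensor product is singular.

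The main obstacle will be not any individual calculation but the sheer bookkeeping of cases: for each of the roughly dozen rows of table \ref{tbl:local_tensor_products}, one must enumerate the possible sub-tensor-products up to extension-contraction, determine the tightness of each via the definitions, and collect all pairs (tightness of $D$, tightness of $D'$) that arise. Once this is carried out, the entries of table \ref{tbl:local_sub-tensor-product_tightness} are precisely the union of these pairs over all representatives, and the lemma follows.
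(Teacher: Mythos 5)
Your proposal is correct and follows essentially the same route as the paper, which offers no written argument beyond the remark preceding the lemma: reduce via proposition \ref{prop:viable_tensor_product_classification} to the representatives in table \ref{tbl:local_tensor_products}, observe that inserting or deleting idempotent factors changes nothing, and enumerate the tightness of all sub-tensor-products case by case. The only point worth keeping an eye on in the bookkeeping is that a nontrivial contraction can turn a sublime representative into a tight tensor product, but since the tight case is handled directly from definition \ref{def:tensor_product_tightness}, this does not affect the table.
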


\begin{table}
\begin{center}
\begin{tabular}{x{1.5cm}| c|c|c|c|c|c}
\diag{.1em}{1.5cm}{$D$}{$D'$} & Tight & Sublime & Twisted & Crossed & Critical & Singular  \\ \hline
Tight 	& X & 	&		&		&		& \\ \hline
Sublime & X	&	X	&	X	&	X	&		& \\ \hline
Twisted &	X	&		&	X &		&		& \\ \hline
Crossed & X	&		&		&	X &		& \\ \hline 
Critical&	X	&		&	X	&		&	X	&	X \\ \hline
Singular&	X	&		&		&		&		&	X
\end{tabular}
\caption{Possible tightness types of a viable local tensor product $D$ and a sub-tensor-product $D'$.}
\label{tbl:local_sub-tensor-product_tightness}
\end{center}
\end{table}

Thus, for instance, if $D$ is tight then $D'$ is also tight; if $D'$ is sublime then $D$ is also sublime; and if $D'$ is critical then $D$ is also critical.

We also have a similar ``global" result about the possible tightness types of a tensor product $D$ and sub-tensor product $D'$, on a general arc diagram.
\begin{lem}[Tightness of sub-tensor-products]
\label{lem:sub-tensor-product_tightness}
Let $D = D_1 \otimes \cdots \otimes D_n$ be a viable tensor product of diagrams on an arc diagram $\ZZ$, and let $D'$ be a sub-tensor-product of $D$. 
\begin{enumerate}
\item
If $D$ is tight, then $D'$ is tight.
\item
If $D'$ is critical or singular, then $D$ is critical or singular.
\end{enumerate}
Every combination of tightness types not ruled out by these implications is possible.
\end{lem}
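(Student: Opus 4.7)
The plan is to reduce both claims to the local case, leveraging the local--global tightness lemma (Lemma \ref{lem:tensor_product_local_tightness}) together with the local classification of sub-tensor-products (Lemma \ref{lem:local_sub-tensor-product_tightness}). The key point is that for a viable $D$, the tightness type is determined by the tightness type at each matched pair $P$; and crucially, if $D' = D_i \otimes \cdots \otimes D_j$ is a sub-tensor-product of $D$, then $D'_P = (D_i)_P \otimes \cdots \otimes (D_j)_P$ is a sub-tensor-product of $D_P = (D_1)_P \otimes \cdots \otimes (D_n)_P$ at every $P$. So pairwise consequences of the local lemma can be assembled into the global statement.

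For (i), suppose $D$ is tight. By Lemma \ref{lem:tensor_product_local_tightness}(i), every $D_P$ is tight. The ``Tight'' row of Table \ref{tbl:local_sub-tensor-product_tightness} forces $D'_P$ to be tight at each $P$, whence a second application of Lemma \ref{lem:tensor_product_local_tightness}(i) yields that $D'$ is tight. For (ii), suppose $D'$ is critical or singular. By Lemma \ref{lem:tensor_product_local_tightness}(v)--(vi) there is a matched pair $P$ at which $D'_P$ is critical or singular. Reading the corresponding columns of Table \ref{tbl:local_sub-tensor-product_tightness}, $D_P$ must itself be critical or singular at $P$; and one more appeal to Lemma \ref{lem:tensor_product_local_tightness}(v)--(vi) gives that $D$ is critical or singular.

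For the converse claim, one enumerates the pairs $(D, D')$ not excluded by (i) and (ii) and exhibits an example realising each combination. Global examples can always be built by placing a chosen local tensor product at one matched pair and trivial idempotent data at all others; so whenever the desired combination already appears as an entry of Table \ref{tbl:local_sub-tensor-product_tightness} at a single pair, a row of Table \ref{tbl:local_tensor_products} supplies the example. The hard part is realising the ``mixed'' combinations which are not achievable at a single pair --- for instance $(D \text{ twisted}, D' \text{ sublime})$, $(D \text{ crossed}, D' \text{ sublime})$, or $(D \text{ critical}, D' \text{ sublime or crossed})$ --- since a sub-tensor-product selects the same consecutive index range $i,\ldots,j$ at every matched pair simultaneously. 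To construct these, one chooses two or more pairs $P_1, P_2$ and distributes the behaviour across them: at $P_1$ the local tensor product is arranged so that the slice $D_i \otimes \cdots \otimes D_j$ exhibits the desired local tightness for $D'$, while at $P_2$ a compatible local tensor product ensures the correct global tightness for $D$ without spoiling the chosen slice. Locating suitable entries of Table \ref{tbl:local_tensor_products} that can be lined up on a common index range is the main bookkeeping obstacle; once it is done, the remaining verifications are routine applications of Lemma \ref{lem:tensor_product_local_tightness}.
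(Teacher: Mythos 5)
Your proofs of (i) and (ii) are correct, but they take a genuinely different route from the paper's. The paper argues in two lines directly from Definition \ref{def:tensor_product_tightness}: for (i), $D$ tight means $D_1\cdots D_n$ is tight, hence nonzero in homology, hence every partial product $D_i\cdots D_j$ is nonzero in homology and therefore tight (the factors being tight already); for (ii), $D'$ critical or singular means $D_i\cdots D_j=0$, hence $D_1\cdots D_n=0$, hence $D$ is critical or singular. No local analysis is needed. Your route --- cut at each matched pair, note that $D'_P$ is a sub-tensor-product of $D_P$, apply Lemma \ref{lem:tensor_product_local_tightness} in both directions and read off the Tight row and the Critical/Singular columns of Table \ref{tbl:local_sub-tensor-product_tightness} --- is valid ($D'$ is viable, so the local--global lemma does apply to it), and it is pleasingly uniform with the rest of the local-to-global machinery; the cost is that it leans on Lemma \ref{lem:local_sub-tensor-product_tightness}, which is itself only justified by enumeration, whereas the paper's argument is self-contained. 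One small point in (ii): from ``some $D_P$ is critical or singular'' you should conclude that $D$ is critical or singular via the mutual exclusivity of the six types (the global type is the most degenerate local type, so $D$ cannot be tight, sublime, twisted or crossed); citing parts (v)--(vi) alone is slightly loose, though the fix is immediate.

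For the final assertion that every combination not excluded by (i) and (ii) actually occurs, your strategy is the right one and matches the paper's: single-pair realizations drawn from Table \ref{tbl:local_tensor_products} when the combination already appears in Table \ref{tbl:local_sub-tensor-product_tightness}, and multi-pair constructions distributing the behaviour across two matched pairs for the mixed combinations such as ($D$ twisted, $D'$ sublime), which is exactly what Figures \ref{fig:sublime_tensor_example} and \ref{fig:crossed_tensor_example} do. However, as written you have only described the method; since this is an existence claim, a complete proof must actually exhibit the tensor products (the paper displays explicit examples covering most combinations and omits only the handful with $D$ singular), so you should write down at least one concrete example for each mixed combination rather than deferring the bookkeeping.
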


	
\begin{proof}
If $D$ is tight, then $D_1 \cdots D_n$ is tight (definition \ref{def:tensor_product_tightness}), hence nonzero in homology (definition \ref{def:tight_twisted_crossed_diagrams}), hence any $D_i \cdots D_j$ is nonzero in homology, hence tight, hence $D'$ is tight.

If $D'$ is critical or singular then $D_i \cdots D_j = 0$ (definition \ref{def:tensor_product_tightness}), so $D_1 \cdots D_n = 0$, so $D$ is critical or singular.

We show some of the remaining possibilities in figures. Figures \ref{fig:sublime_tensor_example} and \ref{fig:crossed_tensor_example} show examples of sublime, twisted, crossed and critical tensor products containing many types of sub-tensor-products. The small number remaining are omitted.
\end{proof}

In figures such as \ref{fig:sublime_tensor_example}(left) we only draw the local tensor products at one matched pair; and in figures \ref{fig:twisted_tensor_example}(right) and \ref{fig:crossed_tensor_example} we only draw the local tensor products at two matched pairs. These can easily be extended to figures of tensor products of non-augmented diagrams on connected arc diagrams if desired.

\begin{figure}
\begin{center}
\begin{tikzpicture}[scale=1]
\strandbackgroundshading
\tstrandbackgroundshading{1}
\tstrandbackgroundshading{2}
\afterwused
\tbeforewused{1}
\tbeforevused{2}
\taftervused{2}
\strandsetupn{}{}
\tstrandsetup{1}
\tstrandsetup{2}
\tstrandsetup{3}
\lefton
\rightoff
\trighton{1}
\trighton{2}
\usea
\tuseb{1}
\tusecd{2}
\tdothorizontals{2}

\begin{scope}[xshift=6 cm, yshift = 1 cm]
\strandbackgroundshading
\tstrandbackgroundshading{1}
\tstrandbackgroundshading{2}
\tbeforewused{2}
\tafterwused{2}
\strandsetupn{}{}
\tstrandsetup{1}
\tstrandsetup{2}
\tstrandsetup{3}
\lefton
\righton
\trighton{1}
\trighton{2}
\dothorizontals
\tdothorizontals{1}
\tusea{2}
\tuseb{2}
\end{scope}
\begin{scope}[xshift = 6 cm, yshift=-1 cm]
\strandbackgroundshading
\tstrandbackgroundshading{1}
\tstrandbackgroundshading{2}
\beforewused
\afterwused
\taftervused{1}
\strandsetupn{}{}
\tstrandsetup{1}
\tstrandsetup{2}
\tstrandsetup{3}
\lefton
\righton
\trightoff{1}
\trightoff{2}
\dothorizontals
\useab
\tusec{1}
\end{scope}
\end{tikzpicture}

\caption{Left: A sublime tensor product $D_1 \otimes D_2 \otimes D_3$ containing tight ($D_1, D_2$), sublime ($D_2 \otimes D_3$, $D_1 \otimes D_2 \otimes D_3$), twisted ($D_1 \otimes D_2$) and crossed ($D_3$) sub-tensor-products. 
Right: A twisted tensor product $D_1 \otimes D_2 \otimes D_3$ containing tight ($D_2$), sublime ($D_1 \otimes D_2$), twisted ($D_3$, $D_2 \otimes D_3$, $D_1 \otimes D_2 \otimes D_3$) and crossed ($D_1$) sub-tensor-products.}
\label{fig:sublime_tensor_example}
\label{fig:twisted_tensor_example}
\end{center}
\end{figure}
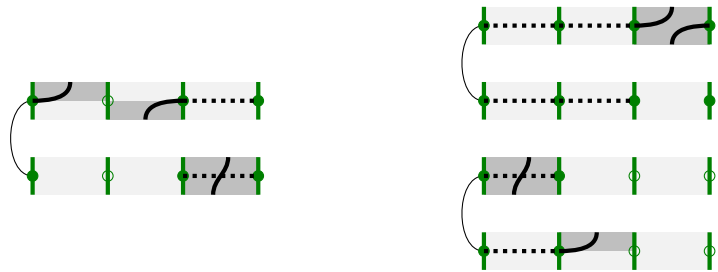

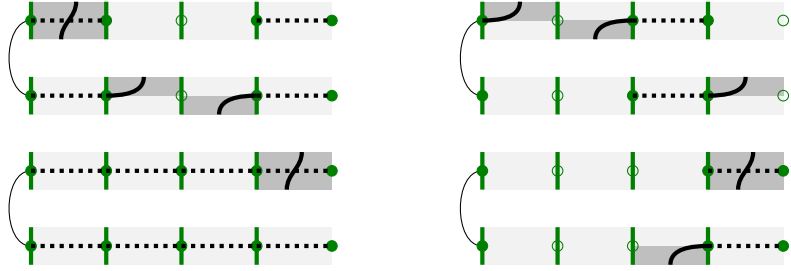
\begin{figure}
\begin{center}
\begin{tikzpicture}[scale=1]
\strandbackgroundshading
\tstrandbackgroundshading{1}
\tstrandbackgroundshading{2}
\tstrandbackgroundshading{3}
\beforewused
\afterwused
\taftervused{1}
\tbeforevused{2}
\strandsetupn{}{}
\tstrandsetup{1}
\tstrandsetup{2}
\tstrandsetup{3}
\lefton
\righton
\trightoff{1}
\trighton{2}
\trighton{3}
\dothorizontals
\useab
\tusec{1}
\tused{2}
\tdothorizontals{3}
\begin{scope}[yshift=-2 cm]
\strandbackgroundshading
\tstrandbackgroundshading{1}
\tstrandbackgroundshading{2}
\tstrandbackgroundshading{3}
\tafterwused{3}
\tbeforewused{3}
\strandsetupn{}{}
\tstrandsetup{1}
\tstrandsetup{2}
\tstrandsetup{3}
\lefton
\righton
\trighton{1}
\trighton{2}
\trighton{3}
\dothorizontals
\tdothorizontals{1}
\tdothorizontals{2}
\tdothorizontals{3}
\tuseab{3}
\end{scope}

\begin{scope}[xshift = 6 cm]
\strandbackgroundshading
\tstrandbackgroundshading{1}
\tstrandbackgroundshading{2}
\tstrandbackgroundshading{3}
\afterwused
\tbeforewused{1}
\taftervused{3}
\strandsetupn{}{}
\tstrandsetup{1}
\tstrandsetup{2}
\tstrandsetup{3}
\lefton
\rightoff
\trighton{1}
\trighton{2}
\trightoff{3}
\usea
\tuseb{1}
\tdothorizontals{2}
\tusec{3}
\end{scope}
\begin{scope}[xshift = 6 cm, yshift=-2 cm]
\strandbackgroundshading
\tstrandbackgroundshading{1}
\tstrandbackgroundshading{2}
\tstrandbackgroundshading{3}
\tbeforevused{2}
\tafterwused{3}
\tbeforewused{3}
\strandsetupn{}{}
\tstrandsetup{1}
\tstrandsetup{2}
\tstrandsetup{3}
\lefton
\rightoff
\trightoff{1}
\trighton{2}
\trighton{3}
\tused{2}
\tdothorizontals{3}
\tuseab{3}
\end{scope}
\end{tikzpicture}

\caption{Left: A crossed tensor product $D_1 \otimes D_2 \otimes D_3 \otimes D_4$ containing tight ($D_2$, $D_3$), sublime ($D_1 \otimes D_2$, $D_1 \otimes D_2 \otimes D_3$), twisted ($D_2 \otimes D_3$) and crossed ($D_1$, $D_4$, $D_3 \otimes D_4$, $D_2 \otimes D_3 \otimes D_4$, $D_1 \otimes D_2 \otimes D_3 \otimes D_4$) sub-tensor-products.
Right: A critical tensor product $D_1 \otimes D_2 \otimes D_3 \otimes D_4$ containing tight ($D_1, D_2, D_3, D_2 \otimes D_3$), sublime ($D_3 \otimes D_4$), twisted ($D_1 \otimes D_2$, $D_1 \otimes D_2 \otimes D_3$), crossed ($D_4$), critical ($D_1 \otimes D_2 \otimes D_3 \otimes D_4$) and singular ($D_2 \otimes D_3 \otimes D_4$) sub-tensor-products.}
\label{fig:crossed_tensor_example}
\end{center}
\end{figure}

On the other hand, extension-contraction preserves most, but not all, types of tightness. The only subtlety is sublimation: sublime tensor products may become tight.
\begin{lem}
\label{lem:tightness_extension-contraction}
Suppose $D'$ is obtained from $D = D_1 \otimes \cdots \otimes D_n$ by extension-contraction. Then $D$ and $D'$ have the same tightness, or $D$ is sublime and $D'$ is tight.
\end{lem}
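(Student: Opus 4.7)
The plan is to observe that extension-contraction preserves two key pieces of data: the total H-data of the tensor product, and the product $D_1 \cdots D_n \in \A$ itself. Extension by an idempotent $D_i^*$ inserts a factor that acts as the identity on the relevant idempotent, changing neither. Contraction of $D_i \otimes \cdots \otimes D_j$ replaces several factors by their product $D_i \cdots D_j$, which likewise preserves the overall H-data and the overall product (this is exactly why lemma \ref{lem:extending_contracting} requires the subproduct to be nonzero).

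Since $D_1 \cdots D_n$ is preserved, whether this product is zero is preserved; and when it is nonzero, its tightness type as a single diagram (tight, twisted, or crossed) is preserved. Together with preservation of H-data (hence of the singular/non-singular status of that H-data, via lemma \ref{lem:local-global_H-data} and definition \ref{def:tightness_H-data}), this immediately shows that extension-contraction preserves each of the following five classes individually: (a) tight-or-sublime ($D_1 \cdots D_n$ nonzero and tight), (b) twisted, (c) crossed, (d) critical, (e) singular. In particular, the only possible transition between distinct types of tightness lies within the tight-or-sublime class.

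To separate tight from sublime after an extension-contraction, recall that the tight/sublime distinction is precisely whether all tensor factors are tight. First, extension by an idempotent $D^*_k$ adds a tight factor: each local piece of $D^*_k$ is either $00$- or $11$-unoccupied, both tight by table \ref{tbl:local_diagrams}. Hence extension preserves both ``all factors tight'' and ``some factor not tight'', so it preserves tight vs.\ sublime exactly. Next, suppose $D$ is tight and we contract $D_i \otimes \cdots \otimes D_j$ to the single factor $D_i \cdots D_j$. All $D_k$ are tight, hence crossingless with homology classes $M_k$; by lemma \ref{lem:products_crossings}, $D_i \cdots D_j$ is crossingless, hence a cycle representing $M_i \cdots M_j$. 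Since $D$ is tight, $M_1 \cdots M_n \neq 0$ in $\HH$, which forces the intermediate product $M_i \cdots M_j \neq 0$, so $D_i \cdots D_j$ is tight. Thus contraction also preserves ``all factors tight'', and we conclude that $D$ tight implies $D'$ tight.

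The remaining case is $D$ sublime: by the first paragraph $D'$ lies in the tight-or-sublime class, so $D'$ is tight or sublime, as claimed. The only subtlety worth noting is that a genuine transition sublime $\to$ tight really can occur: contracting a sublime sub-tensor-product replaces non-tight factors by their tight product, and if no non-tight factor remains outside the contracted range, $D'$ becomes tight. No single step presents a real obstacle; the work is bookkeeping with the table of local tensor products.
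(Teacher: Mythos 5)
Your proof is correct and follows essentially the same route as the paper: note that extension-contraction preserves the H-data and the product $D_1\cdots D_n$, so singularity and the classes twisted/crossed/critical/(tight or sublime) are preserved, then handle the tight case separately by showing every factor of $D'$ (idempotents and sub-products $D_i\cdots D_j$) is tight. The only cosmetic difference is that you inline the homology argument that the paper delegates to lemma \ref{lem:sub-tensor-product_tightness}(i).
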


\begin{proof}
Under extension or contraction, the product $D_1 \cdots D_n$ of a tensor product $D_1 \otimes \cdots \otimes D_n$ remains invariant, as does H-data.

Singularity of a tensor product is defined by reference only to H-data; hence hence $D$ is singular iff $D'$ is singular. 

Now assume $D,D'$ are not singular. The tightness properties ``tight or sublime", ``twisted", ``crossed" and ``critical" of $D$ are defined by the properties of the product $D_1 \cdots D_n$ (i.e. whether $D_1 \cdots D_n$ is tight, twisted, crossed or zero respectively); hence these tightness properties are preserved under extension-contraction.

It remains to prove that if $D$ is tight then $D'$ is tight. In this case, any sub-tensor-product of $D'$ is tight (lemma \ref{lem:sub-tensor-product_tightness}), and hence for any $1 \leq i \leq j \leq n$ the product $D_i \cdots D_j$ is tight. Thus in any extension-contraction $D'$ of $D$, the product of any sub-tensor product is tight; so $D'$ is tight.
\end{proof}

\subsection{Tensor products of homology classes}
\label{sec:tensor_product_homology}

We now turn to $\HH^{\otimes n}$.
Let $M = M_1 \otimes \cdots \otimes M_n$ be a viable tensor product of nonzero homology classes of diagrams on an arc diagram $\ZZ$, where $M_i$ has H-data $(h_i, s_i, t_i)$ and is represented by a tight diagram $D_i$.



Since each $D_i$ is tight, no $D_i$ is crossed, so $D$ is neither crossed (lemma \ref{lem:products_crossings}) nor sublime (lemma \ref{lem:sublime_contains_crossed}). That only leaves the possibilities in the following definition.
\begin{defn}
\label{def:tightness_tensor_homology}
Suppose $M = M_1 \otimes \cdots \otimes M_n$ is viable, and let $D_i$ be a diagram representing $M_i$. Then $M$ is \emph{tight, twisted, critical} or \emph{singular} accordingly as $D = D_1 \otimes \cdots \otimes D_n$ is tight, twisted, critical or singular.
\end{defn}
We can also speak of $M$ being tight, twisted, critical or singular at a matched pair $P$, or twisted at a place $p$, as for $D$.

However, there may be multiple choices for the $D_i$; so we check that tightness is well defined.
\begin{lem}
\label{lem:homology_tensor_tightness_well_defined}
Let $D_i, D'_i$ be diagrams representing $M_i$, and let $D = D_1 \otimes \cdots D_n$, $D' = D'_1 \otimes \cdots \otimes D'_n$. Then $D$ and $D'$ have the same tightness.
\end{lem}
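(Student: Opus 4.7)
The plan is to reduce to the case of a single strand switch and then analyze locally.

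By lemma \ref{lem:selecting_occupied}, for each $i$ the tight diagrams $D_i$ and $D'_i$ representing $M_i$ are related by a sequence of strand switchings at all-on doubly occupied matched pairs. Interpolating factor by factor and switch by switch, I can build a chain of viable tensor products $D = D^{(0)}, D^{(1)}, \ldots, D^{(N)} = D'$ in which consecutive terms differ by a single strand switch at one matched pair $P$ in one factor (say the $i$-th); viability is preserved along the way because strand switching leaves H-data unchanged, and the H-data of the tensor product is the same throughout. By transitivity, it therefore suffices to show that two such consecutive tensor products have the same tightness.

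Having reduced to this elementary case, I appeal to the local-global principle for tightness of tensor products of diagrams (lemma \ref{lem:tensor_product_local_tightness}): it is enough to prove $D$ and $D'$ have the same local tightness at every matched pair. At any pair $Q \neq P$, the local tensor products $D_Q$ and $D'_Q$ are literally identical, so there is nothing to check. At the pair $P$, the $i$-th local factor is $g_p$ in $D$ and $g_q$ in $D'$; either way this local diagram is 11 doubly occupied and covers all four steps of $\ZZ_P$. Viability of $D$ at $P$ then forces every other $(D_j)_P$ to carry no non-horizontal strand; applying idempotent matching and the fact that a local diagram with no non-horizontal strands at $P$ must be 00 or 11 (determined by its idempotent type), one propagates outward from position $i$ to conclude that each $(D_j)_P$ for $j\neq i$ is the 11 idempotent (dotted horizontals at $P$). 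Hence $D_P$ is obtained from the single-factor tensor product $g_p$ by iterated extension (definition \ref{def:extending_contracting}), and similarly $D'_P$ from $g_q$. Since $g_p$ and $g_q$ are both tight, lemma \ref{lem:tightness_extension-contraction} gives that both $D_P$ and $D'_P$ are tight, so they have the same local tightness at $P$.

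Combining these observations, $D$ and $D'$ have the same local tightness at every matched pair, hence the same global tightness by lemma \ref{lem:tensor_product_local_tightness}. The main point requiring care is the propagation argument at $P$ forcing the other local factors to be 11 idempotents; everything else is an immediate reduction or direct application of previously established lemmas. Singularity needs no separate treatment since it depends only on H-data, which is invariant under the reduction.
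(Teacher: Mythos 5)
Your proof is correct and takes essentially the same route as the paper's: at any matched pair where the representatives differ, that pair is all-on doubly occupied, viability forces every other factor to be an idempotent there, so both tensor products are locally tight at that pair, and lemma \ref{lem:tensor_product_local_tightness} then transfers equality of local tightness to equality of global tightness. Your interpolation through single strand switches is harmless but unnecessary (the paper treats all differing pairs simultaneously), and your propagation of the on-on idempotent via idempotent matching merely makes explicit a detail the paper leaves implicit.
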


\begin{proof}
Consider a matched pair $P$. If $D_i$ and $D'_i$ differ at $P$, then $P$ is all-on doubly occupied by $D_i$. In this case $D_i$ covers all four steps of the local arc diagram $\ZZ_P$, as does $D'_i$; so every $D_j$ and $D'_j$ with $j \neq i$ is idempotent at $P$. Thus both $D$ and $D'$ are tight at $P$. Hence $D$ and $D'$ have the same tightness at each matched pair. By lemma \ref{lem:tensor_product_local_tightness} then $D$ and $D'$ have the same tightness.
\end{proof}

Now we observe from table \ref{tbl:local_tensor_products} that any local tight, twisted, critical or singular tensor product, for any viable H-grading, can be constructed using only tight diagrams. So the table of possible local tensor products of homology classes of diagrams is precisely given by the tight, twisted, critical and singular columns of table \ref{tbl:local_tensor_products}, and we have the following proposition.
\begin{prop}[Classification of viable local tensor products of homology classes]
\label{prop:homology_tensor_product_classification}
Let $M = M_1 \otimes \cdots \otimes M_n$ be a viable tensor product of nonzero homology classes of diagrams on $\ZZ_P$. Then $M$ is an extension-contraction of a tensor product of homology classes of diagrams shown in the tight, twisted, critical or singular columns of table \ref{tbl:local_tensor_products}, and its H-data and tightness are as shown.
\qed
\end{prop}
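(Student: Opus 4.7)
The plan is to reduce this to Proposition \ref{prop:viable_tensor_product_classification} by picking explicit tight representatives and checking that only certain columns of Table \ref{tbl:local_tensor_products} can actually arise. Since each $M_i \neq 0$ has tight H-data (by Proposition \ref{prop:homology_summands}), choose a tight local diagram $D_i$ on $\ZZ_P$ representing $M_i$, and form $D = D_1 \otimes \cdots \otimes D_n \in \A_P^{\otimes n}$. By Lemma \ref{lem:homology_tensor_tightness_well_defined}, the tightness of $M$ (as in Definition \ref{def:tightness_tensor_homology}) is precisely the tightness of $D$, independent of these choices.

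First I would rule out the sublime and crossed types for $D$. Since every $D_i$ is crossingless, Lemma \ref{lem:products_crossings} gives that $D_1 \cdots D_n$ is crossingless (when nonzero), so $D$ is not crossed; and Lemma \ref{lem:sublime_contains_crossed} says any sublime tensor product contains a crossed factor, which forces $D$ not to be sublime. Hence $D$, and therefore $M$, must be tight, twisted, critical, or singular.

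Next I would apply Proposition \ref{prop:viable_tensor_product_classification} to exhibit some $\tilde{D}$ in Table \ref{tbl:local_tensor_products} with $D$ obtained from $\tilde{D}$ by extension-contraction. The key observation --- and the heart of the proof --- is that every entry in the tight, twisted, critical, and singular columns of the table has \emph{all} of its factors tight. For the tight column this is immediate from Lemma \ref{lem:tightness_local_factors}(i); for the twisted column, Lemma \ref{lem:tightness_local_factors}(iii) allows only tight factors plus possibly a twisted one, and inspection of the table shows that the listed representative uses two tight factors; for the critical column, Lemma \ref{lem:tightness_local_factors}(v) forbids crossed factors, and the entries shown use tight factors only; for the singular column, Lemma \ref{lem:singular_structure} directly forces the two non-idempotent factors to be tight. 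Therefore, for each viable H-data at which $M$ can take on a given tightness, the table already supplies an all-tight-factor representative in the corresponding column, and I may take $\tilde{D}$ to be that representative.

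The last step is to lift the extension-contraction from diagrams to homology classes. Each factor of the chosen $\tilde{D}$ is tight, so represents a nonzero homology class; let $\tilde{M}$ be the resulting tensor product in $\HH_P^{\otimes n}$. Each extension by an idempotent and each contraction of a (tight) subproduct used in passing from $\tilde{D}$ to $D$ corresponds, via Lemma \ref{lem:extending_contracting}(ii), to an extension or contraction of the matching homology classes --- the relevant products remain nonzero because the diagrammatic products are tight. Hence $M$ is obtained from $\tilde{M}$ by extension-contraction, completing the classification. The main obstacle is the verification in the third paragraph, namely certifying that one never \emph{has} to pick $\tilde{D}$ in the sublime or crossed columns (whose extension-contractions can also land at tight $D$); this is resolved entirely by inspection, once Lemmas \ref{lem:tightness_local_factors} and \ref{lem:singular_structure} rule out crossed factors in our setting.
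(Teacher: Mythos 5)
Your proof is correct and takes essentially the same route as the paper: the paper's (brief, pre-statement) argument likewise passes to tight representatives, rules out the sublime and crossed types via Lemmas \ref{lem:products_crossings} and \ref{lem:sublime_contains_crossed}, and then combines Proposition \ref{prop:viable_tensor_product_classification} with the observation that every entry in the tight, twisted, critical and singular columns of Table \ref{tbl:local_tensor_products} is built from tight factors only. Your final step, explicitly lifting the extension-contraction from diagrams to homology classes, merely spells out what the paper leaves to inspection.
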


Strictly speaking, table \ref{tbl:local_tensor_products} shows tensor products of diagrams; proposition \ref{prop:homology_tensor_product_classification} refers to their homology classes. In practice when drawing or referring to homology classes, we draw and refer to diagrams (necessarily tight) representing them.

The following observation from the classification of proposition \ref{prop:homology_tensor_product_classification} and table \ref{tbl:local_tensor_products} allows us to say something about tightness merely from H-data.
\begin{lem}
\label{lem:homology_tensor_product_tightness_H-data}
Let $M = M_1 \otimes \cdots \otimes M_n$ be viable on $\ZZ_P$, with H-data $(h,s,t)$.
\begin{enumerate}
\item
$M$ is tight or critical at $P$ iff $(h,s,t)$ is tight at $P$.
\item
$M$ is twisted at $P$ iff $(h,s,t)$ is twisted at $P$.
\item
$M$ is is singular at $P$ iff $(h,s,t)$ is singular at $P$.
\end{enumerate}
\qed
\end{lem}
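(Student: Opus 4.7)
The plan is to deduce all three biconditionals from the classification in Proposition \ref{prop:homology_tensor_product_classification} together with Proposition \ref{prop:homology_summands}, essentially by reading off the rows of Table \ref{tbl:local_tensor_products} restricted to the columns tight/twisted/critical/singular. Since extension-contraction preserves H-data (Section \ref{sec:sub_extension_contraction}), the H-data $(h_P, s_P, t_P)$ of $M$ equals the H-data of the table entry of which $M$ is an extension-contraction, i.e.\ the row label in the leftmost column.

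I would first handle the ``only if'' directions from the definitions. If $M$ is tight or twisted at $P$, then choosing tight diagram representatives $D_i$ of the $M_i$ yields a product $D_1 \cdots D_n$ which at $P$ is a nonzero tight (resp.\ twisted) diagram with H-data $(h_P, s_P, t_P)$, so this H-data is tight (resp.\ twisted) by Definition \ref{def:tightness_H-data}. If $M$ is singular at $P$, then by Definition \ref{def:tensor_product_tightness} one has $\A_P(h_P, s_P, t_P) = 0$, which by Proposition \ref{prop:homology_summands} is precisely singularity of the H-data. If $M$ is critical at $P$, then $\A_P(h_P, s_P, t_P) \neq 0$, so the H-data must be tight or twisted; I rule out the twisted case by noting that the critical column of Table \ref{tbl:local_tensor_products} contains no once-occupied-11 entry.

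For the ``if'' directions I inspect Table \ref{tbl:local_tensor_products} row-by-row, restricted to the columns relevant for homology classes. The twisted H-data row (once occupied 11) has, among these columns, only an entry in the twisted column; the singular H-data row (alternately occupied 00) has only an entry in the singular column; and every remaining row, which carries tight H-data by Proposition \ref{prop:homology_summands}, has entries only in the tight and critical columns. Since $M$ must fall into exactly one of the four types tight, twisted, critical, singular (Definition \ref{def:tightness_tensor_homology}), this immediately gives all three biconditionals in (i), (ii), (iii).

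The main obstacle, such as it is, is simply bookkeeping: one must carefully verify the column pattern for each H-data row of the table, discarding the sublime and crossed columns, which cannot occur for a tensor product of nonzero homology classes since representative diagrams are tight (so by Lemma \ref{lem:sublime_contains_crossed} no sublimation occurs, and by Lemma \ref{lem:products_crossings} no crossings appear in the product). This is routine once the table is laid out.
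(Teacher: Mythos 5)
Your proposal is correct and follows essentially the paper's own route: the paper states this lemma as an immediate observation from Proposition \ref{prop:homology_tensor_product_classification} and Table \ref{tbl:local_tensor_products} (together with Proposition \ref{prop:homology_summands} and the definitions), and your argument simply spells out that inspection, with the "only if" directions from the definitions of tightness of tensor products and of H-data, and the converses following from the mutual exclusivity of the classifications. The bookkeeping you describe (discarding the sublime and crossed columns via Lemmas \ref{lem:sublime_contains_crossed} and \ref{lem:products_crossings}) matches what the paper already establishes in Section \ref{sec:tensor_product_homology}.
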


We can distinguish tightness in $\HH^{\otimes n}$ by the following result.
\begin{lem}[Characterising tightness of tensor product of homology classes]
\label{lem:homology_tensor_tightness_characterisation}
Suppose $M = M_1 \otimes \cdots \otimes M_n$ is a viable tensor product of nonzero homology classes of diagrams on an arc diagram $\ZZ$, with H-data $(h,s,t)$, and let $D_i$ be a diagram representing $M_i$.
\begin{enumerate}
\item
$M$ is tight iff $M_1 \cdots M_n \neq 0$.
\item
$M$ is twisted iff $M_1 \cdots M_n = 0$ but $D_1 \cdots D_n \neq 0$.
\item
$M$ is critical iff $D_1 \cdots D_n = 0$, but $\A(h,s,t) \neq 0$.
\item
$M$ is singular iff $\A(h,s,t) = 0$
\end{enumerate}
\end{lem}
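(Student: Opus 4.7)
The plan is to reduce the statement to the tightness criteria for tensor products of diagrams (Definition~\ref{def:tensor_product_tightness}), using the observation that tight representatives $D_i$ are available and well-behaved.

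First, I would invoke Definition~\ref{def:tightness_tensor_homology}, which defines the tightness of $M$ via the tightness of $D = D_1 \otimes \cdots \otimes D_n$, where each $D_i$ is a diagram representing $M_i$. Lemma~\ref{lem:homology_tensor_tightness_well_defined} ensures this is independent of the choice of representatives, so we may take each $D_i$ to be tight (which is possible since $M_i \neq 0$). The key initial observation is that $D$ cannot be crossed or sublime: no $D_i$ is crossed, so by Lemma~\ref{lem:products_crossings} the product $D_1 \cdots D_n$ is crossingless (or zero), ruling out ``crossed''; and by Lemma~\ref{lem:sublime_contains_crossed}, a sublime tensor product must have some crossed $D_i$, which is absent here. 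Thus $D$ falls into exactly one of the four types tight, twisted, critical, singular, matching the four cases to prove.

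Cases (iii) and (iv) then follow by direct unwrapping of Definition~\ref{def:tensor_product_tightness}: $D$ is critical iff $D_1 \cdots D_n = 0$ and $\A(h,s,t) \neq 0$, while $D$ is singular iff $D_1 \cdots D_n = 0$ and $\A(h,s,t) = 0$. For cases (i) and (ii), the key bridge is that the crossingless product $D_1 \cdots D_n$, when nonzero, is automatically a cycle, so $M_1 \cdots M_n = [D_1 \cdots D_n]$ in $\HH$. Hence $D_1 \cdots D_n$ is tight iff it is nonzero with nonzero homology class, iff $M_1 \cdots M_n \neq 0$; this gives (i). And $D_1 \cdots D_n$ is twisted iff it is nonzero but zero in homology, iff $D_1 \cdots D_n \neq 0$ but $M_1 \cdots M_n = 0$; this gives (ii).

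There is no major obstacle: the statement is essentially a translation exercise between the two definitions of tightness (one for diagrams, one for their homology classes). The only point requiring a little care is making sure that ``nonzero in homology'' and ``nonzero as a diagram, with vanishing homology class'' are cleanly separated from ``zero as a diagram''; this is handled uniformly by the crossinglessness conclusion from Lemma~\ref{lem:products_crossings}, which guarantees $D_1 \cdots D_n$ is a cycle whenever it is nonzero. Once that is noted, the four equivalences read off from Definition~\ref{def:tensor_product_tightness} in one line each.
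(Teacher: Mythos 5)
Your proposal is correct and follows essentially the same route as the paper: reduce to the tightness of $D = D_1 \otimes \cdots \otimes D_n$ via Definition~\ref{def:tightness_tensor_homology} and Lemma~\ref{lem:homology_tensor_tightness_well_defined}, choose tight representatives, use Lemma~\ref{lem:products_crossings} (and, as in the paper's remarks, Lemma~\ref{lem:sublime_contains_crossed}) to see the product is crossingless so that $M_1\cdots M_n = [D_1\cdots D_n]$, and read off (iii)--(iv) directly from Definition~\ref{def:tensor_product_tightness}. The only cosmetic difference is that in case (ii) you conclude ``twisted'' from crossinglessness plus vanishing homology class, whereas the paper phrases it as ``not tight, not crossed, hence twisted''; these are the same argument.
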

Like definition \ref{def:tensor_product_tightness}, lemma \ref{lem:homology_tensor_tightness_characterisation} presents tightness as a list of things that go increasingly wrong. Note $\A(h,s,t) = 0$ implies $D_1 \cdots D_n = 0$ implies $M_1 \cdots M_n = 0$, so precisely one of these cases applies.

Recalling the isomorphism between $\HH$ and the contact category, $M = M_1 \otimes \cdots \otimes M_n$ describes the stacking of tight cubulated contact structures on a thickened surface $\Sigma \times [0,1]$. Cases (ii) through (iv) describe overtwisted structures, in increasing order of degeneracy. In case (ii) the stacked contact cubes above each individual square of the quadrangulation remain tight, but the overall contact structure is overtwisted (as in figure \ref{fig:twisted_example}); in case (iii) the contact cube above some square becomes overtwisted; in case (iv) the contact cube above some square is overtwisted, even when restricted to the boundary of the cube.

\begin{proof}
Let $D = D_1 \otimes \cdots \otimes D_n$, so by definition \ref{def:tightness_tensor_homology} (and lemma \ref{lem:homology_tensor_tightness_well_defined}), $M$ and $D$ have the same tightness, and it is sufficient to consider the tightness of $D$.

If $D$ is tight then $D_1 \cdots D_n$ is tight, so $M_1 \cdots M_n \neq 0$ (definition \ref{def:tight_twisted_crossed_diagrams}).  Conversely, if $M_1 \cdots M_n \neq 0$ then all $M_i \neq 0$, and these nonzero homology classes are represented by the diagrams $D_1 \cdots D_n$ and $D_i$, which are tight, so $D$ is tight.

If $D$ is twisted then $D_1 \cdots D_n$ is twisted, hence nonzero, but its homology class $M_1 \cdots M_n = 0$ (definition \ref{def:tight_twisted_crossed_diagrams} again). Conversely, if $M_1 \cdots M_n = 0$ but the diagram $D_1 \cdots D_n$ is nonzero, then $D_1 \cdots D_n$ is not tight; it is also not crossed, since no $D_i$ is crossed (lemma \ref{lem:products_crossings}); hence it is twisted. Thus $D$ is twisted.

The characterisations of critical and singular follow immediately from definition \ref{def:tensor_product_tightness}.
\end{proof}

Lemma \ref{lem:tensor_product_local_tightness} applied to homology immediately gives the following.
\begin{lem}[Local-global tightness in $\HH^{\otimes n}$]
\label{lem:homology_tensor_tightness_local}
Let $M = M_1 \otimes \cdots \otimes M_n$ be viable, where $M_i$ is represented by diagram $D_i$.
\begin{enumerate}
\item
$M$ is tight iff $M$ is tight at all matched pairs.
\item
$M$ is twisted iff $M$ is tight or twisted at each matched pair, and twisted at $\geq 1$ matched pair.
\item
$M$ is critical iff $M$ is tight, twisted or critical at all matched pairs, and critical at $\geq 1$ matched pair.
\item
$M$ is singular iff $M$ is singular at $\geq 1$ matched pair.
\end{enumerate}
\qed
\end{lem}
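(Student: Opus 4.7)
The plan is to reduce the statement for $M \in \HH^{\otimes n}$ to the already-established Lemma~\ref{lem:tensor_product_local_tightness} for $\A^{\otimes n}$, via a choice of tight representatives. Pick tight diagrams $D_i$ representing $M_i$ and set $D = D_1 \otimes \cdots \otimes D_n$. By Definition~\ref{def:tightness_tensor_homology}, $M$ has the same tightness as $D$, and Lemma~\ref{lem:homology_tensor_tightness_well_defined} tells us this is independent of the choice of $D_i$. The same reasoning applies locally at each matched pair $P$: since $(D_i)_P$ represents $(M_i)_P$, the tightness of $M$ at $P$ equals the tightness of $D$ at $P$.

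The key observation is then that, because every $D_i$ is tight (hence crossingless), $D$ cannot be crossed or sublime, nor can any local $D_P$. Indeed, no $(D_i)_P$ is crossed, so by Lemma~\ref{lem:products_crossings} no local product $(D_1)_P \cdots (D_n)_P$ is crossed, ruling out $D_P$ being crossed; and by Lemma~\ref{lem:sublime_contains_crossed}, sublimity would require some $(D_i)_P$ to be $11$ once-occupied crossed, which again is impossible. Globally this likewise excludes $D$ from being crossed or sublime. Hence both $D$ and every $D_P$ fall into just the four classes: tight, twisted, critical, or singular.

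With these exclusions in place, each of (i)--(iv) drops out by specialising the corresponding item of Lemma~\ref{lem:tensor_product_local_tightness}. Item (i) of that lemma gives (i) directly. For (ii), item (iii) of that lemma says $D$ is twisted iff $D$ is tight, sublime or twisted at each pair and twisted at $\geq 1$ pair; dropping the sublime option yields our (ii). For (iii), item (v) says $D$ is critical iff $D$ is tight, sublime, twisted, crossed or critical at each pair and critical at $\geq 1$ pair; dropping sublime and crossed yields our (iii). Finally item (vi) directly gives (iv).

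There is no real obstacle: once the absence of crossed and sublime local (and global) tensor products of tight diagrams is noted, the result is a direct corollary of Lemma~\ref{lem:tensor_product_local_tightness}. The only thing to be slightly careful about is that the local tightness of $M$ at $P$ agrees with the local tightness of $D$ at $P$ regardless of which representatives $D_i$ are chosen, but this is exactly the content of Lemma~\ref{lem:homology_tensor_tightness_well_defined} applied matched-pair by matched-pair.
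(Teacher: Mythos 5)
Your proof is correct and follows essentially the same route as the paper, which simply applies Lemma~\ref{lem:tensor_product_local_tightness} to homology via tight representatives (with the exclusion of crossed and sublime cases already noted in the discussion preceding Definition~\ref{def:tightness_tensor_homology}, and well-definedness from Lemma~\ref{lem:homology_tensor_tightness_well_defined}). You have merely spelled out in more detail the specialisation that the paper leaves implicit.
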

Thus the properties ``tight", ```tight or twisted", ``tight, twisted or critical" and ``not singular" are all local-to-global properties of tensor products of homology classes.

We also consider contractions and extensions.
\begin{lem} \
\label{lem:homology_contractions}
Let $M = M_1 \otimes \cdots \otimes M_n$ be a viable tensor product of diagrams on $\ZZ_P$.
\begin{enumerate}
\item
If $M$ is tight, then for all $1 \leq i \leq j \leq n$, the product $M_i \cdots M_j$ is nonzero, so $M_1 \otimes \cdots \otimes M_{i-1} \otimes (M_i \cdots M_j) \otimes M_{j+1} \otimes M_n$ is a contraction of $M$.
\item
If $M$ is twisted, critical or singular, then any contraction of $M$ is trivial. Moreover, $M$ is an extension of a tensor product of homology classes shown in the twisted, critical of singular columns of table \ref{tbl:local_tensor_products}.
\end{enumerate}
\end{lem}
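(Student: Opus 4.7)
The plan is to deduce both parts from the tightness classifications established earlier, especially the counts of non-idempotent factors in lemmas \ref{lem:tightness_local_factors} and \ref{lem:singular_structure}, combined with the characterisation of tightness in lemma \ref{lem:homology_tensor_tightness_characterisation} and the extension-contraction classification in proposition \ref{prop:homology_tensor_product_classification}.

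For (i), I would represent each $M_i$ by a tight diagram $D_i$, making $D = D_1 \otimes \cdots \otimes D_n$ a tight tensor product on $\ZZ_P$. By lemma \ref{lem:local_sub-tensor-product_tightness}, every sub-tensor-product $D_i \otimes \cdots \otimes D_j$ is itself tight, so the diagram product $D_i \cdots D_j$ is tight and in particular nonzero in homology; thus $M_i \cdots M_j \neq 0$, and the indicated contraction is well-defined by lemma \ref{lem:extending_contracting}(ii)(b).

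For the first assertion in (ii), suppose a sub-tensor-product $M_i \otimes \cdots \otimes M_j$ satisfies $M_i \cdots M_j \neq 0$ and contains $k \geq 2$ non-idempotent factors. By lemma \ref{lem:homology_tensor_tightness_characterisation}(i) this sub-tensor-product is tight, and the product $M_i \cdots M_j$ is itself non-idempotent since its H-grading is a sum of non-trivial H-gradings. The contracted $M'$ shares the H-data and product of $M$, hence by lemma \ref{lem:homology_tensor_tightness_characterisation} has the same tightness, while its number of non-idempotent factors has decreased by $k - 1 \geq 1$. The argument then splits by tightness. Twisted and singular $M$ have exactly two non-idempotent factors (lemma \ref{lem:tightness_local_factors}(iii)(a) with $W = 0$; lemma \ref{lem:singular_structure}), so $M'$ would have only one, contradicting its twisted or singular status. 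Critical $M$ has $G \in \{3, 4\}$ non-idempotent factors (lemma \ref{lem:tightness_local_factors}(v) with $W = 0$, recalling that $(W, G) = (0, 2)$ is singular rather than critical); if $G = 3$ then $M'$ has at most two non-idempotent factors, contradicting criticality. The only remaining subcase is $G = 4$ with $k = 2$, which I would rule out by directly examining the two $G = 4$ critical entries in table \ref{tbl:local_tensor_products} (doubly occupied $00$ and doubly occupied $11$) and checking via the symmetrised augmented strand algebra product rules that any product of two adjacent non-idempotent factors is either zero or a twisted local diagram of type $w_p$ or $w_q$, both of which vanish in homology.

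For the second assertion in (ii), proposition \ref{prop:homology_tensor_product_classification} already expresses $M$ as an extension-contraction of some table entry $E$, which I would take in the column matching $M$'s tightness. Since extension-contraction preserves both H-data and the overall product, and tightness is determined by these data via lemma \ref{lem:homology_tensor_tightness_characterisation}, every intermediate tensor product in the sequence from $E$ to $M$ has the same tightness as $M$, so the previous paragraph shows that every contraction appearing in the sequence must be trivial. Trivial contractions merely remove idempotent padding and are inverse to extensions, so the sequence can be rewritten as a pure sequence of extensions beginning at $E$. The main obstacle is precisely the $G = 4$ critical subcase, where the clean counting argument alone is insufficient and the explicit finite verification on the two doubly occupied table entries is required to eliminate a putative $G = 3$ doubly occupied critical contraction that would otherwise be numerically consistent with criticality.
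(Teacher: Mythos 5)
Part (i) of your argument is correct, though it can be done even more cheaply, as the paper does: tightness of $M$ gives $M_1 \cdots M_n \neq 0$ directly by lemma \ref{lem:homology_tensor_tightness_characterisation}, and any sub-product of a nonzero product is nonzero; the detour through representing diagrams and lemma \ref{lem:local_sub-tensor-product_tightness} is unnecessary. For part (ii) you take a genuinely different route. The paper makes a single uniform observation: in every entry of the twisted, critical and singular columns of table \ref{tbl:local_tensor_products}, the product of any two consecutive factors is zero or twisted, hence zero in homology; this kills all nontrivial contractions and forces the extension--contraction supplied by proposition \ref{prop:homology_tensor_product_classification} to be a pure extension, proving both assertions at once. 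Your counting of non-idempotent factors (via lemmas \ref{lem:tightness_local_factors} and \ref{lem:singular_structure}) handles the twisted, singular and three-factor critical cases, at the cost of leaning on the parenthetical remark that $(W,G)=(0,2)$ never occurs.

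The weak point is the doubly occupied critical case with $k=2$. You propose to rule it out by computing adjacent products in the two four-factor table entries; but for that computation to say anything about a general critical $M$ with four non-idempotent factors, you need to know those factors are arranged exactly as in one of the entries, i.e.\ that $M$ is an \emph{extension} (not merely an extension--contraction) of such an entry --- and in your write-up that fact is deduced only afterwards, from the very no-nontrivial-contraction statement being proved. As written the argument is circular. The patch is easy: extensions and contractions never increase the number of non-idempotent factors, so a critical $M$ with four non-idempotent factors must arise from a four-factor entry via a sequence in which every contraction preserves that count, hence is trivial, so $M$ is an extension of the entry and your finite check applies; alternatively, front-load the paper's check of all consecutive products in the relevant columns and deduce everything from it. One further gloss to tighten: a contraction preserves the H-data and the homology-class product, but distinguishing twisted from critical also requires the product of representing diagrams; this is fine because you may take $D_i \cdots D_j$ as the representative of the merged factor and invoke lemma \ref{lem:homology_tensor_tightness_well_defined}, but ``shares the H-data and product'' alone does not literally yield ``same tightness'' via lemma \ref{lem:homology_tensor_tightness_characterisation}.
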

Recall trivial and nontrivial contractions were defined in section \ref{sec:sub_extension_contraction} (definition \ref{def:trivial_contraction}).

\begin{proof}
If $M$ is tight, then (lemma \ref{lem:homology_tensor_tightness_characterisation}) $M_1 \cdots M_n \neq 0$; so any $M_i \cdots M_j \neq 0$.

If $M$ is twisted, critical or singular, then 
by proposition \ref{prop:homology_tensor_product_classification}, $M$ is an extension-contraction of a tensor product shown in the appropriate column of table \ref{tbl:local_tensor_products}. We observe that multiplying any two consecutive diagrams in any of these tensor products yields a twisted or zero diagram, which is zero in homology. Thus no nontrivial contraction exists.
\end{proof}

The following fact about critical tensor products will be useful in the sequel.
\begin{lem}[``It takes 3 to be critical"]
\label{lem:it_takes_3}
Suppose $M = M_1 \otimes \cdots \otimes M_n$ is viable and critical on an arc diagram $\ZZ$. Then $n \geq 3$.
\end{lem}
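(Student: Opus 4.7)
The plan is to reduce the global statement to a local one, and then appeal to the tabulated classification. Suppose, for contradiction, that $M = M_1 \otimes \cdots \otimes M_n$ is viable and critical with $n \leq 2$. (The case $n=1$ is essentially vacuous, since a single nonzero homology class is tight by lemma \ref{lem:homology_tensor_tightness_characterisation}(i); the substantive case is $n=2$.)

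First I would apply the local-to-global principle for tightness of tensor products of homology classes. By lemma \ref{lem:homology_tensor_tightness_local}(iii), $M$ critical implies there exists a matched pair $P$ at which the local tensor product $M_P = (M_1)_P \otimes \cdots \otimes (M_n)_P$ is itself critical on the fragment $\ZZ_P$. Crucially, $M_P$ has exactly the same number $n$ of tensor factors as $M$ does, so a lower bound on the number of factors of $M_P$ yields the same lower bound on $n$.

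Next I would invoke the classification of viable local critical tensor products of homology classes. By lemma \ref{lem:homology_contractions}(ii), since $M_P$ is critical, it admits no nontrivial contraction, and moreover $M_P$ is an \emph{extension} (with no contraction involved) of one of the tensor products appearing in the critical column of table \ref{tbl:local_tensor_products}. Inspection of that column shows that every critical local tensor product listed there has at least $3$ factors (the pre-sesqui-occupied $01$ and post-sesqui-occupied $10$ entries have exactly $3$ non-idempotent factors, and the doubly occupied $00$ and $11$ entries have $4$). Since extension can only insert additional (idempotent) factors without removing any, $M_P$ must have at least as many factors as the table entry it extends, hence at least $3$.

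The main obstacle here is not really difficult; it is simply making sure the bookkeeping of factors under extension is honest. In particular one must note that extensions preserve the factor count of the underlying table entry as a lower bound, and that the local tensor product $M_P$ retains $n$ factors in total (idempotent $(M_i)_P$'s are still counted). Combining these observations gives $n \geq 3$, as required.
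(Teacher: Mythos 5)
Your proof is correct and follows essentially the same route as the paper: localise via lemma \ref{lem:homology_tensor_tightness_local} to a matched pair where $M_P$ is critical, then use lemma \ref{lem:homology_contractions} to see that $M_P$ is an extension of a critical entry of table \ref{tbl:local_tensor_products}, all of which have at least $3$ factors. The extra contradiction framing and the bookkeeping remarks about extensions are fine but add nothing beyond the paper's two-line argument.
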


\begin{proof}
By lemma \ref{lem:homology_tensor_tightness_local}, some local tensor product $M_P$ is critical. By lemma \ref{lem:homology_contractions}, $M_P$ is an extension of a critical diagrams in table \ref{tbl:local_tensor_products}, and all such diagrams have at least 3 factors.
\end{proof}

We also consider how tightness behaves under taking sub-tensor-products of local homology classes. The following lemma is immediate from applying lemma \ref{lem:local_sub-tensor-product_tightness} to homology (recalling definition \ref{def:tightness_tensor_homology} of tightness). Effectively we simply cross out the sublime and crossed rows and columns of table \ref{tbl:local_sub-tensor-product_tightness}.
\begin{lem}[Tightness of local sub-tensor-products of homology classes]
\label{lem:local_tightness_homology_sub-tensor-product}
Let $M$ be a viable tensor product of nonzero homology classes of diagrams on $\ZZ_P$, and let $M'$ be a sub-tensor-product. Then the possible tightness types of $M$ and $M'$ are as shown in table \ref{tbl:local_homology_sub-tensor-product_tightness}.
\qed
\end{lem}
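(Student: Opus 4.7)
The plan is to reduce the statement to lemma \ref{lem:local_sub-tensor-product_tightness} by picking tight representatives and showing the sublime and crossed possibilities cannot occur. Fix tight diagrams $D_i$ representing each nonzero $M_i$, and set $D = D_1 \otimes \cdots \otimes D_n$. Let $D'$ be the sub-tensor-product of $D$ consisting of the factors corresponding to $M'$. By definition \ref{def:tightness_tensor_homology} (and lemma \ref{lem:homology_tensor_tightness_well_defined} for independence of the choice of representatives), $M$ and $M'$ have the same tightness as $D$ and $D'$ respectively. It therefore suffices to classify the possible pairs $(D,D')$.

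Next I would restrict the possible tightness types of $D$ and $D'$. Since every $D_i$ is tight, hence crossingless, lemma \ref{lem:products_crossings} implies no sub-product $D_i \cdots D_j$ contains a crossing; so neither $D$ nor $D'$ is crossed. Similarly lemma \ref{lem:sublime_contains_crossed} requires any sublime local tensor product to contain a crossed factor, so neither $D$ nor $D'$ is sublime. Hence both $D$ and $D'$ belong to the four types \emph{tight}, \emph{twisted}, \emph{critical}, \emph{singular}.

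Now I would invoke lemma \ref{lem:local_sub-tensor-product_tightness}: the admissible pairs of tightness types for a viable local tensor product and a sub-tensor-product are given by the entries of table \ref{tbl:local_sub-tensor-product_tightness}. Deleting the sublime and crossed rows and columns leaves precisely the entries marked in table \ref{tbl:local_homology_sub-tensor-product_tightness}, which gives one direction of the claim.

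For the other direction (realizability), I would exhibit, for each surviving box of the table, a viable local tensor product with tight factors realizing that combination. Most cases reduce to tight or twisted local tensor products already appearing in table \ref{tbl:local_tensor_products}, whose factors are all tight; the critical-in-$M$/twisted-in-$M'$ and critical-in-$M$/critical-in-$M'$ boxes can be obtained by extending the critical entries of table \ref{tbl:local_tensor_products} and reading off sub-tensor-products, and the singular cases come from (extensions of) figure \ref{fig:singular_example}. This final bookkeeping step is the only potential obstacle: one must simply confirm that the tight-factor examples implicit in the proof of lemma \ref{lem:local_sub-tensor-product_tightness} (together with proposition \ref{prop:homology_tensor_product_classification}) already cover every remaining box, which is a routine inspection.
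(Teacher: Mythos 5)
Your proposal is correct and follows essentially the same route as the paper: the paper's proof simply applies lemma \ref{lem:local_sub-tensor-product_tightness} to homology classes via definition \ref{def:tightness_tensor_homology}, crossing out the sublime and crossed rows and columns of table \ref{tbl:local_sub-tensor-product_tightness}, with the exclusion of sublime and crossed already established (via lemmas \ref{lem:products_crossings} and \ref{lem:sublime_contains_crossed}) in the discussion preceding that definition, exactly as you argue. Your extra realizability check is harmless but not something the paper dwells on, since the marked entries come from the enumeration in table \ref{tbl:local_tensor_products}, whose non-sublime, non-crossed columns use only tight factors.
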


\begin{table}
\begin{center}
\begin{tabular}{x{1.5cm}| c|c|c|c}
\diag{.1em}{1.5cm}{$M$}{$M'$} & Tight & Twisted & Critical & Singular  \\ \hline
Tight 	& X &		&		& \\ \hline
Twisted &	X	&	X &		& \\ \hline
Critical&	X	&	X	&	X	&	X \\ \hline
Singular&	X	&		&		&	X
\end{tabular}
\caption{Possible tightness types of a viable local tensor product of homology classes $M$ and a sub-tensor-product $M'$.}
\label{tbl:local_homology_sub-tensor-product_tightness}
\end{center}
\end{table}

Thus, for instance, if $M$ is tight, then $M'$ is tight; In this case $M$ corresponds to a tight contact manifold and $M'$ to a contact submanifold. Similarly, if $M'$ is critical, then $M$ is critical. 

Considering the tightness of sub-tensor-products globally, we need the following statement, which is immediate from lemma \ref{lem:sub-tensor-product_tightness}.
\begin{lem}[Tightness of global sub-tensor-products of homology classes]
\label{lem:tightness_homology_sub-tensor-product}
Let $M$ be a viable tensor product of nonzero homology classes of diagrams on an arc diagram $\ZZ$, and let $M'$ be a sub-tensor-product.
\begin{enumerate}
\item If $M$ is tight, then $M'$ is tight.
\item If $M'$ is critical or singular, then $M$ is critical or singular.
\end{enumerate}
\qed
\end{lem}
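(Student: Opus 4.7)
The plan is to reduce the statement to its diagram-level analogue, lemma \ref{lem:sub-tensor-product_tightness}, by lifting homology classes to tight representative diagrams. Write $M = M_1 \otimes \cdots \otimes M_n$ and, using the sub-tensor-product structure, suppose $M' = M_i \otimes \cdots \otimes M_j$ for some $1 \leq i \leq j \leq n$. For each $M_k$, choose a tight diagram $D_k$ representing $M_k$ (which exists because each $M_k$ is a nonzero homology class, by proposition \ref{prop:homology_summands}). Set $D = D_1 \otimes \cdots \otimes D_n$ and $D' = D_i \otimes \cdots \otimes D_j$, so $D'$ is a sub-tensor-product of $D$.

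The first step is to record that, by definition \ref{def:tightness_tensor_homology}, $M$ has the same tightness as $D$, and $M'$ has the same tightness as $D'$; independence of the choice of representatives is guaranteed by lemma \ref{lem:homology_tensor_tightness_well_defined}. (Strictly, one should also check viability of $D$: since $M$ is viable, idempotents match and $h(D_1) + \cdots + h(D_n)$ is viable, so $D$ is viable; similarly for $D'$.)

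With this dictionary in hand, both claims follow directly. For (i), if $M$ is tight then $D$ is tight, so by lemma \ref{lem:sub-tensor-product_tightness}(i) $D'$ is tight, and therefore $M'$ is tight. For (ii), if $M'$ is critical or singular then $D'$ is critical or singular, so by lemma \ref{lem:sub-tensor-product_tightness}(ii) $D$ is critical or singular, whence $M$ is critical or singular.

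There is no serious obstacle; the only subtlety worth flagging is to ensure that the translation between homology classes and diagrams is used consistently on both $M$ and $M'$, which is exactly what lemma \ref{lem:homology_tensor_tightness_well_defined} guarantees. The statement is in this sense an immediate corollary of the corresponding diagrammatic fact.
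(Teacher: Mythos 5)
Your proof is correct and follows exactly the paper's intended argument: the paper states this lemma as immediate from lemma \ref{lem:sub-tensor-product_tightness}, via the definition of tightness for tensor products of homology classes through tight representatives (definition \ref{def:tightness_tensor_homology} and lemma \ref{lem:homology_tensor_tightness_well_defined}), which is precisely the reduction you carry out. Nothing is missing; the viability check you flag is a reasonable extra remark but not an obstacle.
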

Note the contrapositive of (ii): if $M$ is tight or twisted, then $M'$ is tight or twisted.




\subsection{Generalised contraction}
\label{sec:generalised_contraction}

It is useful to generalise the notion of contraction discussed above. 
Let $M = M_1 \otimes \cdots \otimes M_n$ be a viable tensor product of nonzero homology classes of diagrams. A contraction of $M$ replaces a sub-tensor-product $M' = M_i \otimes \cdots \otimes M_j$ with $M_i \cdots M_j$ provided that this product is nonzero (definition \ref{def:extending_contracting}). Recalling (proposition \ref{prop:homology_summands}) that for any given tight H-data $(h,s,t)$ there is a unique nonzero homology class, we observe $M_i \cdots M_j$ is the unique homology class of diagram with the H-data of $M'$. This leads to the following generalisation.
\begin{defn}
\label{def:H-contraction}
Let $M = M_1 \otimes \cdots \otimes M_n$ be a viable tensor product of nonzero homology classes of diagrams on an arc diagram $\ZZ$. Suppose a sub-tensor-product $M_i \otimes \cdots \otimes M_j$ has tight H-data, and let $M^*$ be the unique nonzero homology class of a diagram with this H-data.

Then we say $M' = M_1 \otimes \cdots \otimes M_{i-1} \otimes M^* \otimes M_{j+1} \otimes \cdots \otimes M_n$ is obtained from $M$ by \emph{H-contraction} of $M_i \otimes \cdots \otimes M_j$.
\end{defn}
Thus H-contraction generalises contraction. Moreover if $M'$ is obtained from $M$ by H-contraction, then $M'$ is viable, and has the same H-data as $M$.

Tightness locally behaves rather nicely under H-contraction.
\begin{lem}
\label{lem:tightness_H-contraction}
Let $M$ be a viable tensor product of nonzero homology classes of diagrams on $\ZZ_P$. Suppose $M'$ is obtained from $M$ by H-contraction.
\begin{enumerate}
\item $M$ is tight or critical iff $M'$ is tight or critical. Moreover,
\begin{enumerate}
\item if $M$ is tight, then $M'$ is tight;
\item if $M'$ is critical, then $M$ is critical.
\end{enumerate}
\item $M$ is twisted iff $M'$ is twisted.
\item $M$ is singular iff $M'$ is singular.
\end{enumerate}
\end{lem}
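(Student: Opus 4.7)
The plan is to observe that H-contraction preserves H-data, and then leverage lemma \ref{lem:homology_tensor_product_tightness_H-data} to immediately obtain the iff statements, leaving only (i)(a) to prove directly; part (i)(b) will then follow by contrapositive from (i)(a) together with the equivalence in the first sentence of (i).

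First, I would establish the key preliminary observation: if $M' = M_1 \otimes \cdots \otimes M_{i-1} \otimes M^* \otimes M_{j+1} \otimes \cdots \otimes M_n$ is obtained from $M$ by H-contraction, then $M$ and $M'$ share the same H-data $(h,s,t)$. This is because $M^*$ has, by definition \ref{def:H-contraction}, the same H-data as the sub-tensor-product $M_i \otimes \cdots \otimes M_j$ it replaces, so H-grading and the bracketing idempotents $s, t$ are preserved. Applying lemma \ref{lem:homology_tensor_product_tightness_H-data} to both $M$ and $M'$ with this common $(h,s,t)$ immediately yields: $M$ is tight or critical iff $(h,s,t)$ is tight iff $M'$ is tight or critical, giving the iff of (i); similarly for (ii) and (iii).

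It remains to prove (i)(a). Suppose $M$ is tight. By lemma \ref{lem:homology_tensor_tightness_characterisation}, $M_1 \cdots M_n \neq 0$. The sub-tensor-product $M_i \otimes \cdots \otimes M_j$ is then tight by lemma \ref{lem:local_tightness_homology_sub-tensor-product} (tightness of $M$ forces tightness on any sub-tensor-product locally), hence $M_i \cdots M_j \neq 0$. Since tight H-data admits a \emph{unique} nonzero homology class (proposition \ref{prop:homology_summands}), we must have $M^* = M_i \cdots M_j$. Therefore $M'$ is an ordinary contraction of $M$ in the sense of definition \ref{def:extending_contracting}, and the full product of its factors equals that of $M$:
\[
M_1 \cdots M_{i-1} \cdot M^* \cdot M_{j+1} \cdots M_n = M_1 \cdots M_n \neq 0.
\]
Again by lemma \ref{lem:homology_tensor_tightness_characterisation}, $M'$ is tight. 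For (i)(b), suppose $M'$ is critical. Then $M'$ is not tight, so by the contrapositive of (i)(a) $M$ is not tight; but by the first sentence of (i), $M$ is tight or critical; hence $M$ is critical.

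I don't expect a serious obstacle here: once one recognises that H-data is preserved, almost everything collapses to lemma \ref{lem:homology_tensor_product_tightness_H-data}, and the only genuine argument needed is the identification $M^* = M_i \cdots M_j$ in the tight case, which relies crucially on uniqueness of the nonzero homology class for tight H-data.
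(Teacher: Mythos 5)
Your proposal is correct, and its overall skeleton is the same as the paper's: you observe that H-contraction preserves H-data, invoke lemma \ref{lem:homology_tensor_product_tightness_H-data} to get all three iff statements at once, and for (i)(a) you show the H-contraction is a genuine contraction in the tight case (your identification $M^* = M_i \cdots M_j$ via uniqueness of the nonzero class with tight H-data, plus lemma \ref{lem:homology_tensor_tightness_characterisation}, is exactly the content of the paper's appeal to lemma \ref{lem:homology_contractions}). The one place you genuinely diverge is (i)(b): the paper proves it structurally, using proposition \ref{prop:homology_tensor_product_classification} and lemma \ref{lem:homology_contractions} to argue that a critical $M'$ is an extension of a critical tensor product from table \ref{tbl:local_tensor_products}, so each factor covers at most one step, and then that $M$ is itself an extension of $M'$ and hence critical; you instead derive (i)(b) purely formally, as the contrapositive of (i)(a) combined with the iff in (i) and the mutual exclusivity of the tightness types. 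Your route is shorter and perfectly valid; what the paper's argument buys in exchange is the extra structural fact that $M$ is an extension of $M'$ in the critical case, which is not needed for the statement but mirrors the style of argument used elsewhere (e.g.\ in lemma \ref{lem:graft_valid_distributive}).
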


\begin{proof}
The H-data of $M$ is tight, twisted or singular accordingly as $M$ is respectively (tight or critical), twisted or singular (lemma \ref{lem:homology_tensor_product_tightness_H-data}). Since H-contraction preserves H-data, these tightness properties are also preserved.

If $M$ is tight, then we replace $M_i \otimes \cdots \otimes M_j$ with $M_i \cdots M_j$ (lemma \ref{lem:homology_contractions}), so we have a bona fide contraction (definition \ref{def:extending_contracting}), and $M'$ is tight: the product of the factors in both $M$ and $M'$ is $M_1 \cdots M_n$ (lemma \ref{lem:homology_tensor_tightness_characterisation}).

If $M'$ is critical, then, by proposition \ref{prop:homology_tensor_product_classification} and lemma \ref{lem:homology_contractions}, $M'$ is an extension of one of the tensor products shown in the critical column of table \ref{tbl:local_tensor_products}. Thus each tensor factor of $M'$ covers at most one step of $\ZZ_P$. Since $M$ is obtained from $M'$ by replacing a tensor factor of $M'$ with $M_i \otimes \cdots \otimes M_j$, in a way that preserves H-data, each tensor factor of $M$ also covers at most one step of $\ZZ_P$; in fact $M$ is an extension of $M'$. So $M$ is critical.
\end{proof}

\section{Constructing A-infinity structures}
\label{sec:construction}

\subsection{The construction}
\label{sec:constructing_operations}

We now describe $A_\infty$ structures on $\HH$. 

Recall the discussion of section \ref{sec:intro_construction}. Regard $\A$ as an $A_\infty$ algebra with trivial $n$-ary operations for $n \geq 3$. The homology $\HH$ can be regarded as a DGA with trivial differential. We construct an $A_\infty$ structure $X$ on $\HH$ extending this DGA structure, i.e. with $X_1 = 0$ and $X_2$ being multiplication, together with a morphism of $A_\infty$ algebras $f \colon \HH \To \A$, consisting of  maps $f_n \colon \HH^{\otimes  n} \To \A$, extending a cycle selection map $f_1 \colon \HH \hookrightarrow \A$. Kadeishvili's construction \cite{kadeishvili_homology_1980} builds maps $X_n, f_n$ and auxiliary maps $U_n$ inductively over $n$. All these maps preserve H-data; $X_n$ and $U_n$ have Maslov grading $n-2$, and $f_n$ has Maslov grading $n-1$. At each stage there we only need to construct $f_n$ explicitly, then $U_{n+1}$ and $X_{n+1}$ are determined by equations (\ref{eqn:Un_def}) and (\ref{eqn:Xn_def}). 

As it turns out, we only need to construct maps $\fbar_n, \Ubar_n \colon \HH^{\otimes n} \To \AAbar$ (definition \ref{defn:Abar}) into the quotient $\AAbar = \A/\F$. 

To construct the cycle selection homomorphism $f_1$, as discussed in section \ref{sec:cycle_selection_homs}, we use a cycle choice function $\mathcal{CY}$. And as discussed in section \ref{sec:ordering}, a cycle choice function can be constructed from a pair ordering on the arc diagram $\ZZ$.

To construct $f_n$ for $n \geq 2$, we need to solve equation (\ref{eqn:fn_eqn}): $f_1 X_n - U_n = \partial f_n$. Since all maps preserve H-data, this can be done separately on each H-summand. On twisted summands, defining $f_n$ amounts to inverting the differential, as discussed in section \ref{sec:inverting_differential}. We use the creation operators of section \ref{sec:creation_operators}, choosing appropriate creation operators on each summand using a creation choice function, as discussed in section \ref{sec:global_creation}. On other summands, it turns out that no choice is necessary, once we project to $\AAbar$; we can take $\fbar_n = 0$ in this case.

Our construction will satisfy the following condition on the maps $f_n$. The idea is that if $f_1 X_n - U_n = 0$, then it reasonable to say that $f_n$ ``ought" to be zero as well. (The constant of integration is most naturally zero!) 

\begin{defn}
\label{def:balanced}
Suppose that for all $M$, if $(f_1 X_n - U_n) (M) = 0$ then $f_n (M) = 0$. In this case we say $f_n$ is \emph{balanced}.
\end{defn}

We now state and prove the result.
\begin{thm}
\label{thm:main_thm}
Let $\ZZ$ be an arc diagram and let $M_i$ be nonzero homology classes of diagrams on $\ZZ$. Let $\mathcal{CY}$ be a cycle choice function for $\ZZ$, and let $\mathcal{CR}$ be a creation choice function. Then there is an $A_\infty$ structure $X$ on $\HH(\ZZ)$ extending its DGA structure, and a morphism of $A_\infty$ algebras $f \colon \HH(\ZZ) \To \A(\ZZ)$ extending  the cycle selection  map $f_1 = f^{\mathcal{CY}}$, such that the following conditions hold.
\begin{enumerate}
\item
If $M = M_1 \otimes \cdots \otimes M_n$ is not viable, then $\fbar_n (M) = 0$ and $X_n (M) = 0$; and if $M$ has an idempotent mismatch then $f_n (M)=0$.
\item
The maps $X_n \colon \HH(\ZZ)^{\otimes n} \To \HH(\ZZ)$ of $X$, and the maps $f_n \colon \HH(\ZZ)^{\otimes n} \To \A(\ZZ)$ of $f$, all preserve H-data; moreover $X_n$ has Maslov grading $n-2$ and $f_n$ has Maslov grading $n-1$.
\item
Each map $f_n$ is balanced.
\item
For $n \geq 2$, on each twisted H-summand, $f_n = A_{\mathcal{CR}}^* \circ (f_1 X_n - U_n)$, where $U_n$ is defined by equation (\ref{eqn:Un_def}) and $X_n$ is defined by equation (\ref{eqn:Xn_def}) from section \ref{sec:intro_construction}.
\end{enumerate}
The $A_\infty$-operations $X_n$ satisfying these conditions are unique. The maps $f_n$ are uniquely defined modulo $\F$.
\end{thm}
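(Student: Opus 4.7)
The proof plan is to follow Kadeishvili's inductive scheme from \cite{kadeishvili_homology_1980} outlined in section \ref{sec:intro_construction}, being explicit about the choices on each $H$-summand. Since $\HH$ splits as $\bigoplus_{(h,s,t)} \HH(h,s,t)$, and all maps involved preserve $H$-data, I would carry out the induction summand by summand. The base case is $X_1 = 0$, and $f_1 = f^{\mathcal{CY}}$, which by lemma \ref{lem:cycle_selection_classification} is a diagrammatically simple cycle selection homomorphism. Assume $f_i, X_i, U_i$ have been defined for $i < n$ satisfying (i)--(iv), and define $U_n$ and $X_n = [U_n]$ by equations (\ref{eqn:Un_def}) and (\ref{eqn:Xn_def}). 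Kadeishvili's argument shows $U_n$ is a cycle, hence $X_n$ is well defined and equation (\ref{eqn:fn_eqn}) has solutions; I need to specify one. All the induction work then reduces to defining $f_n$ on each $H$-summand of $\HH^{\otimes n}$, which I do by cases.

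On a summand with an idempotent mismatch, set $f_n = 0$; this is forced by balance since $U_n$ lives in the H-summand of $\A$ determined by the H-data of $M$, and if idempotents mismatch, then $\A(h,s,t) = 0$ so $f_1 X_n - U_n = 0$ vacuously. On a non-viable but idempotent-matched summand, set $f_n = A_{\mathcal{CR}}^* \circ (f_1 X_n - U_n)$ if the H-data contains an all-on once occupied pair (so $\mathcal{CR}$ is defined there); otherwise observe that non-viability combined with idempotent matching forces $M_1 \cdots M_n = 0$ and in fact the entire summand $\A(h,s,t)$ consists of non-viable diagrams, which lie in $\F$, so $\fbar_n = 0$ automatically. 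On a twisted summand, take $f_n = A_{\mathcal{CR}}^* \circ (f_1 X_n - U_n)$; by proposition \ref{prop:creation_operator_Heisenberg} and lemma \ref{lem:creations_invert_differential}, since $f_1 X_n - U_n$ is a cycle (as $\partial(f_1 X_n - U_n) = f_1 \partial X_n - \partial U_n$ and both vanish by Kadeishvili's computation), this provides $\partial f_n = f_1 X_n - U_n$. Finally, on a tight summand, set $f_n = 0$, noting that $X_n$ must equal the homology class $M_{h,s,t}$ up to a $\Z_2$-scalar, and that $f_1 X_n$ and $U_n$ both represent this class; applying lemma \ref{lem:even_num_reps} and lemma \ref{lem:no_FOC_pairs_integral_in_F} shows $f_1 X_n - U_n \in \partial \F$, so in the quotient $\AAbar$ we have $\fbar_1 X_n = \Ubar_n$, making $\fbar_n = 0$ balanced; lifting back to $\A$, choose any $f_n \in \F$ realising $\partial f_n = f_1 X_n - U_n$ (such a lift exists by the previous sentence).

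The main obstacle is verifying that $f$ satisfies the $A_\infty$ morphism relations and that $X$ satisfies the $A_\infty$ algebra relations. For this I would check that the construction actually reproduces Kadeishvili's: the defining equations (\ref{eqn:Un_def}), (\ref{eqn:Xn_def}) and (\ref{eqn:fn_eqn}) are exactly his. The nontrivial point is that my tight-summand choice $f_n = 0$ still satisfies (\ref{eqn:fn_eqn}), which requires $f_1 X_n - U_n = 0$ in $\A$ --- not merely modulo $\F$. But here $f_1 X_n - U_n$ lies in a tight summand and is a cycle of Maslov grading $n-2$; in tight H-data the only cycles in non-top Maslov grading are boundaries (since all homology concentrates in the top tight grading), and lemma \ref{lem:no_FOC_pairs_integral_in_F} shows any such boundary primitive lies in $\F$. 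Thus I actually need to choose $f_n$ to be such a primitive in $\F$, not simply $f_n = 0$; only its image $\fbar_n$ in $\AAbar$ is zero. This adjustment is consistent with the theorem statement, which only asserts uniqueness of $f_n$ modulo $\F$.

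Uniqueness of the $X_n$ follows since $X_n = [U_n]$ and $U_n$ is determined by the previously constructed $f_i$ and $X_j$ with $j<n$, where at each stage the ambiguity in choosing $f_j$ lies in $\F$; a short induction using lemma \ref{lem:even_num_reps} shows that altering $f_j$ by an element of $\F$ alters $U_n$ only by a boundary of something in $\F$, hence does not change $X_n$. Uniqueness of $f_n$ modulo $\F$ is then immediate: any two solutions of $\partial f_n = f_1 X_n - U_n$ differ by a cycle, and in twisted summands the creation operator prescription fixes a canonical choice while the image in $\AAbar$ is determined by the equation $\partial \fbar_n = \fbar_1 X_n - \Ubar_n$ together with balance. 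Finally, property (i) follows by tracing through: non-viability is inherited along (\ref{eqn:Un_def}) since multiplication and $f_j, X_j$ preserve $H$-data, and idempotent mismatches propagate identically.
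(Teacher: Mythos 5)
Your overall architecture is the paper's: Kadeishvili's induction, a case split by the H-data of each summand, creation operators on twisted summands, and lemma \ref{lem:no_FOC_pairs_integral_in_F} together with the ideal $\F$ on tight summands, with uniqueness coming from the fact that all remaining ambiguity lies in $\F$. However, two of your case arguments do not work as written. For the idempotent-mismatch case you assert that $\A(h,s,t)=0$, so that $f_1X_n-U_n$ vanishes ``vacuously''; this is false --- an interior mismatch $t_j\neq s_{j+1}$ does not make the target summand $I(s_1)\A I(t_n)$ zero. What is actually needed, and what the paper proves, is that $U_n(M)=0$ term by term: in each summand of (\ref{eqn:Un_def}) either the inner $f_j$ or $X_j$ is applied to a sub-tensor-product containing the mismatch (zero by the inductive hypothesis), or the two $f$-factors are multiplied across the mismatch, where the product vanishes because the idempotents do not compose. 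Your closing remark that ``mismatches propagate identically'' gestures at this, but the argument has to be made, since balance plus equation (\ref{eqn:fn_eqn}) only forces $f_n(M)=0$ once $U_n(M)=0$ and $X_n(M)=0$ are known.

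Second, on non-viable but idempotent-matched summands you propose $f_n=A^*_{\mathcal{CR}}\circ(f_1X_n-U_n)$ whenever there is an all-on once occupied pair; but a creation choice function is only defined on viable twisted H-data $\mathbf{w}(\ZZ)$ (definition \ref{def:creation_choice_function}), and creation operators themselves require viability, since the local tensor decomposition $\A(h,s,t)\cong\bigotimes_P\A_P(h_P,s_P,t_P)$ used in definition \ref{def:creation_operator} is only available for viable H-data. The fix is the paper's: $U_n(M)$ is a sum of non-viable diagrams, hence lies in $\F$ and is zero in homology, so $X_n(M)=0$; one then takes $f_n(M)$ to be any homogeneous primitive of $U_n(M)$ (and $0$ when $U_n(M)=0$, to stay balanced), which automatically consists of non-viable diagrams and so lies in $\F$, giving $\fbar_n(M)=0$. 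Two smaller points: lemma \ref{lem:even_num_reps} is not directly applicable on tight summands, since $U_n(M)$ need not be a sum of crossingless diagrams --- lemma \ref{lem:no_FOC_pairs_integral_in_F}, combined with the general Kadeishvili fact that $f_1X_n-U_n$ is a nullhomologous cycle, already does the job, as you also note; and the expression $\partial\fbar_n=\fbar_1X_n-\Ubar_n$ is not meaningful, because $\partial$ does not descend to $\AAbar$ ($\F$ is not preserved by the differential) --- uniqueness of $\fbar_n$ should instead be argued as in the paper, from condition (iv) on twisted summands and the forced vanishing of $\fbar_n$ on all other summands.
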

Recall that when $M$ is singular, there are no diagrams with its H-data $(h,s,t)$, and $\A(h,s,t) = 0$ (lemma \ref{lem:homology_tensor_tightness_characterisation}). So condition (i), that $f_n$ and $X_n$ preserve H-data, implies that when $M$ is singular, $f_n (M)$ and $X_n (M)$ are both zero.

The uniqueness statement means that, although the $f_n$ are not uniquely determined by the conditions of the theorem, after composing with the quotient map $\A \To \AAbar$ to obtain $\fbar_n \colon \HH^{\otimes n} \To \AAbar$, the maps $\fbar_n$ \emph{are} uniquely determined.

Recall from section \ref{sec:ordering} that a pair ordering $\preceq$ (definition \ref{defn:pair_ordering}) determines a cycle choice function $\mathcal{CY}^\preceq$ and a creation choice function $\mathcal{CR}^\preceq$ (definition \ref{def:choice_functions_from_ordering}). Hence we immediately obtain the following corollary.
\begin{cor}
\label{cor:main_cor}
Let $\preceq$ be a pair ordering on an arc diagram $\ZZ$. Then there is an $A_\infty$ structure $X$ on $\HH$ extending its DGA structure, and a morphism of $A_\infty$ algebras $f \colon \HH \To \A$ extending the cycle selection map $f^{\mathcal{CY}^\preceq}$, satisfying the conditions of theorem \ref{thm:main_thm}, such that on twisted summands for $n \geq 2$, $f_n = A_{\mathcal{CR}^\preceq}^* \circ \left( f_1 X_n - U_n \right)$.
\qed
\end{cor}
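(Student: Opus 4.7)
The plan is to recognize that this corollary is an immediate specialization of Theorem \ref{thm:main_thm} to the particular choice functions determined by the pair ordering $\preceq$, so essentially no new work is required beyond invoking already-established constructions.

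First, I would recall from Definition \ref{def:choice_functions_from_ordering} that a pair ordering $\preceq$ on $\ZZ$ canonically determines a cycle choice function $\mathcal{CY}^{\preceq}$ (assigning to each tight H-data the pair choice function that selects the $\preceq$-minimal place in each all-on doubly occupied pair) and a creation choice function $\mathcal{CR}^{\preceq}$ (assigning to each twisted H-data its $\preceq$-minimal all-on once occupied matched pair). I would briefly verify these are well-defined: for each $(h,s,t) \in \mathbf{g}(\ZZ)$ the set $\mathbf{P}_{h,s,t}$ is a finite set of matched pairs totally ordered by $\preceq$, so a $\preceq$-minimal place exists in each pair; and for each $(h,s,t) \in \mathbf{w}(\ZZ)$, Proposition \ref{prop:homology_summands} guarantees at least one all-on once occupied pair exists, and among these finitely many pairs $\preceq$ selects a unique minimum. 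Hence $\mathcal{CY}^{\preceq}$ and $\mathcal{CR}^{\preceq}$ satisfy Definitions \ref{def:cycle_choice_function} and \ref{def:creation_choice_function} respectively.

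Second, I would simply apply Theorem \ref{thm:main_thm} with $\mathcal{CY} = \mathcal{CY}^{\preceq}$ and $\mathcal{CR} = \mathcal{CR}^{\preceq}$. The theorem directly yields the $A_\infty$ structure $X$ on $\HH$ extending its DGA structure, together with the morphism $f \colon \HH \To \A$ extending $f_1 = f^{\mathcal{CY}^{\preceq}}$. All four numbered conditions (the vanishing on non-viable and idempotent-mismatched inputs, the Maslov and H-data gradings, balancedness, and the explicit formula $f_n = A_{\mathcal{CR}}^{*} \circ (f_1 X_n - U_n)$ on twisted summands) are inherited verbatim; one only needs to substitute $\mathcal{CR} = \mathcal{CR}^{\preceq}$ in the last of these to obtain the formula asserted in the corollary.

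There is essentially no obstacle, since the corollary is flagged as immediate in the text; the only conceptual content is the observation that a pair ordering packages exactly the two pieces of data (choice of place at doubly occupied pairs, choice of once occupied pair at twisted H-data) that Theorem \ref{thm:main_thm} requires as input. If I wanted to be pedantic I could note that distinct pair orderings may yield the same pair of choice functions, so the $A_\infty$ structure obtained depends only on $(\mathcal{CY}^{\preceq}, \mathcal{CR}^{\preceq})$ and not on the full ordering, but this does not affect the statement of the corollary.
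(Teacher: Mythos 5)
Your proposal is correct and matches the paper exactly: the paper treats the corollary as an immediate consequence of Theorem \ref{thm:main_thm}, obtained by feeding in the cycle choice function $\mathcal{CY}^{\preceq}$ and creation choice function $\mathcal{CR}^{\preceq}$ determined by the pair ordering. Your brief check that these choice functions are well defined is a harmless (and reasonable) addition, but otherwise there is nothing more to do.
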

Corollary \ref{cor:main_cor} is a precise form of theorem \ref{thm:first_thm}.

\begin{proof}[Proof of theorem \ref{thm:main_thm}]
We follow the method described above. At level 1, equations (\ref{eqn:Un_def}), (\ref{eqn:Xn_def}) and (\ref{eqn:fn_eqn}) require $U_1 = 0$, $X_1 = 0$ and $\partial f_1 = 0$. The last equation is satisfied by $f_1 = f^{\mathcal{CY}}$. Since diagrams with non-viable H-data are zero in homology, $f_1 = 0$ for such diagrams.

Now suppose we have constructed all operations at level $<n$ as required; we construct $U_n, X_n, f_n$.

We define $U_n$ by equation (\ref{eqn:Un_def}). Then $U_n$ Maslov grading $n-2$. As the $f_j$ are not uniquely defined, neither is $U_n$. However, all the $\fbar_j$ are uniquely defined, and since equation \ref{eqn:Un_def} expresses $U_n$ as a sum of products of values of $f_j$, $\Ubar_n$ is also uniquely defined. Since the $f_j$ (and multiplication in $\A$) preserve H-data, $U_n$ does also.

We define $X_n$ by equation (\ref{eqn:Xn_def}); then $X_n$ respects gradings as required. As in \cite{kadeishvili_homology_1980}, $U_n$ is a cycle and $X_n$ is its homology class, so $X_n$ is well defined. Now all diagrams in $\F$ are non-viable or have crossings, and such diagrams do not contribute to homology. Thus $X_n (M)$ is determined completely by $\Ubar_n (M)$, which is uniquely defined; hence $X_n (M)$ is uniquely defined.

To define $f_n$, we must solve equation (\ref{eqn:fn_eqn}) for each viable $M = M_1 \otimes \cdots \otimes M_n$:
\[
\partial f_n \left( M \right) = \left( f_1 X_n - U_n \right) \left( M \right).
\]
We now consider various cases for $M$.

First, suppose $M = M_1 \otimes \cdots \otimes M_n$ is non-viable because of an idempotent mismatch. Then each term in $U_n (M)$ from (\ref{eqn:Un_def}) is zero: by induction, $f_i$ and $X_i$ for $i<n$ are zero on tensor products with mismatches, and the product of two mismatched diagrams is zero. Thus $U_n (M) = 0$, and by (\ref{eqn:Xn_def}) then $X_n (M) = 0$. We set $f_n (M) = 0$ as required by the balanced condition; equation (\ref{eqn:fn_eqn}) is then satisfied. 

Next, suppose $M$ is non-viable but has no idempotent mismatch, hence has some step covered more than once. As $U_n$ preserves H-data then $U_n (M)$ is a sum of non-viable diagrams, so $U_n (M) \in \F$ and $\Ubar_n (M) = 0$. Then $X_n (M) = 0$, and equation (\ref{eqn:fn_eqn}) then requires $\partial f_n (M) = U_n (M)$. If $U_n (M) = 0$ then we set $f_n (M) = 0$, satisfying the balanced condition; otherwise we choose $f_n (M)$ arbitrarily to be any solution to this equation with the same H-data as $M$, and of pure Maslov grading (necessarily 1 greater than $M$). Then $f_n (M) \in \F$, being a sum of non-viable diagrams. Thus $f_n (M)$ is not uniquely determined, but $\fbar_n (M)$ is uniquely determined, indeed $\fbar_n (M) = 0$.

If $M$ is singular, then as there are no diagrams with the H-data of $M$, $U_n (M)$, $X_n (M)$ and $f_n (M)$ are all zero, and all required conditions are satisfied.

So we may now assume $M$ is viable and non-singular; its H-data $(h,s,t)$ (definition \ref{def:H-data_tensor_product}) is thus tight or twisted (definition \ref{def:tightness_H-data}) . 

If $(h,s,t)$ is twisted, then as required we take $f_n = A^*_{\mathcal{CR}} \circ (f_1 X_n - U_n)$. Then $f_n$ is balanced. Since $(f_1 X_n - U_n)(M)$ is a boundary, hence a cycle (in fact boundaries and cycles are the same since $\HH(h,s,t) = 0$), we have
\[
\partial f_n (M) = \partial A^*_{\mathcal{CR}} \left( f_1 X_n (M) - U_n (M) \right)
= \left( f_1 X_n - U_n \right) (M),
\]
inverting the differential, by lemma \ref{lem:creations_invert_differential} and the discussion of section \ref{sec:global_creation}.

If $(h,s,t)$ is tight, by lemma \ref{lem:no_FOC_pairs_integral_in_F}, any $f_n (M)$ of the required Maslov grading and  satisfying $\partial f_n (M) = f_1 X_n (M) - U_n (M)$ lies in $\F$. We choose $f_n (M)$ to be zero if $f_1 X_n - U_n = 0$ (satisfying the balanced condition), otherwise to be any solution to this equation with the same H-data as $M$, and pure Maslov grading. Then $f_n (M)$ is not uniquely determined, but $\fbar_n (M) = 0$.

This defines $f_n$ and $X_n$ satisfying the required conditions, with the uniqueness claimed. Having followed Kadeishvili's construction, the $X_n$ form an $A_\infty$ structure on $\HH$, and the $f_n$ form a morphism of $A_\infty$ algebras $\HH \To \A$. 
\end{proof}

Note it follows from this proof that whenever $M$ has tight H-data then $\fbar_n (M) = 0$.

\subsection{Shorthand notation}
\label{sec:shorthand}

For convenience, we use some shorthand for viable nonzero tensor products in $\A^{\otimes n}$ and $\HH^{\otimes n}$. The shorthand is essentially a stylised version of our previous diagrams.

Let $M = M_1 \otimes \cdots \otimes M_n$ be a viable tensor product of nonzero homology classes of diagrams on an arc diagram $\ZZ$. A shorthand diagram represents $M$ by an array of data. Each row refers to a matched pair $P$ of $\ZZ$. The $n$ columns refer to $M_1, \ldots, M_n$. In the row for $P = \{p,p'\}$ and the column of $M_i$, we write which of the four steps of $\ZZ_P$ are covered by $M_i$. Along the row for $P$, between the columns we draw a hollow or solid circle indicating whether $P$ is contained in the corresponding idempotent (``on or off"). This is well defined since $M$ is viable.

Such notation specifies $M$ completely, since it specifies the H-data of each $M_i$.

We denote the steps before and after a place $p$ by $p_-$ and $p_+$ respectively. We write $p_\pm$ to indicate that both $p_+$ and $p_-$ are covered. 

Figure \ref{fig:shorthand_notation} shows an example. 
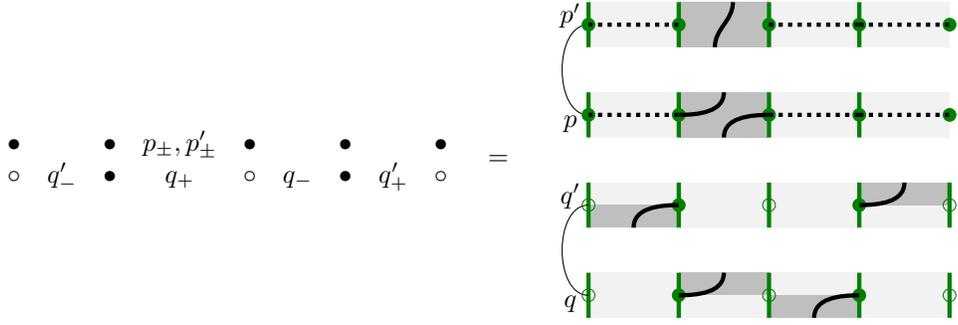
\begin{figure}
\begin{center}
\begin{tikzpicture}[scale=1.2]


\begin{scope}[yshift = -2 cm]
\draw (-4,-0.25) node {$\begin{array}{ccccccccc}
\on &  			& \on & p_\pm, p'_\pm & \on 	&  		& \on 	&  			& \on \\
\off & q'_-	& \on & q_+						& \off	&	q_-	&	\on		& q'_+ 	& \off
\end{array}$};
\draw (-1,-0.25) node {$=$};
\strandbackgroundshading
\tstrandbackgroundshading{1}
\tstrandbackgroundshading{2}
\tstrandbackgroundshading{3}
\tbeforevused{1}
\taftervused{1}
\tbeforewused{1}
\tafterwused{1}
\strandsetupn{p}{p'}
\tstrandsetup{1}
\tstrandsetup{2}
\tstrandsetup{3}
\lefton
\righton
\trighton{1}
\trighton{2}
\trighton{3}
\dothorizontals
\tuseab{1}
\tused{1}
\tusec{1}
\tdothorizontals{2}
\tdothorizontals{3}
\end{scope}
\begin{scope}[yshift=-4 cm]
\strandbackgroundshading
\tstrandbackgroundshading{1}
\tstrandbackgroundshading{2}
\tstrandbackgroundshading{3}
\beforewused
\taftervused{1}
\tbeforevused{2}
\tafterwused{3}
\strandsetupn{q}{q'}
\tstrandsetup{1}
\tstrandsetup{2}
\tstrandsetup{3}
\tstrandsetup{4}
\leftoff
\righton
\trightoff{1}
\trighton{2}
\trightoff{3}
\useb
\tusec{1}
\tused{2}
\tusea{3}
\end{scope}

\end{tikzpicture}
\caption{Shorthand notation for viable nonzero tensor products of homology classes of diagrams.}
\label{fig:shorthand_notation}
\end{center}
\end{figure}
(It is always possible to include further matched pairs, each with a tight tensor product of homology classes of diagrams, so as to obtain a tensor product of homology classes of non-augmented diagrams on a connected arc diagram.) 

Occasionally, when the idempotents can be inferred from the H-grading of each $M_i$, we omit the circles in the notation.

We use a similar notation to write elements of $\AAbar^{\otimes n}$. Let $D = D_1 \otimes \cdots \otimes D_n$ be a viable tensor product of diagrams on $\ZZ$, which does not lie in $\F$. Again we write an array where each row refers to a matched pair $P$, and each column refers to a $D_i$, and in each row we have hollow or solid circle to signify idempotents. However, a diagram is not always specified by its H-data, so we use some of the notation of definition \ref{def:C_P}. When there is a unique tight local diagram with the H-data, we simply write which steps are covered. Otherwise, we use the notation $g_p, c_p, w_p$ for 11 doubly occupied tight, 11 once occupied crossed, and 11 once occupied twisted pairs. (As we compute in $\AAbar$, all-on doubly occupied crossed diagrams do not arise. See further section \ref{sec:choices_of_maps}.)

When the $A_\infty$ operations defined by a pair ordering, as in corollary \ref{cor:main_cor}, we may order the pairs upwards in our array (just as they are ordered along the intervals of $\mathbf{Z}$). A creation operator then always applies at the all-on once occupied pair which is lowest in our shorthand notation.

We adopt notation where each matched pair is denoted by a capital letter, and its two places by the corresponding lower case letter, the latter under $\preceq$ being primed. Thus we always write pairs as $P = \{p,p'\}$, $Q = \{q,q'\}$, etc., where $p \prec p'$, $q \prec q'$, etc. Then a cycle choice function always selects a cycle with strands at a place with an unprimed label. 

When a tensor product $M = M_1 \otimes \cdots \otimes M_n$ is twisted at a place $p$ of a pair $P = \{p,p'\}$, it is 11 once occupied at $P$, with $p$ fully occupied (proposition \ref{prop:homology_tensor_product_classification} and table \ref{tbl:local_tensor_products}), and the two steps $p_+, p_-$ are covered by some $M_i$ and $M_j$, with $i<j$. Thus, in our shorthand, across the row corresponding to $P$ we see $p_+$ in one column, then $p_-$ in another column, in that order.

Similarly, if $P$ is critical, then it is sesqui-occupied or doubly occupied. Looking across the row corresponding to $P$ we see one of the following sequences, appearing in order, in distinct columns (possibly after relabelling $p$ and $p'$):
\begin{itemize}
\item Pre-sesqui-occupied: $p'_-, p_+, p_-$
\item Post-sesqui-occupied: $p_+, p_-, p'_+$
\item 00 doubly occupied: $p_-, p'_+, p'_-, p_+$
\item 11 doubly occupied: $p'_+, p'_-, p_+, p_-$
\end{itemize}

\subsection{Low-level maps}
\label{sec:low-level_ex}

Now that we have constructed $A_\infty$ structures on $\HH$, we consider low-level maps explicitly. We assume $A_\infty$ structures are constructed from a cycle choice function $\mathcal{CY}$ and a creation choice function $\mathcal{CR}$, as in theorem \ref{thm:main_thm}. 


Level 1 maps are straightforward ($X_1 = 0$ and $f_1 = f^{\mathcal{CY}}$), as is multiplication $X_2$. We consider $\fbar_2$.

Let $M = M_1 \otimes M_2$ be a tensor product of nonzero homology classes of diagrams, with H-data $(h,s,t)$. By theorem \ref{thm:main_thm} (and subsequent discussion), if $M$ is non-viable or singular, then $\fbar_n(M)$ and $X_n (M)$ are both zero; so we assume $M$ is viable and non-singular. Then $(h,s,t)$ is tight or twisted.

When $(h,s,t)$ is tight, $\fbar_2 (M) = 0$. So suppose $(h,s,t)$ is twisted, hence has at least one all-on once occupied pair. Clearly $M$ is then not tight; in fact, $M$ cannot be critical either, since it takes 3 to be critical (lemma \ref{lem:it_takes_3}). So $M$ is twisted, hence is tight or twisted at each matched pair (lemma \ref{lem:homology_tensor_tightness_local}). In particular, $M$ is twisted at each all-on once occupied pair, and tight at each other pair.

Since $U_2 (M_1 \otimes M_2) = f_1 (M_1) f_1 (M_2)$, we have
\begin{equation}
\label{eqn:f2_description}
f_2 (M_1 \otimes M_2) = A^*_{\mathcal{CR}} \Big( f_1 (M_1 M_2) + f_1 (M_1) f_1 (M_2) \Big).
\end{equation}
Since $M$ is twisted, $M_1 M_2 = 0$ and $f_1 (M_1) f_1 (M_2) \neq 0$ (lemma \ref{lem:homology_tensor_tightness_characterisation}). The nonzero product $f_1 (M_1) f_1 (M_2)$ is clearly not tight, and it is also not crossed (being the product of two crossingless diagrams: lemma \ref{lem:products_crossings}), hence it is twisted, hence tight or twisted at each pair (lemma \ref{lem:tightness_degeneracy}). At each all-on once occupied pair $f_1 (M_1) f_1 (M_2)$ cannot be tight, so must be twisted; and at each other pair, it	 must be tight. We then have
\begin{equation}
\label{eqn:nonzero_f2_description}
f_2 (M_1 \otimes M_2) = A^*_{\mathcal{CR}^\preceq} \left( f_1 (M_1) f_1 (M_2) \right),
\end{equation}
where $A^*_{\mathcal{CR}^\preceq}$ adds a crossing at the $\preceq$-minimal all-on once occupied pair of $f_1 (M_1) f_1 (M_2)$.

Thus, $f_2 (M_1 \otimes M_2)$ is given by a single diagram, which is crossed at the $\preceq$-minimal all-on once occupied pair of $M_1 \otimes M_2$, and is elsewhere given by $f_1 (M_1) f_1 (M_2)$. The idea is shown in figure \ref{fig:f2_effect}. In this way, $f_2$ turns one pair from twisted to crossed.

\begin{figure}
\begin{center}
\begin{tikzpicture}[scale=1.5]
\draw (-2.5, 0.75) node {$f_2 \Big( \begin{array}{ccccc}
\on & p'_+ & \off & p'_- & \on \end{array} \Big) = $};
\draw (-0.5,0.75) node {$f_2 \Bigg($};
\strandbackgroundshading
\tstrandbackgroundshading{1}
\afterwused
\tbeforewused{1}
\strandsetupn{p}{p'}
\tstrandsetup{1}
\lefton
\rightoff
\trighton{1}
\usea
\tuseb{1}
\draw (2.5, 0.75) node {$\Bigg) =$};
\begin{scope}[xshift=3.5 cm]
\strandbackgroundshading
\beforewused
\afterwused
\strandsetupn{p}{p'}
\lefton
\righton
\dothorizontals
\useab
\end{scope}
\begin{scope}[xshift=5 cm]
\draw (0.5,0.75) node {$= \begin{array}{ccc} \on & c_{p'} & \on \end{array}$};
\end{scope}
\end{tikzpicture}
\caption{The effect of $f_2$.}
\label{fig:f2_effect}
\end{center}
\end{figure}

\section{Properties of A-infinity structures}
\label{sec:A-infinity_structures_general}

We now consider $A_\infty$ structures on $\HH$ constructed by Kadeishvili's method in general --- not just those defined by the construction in theorem \ref{thm:main_thm}. Using this general method, no creation operators or creation choice functions are used.

Throughout this section, then, we consider an $A_\infty$ structure $X$ on $\HH$, with operations $X_n \colon \HH^{\otimes n} \To \HH$, and a morphism of $A_\infty$ algebras $f \colon \HH \To \A$, with maps $f_n \colon \HH^{\otimes n} \To A$, constructed by Kadeishvili's method. So there are also auxiliary maps $U_n \colon \HH^{\otimes n} \To \A$, and we assume the maps $U_n, X_n, f_n$ satisfy equations (\ref{eqn:Un_def}), (\ref{eqn:Xn_def}) and (\ref{eqn:fn_eqn}). We assume all $U_n, X_n, f_n$ preserve H-data and have Maslov degree $n-2$, $n-2$, $n-1$ respectively. Thus, each $f_n$ inverts the differential in $\partial f_n = f_1 X_n - U_n$, but not necessarily by a creation operator.

As usual, throughout this section, $M = M_1 \otimes \cdots \otimes M_n$ denotes a tensor product of nonzero homology classes of diagrams.

\subsection{Non-viable input}
\label{sec:H-data_viability}

We have already seen that, if a tensor product of homology classes of diagrams $M_1 \otimes \cdots \otimes M_n$ is not viable, then their product is zero (lemma \ref{lem:nonzero_viable_tensor_product}). We now show that other operations are zero as well.

\begin{lem} \
\label{lem:fn_Xn_H-data_viability}
If all $f_n$ are balanced, and $M$ is not viable, then $\fbar_n (M)$ and $X_n (M)$ are both zero. More precisely, we have the following.
\begin{enumerate}
\item
If $M$ is not viable because some step is covered twice, then $\fbar_n (M)$ and $X_n (M)$ are both zero.
\item
If all $f_n$ are balanced, and $M$ is not viable because of an idempotent mismatch, then $f_n (M)$ and $X_n (M)$ are both zero.
\end{enumerate}
\end{lem}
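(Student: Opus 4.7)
The plan is to proceed by induction on $n$, treating the two cases separately. The base case $n=1$ is vacuous: a single nonzero $M_1 \in \HH$ lies in some $\HH(h,s,t)$ with $(h,s,t)$ viable (non-viable summands of $\HH$ are zero by proposition \ref{prop:homology_summands}), and $X_1 = 0$ while $f_1$ sends $M_1$ to a cycle with viable H-data. Assume the lemma holds at all levels $<n$.

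For case (i), suppose $M$ has some step of $\mathbf{Z}$ covered twice, i.e.\ $h = h_1 + \cdots + h_n$ is not viable. Since $U_n$, $f_1 \circ X_n$, and $f_n$ all preserve H-data, their values on $M$ land in $\A(h,s,t)$ with $(h,s,t)$ non-viable. By definition \ref{def:ideal_F}, every diagram in $\A(h,s,t)$ lies in $\F$, so $\Ubar_n(M) = 0$ and $\fbar_n(M) = 0$ immediately. Moreover $X_n(M) \in \HH(h,s,t)$, and $\HH(h,s,t) = 0$ for non-viable H-data by proposition \ref{prop:homology_summands}, giving $X_n(M) = 0$. This case requires no induction hypothesis and no balanced assumption.

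For case (ii), suppose $M$ has an idempotent mismatch but every step is covered at most once; the main task is to show $U_n(M) = 0$ term by term in formula (\ref{eqn:Un_def}), after which $X_n(M) = [U_n(M)] = 0$, the right-hand side of equation (\ref{eqn:fn_eqn}) vanishes, and the balanced hypothesis yields $f_n(M) = 0$. For each $j$ in the first sum $m_2\bigl(f_j(M_1 \otimes \cdots \otimes M_j) \otimes f_{n-j}(M_{j+1} \otimes \cdots \otimes M_n)\bigr)$: if a mismatch lies strictly within $\{1,\dots,j\}$ or strictly within $\{j+1,\dots,n\}$, the induction hypothesis (since $j, n-j < n$) forces the corresponding $f$-factor to vanish; if instead the mismatch occurs exactly between positions $j$ and $j+1$, then $f_j(\cdots)$ ends with idempotent $t_j$ and $f_{n-j}(\cdots)$ starts with $s_{j+1} \neq t_j$, so the product in $\A$ is zero. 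For each term in the second sum $f_{n-j+1}(M_1 \otimes \cdots \otimes X_j(M_{k+1} \otimes \cdots \otimes M_{k+j}) \otimes \cdots \otimes M_n)$: if any mismatch lies inside the window $\{k+1,\dots,k+j\}$ of $X_j$, then $X_j$ applied there vanishes by induction (since $2 \leq j \leq n-1 < n$); otherwise the mismatch survives in the outer argument to $f_{n-j+1}$, and since $n-j+1 \leq n-1$, induction kills that term as well.

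The key subtlety, and the main obstacle, is the careful case analysis in (ii): one must verify that for \emph{every} decomposition $j$ (respectively every $(k,j)$) appearing in (\ref{eqn:Un_def}), at least one of the two mechanisms (induction or idempotent obstruction) applies, including when $M$ has several mismatches at once. Writing this as: ``for each term, either the mismatch is internal to some sub-tensor-product on which a lower-level operation is evaluated, or it lies across a multiplication"\ handles all cases uniformly. Once $U_n(M) = 0$ is established, $X_n(M) = 0$ follows by (\ref{eqn:Xn_def}), and then $(f_1 X_n - U_n)(M) = 0$ together with the balanced condition (definition \ref{def:balanced}) gives $f_n(M) = 0$, completing the induction.
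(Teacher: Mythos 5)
Your proposal is correct and takes essentially the same route as the paper: case (i) is handled purely by H-data preservation (non-viable summands contribute nothing to $\HH$ and land in $\F$), and case (ii) is the same induction on $n$, killing each term of $U_n(M)$ either by the induction hypothesis (mismatch internal to a lower-level $f$ or $X$ input) or by vanishing of the product at the mismatched idempotent, then invoking the balanced condition to conclude $f_n(M)=0$. The only cosmetic difference is that you spell out the base case and the role of the inductive hypothesis for $X_j$ slightly more explicitly than the paper does.
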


\begin{proof}
First suppose $M$ has some step covered twice. As $X_n$ preserves H-grading, and there are no tight diagrams with such H-grading, $X_n (M)=0$. As $f_n$ preserves H-grading, $f_n (M)$ is a sum of non-viable diagrams, hence lies in $\F$, so $\fbar_n (M) = 0$.

We now show (ii) by induction on $n$. When $n=1$ there is nothing to prove.
Suppose the result is true for all $f_i$ with $i<n$; now assume $M$ is mismatched, and we will show $f_n (M) = 0$.

Consider $U_n (M)$. In a term of the form $f_j (M_1 \otimes \cdots \otimes M_j) f_{n-j} (M_{j+1} \otimes \cdots \otimes M_n)$, if the mismatch occurs within $M_1 \otimes \cdots M_j$ or $M_{j+1} \otimes \cdots \otimes M_n$ then by induction the term is zero; otherwise it occurs between $M_j$ and $M_{j+1}$, in which case the product is zero. In a term of the form $f_{n-j+1} ( M_1 \otimes \cdots \otimes M_k \otimes X_j (M_{k+1} \otimes \cdots \otimes M_{k+j} ) \otimes \cdots \otimes M_n)$, if the mismatch occurs within $M_{k+1} \otimes \cdots \otimes M_{k+j}$ then the $X_j$ term is zero, hence the whole term is zero; otherwise it occurs within the $f_{n-j+1}$ term and again we have zero. Thus $U_n (M) = 0$, so $X_n (M)=0$. Then $\partial f_n (M) = (f_1 X_n - U_n)(M) = 0$, and since $f_n$ is balanced then $f_n (M) = 0$ also.
\end{proof}

\subsection{Equivalent choices of maps}
\label{sec:choices_of_maps}

In the proof of theorem \ref{thm:main_thm}, we saw that although there might be many choices available for the $f_n$ on tight summands, such choices had no effect on the resulting $X_n$. 

In a similar vein, we now show that, in applying Kadeishvili's construction in general (i.e. without creation operators), the choices available for the maps $\fbar_n$ do not depend on any previous choices.
\begin{lem}
\label{lem:F_persistence}
Suppose that $f_i$ are defined for all $i < n$, $U_i$ and $X_i$ are defined for all $i \leq n$, and that two functions $a, b \colon H^{\otimes n} \To A$ satisfy
\[
\overline{a} = \overline{b}
\quad \text{and} \quad
\partial a = \partial b = f_1 X_n - U_n.
\]
Whether we choose $f_n = a$ or $b$, for all $N>n$, the choices for each $\fbar_N$ are identical.
\end{lem}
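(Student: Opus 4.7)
The plan is to prove this by induction on $N$, with the inductive hypothesis that for every compatible sequence of choices $f_{n+1}^a, \ldots, f_{N-1}^a$ made under $f_n = a$, there exist parallel choices $f_{n+1}^b, \ldots, f_{N-1}^b$ made under $f_n = b$ satisfying $\overline{f_i^a} = \overline{f_i^b}$ in $\AAbar$ for $n \leq i \leq N-1$ (and symmetrically with $a$ and $b$ exchanged). This immediately yields the claimed coincidence of the choice sets for $\fbar_N$.

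The central technical input I will need is the assertion that every cycle $c \in \F$ is the boundary of some element of $\F$. I will verify this summand by summand using the decomposition $\A = \bigoplus_{(h,s,t)} \A(h,s,t)$ and further splitting by pure Maslov grading, which is possible since $\partial$ respects both H-data and Maslov grading. In non-viable summands, the entire summand already lies in $\F$ and is acyclic, so any preimage of a cycle lies in $\F$ automatically. In viable twisted summands, the creation operator $A_P^*$ attached to an all-on once occupied pair $P$ inverts the differential by lemma \ref{lem:creations_invert_differential} and manifestly preserves $\F$, since inserting a crossing at a once occupied pair neither creates nor destroys any crossed doubly occupied pair. In viable tight summands, cycles in $\F$ live in strictly positive Maslov grading and are boundaries --- the homology being concentrated in the lowest Maslov grading, represented by tight diagrams --- and lemma \ref{lem:no_FOC_pairs_integral_in_F} then forces any preimage of pure Maslov grading to lie in $\F$.

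For the inductive step at level $N$, the formula (\ref{eqn:Un_def}) expresses $U_N$ as a sum of products and compositions of lower-level $f_j$ and $X_j$, and since $\F$ is a two-sided ideal these operations descend cleanly to $\AAbar$. The inductive hypothesis then yields $\overline{U_N^a} = \overline{U_N^b}$, so that $U_N^a - U_N^b$ is a cycle lying in $\F$. Applying the technical fact produces $v \in \F$ with $\partial v = U_N^a - U_N^b$, and passing to homology forces $X_N^a = X_N^b$, so that $f_1 X_N$ is independent of the choice. Given any valid $f_N^a$ satisfying $\partial f_N^a = f_1 X_N - U_N^a$, I will set $f_N^b = f_N^a + v$; over $\Z_2$ this satisfies $\partial f_N^b = f_1 X_N - U_N^b$ and $\overline{f_N^b} = \overline{f_N^a}$, so $f_N^b$ is a valid choice under $f_n = b$ matching $f_N^a$ in $\AAbar$. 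This completes the induction.

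The main obstacle is the tight case of the technical fact, where a preimage must be found inside $\F$ even though $\F$ contains no tight diagrams; here the absence of all-on once occupied pairs, combined with lemma \ref{lem:no_FOC_pairs_integral_in_F}, is essential. The twisted case by contrast follows immediately from the Heisenberg relation for creation operators (proposition \ref{prop:creation_operator_Heisenberg}), and the non-viable case is trivial.
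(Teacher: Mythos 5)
Your proof is correct and follows essentially the same route as the paper's: since $\overline{a}=\overline{b}$ the $U$-maps differ by elements of the ideal $\F$, which do not affect homology, and a primitive of that difference can be chosen inside $\F$ (creation operators on twisted summands, lemma \ref{lem:no_FOC_pairs_integral_in_F} on tight ones), so shifting $f_N$ by it identifies the choice sets for $\fbar_N$, and one inducts. The only cosmetic difference is that the paper first disposes of non-viable and singular summands by noting that $\fbar$ is forced to vanish there, whereas you absorb them into a general ``every cycle in $\F$ bounds in $\F$'' statement using acyclicity of non-viable summands.
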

Let us be more explicit. Taking $f_n = a$ we define $U$ and $X$ maps at level $n+1$, which we denote $U_{n+1}^a, X_{n+1}^a$. Then we have a set of choices $\mathcal{S}_{n+1}^a = \{ \fbar \; \mid \; \partial f = f_1 X_{n+1}^a - U_{n+1}^a \}$ for $\fbar_{n+1}$. On the other hand, taking $f_n = b$ we define $U_{n+1}^b, X_{n+1}^b$ and have another set of choices $\mathcal{S}^b_{n+1} = \{ \fbar \; \mid \; \partial f = f_1 X^b_{n+1} - U^b_{n+1} \}$ for $\fbar_{n+1}$. Lemma \ref{lem:F_persistence} says that $\mathcal{S}^a_{n+1} = \mathcal{S}^b_{n+1}$. Moreover, after taking $f_n = a$ and making arbitrary choices $f_{n+1}^a, \ldots, f_{N-1}^a$ using Kadeishvili's construction, obtaining maps $U_{n+1}^a, X_{n+1}^a, \ldots, U_{N}^a, X_{N}^a$, we obtain a set of choices $\mathcal{S}^a_N = \{ \fbar \; \mid \; \partial f = f_1 X_N^a - U_N^a \}$ for $\fbar_N$; after taking $f_n = b$ and making arbitrary choices $f_{n+1}^b \ldots, f_{N-1}^b$ and obtaining maps $U_{n+1}^b, X_{n+1}^b, \ldots, U_N^b, X_N^b$, we have choices $\mathcal{S}^b_N = \{ \fbar \; \mid \; \partial f = f_1 X_N - U_N \}$ for $\fbar_N$. Lemma \ref{lem:F_persistence} says, more generally, that these too are equal: $\mathcal{S}^a_N = \mathcal{S}^b_N$. 

\begin{proof}
Let $M$ have H-data $(h,s,t)$ (definition \ref{def:H-data_tensor_product}).
When $(h,s,t)$ is not viable (i.e. covers some step at least twice: definition \ref{def:viability}), there is only one choice for $\fbar_n (M)$, namely $0$, by lemma \ref{lem:fn_Xn_H-data_viability}. And when $(h,s,t)$ is singular, $\fbar_n (M) = 0$ as there are no available diagrams. Hence we need only consider $\fbar_n (M)$ when $(h,s,t)$ is viable and non-singular, hence tight or twisted (proposition \ref{prop:homology_summands}, definition \ref{def:tightness_H-data}).

Since $\overline{a} = \overline{b}$, $a - b$ takes values in $\F$. As $\F$ is an ideal, $U_{n+1}^a (M)$ and $U^b_{n+1} (M)$ differ by values in $\F$. Diagrams in $\F$ do not contribute to homology, as they have crossings, so $[U^a_{n+1} (M)] = [U^b_{n+1} (M)]$. It follows that $X^a_{n+1} (M)= X^b_{n+1}(M)$; we simply write $X_{n+1}(M)$ in either case. Moreover, $U^a_{n+1}(M) - U^b_{n+1}(M)$ is a boundary; let $U^a_{n+1}(M) - U^b_{n+1}(M) = \partial g_{n+1}$. As $U^a_{n+1}(M) - U^b_{n+1}(M) \in \F$, there is such a $g_{n+1}$ in $\F$: if $(h,s,t)$ is twisted we can use a creation operator; if $(h,s,t)$ is tight then any crossing occurs at an all-on doubly occupied pair, so any diagram with crossings lies in $\F$. Then to define $f_{n+1}(M)$ we must solve
\[
f_1 X_{n+1}(M) - U^a_{n+1}(M) = \partial f^a_{n+1}(M)
\quad \text{or} \quad
f_1 X_{n+1}(M) - U^b_{n+1}(M) = \partial f^b_{n+1}(M).
\]
Now observe that $f^a_{n+1}(M)$ is a solution of the first equation iff $f^a_{n+1}(M) + g_{n+1}$ is a solution of the second equation. As $g_{n+1} \in \F$, the possible $\fbar^a_{n+1}(M)$ and $\fbar^b_{n+1}(M)$ are identical.

Thus, the possible choices for $f_{n+1}$ differ by values in $\F$. The possible choices for $U_{n+2}$ then differ by values in $\F$, and the argument proceeds by induction, giving the desired result.
\end{proof}

Thus in the construction of the maps $f_n$ and $X_n$, it is sufficient to consider $\fbar$ rather than $f$ at each level. So we may effectively compute in $\A/\F = \AAbar$.

\subsection{First properties of A-infinity operations}
\label{sec:first_properties_A-infinity}

We now have some preliminary results giving some description of $\fbar_n$ and $X_n$.

\begin{lem}
\label{lem:fn_crossings}
Suppose all $f_k$ are balanced. For any $n \geq 2$ and $M = M_1 \otimes \cdots \otimes M_n$, $f_n (M)$ is a sum of crossed diagrams.
\end{lem}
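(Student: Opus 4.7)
The plan is to dispatch the lemma via a case analysis on the H-data $(h,s,t)$ of $M$, using the fact that $f_n$ preserves H-data and has Maslov degree $n-1$, so that $f_n(M)$ is supported in $\A(h,s,t)$ with pure Maslov grading $\iota(M)+n-1$. The degenerate cases are easy: an idempotent mismatch gives $f_n(M)=0$ by the balanced hypothesis combined with lemma~\ref{lem:fn_Xn_H-data_viability}, while singular $(h,s,t)$ gives $f_n(M)=0$ because $\A(h,s,t)=0$; in both cases the conclusion holds vacuously. No induction on $n$ is needed.

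The main cases are tight and twisted H-data. When $(h,s,t)$ is tight, so $M$ is tight or critical, I would apply lemma~\ref{lem:no_FOC_pairs_integral_in_F} to the defining equation $\partial f_n(M)=(f_1 X_n-U_n)(M)$: both sides are of pure Maslov grading, so either the right-hand side is nonzero and the lemma forces $f_n(M)\in\F$, or it vanishes and the balanced hypothesis gives $f_n(M)=0$. Either way $f_n(M)\in\F$, and since $(h,s,t)$ is viable, every diagram appearing in $f_n(M)$ lies in $\F\cap\A(h,s,t)$ and so has at least one crossed doubly-occupied pair, hence is crossed. When $(h,s,t)$ is twisted, $M$ itself must be twisted (it is not singular, nor can it be tight or critical since those both have tight H-data by lemma~\ref{lem:homology_tensor_product_tightness_H-data}); then from the classification in table~\ref{tbl:local_tensor_products} together with the additivity of Maslov grading over matched pairs one obtains $\iota(M)=\iota_0(h,s,t)$, the common Maslov grading of any crossingless diagram in $\A(h,s,t)$. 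Thus $f_n(M)$ has pure Maslov grading $\iota_0+n-1\ge\iota_0+1$, and since on each viable H-summand the Maslov grading equals $\iota_0$ plus the number of crossings (section~\ref{sec:classification_local_diagrams}), every diagram in $f_n(M)$ is crossed.

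The remaining case is non-viable $M$ with some step covered twice, and here the same Maslov-shift strategy should work. The value $\iota(M)=\sum_i\iota(M_i)+\sum_{j<k}m(h_k,\partial h_j)$ coincides with the Maslov grading of the superimposition of the chosen tight representatives of the $M_i$, which is a crossingless non-viable diagram in $\A(h,s,t)$ attaining the minimal Maslov grading $\iota_{\min}(h,s,t)$ available there. Consequently $\iota(f_n(M))\ge\iota_{\min}(h,s,t)+n-1\ge\iota_{\min}(h,s,t)+1$ for $n\ge 2$, and a diagram with H-data $(h,s,t)$ whose Maslov grading exceeds $\iota_{\min}(h,s,t)$ must have at least one crossing. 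I expect this last verification to be the main obstacle, because unlike the viable cases there is no classification table to fall back on; the identification $\iota(M)=\iota_{\min}(h,s,t)$ has to be read off directly from the definitions of $\iota$ and $m$, keeping careful track of horizontal strands and idempotent matching to see that the naive superimposition is indeed crossingless and of minimal Maslov.
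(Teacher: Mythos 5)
Your degenerate cases (idempotent mismatch, singular H-data) and your tight-H-data case are fine; the detour through lemma~\ref{lem:no_FOC_pairs_integral_in_F} plus balancedness is a correct alternative there, and your twisted-case computation is essentially the paper's entire proof, which is just the grading argument: $f_n$ preserves H-data, has Maslov degree $n-1\geq 1$, and for fixed (viable) H-data the Maslov grading is a constant plus the number of crossed pairs (section~\ref{sec:classification_local_diagrams}), so every diagram in $f_n(M)$ must be crossed. However, two steps in your proposal fail as written. First, twisted H-data does not force $M$ to be twisted: by lemma~\ref{lem:homology_tensor_tightness_local} a critical $M$ can have twisted matched pairs (critical at one pair, twisted at another), and its H-data is then twisted while $\iota(M)=\iota_0-m$, where $\iota_0$ is the crossingless level of the summand and $m\geq 1$ is the number of critical pairs; your identity $\iota(M)=\iota_0$ fails, and when $m=n-1$ the grading of $f_n(M)$ is exactly $\iota_0$, so the crossing count alone does not exclude crossingless diagrams. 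The repair is the one you already use elsewhere: in that regime $(f_1X_n-U_n)(M)$ would have pure grading $\iota_0-1$, below the minimum of the viable summand, hence vanishes (note $X_n(M)=0$ since $\HH(h,s,t)=0$), and balancedness gives $f_n(M)=0$.

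Second, your non-viable case rests on two claims that are not established and the first of which is false as stated. The quantity $\iota(M)$ need not equal the minimal Maslov grading of $\A(h,s,t)$: ``superimposition'' is not a defined diagram, the natural candidate $D_1\cdots D_n$ can be zero (e.g.\ if $M$ also has a critical-type pair), and then $\iota(M)$ sits strictly below the grading of any crossingless diagram in the summand, exactly as in the viable critical case. Moreover, the statement that a diagram in a non-viable summand whose Maslov grading exceeds the crossingless level must have a crossing is precisely what section~\ref{sec:classification_local_diagrams} proves only for viable H-data, via the local classification; for non-viable H-data crossings are not localised at matched pairs, distinct crossingless constrained diagrams with the same H-data can have different Maslov gradings, and one must decide how crossings with dotted horizontal strands are counted, so this link genuinely needs its own argument (the paper's one-line proof is silent on this case too, but your writeup explicitly leans on it). As it stands, the non-viable branch of your proof is a gap, not merely an unchecked verification.
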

This includes a sum of no crossed diagrams, when $f_n (M) = 0$.

\begin{proof}
Recall $f_n$ preserves H-data and has Maslov grading $n-1 \geq 1$. For fixed H-data, Maslov grading is (up to a constant) given by the number of matched pairs with crossings (section \ref{sec:classification_local_diagrams}). So each diagram in $f_n (M)$ must contain at least one crossing.
\end{proof}

\begin{lem}
\label{lem:Xn_directly}
Suppose all $f_k$ are balanced. Then $X_n (M)$ is represented by the sum of all crossingless diagrams in the sum
\[
\sum_{j=1}^{n-1} \fbar_j (M_1 \otimes \cdots \otimes M_j) \fbar_{n-j} (M_{j+1} \otimes \cdots \otimes M_n),
\]
where we write elements of $\AAbar$ in standard form.
\end{lem}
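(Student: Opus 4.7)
The plan is to analyse $U_n(M)$ term-by-term, isolate the crossingless part, and show it coincides with the prescribed expression. First, split $U_n(M) = A(M) + B(M)$, where
\[
A(M) = \sum_{j=1}^{n-1} f_j(M_1 \otimes \cdots \otimes M_j)\, f_{n-j}(M_{j+1} \otimes \cdots \otimes M_n)
\]
is the first sum in (\ref{eqn:Un_def}) and $B(M)$ is the second sum, whose summands have the form $f_{n-j+1}(\cdots \otimes X_j(\cdots) \otimes \cdots)$ with $2 \leq j \leq n-1$ and hence $n-j+1 \geq 2$. By lemma \ref{lem:fn_crossings} (which uses the balanced hypothesis), each such $f_{n-j+1}$ output is a sum of crossed diagrams, so every diagram in $B(M)$ is crossed. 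Consequently the crossingless diagrams appearing in $U_n(M)$ come entirely from $A(M)$.

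Next, decompose $U_n(M) = U^{(0)} + U^{(\geq 1)}$ by number of crossings. Since $\partial$ strictly reduces the crossings-grade and $\partial U_n(M) = 0$, the two pieces are separately cycles. Let $(h,s,t)$ denote the H-data of $M$. If $(h,s,t)$ is not tight (twisted, singular, or non-viable), then $\HH(h,s,t) = 0$, so $X_n(M) = 0$ and every crossingless cycle in $\A(h,s,t)$ represents $0$ in homology; the claim holds in this case. If instead $(h,s,t)$ is tight, then the isomorphism $\A(h,s,t) \cong \bigotimes_P \A_P(h_P, s_P, t_P)$ from section \ref{sec:local_algebras} has every local factor of the form $C''_P$ or $C_P$, each with homology concentrated in crossings-grade $0$; by Künneth the same is true of $H(\A(h,s,t))$. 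Therefore $Z_k = B_k$ for all $k \geq 1$, which forces the cycle $U^{(\geq 1)}$ to be a boundary, and $X_n(M) = [U_n(M)] = [U^{(0)}]$.

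Finally, I would identify the crossingless diagrams of $A(M) \in \A$ with those appearing in the standard-form representative of $\sum_{j=1}^{n-1} \fbar_j(M_1 \otimes \cdots \otimes M_j) \cdot \fbar_{n-j}(M_{j+1} \otimes \cdots \otimes M_n) \in \AAbar$. These two expressions agree modulo $\F$. For viable $(h,s,t)$, every diagram in $\A(h,s,t)$ is automatically viable, so $\F \cap \A(h,s,t)$ consists of diagrams with crossed doubly occupied pairs, all of which are crossed; hence no crossingless diagram of $A(M)$ lies in $\F$, and the two sets of crossingless diagrams coincide. When $(h,s,t)$ is non-viable the claim is already subsumed by the vanishing argument above. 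The main obstacle is the middle step --- showing $U^{(\geq 1)}$ is a boundary --- which rests on the local tensor-product decomposition of $\A(h,s,t)$ placing all of its homology in crossings-grade zero for tight $(h,s,t)$.
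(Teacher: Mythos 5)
Your proof is correct and follows essentially the same route as the paper's: split $U_n(M)$ into the $f_j\cdot f_{n-j}$ terms and the terms containing an $X_j$, kill the latter via lemma \ref{lem:fn_crossings}, and discard crossed diagrams and $\F$-terms because the homology of each summand $\A(h,s,t)$ is concentrated on crossingless diagrams. The only difference is that you make explicit (via the crossing-number grading, the observation that each graded piece of the cycle $U_n(M)$ is a cycle, and $Z_k=B_k$ for $k\geq 1$ in the tight case) the step the paper compresses into ``crossed diagrams do not contribute to homology,'' which is a sound rigorization rather than a new argument.
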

Recall (definition \ref{defn:Abar}(v)) that the standard form of an element of $\AAbar$ is a sum of viable diagrams without crossed doubly occupied pairs. For $n=1$ the result reduces to $X_1 = 0$.

\begin{proof}
By construction, $X_n (M)$ is the homology class of $U_n (M)$. Consider the terms of (\ref{eqn:Un_def}) defining $U_n (M)$. Terms of the form $f_{\bullet} (M_1 \otimes \cdots \otimes X_\bullet ( \cdots ) \otimes \cdots \otimes M_n)$ only contain crossed diagrams (lemma \ref{lem:fn_crossings}) hence do not  contribute to homology. Thus, $X_n (M)$ is represented by the sum of tight diagrams of the form $f_j (M_1 \otimes \cdots \otimes M_j) f_{n-j} (M_{j+1} \otimes \cdots \otimes M_n)$. Since diagrams in the ideal $\F$ do not contribute to homology, $X_n$ is represented by the sum claimed in standard form.
\end{proof}

Lemma \ref{lem:Xn_directly} allows us to calculate $X_n (M)$ directly from $\fbar_j$ and $\fbar_{n-j}$. Diagrams in $\fbar_j$ or $\fbar_{n-j}$ usually contain crossings (lemma \ref{lem:fn_crossings}), but the crossings may multiply out to give a tight result. Such $\fbar_j \otimes \fbar_{n-j}$ are sublime. Sublimation is therefore ubiquitous in the operations $X_n$, arising in any nonzero $X_n (M)$.

\subsection{Conditions for nontrivial A-infinity operations}
\label{sec:nontrivial_conditions}

We now find some necessary conditions for $\fbar_n$ or $X_n$ to be nonzero.

\begin{thm}
\label{thm:fn_structure}
Suppose all $f_k$ are balanced. Let $n \geq 2$, let $M_1, \ldots, M_n$ be nonzero homology classes of tight diagrams, and let $M = M_1 \otimes \cdots \otimes M_n$. If $\fbar_n (M) \neq 0$, then the following statements hold.
\begin{enumerate}
\item
In $M$ there are $l$ matched pairs which are twisted, and $m$ matched pairs which are critical, where $l+m \geq n-1$ and $m \leq n-2$. All other matched pairs are tight.
\item
$\fbar_n (M)$ is represented by a sum of diagrams, where each diagram $D$ satisfies the following conditions.
\begin{enumerate}
\item All $m$ of the critical matched pairs in $M$ become tight in $D$.
\item Precisely $n-m-1$ of the $l$ twisted matched pairs in $M$ become crossed in $D$; the other $l-n+m+1$ twisted matched pairs in $M$ remain twisted in $D$.
\item All tight matched pairs in $M$ remain tight in $D$.
\end{enumerate}
\end{enumerate}
\end{thm}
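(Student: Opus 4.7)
Plan: The proof proceeds by induction on $n$. The base case $n = 2$ is immediate from the analysis of $\fbar_2$ in section \ref{sec:low-level_ex}: when $n = 2$ we have $m = 0$ by lemma \ref{lem:it_takes_3}, and $\fbar_2(M)$ is a single diagram with exactly one twisted pair of $M$ crossed, matching the claimed structure.

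For the inductive step, first rule out the trivial cases for $M$. Lemma \ref{lem:fn_Xn_H-data_viability} and theorem \ref{thm:main_thm}(i) handle non-viable and singular $M$. If the H-data $(h,s,t)$ of $M$ is tight, then since $f_n$ has positive Maslov degree every diagram of $\fbar_n(M)$ has at least one crossing; but by proposition \ref{prop:homology_summands} such $(h,s,t)$ admits no 11 once occupied pairs, so any crossing must sit at an 11 doubly occupied pair, placing the diagram in $\F$ and forcing $\fbar_n(M) = 0$. Thus $(h,s,t)$ is twisted and $l \geq 1$. Next, any $D \in \fbar_n(M)$ has H-data $(h,s,t)$, lies in $\AAbar$, and has Maslov grading $\iota(M) + n - 1$; since $D$ has no crossed doubly occupied pair, every crossing of $D$ sits at an 11 once occupied pair of $(h,s,t)$, i.e.\ at a pair which was twisted in $M$. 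Tight and critical pairs of $M$ both have tight H-data, forcing $D$ locally to be the unique tight diagram there (establishing (ii)(a) and (ii)(c)). Comparing local Maslov indices via table \ref{tbl:local_tensor_products}, a tight pair contributes equally to $\iota(D)$ and $\iota(M)$, a critical pair contributes $+1$ to $\iota(D)$ relative to $\iota(M)$, and a twisted pair contributes $0$ or $+1$ according as $D_P$ is $w_p$ or $c_p$. Summing gives $n-1 = m + k$ with $k$ the number of crossed twisted pairs, so $k = n - 1 - m$ (establishing (ii)(b)); the constraints $0 \leq k \leq l$ then yield $l + m \geq n - 1$ and $m \leq n - 1$.

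The main obstacle is the strict inequality $m \leq n - 2$. The plan is to show that $m = n - 1$ forces $\Ubar_n(M) = 0$ in $\AAbar$, which by balancedness (together with $f_1 X_n(M) = 0$ for twisted H-data, since then $\HH(h,s,t) = 0$) implies $\fbar_n(M) = 0$, contradicting our hypothesis. Expanding $U_n(M)$ via equation (\ref{eqn:Un_def}), each summand is a product $\fbar_j(M^{(L)}) \fbar_{n-j}(M^{(R)})$ or has the form $\fbar_{n-j+1}(M^{(L)} \otimes X_j(M^{(C)}) \otimes M^{(R)})$. The inductive hypothesis applied to each $\fbar$-factor, combined with lemmas \ref{lem:tightness_homology_sub-tensor-product} and \ref{lem:tightness_H-contraction} governing how critical and twisted pairs distribute into sub-tensor-products and H-contractions, should force each summand into $\F$. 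I expect the combinatorial bookkeeping here to be the main technical difficulty: one must show that the budgets $m_L, m_R, m_C \leq j - 2$ of critical pairs across any splitting cannot simultaneously accommodate all $n - 1$ critical pairs of $M$ while leaving a twisted pair available to be crossed, using that each critical pair requires some sub-tensor-product to sublimate (lemma \ref{lem:sublime_contains_crossed}) to clear it, thereby consuming crossings unavailable when $k = 0$. A careful case analysis by local type of each critical pair (sesqui- versus doubly occupied) relative to the position of the chosen splitting should then produce the required contradiction.
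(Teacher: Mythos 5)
Your main line of argument — viability via lemma \ref{lem:fn_Xn_H-data_viability}, excluding tight H-data because any crossing would land in $\F$, the pair-by-pair analysis forcing $D$ to be tight at tight and critical pairs and twisted-or-crossed at twisted pairs, and the Maslov count $n-1=m+k$ — is essentially the paper's proof, and it correctly delivers part (ii) together with $l+m\geq n-1$ and $m\leq n-1$. The problem is the step you flag as ``the main obstacle,'' $m\leq n-2$. In the paper this costs one line: lemma \ref{lem:fn_crossings} (already available, since all $f_k$ are balanced and $f_n$ has Maslov degree $n-1\geq 1$) says that for $n\geq 2$ every diagram in $f_n(M)$ is crossed, so in your own count $k\geq 1$, i.e.\ $m\leq n-2$. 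You never invoke that lemma, and the replacement you propose is not a proof.

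Concretely, the proposed induction has two defects. First, it is unexecuted: the ``combinatorial bookkeeping'' you defer is the entire content of the step, and you yourself only claim the case analysis ``should'' produce a contradiction. Second, the reduction is logically flawed even as a plan: establishing $\Ubar_n(M)=0$ only shows $U_n(M)\in\F$, whereas the balanced condition (definition \ref{def:balanced}) lets you conclude $f_n(M)=0$ only when $(f_1X_n-U_n)(M)=0$ in $\A$ exactly; on a twisted summand $f_1X_n(M)=0$, so you would need $U_n(M)=0$ on the nose. Your inductive hypothesis (the theorem itself) controls only the projections $\fbar_j$ in $\AAbar$, not the maps $f_j$, so exact vanishing of $U_n(M)$ is out of reach by this route; and if $U_n(M)$ is a nonzero element of $\F$, the equation $\partial f_n(M)=U_n(M)$ by itself does not force $\fbar_n(M)=0$, since $f_n(M)$ could a priori contain a crossingless cycle of the appropriate grading. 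So the strict inequality is a genuine gap in your write-up; the fix is simply to cite lemma \ref{lem:fn_crossings} at the end of your Maslov count, as the paper does.
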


Note in particular that the conditions in (i) imply that $l>0$, so the H-data of $M$ is twisted. Hence if $M$ has tight H-data then $\fbar_n (M) = 0$.

\begin{proof}
First, by lemma \ref{lem:fn_Xn_H-data_viability}, $M$ is viable. 

Write $\fbar_n (M)$ in standard form (definition \ref{defn:Abar}(v)), as a sum of distinct diagrams without crossed doubly occupied pairs. Let $D$ be one of these diagrams. As $\fbar_n$ respects H-grading and has Maslov grading $n-1$, $D$ is viable, with $h(D) = h(M)$, and $\iota(D) = \iota(M) + n-1$. From tables \ref{tbl:local_diagrams} and \ref{tbl:local_tensor_products}, at each matched pair the Maslov grading can increase by at most $1$; hence there are precisely $n-1$ matched pairs at which $D$ has a higher Maslov index than $M$.

At each matched pair $P$, $D$ must give a viable local diagram which respects local H-data. There are no such diagrams for singular pairs; hence all matched pairs of $M$ are tight, twisted, or critical.

Consider a matched pair $P$ where $M$ is critical. From table \ref{tbl:local_tensor_products}, $P$ is sesqui-occupied or doubly occupied by $M$. Every all-on doubly occupied pair must remain uncrossed in $D$ (by assumption of standard form). From table \ref{tbl:local_diagrams}, any viable local diagram at a sesqui-occupied or doubly occupied matched pair, which is not crossed all-on doubly occupied, must be tight. So $D_P$ is tight at $P$, and (again by reference to the table) $\iota(D_P) = \iota(M_P) + 1$.

Now consider a matched pair $P$ where $M$ is tight. Then the local H-data of $M$ at $P$ is tight. We observe from table \ref{tbl:local_diagrams} that, with crossed all-on doubly occupied local diagrams ruled out, any viable local diagram with tight H-data must be tight. Thus $D_P$ is tight, and hence $\iota(D_P) = \iota(M_P)$.

Now $\iota(D) = \iota(M) + n-1$, and $m$ of this increase is accounted for at critical matched pairs. The remaining increase of $n-1-m$ must arise at the $l$ pairs where $M$ is twisted. From table \ref{tbl:local_tensor_products}, we observe that these are precisely the pairs where $M$ is all-on once occupied. At such pairs, two viable local diagrams are possible: a tight and a crossed diagram. Crossings can thus be inserted at such pairs to increase the Maslov index; they must be inserted at $n-1-m$ such pairs for $D$ to have the correct Maslov index; so $n-1-m \leq l$. The remaining $l+m-n+1$ pairs must remain twisted in $D$. 

The diagram $D$ thus has precisely $n-m-1$ crossings. But by lemma \ref{lem:fn_crossings}, $D$ must have at least one crossing. Thus $n-m-1 \geq 1$.
\end{proof}

\begin{thm}
\label{thm:Xn_structure}
Suppose all $f_k$ are balanced. If $X_n (M) \neq 0$, then the following statements hold.
\begin{enumerate}
\item
$M$ has precisely $n-2$ critical matched pairs, and all other matched pairs are tight.
\item
$X_n (M)$ is the unique homology class of tight diagram with the H-data of $M$.
\end{enumerate}
\end{thm}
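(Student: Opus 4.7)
The plan is to reduce both claims to facts that are essentially already established: part (i) becomes a Maslov grading count, and part (ii) follows from the one-dimensionality of tight H-summands of $\HH$.

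First I would use the H-grading preservation of $X_n$ to locate the value. Since $X_n$ preserves H-data, $X_n(M)$ lies in $\HH(h,s,t)$, where $(h,s,t)$ is the H-data of $M$. The hypothesis $X_n(M) \neq 0$ forces $\HH(h,s,t) \neq 0$, so by proposition \ref{prop:homology_summands} the H-data $(h,s,t)$ is tight. By the local-to-global statement of lemma \ref{lem:local-global_H-data}, each local H-data $(h_P, s_P, t_P)$ is tight, and then lemma \ref{lem:homology_tensor_product_tightness_H-data} yields that at every matched pair $P$, the local tensor product $M_P$ is either tight or critical. This rules out twisted, sublime, crossed and singular matched pairs, leaving only tight and critical as possibilities, as required for half of (i).

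Next I would pin down the number of critical pairs by a Maslov grading count. Since $X_n$ has Maslov degree $n-2$, we have $\iota(X_n(M)) = \iota(M) + n-2$. On the other hand, proposition \ref{prop:homology_summands} tells us $\HH(h,s,t) \cong \Z_2$, generated by the unique homology class of a tight diagram $D_T$ with H-data $(h,s,t)$; so $X_n(M) = [D_T]$ and $\iota(X_n(M)) = \iota(D_T)$. The Maslov grading of a diagram decomposes as a sum over matched pairs, and, as I would verify, the same additivity holds for viable tensor products: $\iota(M) = \sum_P \iota(M_P)$ and $\iota(D_T) = \sum_P \iota((D_T)_P)$. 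Inspecting table \ref{tbl:local_tensor_products} against table \ref{tbl:local_diagrams}, at any tight matched pair $P$ the local Maslov gradings $\iota(M_P)$ and $\iota((D_T)_P)$ agree, while at any critical matched pair $P$ (which occurs only in pre- or post-sesqui-occupied or doubly occupied H-data) the tight value exceeds the critical value by exactly $1$. Thus if $M$ has $k$ critical matched pairs, $\iota(D_T) - \iota(M) = k$, and combined with the earlier equation this gives $k = n-2$, completing (i).

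Part (ii) is then immediate: $X_n(M)$ is a nonzero element of $\HH(h,s,t) \cong \Z_2$, hence equals the unique nonzero homology class in this summand, namely the class of the unique (up to strand-switching boundaries) tight diagram with H-data $(h,s,t)$. The only genuine obstacle is confirming that Maslov grading is additive across the vertical local decomposition for tensor products (not just for single diagrams); this is a routine check from the definition, but needs to be stated carefully since $M_1 \cdots M_n$ is zero at critical pairs, so the argument cannot simply invoke the multiplicative case of section \ref{sec:local_diagrams}, and instead must use the direct formula $\iota = \sum \iota(D_i) + \sum m(h_k, \partial h_j)$ together with the local-to-global decomposition of $H_1(\mathbf{Z}, \mathbf{a})$.
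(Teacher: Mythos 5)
Your argument is essentially the paper's own proof: nonvanishing of $X_n(M)$ forces the H-data to be tight, the local classification (lemma \ref{lem:local-global_H-data}, lemma \ref{lem:homology_tensor_product_tightness_H-data}, table \ref{tbl:local_tensor_products}) leaves only tight or critical matched pairs, and the count $n-2$ then falls out of the Maslov grading being additive over matched pairs, with critical pairs contributing exactly $+1$; part (ii) is the one-dimensionality of the tight summand, exactly as in the paper. The one omission is that you never establish viability of $M$, which is needed before ``the H-data of $M$'' and the local classification lemmas make sense at all -- this is precisely where the balanced hypothesis enters, via lemma \ref{lem:fn_Xn_H-data_viability} (in particular to dispose of the idempotent-mismatch case, where H-grading preservation alone does not suffice); the paper's proof opens with this step, and adding that one line closes the gap. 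Your concluding remark about checking additivity of the tensor-product Maslov grading over matched pairs via the explicit formula is a fair point of care, and is consistent with what the paper uses implicitly when it cites the local gradings of table \ref{tbl:local_tensor_products}.
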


In particular, if $X_n (M) \neq 0$, then $M$ has tight H-data.

For $n=1$ this result just says $X_1 = 0$; for $n=2$, $X_2$ being multiplication, it follows from lemmas \ref{lem:homology_tensor_tightness_characterisation} and \ref{lem:homology_tensor_tightness_local}.

\begin{proof}
By lemma \ref{lem:fn_Xn_H-data_viability}, $M$ is viable. Since $X_n$ respects H-data, let $M$ and $X_n (M)$ have H-data $(h,s,t)$. Since there is at most one nonzero homology class with fixed H-data, (ii) follows immediately.

As $X_n (M) \in \HH(h,s,t)$ is nonzero, the H-data $(h,s,t)$ is tight (proposition \ref{prop:homology_summands}; definition \ref{def:tightness_H-data}), and thus tight at each matched pair (lemma \ref{lem:local-global_H-data}). In particular, $M$ has no 11 once occupied or 00 alternately occupied pairs. From proposition \ref{prop:homology_tensor_product_classification} and table \ref{tbl:local_tensor_products}, $M$ is tight or critical at each pair.

When $M$ is tight at a pair $P$, $M_1 \cdots M_n$ is tight at $P$ (lemma \ref{lem:homology_tensor_tightness_characterisation}). As $X_n (M)$ is given at $P$ by the unique tight diagram with the H-data of $M$ then $X_n (M)_P = (M_1 \cdots M_n)_P$. In this case $X_n (M)$ has the same Maslov index as $M$ at $P$.

On the other hand, when $M$ is critical at $P$, $X_n (M)$ must still be given at $P$ by the unique tight diagram with the same H-data. Inspecting table \ref{tbl:local_tensor_products} (and recalling that multiplication in $\HH$ has zero Maslov grading), we observe that $\iota(X_n (M)_P) = \iota(M_P) + 1$.

Since Maslov index is additive over matched pairs (section \ref{sec:local_diagrams}), the difference in Maslov indices is given by the number of matched pairs at which $M$ is critical. Since $X_n$ has Maslov index $n-2$, $M$ has precisely $n-2$ critical pairs.
\end{proof}

When the maps $f_n$ and $X_n$ are defined from a pair ordering, as in corollary \ref{cor:main_cor} or theorem \ref{thm:first_thm}, theorems \ref{thm:fn_structure} and \ref{thm:Xn_structure} respectively yield parts (i) and (ii) of theorem \ref{thm:second_thm}.

Now we show that it's not possible to have $\fbar_n$ nonzero simultaneously with $X_n$, and more.




\begin{lem}
\label{lem:fn_Xn}
Suppose that for all $k \geq 1$, $f_k$ is balanced. Let $n \geq 2$. If $X_n (M) \neq 0$ or $M_1 \cdots M_n \neq 0$ then $\fbar_n (M) = 0$.
\end{lem}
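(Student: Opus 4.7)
The plan is to argue by contraposition, using Theorems~\ref{thm:fn_structure} and~\ref{thm:Xn_structure} together with the characterisation of tight tensor products in Lemma~\ref{lem:homology_tensor_tightness_characterisation}. So I suppose that $\fbar_n(M) \neq 0$ and aim to show both $X_n(M) = 0$ and $M_1 \cdots M_n = 0$.

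First I would extract from Theorem~\ref{thm:fn_structure} the key qualitative fact: when $\fbar_n(M) \neq 0$, the number of twisted matched pairs of $M$ satisfies $l \geq n-1-m \geq 1$, since $l+m \geq n-1$ and $m \leq n-2$. Thus \emph{at least one} matched pair of $M$ must be twisted. The point is that this observation already rules out both of the hypotheses of the lemma.

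For the $X_n(M)$ part, Theorem~\ref{thm:Xn_structure}(i) asserts that whenever $X_n(M) \neq 0$, every matched pair of $M$ is tight or critical, with no twisted pairs. This directly contradicts the presence of a twisted pair forced above, so $X_n(M) = 0$. For the product part, by Lemma~\ref{lem:homology_tensor_tightness_characterisation}(i), $M_1 \cdots M_n \neq 0$ is equivalent to $M$ being tight, and by Lemma~\ref{lem:homology_tensor_tightness_local}(i), this in turn forces every matched pair of $M$ to be tight -- again contradicting the existence of a twisted pair. Hence $M_1 \cdots M_n = 0$.

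There is no genuine obstacle here: once Theorems~\ref{thm:fn_structure} and~\ref{thm:Xn_structure} are in hand, the statement is essentially a comparison of the local pair-types forced on $M$ by the two hypotheses, and the twisted-pair existence from nonvanishing of $\fbar_n$ is incompatible with the ``tight or critical everywhere'' conclusion required by $X_n(M) \neq 0$ or $M_1 \cdots M_n \neq 0$. The only minor care needed is to check the arithmetic inequality $l \geq 1$ in the case $n = 2$, where $m \leq 0$ and $l \geq 1$, which holds as written.
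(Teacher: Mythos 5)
Your proof is correct and follows essentially the same route as the paper: both rest on Theorem \ref{thm:fn_structure} (nonvanishing of $\fbar_n$ forces a twisted pair, i.e.\ twisted H-data) and Theorem \ref{thm:Xn_structure} together with the tightness characterisation of $M_1\cdots M_n \neq 0$, the only difference being that the paper phrases the incompatibility at the level of H-data while you phrase it at the level of matched pairs, which is an equivalent formulation.
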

Contrapositively, if $\fbar_n (M) \neq 0$ then $X_n (M) = 0$ and $M_1 \cdots M_n = 0$.

\begin{proof}
Let $M$ have H-data $(h,s,t)$. 
By the comment after theorem \ref{thm:Xn_structure}, if $X_n (M) \neq 0$, then $(h,s,t)$ is tight. And if $M_1 \cdots M_n \neq 0$, then $M$ is tight, so again $(h,s,t)$ is tight. But by the comment after theorem \ref{thm:fn_structure}, if $(h,s,t)$ is tight then $\fbar_n (M) = 0$.


\end{proof}

Theorem \ref{thm:Xn_structure} places stringent necessary conditions on a tensor product $M_1 \otimes \cdots \otimes M_n$ to yield a nonzero result under $X_n$. However, we will see in section \ref{sec:nec_not_suff} that these conditions are not sufficient.


\subsection{Levels 1, 2 and 3 in general}
\label{sec:low_levels}

We now describe some properties of the maps $f_n, X_n$ at levels 1, 2 and 3. Unlike the discussion of section \ref{sec:low-level_ex}, we do not assume the $A_\infty$ structure derives from creation operators, as in section \ref{sec:construction} and theorem \ref{thm:main_thm} and corollary \ref{cor:main_cor}. We assume that Kadeishvili's construction is used, and the maps $U_n, X_n, f_n$ preserve H-data  and have appropriate Maslov gradings. We will additionally now assume that the $f_n$ are balanced.

Level 1 is again straightforward. By construction $X_1 = 0$, and $f_1$ is a cycle selection homomorphism. If $f_1$ is diagrammatically simple (definition \ref{def:diagrammatically_simple}) then it arises from a cycle choice function (lemma \ref{lem:cycle_selection_classification}). In general, for each tight $(h,s,t)$, $f_1$ maps $M_{h,s,t}$ to the sum of an odd number of tight diagrams representing $M_{h,s,t}$. 

Now consider level 2; let $M = M_1 \otimes M_2$. By construction $X_2$ is multiplication.  As for $f_2$, we have the following.

\begin{lem}
\label{lem:f2_description}
Suppose that $f_1$ and $f_2$ are balanced. Then $\fbar_2 (M) \neq 0$ if and only if $M$ is viable and has at least one twisted matched pair. Then $\fbar_2 (M)$ in standard form is the sum of an odd number of diagrams, each with a single crossing at a matched pair where $M$ is twisted, and elsewhere tight or twisted in agreement with $M$.
\end{lem}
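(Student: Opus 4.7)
The forward direction is an immediate specialization of theorem \ref{thm:fn_structure} at $n=2$: the inequalities $m \leq 0$ and $l+m \geq 1$ force $m = 0$ and $l \geq 1$, so $M$ has no critical matched pairs and at least one twisted matched pair. Viability follows from lemma \ref{lem:fn_Xn_H-data_viability}. The structural claim --- one crossing at a twisted pair of $M$, elsewhere tight or twisted in agreement with $M$ --- is exactly what theorem \ref{thm:fn_structure}(ii) gives with these parameters.

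For the backward direction, suppose $M$ is viable with at least one twisted pair. If any pair of $M$ is singular then $\A(h,s,t) = 0$ and $\fbar_2(M)$ vanishes vacuously, so we assume no singular pair (for $n=2$ no pair can be critical by lemma \ref{lem:it_takes_3}, so every pair is tight or twisted). Since $M$ has a twisted pair, lemma \ref{lem:homology_tensor_tightness_characterisation} gives $M_1 M_2 = 0$, hence $(f_1 X_2 - U_2)(M) = U_2(M) = f_1(M_1) f_1(M_2)$. The products $D_1 D_2$ appearing in this expansion are all nonzero (viability rules out idempotent mismatch and double coverage), so $U_2(M) \neq 0$; by the balanced assumption's contrapositive, $f_2(M) \neq 0$. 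The real work is to show $f_2(M) \notin \F$, i.e., $\fbar_2(M) \neq 0$, for \emph{every} balanced valid choice.

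The plan is to detect this via a parity invariant. Decompose the Maslov-$(\iota(M)+1)$ summand of $\A(h,s,t)$ as $T' \oplus T''$, where $T'$ is spanned by diagrams whose unique crossing lies at a twisted pair of $M$, and $T''$ by those whose unique crossing lies at an 11 doubly occupied tight pair. Then $\F \cap \A(h,s,t)^{\iota(M)+1} = T''$, so $\AAbar(h,s,t)^{\iota(M)+1} \cong T'$ via the projection $\pi_{T'}$. Let $T$ denote the crossingless subspace of $\A(h,s,t)^{\iota(M)}$, and define parity maps $\sigma_{T'}: T' \to \Z_2$ and $\sigma_T: T \to \Z_2$ counting mod $2$ the number of distinct basis diagrams in a standard-form expression. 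The identities $\partial c_p = w_p$ (one term) at a twisted pair and $\partial c_Q = g_q + g_{q'}$ (two terms) at a doubly occupied pair yield $\sigma_T \circ \partial|_{T'} = \sigma_{T'}$ and $\sigma_T \circ \partial|_{T''} = 0$. For any cycle $z = z' + z''$, $\partial z = 0$ then forces $\sigma_{T'}(z') = \sigma_T(\partial z'') = 0$.

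Fix the specific choice $V = A_P^* U_2(M)$ at a twisted pair $P$; since $A_P^*$ is a bijection on basis diagrams, $\sigma_{T'}(V) = \sigma_T(U_2(M))$. Writing $f_1(M_k)$ as a sum of $\alpha_k$ tight diagrams with $\alpha_k$ odd, each of the $\alpha_1 \alpha_2$ products $D_1 D_2$ is a single nonzero diagram, so $\sigma_T(U_2(M)) \equiv \alpha_1 \alpha_2 \equiv 1 \pmod 2$. Every other balanced $f_2(M)$ equals $V + z$ for some cycle $z$, and $\sigma_{T'}(\pi_{T'}(V+z)) = \sigma_{T'}(V) + \sigma_{T'}(z') = 1$, so $\pi_{T'}(V+z) \neq 0$ and $\fbar_2(M) \neq 0$. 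The same computation shows that $\fbar_2(M)$ in standard form is the sum of an odd number of diagrams. The main obstacle is identifying the correct parity invariant and verifying it is preserved under the cycle ambiguity; this hinges on the different numbers of terms resolving a crossing at a once-occupied versus a doubly-occupied pair, which is what makes $T'$ and $T''$ behave asymmetrically under $\partial$.
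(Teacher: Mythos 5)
Your proof is correct and, at its core, takes the same route as the paper's. The forward direction is the same specialization of theorem \ref{thm:fn_structure} (with viability from lemma \ref{lem:fn_Xn_H-data_viability}). For the converse, the paper's own argument is precisely the parity observation you formalize: in the relevant H-data and Maslov summand every diagram is crossed at exactly one pair; the differential of a diagram crossed at an 11 once occupied pair is a single diagram while that of a diagram crossed at a doubly occupied pair is a sum of two; and since $\partial f_2(M) = U_2(M) = f_1(M_1) f_1(M_2)$ is a sum of an odd number of diagrams, the standard form of $\fbar_2(M)$ must consist of an odd number of diagrams. Your $T' \oplus T''$ splitting, the parity functionals, and the comparison of an arbitrary solution with the particular solution $A_P^* U_2(M)$ are a cleaner, more explicit packaging of this (and deal with possible cancellations carefully), but not a different method.

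Two small points. First, the ``balanced assumption's contrapositive'' does not yield $f_2(M) \neq 0$: balancedness says that $(f_1 X_2 - U_2)(M) = 0$ implies $f_2(M) = 0$, and its contrapositive runs the other way. The nonvanishing of $f_2(M)$ follows directly from $\partial f_2(M) = U_2(M) \neq 0$, and is in any case subsumed by your $\F$-argument; balancedness is what you need in order to cite theorem \ref{thm:fn_structure} and lemma \ref{lem:fn_Xn_H-data_viability}. Second, the singular-pair case is not ``vacuous'' for the direction you are proving: it is possible for a viable $M = M_1 \otimes M_2$ to have both a twisted pair and a singular pair (already on the genus-one arc diagram, $[\rho_2] \otimes [\rho_1]$ is twisted at one pair and $00$ alternately occupied, hence singular, at the other), and then $\A(h,s,t) = 0$ forces $\fbar_2(M) = 0$ even though the stated hypothesis of the ``if'' direction holds. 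The paper's own converse tacitly excludes this case too, since it needs $f_1(M_1) f_1(M_2)$ to be a nonzero sum of diagrams tight or twisted at the same pairs as $M$; so your exclusion matches the intended reading (that $M$ is twisted, i.e. tight or twisted at every matched pair), but it should be recorded as an extra hypothesis rather than dismissed as vacuous.
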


\begin{proof}
If $\fbar_2 (M) \neq 0$, then $M$ is viable (lemma \ref{lem:fn_Xn_H-data_viability}), and theorem \ref{thm:fn_structure} says that $M$ has no critical matched pairs (impossible at level 2 in any case: lemma \ref{lem:it_takes_3}), and at least one twisted (i.e. all-on once occupied) matched pair. Moreover, $\fbar_2 (M)$ is represented by a sum of diagrams, each of which has precisely one crossing at a twisted matched pair. Also $M_1 M_2 = 0$ (from the twisted matched pair, or lemma \ref{lem:fn_Xn}).

Conversely, suppose $M = M_1 \otimes M_2$ is viable and has at least one twisted matched pair. Then $X_2 (M) = 0$, and $U_2 (M) = f_1 (M_1) f_1 (M_2)$ is the sum of an odd number of diagrams, each tight and twisted at the same matched pairs as $M$; the diagrams differ only by strand switching at all-on doubly occupied pairs (section \ref{sec:diagrams_representing_homology}). Since $\partial f_2 (M) = f_1 (M_1) f_1 (M_2)$ and $f_2$ respects H-data and has Maslov grading $1$, $f_2 (M)$ is a sum of diagrams, each of which has a crossing at precisely one matched pair, and at each other pair is tight or twisted in agreement with $M$. In standard form (definition \ref{defn:Abar}) $\fbar_2 (M)$ is then given by omitting diagrams with crossings at doubly occupied pairs, so that each crossing is at a matched pair where $M$ is twisted. The differential of each remaining diagram is a single diagram, but the differential of each omitted diagram is a sum of two diagrams, so $\fbar_2 (M)$ in standard form is the sum of an odd number of diagrams.
\end{proof}

When the $A_\infty$ structure is defined by creation operators, as in section \ref{sec:construction} and theorem \ref{thm:main_thm}, then any nonzero $f_2 (M)$ is a single diagram, as described in section \ref{sec:low-level_ex}.

We now consider $X_3$; the case is illustrative, showing the role of critical and sublime tensor products of diagrams. Let $M = M_1 \otimes M_2 \otimes M_3$ and suppose $X_3 (M) \neq 0$. Then $M$ is viable (lemma \ref{lem:fn_Xn_H-data_viability}) and by theorem \ref{thm:Xn_structure}, $M$ has precisely one critical matched pair; all other matched pairs are tight. By lemma \ref{lem:Xn_directly}, $X_3 (M)$ is represented by the sum of all crossingless diagrams in
\[
\fbar_1 (M_1) \fbar_2 (M_2 \otimes M_3) + \fbar_2 (M_1 \otimes M_2) \fbar_1 (M_3).
\]
Each diagram in an $\fbar_2 (M_i \otimes M_{i+1})$ term has a crossing at precisely one matched pair $P$, where $M_i \otimes M_{i+1}$ is twisted; since such $P$ cannot be tight in $M$ (lemma \ref{lem:local_tightness_homology_sub-tensor-product}, table \ref{tbl:local_homology_sub-tensor-product_tightness}), $P$ is the critical matched pair of $M$. Multiplying this diagram by the third $M_j$ must then produce a tight diagram. There are two cases for the tightness of the various tensor products:
\begin{itemize}
\item $M_1 \otimes M_2$ twisted; each diagram $D$ in $\fbar_2 (M_1 \otimes M_2)$ crossed; $M_3$ and each diagram $D'$ in $\fbar_1 (M_3)$ tight; each $D \otimes D'$ sublime; and $M_1 \otimes M_2 \otimes M_3$ critical.
\item $M_2 \otimes M_3$ twisted; each diagram $D'$ in $\fbar_2 (M_2 \otimes M_3)$ crossed; $M_1$ and each diagram $D$ in $\fbar_1 (M_1)$ tight; each $D \otimes D'$ sublime; and $M_1 \otimes M_2 \otimes M_3$ critical.

In this case, the situation at $P$ is shown in figure \ref{fig:X3_effect}.
\end{itemize}
We note that these two cases are mutually exclusive: only one of the terms $f_2 (M_1 \otimes M_2) f_1 (M_3)$ or $f_1 (M_1) f_2 (M_2 \otimes M_3)$ can be nonzero. For instance, in the first case $M_2 \otimes M_3$ is singular, and in the second case $M_1 \otimes M_2$ is singular.

Thus, to obtain a nonzero result for $X_3$, we start with a critical tensor product $M_1 \otimes M_2 \otimes M_3$; then a twisted sub-tensor-product (i.e. $M_2 \otimes M_3$ in figure \ref{fig:X3_effect}) combines via $f_2$ into a crossed diagram, yielding a sublime tensor product (i.e. $f_1 (M_1) \otimes f_2 (M_2 \otimes M_3)$ in figure \ref{fig:X3_effect}); and then these are multiplied to give a tight result. This process
is the process depicted in figure \ref{fig:A-infinity-reordering}; it occurs generally in Kadeishvili's construction, without any need for creation operators.

\begin{figure}
\begin{center}
\begin{tikzpicture}[scale=1.5]
\draw (-3, 0.75) node {$X_3 \Big( \begin{array}{ccccccc}
\off & p_- & \on & p'_+ & \off & p'_- & \on \end{array} \Big) = $};
\draw (-0.8,0.75) node {$X_3 \Bigg($};
\strandbackgroundshading
\tstrandbackgroundshading{1}
\tstrandbackgroundshading{2}
\beforevused
\tafterwused{1}
\tbeforewused{2}
\strandsetupn{p}{p'}
\tstrandsetup{1}
\tstrandsetup{2}
\leftoff
\righton
\trightoff{1}
\trighton{2}
\used
\tusea{1}
\tuseb{2}
\draw (3.4, 0.75) node {$\Bigg) =$};
\begin{scope}[xshift=4.2 cm]
\strandbackgroundshading
\beforewused
\afterwused
\beforevused
\strandsetupn{p}{p'}
\leftoff
\righton
\useab
\used
\end{scope}

\end{tikzpicture}
\caption{An example of $X_3 (M_1 \otimes M_2 \otimes M_3)$, where $M_1 \otimes M_2 \otimes M_3$ is critical, $M_1 \otimes M_2$ is singular, and $M_2 \otimes M_3$ is twisted. Moreover, $\fbar_2 (M_2 \otimes M_3)$ is crossed, and $\fbar_1 (M_1) \otimes \fbar_2 (M_2 \otimes M_3)$ is sublime.}
\label{fig:X3_effect}
\end{center}
\end{figure}

We can prove a converse, and give necessary and sufficient conditions for $X_3 \neq 0$.
\begin{prop}
\label{prop:X3_description}
Suppose $f_1$ and $f_2$ are balanced. Then $X_3 (M)$ is nonzero if and only if $M$ is viable, critical at precisely one matched pair $P$, and tight at all other matched pairs.
\end{prop}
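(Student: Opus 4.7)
Plan: The forward direction ($X_3(M) \neq 0 \Rightarrow M$ viable, one critical pair, all others tight) is immediate by combining lemma \ref{lem:fn_Xn_H-data_viability} (for viability) with theorem \ref{thm:Xn_structure} specialized to $n = 3$, which forces exactly $n-2=1$ critical matched pair and all others tight.

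For the converse, suppose $M = M_1 \otimes M_2 \otimes M_3$ is viable, critical at a unique pair $P$, and tight at every other matched pair. By lemma \ref{lem:it_takes_3} the local critical pattern at $P$ requires at least 3 tensor factors; since our total is exactly 3, the local tensor product $M_P$ must be one of the 3-factor critical patterns in table \ref{tbl:local_tensor_products}, namely pre-sesqui-occupied critical or post-sesqui-occupied critical. The two cases are symmetric (with the roles of $M_1 \otimes M_2$ and $M_2 \otimes M_3$ interchanged), so I treat the pre-sesqui case only.

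Reading off the pre-sesqui critical entry of table \ref{tbl:local_tensor_products}, one finds $(M_1 \otimes M_2)_P$ has $00$ alternately-occupied (singular) H-data while $(M_2 \otimes M_3)_P$ has $11$ once-occupied (twisted) H-data. Globally, lemma \ref{lem:homology_tensor_tightness_local} makes $M_1 \otimes M_2$ singular, while lemma \ref{lem:local_tightness_homology_sub-tensor-product} forces $M_2 \otimes M_3$ to be tight at all $P' \neq P$, hence twisted overall. Therefore $\bar{f}_2(M_1 \otimes M_2) = 0$, and by lemma \ref{lem:homology_tensor_tightness_characterisation} both $X_2(M_1 \otimes M_2) = M_1 M_2 = 0$ and $X_2(M_2 \otimes M_3) = M_2 M_3 = 0$. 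Equation (\ref{eqn:Un_def}) then collapses to $\bar{U}_3(M) = \bar{f}_1(M_1)\,\bar{f}_2(M_2 \otimes M_3)$, and by lemma \ref{lem:Xn_directly}, $X_3(M)$ is the homology class of the crossingless part of this product in standard form.

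Since the H-data $(h,s,t)$ of $M$ is tight (critical pairs have tight H-data), $\HH(h,s,t) \cong \Z_2 \langle M_{h,s,t}\rangle$, so $X_3(M) \in \{0, M_{h,s,t}\}$ and it suffices to rule out zero. I will verify this locally: at each pair $P' \neq P$, both factors have tight local representatives, and their product is a tight local diagram representing the nonzero local homology class $(M_1 M_2 M_3)_{P'}$. At $P$, $(\bar{f}_1(M_1))_P$ is the unique pre-one-half-occupied tight local diagram (a single strand covering $p'_-$), and $(\bar{f}_2(M_2 \otimes M_3))_P = c_p$ by lemma \ref{lem:f2_description} together with table \ref{tbl:local_diagrams}; a direct computation in the augmented strand algebra (as in section \ref{sec:augmented_diagrams}) shows their product equals the unique pre-sesqui-occupied tight local diagram, which is the sublimation pattern appearing in the sublime column of table \ref{tbl:local_tensor_products}.

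The main obstacle is the final parity bookkeeping: writing $\bar{f}_1(M_1) = \sum_j D_j$ (odd number of tight representatives of $M_1$) and $\bar{f}_2(M_2 \otimes M_3) = \sum_i E_i$ (odd by lemma \ref{lem:f2_description}), I must show the resulting sum of tight diagrams in standard form gives an odd $\Z_2$-multiple of $M_{h,s,t}$. At all-on doubly-occupied pairs $P'$ of $M$, products of the form $g_p \cdot g_{p'}$ vanish (strand-switched representatives with incompatible sections), so only matched pairings contribute. Exploiting the local structure forced by $M$ being tight 11 doubly-occupied at such $P'$ (typically one tensor factor absorbs all four steps while others are idempotents, so one of the $E_i$-local or $D_j$-local factors has a unique tight representative), the nonzero products exactly cover one choice of $(D_j, E_i)$-compatible configurations, yielding a single surviving tight diagram mod $2$, whose homology class is $M_{h,s,t} \neq 0$.
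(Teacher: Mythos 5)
Your overall route is the same as the paper's: forward direction from Lemma \ref{lem:fn_Xn_H-data_viability} and Theorem \ref{thm:Xn_structure}; converse by pinning $M_P$ down to the sesqui-occupied critical pattern (this should be cited to Lemma \ref{lem:homology_contractions}/Proposition \ref{prop:homology_tensor_product_classification} rather than Lemma \ref{lem:it_takes_3} alone, since one must know $M_P$ is an \emph{extension} of a table entry), noting $M_1\otimes M_2$ is singular so $\fbar_2(M_1\otimes M_2)=0$ and the corresponding terms of $U_3$ die, that $M_2\otimes M_3$ is twisted precisely at $P$ so each diagram of $\fbar_2(M_2\otimes M_3)$ is crossed at $P$ and tight elsewhere, and that at $P$ the product sublimates to the tight sesqui-occupied local diagram (your $p$ versus $p'$ labels are swapped relative to the table, but consistently so). Up to that point the argument is sound and matches the paper's proof.

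The final parity paragraph, however, is wrong as written. Products of the form $g_p\cdot g_{p'}$ never arise: each $D_j\otimes E_i$ is viable (its H-grading is $h(M)$ and the idempotents match), so no matched pair can be doubly occupied by both $D_j$ and $E_i$; there is therefore no cancellation of "mismatched" pairings to invoke, and the claim that exactly one compatible configuration survives, "yielding a single surviving tight diagram mod $2$", is false in general. In fact \emph{every} product $D_jE_i$ is nonzero: at each pair either one local factor is an idempotent, or both local factors are the unique tight diagrams with their local H-data (so $D_jE_i$ agrees there with the nonzero local product of representatives of $M_1,M_2,M_3$, $M$ being tight at that pair), or the pair is $P$, where you computed the sublimation explicitly. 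Moreover distinct $(j,i)$ give distinct diagrams, since the $D_j$ differ by strand switching at pairs doubly occupied by $M_1$ while the $E_i$ differ at pairs doubly occupied by $M_2\otimes M_3$, and these sets of pairs are disjoint; so the standard form of $\fbar_1(M_1)\fbar_2(M_2\otimes M_3)$ is generally a sum of several tight diagrams, not one. The correct finish — which is the paper's and simpler than what you attempt — is that all $(\text{odd})\times(\text{odd})$ products are crossingless diagrams with the tight H-data $(h,s,t)$, hence cycles representing $M_{h,s,t}$; an odd number of representatives of one class sums in homology to $M_{h,s,t}\neq 0$, and Lemma \ref{lem:Xn_directly} then gives $X_3(M)=M_{h,s,t}\neq 0$.
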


\begin{proof}
We only need prove that if the conditions on $M$ hold, then $X_3 (M) \neq 0$. By lemma \ref{lem:homology_contractions}, $M_P$ is an extension of a tensor product shown in the critical column of table \ref{tbl:local_tensor_products}; but $M_P$ has 3 factors, so $M_P$ is exactly the critical 01 pre-sesqui-occupied or 10 post-sesqui-occupied tensor product shown there. We consider the first case; the second case is similar.

So suppose $M_P$ is critical 01 pre-sesqui-occupied. We first observe that $\fbar_1 (M_1)$ is the sum of an odd number of tight diagrams, all representing $M_1$, and differing by strand switching at all-on doubly occupied pairs (lemma \ref{lem:selecting_occupied}).

By lemma \ref{lem:f2_description}, $\fbar_2 (M_2 \otimes M_3)$ is the sum of an odd number of diagrams, each with a single crossing at a pair where $M_2 \otimes M_3$ is twisted. Let $D$ be one of these diagrams; let its crossing be at the a pair $P'$. By lemma \ref{lem:local_tightness_homology_sub-tensor-product} and table \ref{tbl:local_homology_sub-tensor-product_tightness} then $P'$ cannot be tight in $M$, so $P'=P$. Thus $\fbar_2 (M_2, \otimes M_3)$ is represented by the sum of an odd number of diagrams, each of which has a crossing at $P$ and is tight elsewhere. These diagrams differ by strand switching at all-on doubly occupied pairs.

Then $\fbar_1 (M_1) \otimes \fbar_2 (M_2 \otimes M_3)$ is the sum of an odd number of sublime tensor products of diagrams, and $\fbar_1 (M_1) \fbar_2 (M_2 \otimes M_3)$ is the sum of an odd number of tight diagrams. Moreover, in this case $M_1 \otimes M_2$ is singular so $\fbar_2 (M_1 \otimes M_2) = 0$.



Thus $\fbar_1 (M_1) \fbar_2 (M_2 \otimes M_3) + \fbar_2 (M_1 \otimes M_2) \fbar_1 (M_3)$ is the sum of an odd number of tight diagrams, all related by strand switching. By lemma \ref{lem:Xn_directly} $X_3 (M)$ is the homology class of any one (or the sum of all) of these diagrams. Thus $X_3 (M) \neq 0$.
\end{proof}

Thus, if there are sufficiently few critical matched pairs in $M$, we may be able to guarantee that $X_n (M) \neq 0$. In section \ref{sec:nontrivial_higher_operations} we give some results in this direction, giving sufficient conditions for $X_n$ and $\fbar_n$ to be nonzero.

\section{Further examples and computations}
\label{sec:examples}

We now calculate some further examples and prove some further results, for low-level $A_\infty$ maps.

Recall the previous calculations along these lines. In section \ref{sec:low-level_ex} we discussed the operations $f_n$ and $X_n$ for $n \leq 2$, when $A_\infty$ operations are defined by cycle choice and creation choice functions, as in theorem \ref{thm:main_thm}. And in section \ref{sec:low_levels} we again discussed low-level maps, especially $\fbar_2$ and $X_3$, for $A_\infty$ operations obtained more generally using Kadeishvili's method.

In this section we consider $A_\infty$ operations defined by a pair ordering $\preceq$, as in corollary \ref{cor:main_cor}, and consider maps at level 3 and 4, using the shorthand notation of section \ref{sec:shorthand}.

As always, let $M = M_1 \otimes \cdots \otimes M_n$ denote a tensor product of nonzero homology classes of diagrams. We assume $M$ is viable, necessary for nonzero results (lemma \ref{lem:fn_Xn_H-data_viability}); let $M$ have H-data $(h,s,t)$. We work with $\fbar_n$ and $\Ubar_n$; this loses no generality for calculating $X_n$ (lemma \ref{lem:F_persistence}).

\subsection{Level 3}
\label{sec:level_3}

Consider the operation $\Ubar_3$, given by
\[
\Ubar_3 (M) = \fbar_1 (M_2) \fbar_2 (M_2 \otimes M_3) + \fbar_2 (M_1 \otimes M_2) \fbar_1 (M_3) + \fbar_2 (M_1 M_2 \otimes M_3) + \fbar_2 (M_1 \otimes M_2 M_3).
\]
As in lemma \ref{lem:Xn_directly} (and seen in section \ref{sec:low_levels}), the last two terms cannot contribute to $X_3$, since they yield crossed diagrams (lemma \ref{lem:fn_crossings}). But in general all four terms can be nonzero; indeed, some terms may be equal and cancel. 
Some examples are shown in figures \ref{fig:U3_eg1} and \ref{fig:U3_eg2}. These examples illustrate shorthand calculations alongside the standard notation. Each $\fbar_2$ is calculated using section \ref{sec:low-level_ex} and equation (\ref{eqn:nonzero_f2_description}).

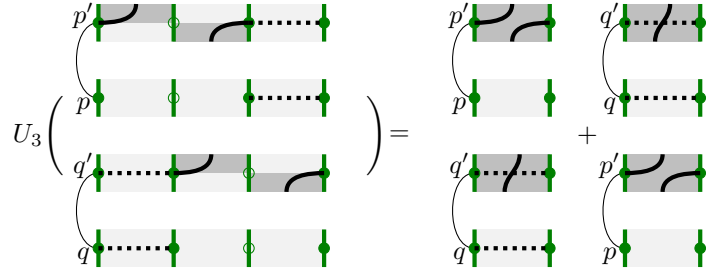
\begin{figure}
\begin{center}
\begin{align*}
U_3 &\Big( \begin{array}{ccccccc}
\on	&	p'_+	&	\off	&	p'_-	&	\on		&				&	\on \\
\on & 			&	\on		&	q'_+	&	\off	&	q'_-	&	\on
\end{array} \Big) \\
&= 
\fbar_1 \left( \begin{array}{ccc}
\on	&	p'_+	&	\off \\
\on &				& \on
\end{array} \right)
\fbar_2 \left( \begin{array}{ccccc}
\off	&	p'_-	&	\on		&				&	\on \\
\on		&	q'_+	&	\off	&	q'_-	&	\on
\end{array} \right)
+
\fbar_2 \left( \begin{array}{ccccc}
\on & p'_+ 	& \off 	& p'_- & 	\on \\
\on & 			& \on		& q'_+	&	\off
\end{array} \right)
\fbar_1 \left( \begin{array}{ccc}
\on		&				&	\on \\
\off	& q'_-	& \on
\end{array} \right) \\
&= 
\left( \begin{array}{ccc}
\on	&	p'_+	&	\off \\
\on &				& \on
\end{array} \right)
\left( \begin{array}{ccc}
\off	&	p'_-		& \on \\
\on		&	c_{q'}	&	\on
\end{array} \right)
+
\left( \begin{array}{ccc}
\on	&	c_{p'}	&	\on \\
\on	&	q'_+		&	\off
\end{array} \right)
\left( \begin{array}{ccc}
\on		&				&	\on \\
\off	&	q'_-	&	\on
\end{array} \right) \\
&= \left( \begin{array}{ccc}
\on		&	w_{p'}	&	\on \\
\on		&	c_{q'}	&	\on
\end{array} \right) 
+
\left( \begin{array}{ccc}
\on		&	c_{p'}	&	\on \\
\on		&	w_{q'}	&	\on
\end{array} \right)
\end{align*}
\end{center}
\begin{center}
\begin{tikzpicture}[scale=1]
\begin{scope}[xshift = -3 cm]
\draw (-0.8,-0.25) node {$U_3 \Bigg( $};
\strandbackgroundshading
\tstrandbackgroundshading{1}
\tstrandbackgroundshading{2}
\afterwused
\tbeforewused{1}
\strandsetupn{p}{p'}
\tstrandsetup{1}
\tstrandsetup{2}
\tstrandsetup{3}
\lefton
\rightoff
\trighton{1}
\trighton{2}
\usea
\tuseb{1}
\tdothorizontals{2}
\draw (3.7, -0.25) node {$\Bigg)$};
\end{scope}
\begin{scope}[xshift = -3 cm, yshift = -2 cm]
\strandbackgroundshading
\tstrandbackgroundshading{1}
\tstrandbackgroundshading{2}
\tafterwused{1}
\tbeforewused{2}
\strandsetupn{q}{q'}
\tstrandsetup{1}
\tstrandsetup{2}
\tstrandsetup{3}
\lefton
\righton
\trightoff{1}
\trighton{2}
\dothorizontals
\tusea{1}
\tuseb{2}
\end{scope}
\begin{scope}[xshift = 2 cm, yshift = 0 cm] 
\draw (-1, -0.25) node {$=$};
\strandbackgroundshading
\beforewused
\afterwused
\strandsetupn{p}{p'}
\tstrandsetup{1}
\lefton
\righton
\usea
\useb
\end{scope}
\begin{scope}[xshift = 2 cm, yshift = -2 cm] 
\strandbackgroundshading
\beforewused
\afterwused
\strandsetupn{q}{q'}
\tstrandsetup{1}
\lefton
\righton
\useab
\dothorizontals
\end{scope}
\begin{scope}[xshift = 4 cm, yshift = 0 cm] 
\draw (-0.5, -0.25) node {$+$};
\strandbackgroundshading
\beforewused
\afterwused
\strandsetupn{q}{q'}
\tstrandsetup{1}
\lefton
\righton
\useab
\dothorizontals
\end{scope}
\begin{scope}[xshift = 4 cm, yshift = -2 cm] 
\strandbackgroundshading
\beforewused
\afterwused
\strandsetupn{p}{p'}
\tstrandsetup{1}
\lefton
\righton
\usea
\useb
\end{scope}
\end{tikzpicture}
\caption{An example of $U_3 (M_1 \otimes M_2 \otimes M_3)$, where $M_1 \otimes M_2 \otimes M_3$ is twisted. In this case both $M_1 M_2$ and $M_2 M_3$ are zero, and the two terms $f_1 (M_1) f_2 (M_2 \otimes M_3)$ and $f_1 (M_1 \otimes M_2) f_1 (M_3)$ both contribute to $U_3$. The sub-tensor-products $M_1 \otimes M_2$ and $M_2 \otimes M_3$ are also twisted, and the two nonzero terms of $U_3$ respectively apply an $f_2$ to insert a crossing in each. The calculation is in shorthand. The result is also shown in standard notation.}
\label{fig:U3_eg1}
\end{center}
\end{figure}

\begin{figure}
\begin{center}
\begin{align*}
\Ubar_3 \left(
\begin{array}{ccccccc}
\on & p'_+ & \off & & \off & p'_- & \on
\end{array} \right) &=
\fbar_2 \left( \begin{array}{ccccc}
\on	&	p'_+	&	\off	&		&
\end{array} \right)
+
\fbar_2 \left( \begin{array}{ccccc}
\on	&	p'_+	&	\off	&		&
\end{array} \right)
= 0
\end{align*}

\begin{tikzpicture}[scale=1]
\draw (-0.8,0.75) node {$U_3 \Bigg( $};
\strandbackgroundshading
\tstrandbackgroundshading{1}
\tstrandbackgroundshading{2}
\afterwused
\tbeforewused{2}
\strandsetupn{p}{p'}
\tstrandsetup{1}
\tstrandsetup{2}
\tstrandsetup{3}
\lefton
\rightoff
\trightoff{1}
\trighton{2}
\usea
\tuseb{2}
\draw (3.5, 0.75) node {$\Bigg)$};
\begin{scope}[xshift = 5.2 cm]
\draw (-1, 0.75) node {$= \fbar_2 \Bigg( $};
\strandbackgroundshading
\tstrandbackgroundshading{1}
\afterwused
\tbeforewused{1}
\strandsetupn{p}{p'}
\tstrandsetup{1}
\tstrandsetup{2}
\lefton
\rightoff
\trighton{1}
\usea
\tuseb{1}
\draw (2.5,0.75) node {$\Bigg)$};
\end{scope}
\begin{scope}[xshift= 9.3 cm, yshift= 0 cm]
\draw (-1,0.75) node {$+ \fbar_2 \Bigg( $};
\strandbackgroundshading
\tstrandbackgroundshading{1}
\afterwused
\tbeforewused{1}
\strandsetupn{p}{p'}
\tstrandsetup{1}
\tstrandsetup{2}
\lefton
\rightoff
\trighton{1}
\usea
\tuseb{1}
\draw (2.7, 0.75) node {$\Bigg) = 0$};
\end{scope}
\end{tikzpicture}
\caption{An example of $U_3 (M_1 \otimes M_2 \otimes M_3)$. In this case both $f_2 (M_1 \otimes M_2)$ and $f_2 (M_2 \otimes M_3)$ are zero. The two terms $f_2 (M_1 M_2 \otimes M_3)$ and $f_2 (M_1 \otimes M_2 M_3)$ are both nonzero, but cancel out to give zero.}
\label{fig:U3_eg2}
\end{center}
\end{figure}

Continuing to $\fbar_3$, we know that when $\fbar_3 (M) \neq 0$ then $M$ is viable (lemma \ref{lem:fn_Xn_H-data_viability}), $X_3 (M) = 0$ and $M_1 M_2 M_3 = 0$ (lemma \ref{lem:fn_Xn}). Moreover, $M$ has no singular matched pairs, $l$ twisted matched pairs, and $m$ critical matched pairs, where $m \leq 1$ and $l+m \geq 2$ (theorem \ref{thm:fn_structure}). It follows that $l \geq 1$, so $(h,s,t)$ is twisted, so by theorem \ref{thm:main_thm}(iv) then $\fbar_3 (M) = \Abar^*_{\mathcal{CR}^\preceq} \circ \Ubar_3 (M)$, where $A^*_{\mathcal{CR}^\preceq}$ is the creation operator of the creation choice function $\mathcal{CR}^\preceq$ (definition \ref{def:global_creation_operator}) of the pair ordering $\preceq$ (definition \ref{def:choice_functions_from_ordering}). 
\begin{equation}
\label{eqn:f3_4_terms}
\fbar_3 (M)
= \Abar^*_{\mathcal{CR}^\preceq} \Big( \fbar_1 (M_1) \fbar_2 (M_2 \otimes M_3) + \fbar_2 (M_1 \otimes M_2) \fbar_1 (M_3) + \fbar_2 (M_1 M_2 \otimes M_3) + \fbar_2 (M_1 \otimes M_2 M_3) \Big)
\end{equation}

Each of the four terms in equation (\ref{eqn:f3_4_terms}) consists of at most one diagram in standard form (definition \ref{defn:Abar}). Since diagrams in $\Ubar_3 (M)$ may have a crossing, a diagram in $\fbar_3 (M)$ may have up to two crossings.

Now $M$ has $m \leq 1$ critical matched pairs. If $m=0$ then all pairs are tight or twisted, and any diagram in $\fbar_3$ above has precisely two crossings. If $m=1$, then the critical pair $P$ must eventually have a tight local diagram to yield a nonzero result, so the diagram at $P$ becomes crossed by an $\fbar_2$ and then sublimates; hence any diagram in $\fbar_3$ has one crossing.


We find that, in order to obtain a nonzero result for $\fbar_3 (M)$, the local diagrams at twisted or critical matched pairs must be ``distributed" across $M_1, M_2, M_3$. For twisted pairs we make this precise in the following statement.

Recall (section \ref{sec:tensor_product_homology}) that if $M$ is twisted at a matched pair $P = \{p,p'\}$, then one place $p$ or $p'$ is occupied, and accordingly $M$ is twisted at $p$ or $p'$. If $M$ is twisted at $p$, then the two steps $p_+, p_-$ around $p$ are covered by some $M_i$ and $M_j$ respectively, with $i<j$, as in the twisted column of table \ref{tbl:local_tensor_products}.
\begin{lem}
\label{lem:f3_rows_same}
Consider an $A_\infty$ structure defined by a pair ordering $\preceq$. 

Suppose $M = M_1 \otimes M_2 \otimes M_3$ is viable, twisted at precisely two places $p,q$ of matched pairs $P = \{p,p'\}$ and $Q = \{q,q'\}$, with all other pairs tight. Moreover, suppose that $p_+, q_+$ are both covered by the same $M_i$, and $p_-, q_-$ are both covered by the same $M_j$.

Then $X_3 (M)$, $\Ubar_3 (M)$ and $\fbar_3 (M)$ are all zero.
\end{lem}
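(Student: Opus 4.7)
The plan is to dispose of $X_3(M)$, then reduce $\fbar_3(M)$ to $\Ubar_3(M)$, and finally do a case analysis to show $\Ubar_3(M) = 0$. Theorem \ref{thm:Xn_structure} immediately gives $X_3(M) = 0$: a nonzero $X_3$ would require exactly one critical pair with all others tight, whereas $M$ has two twisted pairs and no critical pair. Since the H-data of $M$ is twisted (both $P$ and $Q$ are all-on once occupied), theorem \ref{thm:main_thm}(iv) gives $f_3(M) = A^*_{\mathcal{CR}^\preceq}(U_3(M))$; since the creation operator preserves $\F$ (adding a crossing at a once occupied pair cannot create a crossed doubly occupied pair), the whole lemma reduces to showing $\Ubar_3(M) = 0$.

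To prove $\Ubar_3(M) = 0$, I would run the three sub-cases $(i,j) \in \{(1,2), (1,3), (2,3)\}$. In each case the remaining factor $M_k$ with $k \notin \{i,j\}$ is idempotent at both $P$ and $Q$. A direct local tightness check at $P$, $Q$ and at other matched pairs (using lemmas \ref{lem:local_tightness_homology_sub-tensor-product}, \ref{lem:tightness_homology_sub-tensor-product}, and the characterization in lemma \ref{lem:f2_description}) shows that exactly two of the four terms in
\[
\Ubar_3(M) = \fbar_1(M_1)\fbar_2(M_2 \otimes M_3) + \fbar_2(M_1 \otimes M_2)\fbar_1(M_3) + \fbar_2(M_1 M_2 \otimes M_3) + \fbar_2(M_1 \otimes M_2 M_3)
\]
survive in each case: the other two vanish either because the $\fbar_2$ argument is tight or because the required product $M_a M_b$ is zero. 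Each surviving term has the form $\Abar^*_R$ of a product of $\fbar_1$'s, with the common creation pair $R = \min_\preceq (P,Q)$, since $\Abar^*_R$ commutes with left- or right-multiplication by $\fbar_1(M_k)$ when $\fbar_1(M_k)$ is idempotent at $R$.

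The cancellation of the two surviving terms is the crux. Writing the sum as $\Abar^*_R(\alpha + \beta)$, the difference $\alpha - \beta$ arises from comparing $\fbar_1(M_a) \fbar_1(M_b)$ with $\fbar_1(M_a M_b)$ in one of the tensor slots; both are tight representatives of $M_a M_b$, so by lemma \ref{lem:even_num_reps} their sum lies in $\partial \F$. Using the Leibniz rule together with the fact that $\fbar_1(M_k)$ is a cycle, we obtain $\alpha + \beta = \partial \tau$ with $\tau \in \F$. Applying the Heisenberg identity $A^*_R \partial + \partial A^*_R = 1$ in $\A$ gives $A^*_R(\partial \tau) = \tau - \partial A^*_R(\tau)$; projecting to $\AAbar$ and using $\overline{\tau} = 0$, we get $\Ubar_3(M) = \overline{\partial A^*_R(\tau)}$. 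The remaining step is to show this residual lies in $\F$, completing the proof.

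The main obstacle is this final step: $\partial$ does not preserve $\F$ in general (for instance $\partial c_P = g_p + g_{p'} \notin \F$), so establishing $\partial A^*_R(\tau) \in \F$ requires a careful local analysis. The key is that the hypothesis $p_+, q_+$ are covered by a common $M_i$ and $p_-, q_-$ by a common $M_j$ makes the two surviving terms symmetric in a precise sense: the places where $\fbar_1(M_a)\fbar_1(M_b)$ and $\fbar_1(M_a M_b)$ can disagree (namely at all-on doubly occupied pairs of $M_a M_b$) interact with the aligned twisted structure at $P, Q$ in a way that the offending boundary contributions from resolving crossed doubly occupied pairs in $A^*_R(\tau)$ pair up and cancel modulo $\F$, or are killed by place-level idempotent mismatches. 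Verifying this cancellation pair-by-pair in each of the three sub-cases is the main technical work of the proof.
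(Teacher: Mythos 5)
You set the problem up correctly: $X_3(M)=0$ follows from Theorem \ref{thm:Xn_structure}, the reduction $\fbar_3(M)=\Abar^*_{\mathcal{CR}^\preceq}\circ\Ubar_3(M)$ is right, and your identification of exactly two surviving terms of $\Ubar_3(M)$ in each of the three sub-cases $(i,j)=(1,2),(1,3),(2,3)$ agrees with the paper (the other two terms die by Lemma \ref{lem:fn_Xn} or because the relevant product of homology classes is zero). But from there your argument diverges from the paper's and is not finished. The paper proves $\Ubar_3(M)=0$ by showing the two surviving terms are \emph{equal as single standard-form diagrams}: in each sub-case the alignment hypothesis (that $p_+,q_+$ lie in the same factor and $p_-,q_-$ in the same factor) forces each $\fbar_2$-input to be twisted at exactly $P$ and $Q$, so the creation operator inserts its crossing at the same pair $R=\min_\preceq(P,Q)$ in both terms; both terms then have the H-data of $M$, are crossed at $R$, twisted at the other pair, tight elsewhere, and make the same choices at all-on doubly occupied pairs, hence coincide and cancel over $\Z_2$. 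Your route instead writes the sum of the two pre-creation inputs as $\partial\tau$ with $\tau\in\F$ via Lemma \ref{lem:even_num_reps}, applies the Heisenberg relation, and reduces to showing $\overline{\partial A^*_R(\tau)}=0$ --- a step you explicitly leave open. That step is not a technicality; it is the entire content of the lemma.

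Moreover, the target you propose for that last step ($\partial A^*_R(\tau)\in\F$) is the wrong shape of statement. As you note, $\partial$ does not preserve $\F$, and the diagrams in $\partial A^*_R(\tau)$ obtained by resolving the doubly occupied crossing of $\tau$ still carry the once occupied crossing at $R$, so they are typically \emph{not} in $\F$ term by term; what is actually needed is that they cancel in pairs, which amounts to the diagram-level identity between the two surviving terms that the paper establishes directly. In particular, the alignment hypothesis does real work early in the paper's argument (it pins down which terms survive and guarantees a common creation pair and agreement at $P$ and $Q$), whereas your sketch invokes it only vaguely at the end to hope for a cancellation. To complete your proof you should abandon the $\partial\F$/Heisenberg detour and instead compare the two surviving terms directly as diagrams in $\AAbar$, pair by pair, which is essentially the paper's proof.
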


We can denote this result for $\fbar_3$ by
\[
\fbar_3 \left(
\begin{array}{cccccc}
\on & q_+ & \off & q_- & \on  & \on \\
\on & p_+ & \off & p_- & \on  & \on
\end{array} \right) =  
\fbar_3 \left(
\begin{array}{cccccc}
\on & q_+ & \off & \off & q_- & \on  \\
\on & p_+ & \off & \off & p_- & \on
\end{array} \right) =  
\fbar_3 \left(
\begin{array}{cccccc}
\on &  \on & q_+ & \off & q_- & \on \\
\on &  \on & p_+ & \off & p_- & \on
\end{array} \right) = 0.
\]

\begin{proof}
There are three possibilities for $i$ and $j$: $(i,j) = (1,2), (1,3)$ or $(2,3)$. In all cases $X_3 (M) = 0$ as there are no critical matched pairs (theorem \ref{thm:Xn_structure}). Suppose without loss of generality that $P \preceq Q$, so creation operators introduce crossings at $P$ in preference to $Q$.

First suppose $(i,j) = (1,2)$. Then $M_1 M_2 = 0$ (being twisted) and $M_2 M_3 \neq 0$ (being tight), so $\fbar_2 (M_2 \otimes M_3) = 0$ (lemma \ref{lem:fn_Xn}). Thus
\[
\Ubar_3 (M) = \fbar_2 (M_1 \otimes M_2) \fbar_1 (M_3) + \fbar_2 (M_1 \otimes M_2 M_3) 
\]
Now $\fbar_2 (M_1 \otimes M_2) = \Abar_{\mathcal{CR}^{\preceq}} \left( \fbar_1 (M_1) \fbar_1 (M_2) \right)$ is (in standard form) the diagram obtained from $\fbar_1 (M_1) \fbar_1 (M_2)$  by inserting a crossing at $P$. Similarly $\fbar_2 (M_1 \otimes M_2 M_3)$ (in standard form) is obtained from $\fbar_1 (M_1) \fbar_1 (M_2 M_3)$ by inserting a crossing at $P$. Since the diagrams $\fbar_2 (M_1 \otimes M_2 M_3)$ and $\fbar_2 (M_1 \otimes M_2) \fbar_1 (M_3)$ have the same H-data, are crossed at $P$, twisted at $Q$, elsewhere tight, and have the same strands at all-on doubly occupied pairs (chosen by the same cycle selection function of $\preceq$), they are equal. Thus $\Ubar_3 (M) = 0$ and $\fbar_3 (M) = \Abar_{\mathcal{CR}^\preceq} \circ \Ubar_3 (M) = 0$.

The case $(i,j) = (2,3)$ is similar. 

Finally suppose $(i,j) = (1,3)$. Then $M_1 M_2$ and $M_2 M_3$ are nonzero, so $\fbar_2 (M_1 \otimes M_2) = \fbar_2 (M_2 \otimes M_3) = 0$ (lemma \ref{lem:fn_Xn}). The remaining two terms of $\Ubar_3 (M)$ are $\fbar_2 (M_1 M_2 \otimes M_3)$ and $\fbar_2 (M_1 \otimes M_2 M_3)$, both of which are crossed at $p$, twisted at $q$, and equal elsewhere, so again $\Ubar_3$ and $\fbar_3$ are zero.
\end{proof}

The following lemma, together with lemma \ref{lem:f3_rows_same} and the general result of theorem \ref{thm:fn_structure}, completely calculates $\fbar_3 (M)$ when $M$ has two non-tight matched pairs.
\begin{lem}
\label{lem:f3_computations}
Consider an $A_\infty$ structure defined by a pair ordering $\preceq$. 

Suppose $M = M_1 \otimes M_2 \otimes M_3$ is viable, has two matched pairs $P = \{p,p'\} \prec Q = \{q,q'\}$ which are twisted or critical, and all other matched pairs tight, in one of the arrangements depicted below.

Then $\fbar_3 (M)$ is zero or nonzero as shown. If nonzero, it is given by a single diagram in $\AAbar$, with the H-data of $M$, which is crossed at each twisted matched pair of $M$, and elsewhere tight.
\end{lem}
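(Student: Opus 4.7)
The plan is to handle each depicted arrangement by directly expanding
\[
\fbar_3(M) = \Abar^*_{\mathcal{CR}^\preceq}\Big(\fbar_1(M_1)\fbar_2(M_2 \otimes M_3) + \fbar_2(M_1 \otimes M_2)\fbar_1(M_3) + \fbar_2(M_1 M_2 \otimes M_3) + \fbar_2(M_1 \otimes M_2 M_3)\Big)
\]
from equation (\ref{eqn:f3_4_terms}), then computing each term via lemma \ref{lem:f2_description} and equation (\ref{eqn:nonzero_f2_description}). Since $M$ has at most one critical pair (theorem \ref{thm:fn_structure}), the H-data of $M$ is twisted, so the rewriting $\fbar_3 = \Abar^*_{\mathcal{CR}^\preceq} \circ \Ubar_3$ is justified and the creation operator will insert crossings at the $\preceq$-minimal all-on once occupied pair, which is $P$.

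First I would catalogue, for each arrangement, the tightness of $M_1 \otimes M_2$ and $M_2 \otimes M_3$ at $P$ and $Q$, using lemma \ref{lem:local_tightness_homology_sub-tensor-product} and table \ref{tbl:local_homology_sub-tensor-product_tightness}. This determines which of $M_1 M_2$ and $M_2 M_3$ vanish (so the last two terms of $\Ubar_3$ can appear, by lemma \ref{lem:fn_Xn}) and which $\fbar_2$ on a sub-tensor-product is nonzero (again requiring the product to vanish). Arrangements with $P$ or $Q$ critical must have a three-factor structure in the critical row/column of table \ref{tbl:local_tensor_products}, which pins down exactly which $M_i$ carries which local piece; this is the key bookkeeping step.

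Second, for each nonzero $\fbar_2$ term, I would apply equation (\ref{eqn:nonzero_f2_description}): the result is a single diagram in $\AAbar$ with one crossing, placed at the $\preceq$-minimal pair of the input that is twisted (i.e.\ all-on once occupied). Multiplying by the remaining $\fbar_1$ factor performs sublimation at any critical pair (reducing it to a tight local diagram there) while preserving crossings at twisted pairs. The $\fbar_2(M_1 M_2 \otimes M_3)$ and $\fbar_2(M_1 \otimes M_2 M_3)$ terms are analysed the same way, after first computing the products using table \ref{tbl:local_tensor_products}.

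Third, I would add the contributing terms to obtain $\Ubar_3(M)$ in standard form. In arrangements where two of the four terms produce diagrams with the same H-data, crossed at $P$ and twisted (or sublimated) at $Q$ in the same way, they cancel, giving $\Ubar_3(M)=0$ and hence $\fbar_3(M)=0$; this replicates the mechanism of lemma \ref{lem:f3_rows_same}. In the remaining arrangements, exactly one term survives, giving a single twisted diagram, and then $\Abar^*_{\mathcal{CR}^\preceq}$ inserts a crossing at $P$, producing a single diagram crossed at both $P$ and $Q$ (and tight elsewhere), which matches the claimed answer.

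The main obstacle is the bookkeeping: there are several arrangements of two twisted/critical pairs across three tensor factors, and for each one must verify the tightness of every sub-tensor-product, whether products vanish, and whether any two contributing $\fbar_2$ terms coincide. The cancellations (as in lemma \ref{lem:f3_rows_same}) are subtle because two different terms in $\Ubar_3$ can produce diagrams with identical H-data, Maslov grading, crossing location, and choice of strand at all-on doubly occupied pairs (since the same cycle choice function governs both), so care is required to confirm equality on the nose, not just up to $\F$.
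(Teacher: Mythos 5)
Your overall route is the same as the paper's: expand $\Ubar_3(M)$ via the four-term formula of equation (\ref{eqn:f3_4_terms}), evaluate each term using the explicit description of $\fbar_2$ and $\fbar_1$ (lemma \ref{lem:f2_description}, equation (\ref{eqn:nonzero_f2_description})), track which sub-tensor-products are twisted/singular so as to decide which terms vanish, detect cancellations as in lemma \ref{lem:f3_rows_same}, and finally apply $\Abar^*_{\mathcal{CR}^\preceq}$. That is exactly how the paper argues, case by case over the arrangements.

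However, your description of the endgame contains errors that would derail the bookkeeping if followed literally. First, the creation operator acts at the $\preceq$-minimal \emph{all-on once occupied} pair of the H-data of $M$, i.e.\ the minimal pair at which $M$ is \emph{twisted}; this is $P$ only when $P$ is twisted. In the eight nonzero arrangements where $P$ is critical and $Q$ is twisted, the crossing is inserted at $Q$, and the resulting diagram has a \emph{single} crossing (at $Q$), with $P$ becoming tight --- your claim that the output is ``crossed at both $P$ and $Q$'' contradicts the lemma's own conclusion (crossed only at twisted pairs) for these cases. Second, your dichotomy ``either two terms cancel, or exactly one term survives'' misses the mechanism at work in the nonzero arrangements where both $P$ and $Q$ are twisted (the last entries of the first two rows, and the middle entries of the third row): there \emph{two} terms of $\Ubar_3(M)$ survive and do not cancel, one crossed at $P$ and twisted at $Q$, the other crossed at $Q$ and twisted at $P$; the nonzero answer arises because $\Abar^*_{P}$ annihilates the diagram already crossed at $P$ (since $A^*c_p=0$) and inserts a crossing into the other, yielding a single diagram crossed at both pairs. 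This is precisely the case the paper's proof singles out, and it is also what distinguishes these arrangements from the zero cases of lemma \ref{lem:f3_rows_same}, where the two surviving terms are literally equal and cancel before the creation operator is applied. With these two corrections, your plan coincides with the paper's proof; without them, the predicted outputs are wrong in more than half of the nonzero arrangements.
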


Nonzero:

\begin{gather*}
\left( \begin{array}{c|c|c}
q'_- & q_+ & q_- \\
p_+ & p_- &
\end{array} \right)
\
\left( \begin{array}{c|c|c}
 & q_+ & q_- \\
p_+ & p_- & p'_+
\end{array} \right)
\
\left( \begin{array}{c|c|c}
q_+ & & q_- \\
p'_- & p_+ & p_-
\end{array} \right)
\
\left( \begin{array}{c|c|c}
q_+ & q_- & q'_+ \\
p_+ & & p_-
\end{array} \right)
\
\left( \begin{array}{c|c|c}
q_+ & q_- & \\
 & p_+ & p_-
\end{array} \right)
\\
\left( \begin{array}{c|c|c}
q_+ & q_- &  \\
p'_- & p_+ & p_-
\end{array} \right)
\
\left( \begin{array}{c|c|c}
q_+ & q_- & q'_+ \\
 & p_+ & p_-
\end{array} \right)
\
\left( \begin{array}{c|c|c}
q'_- & q_+ & q_- \\
p_+ & & p_-
\end{array} \right)
\
\left( \begin{array}{c|c|c}
q_+ & & q_- \\
p_+ & p_- & p'_+
\end{array} \right)
\
\left( \begin{array}{c|c|c}
 & q_+ & q_- \\
p_+ & p_- & 
\end{array} \right)
\\
\left( \begin{array}{c|c|c}
q_+ & q_- & \\
p_+ & p_- & p'_+
\end{array} \right)
\
\left( \begin{array}{c|c|c}
q_+ & q_- & \\
p_+ & & p_-
\end{array} \right)
\
\left( \begin{array}{c|c|c}
 & q_+ & q_- \\
p_+ & & p_-
\end{array} \right)
\
\left( \begin{array}{c|c|c}
 & q_+ & q_- \\
p'_- & p_+ & p_-
\end{array} \right)
\end{gather*} \\

Zero:

\begin{gather*}
\left( \begin{array}{c|c|c}
q_+ & q_- & q'_+ \\
p_+ & p_- & 
\end{array} \right)
\
\left( \begin{array}{c|c|c}
q_+ & & q_- \\
p_+ & p_- & 
\end{array} \right)
\
\left( \begin{array}{c|c|c}
q_+ & & q_- \\
 & p_+ & p_-
\end{array} \right)
\
\left( \begin{array}{c|c|c}
q'_- & q_+ & q_- \\
 & p_+ & p_-
\end{array} \right)
\end{gather*}

(Circles denoting idempotents are omitted; they can be inferred since each nontrivial local diagram covers at most one step.)

The conclusion that, if $\fbar_3 (M)$ is nonzero, then it is as claimed, follows purely from grading considerations: $\fbar_3$ has Maslov grading $2$, but the Maslov index can only be increased at non-tight pairs. There are only two non-tight matched pairs, so the Maslov index must be increased by $1$ at each. A twisted pair must become crossed, and a critical pair must become tight.
\begin{proof}
In the cases depicted in the first four diagrams in the first two rows above, we have a critical and a twisted pair, and $M_1 M_2 = M_2 M_3 = 0$. In each of these cases one of $M_1 \otimes M_2$ or $M_2 \otimes M_3$ is singular, and the other is twisted. Then precisely one of $\fbar_2 (M_1 \otimes M_2)$ or $\fbar_2 (M_2 \otimes M_3)$ is nonzero, and $\fbar_2$ introduces a crossing at the twisted matched pair. Then the multiplication $\fbar_2 (M_1 \otimes M_2) \fbar_1 (M_3)$ or $\fbar_1 (M_1) \fbar_2 (M_2 \otimes M_3)$ is tight at one pair and twisted at the other; and in fact this diagram is $\Ubar_3 (M)$. Applying a creation operator, we obtain $\fbar_3 (M)$ as a single diagram with a single crossing.

In the cases depicted at the end of the first and second rows, again $M_1 M_2 = M_2 M_3 = 0$, and both $\fbar_2 (M_1 \otimes M_2)$ and $\fbar_2 (M_2 \otimes M_3)$ are nonzero, each with a single crossed pair. So $\fbar_2 (M_1 \otimes M_2) \fbar_1 (M_3)$ and $\fbar_1 (M_1) \fbar_2 (M_2 \otimes M_3)$ are both nonzero, one crossed at $p$ and twisted at $q$, the other crossed at $q$ and twisted at $p$. The creation operator $\Abar_{\mathcal{CR}^\preceq}$ sends the former to zero, and introduces a crossing at $p$ into the latter. Thus $\fbar_3 (M)$ is given by a single diagram, crossed at both $p$ and $q$, as desired.

The other cases can be calculated by similar reasoning.
\end{proof}

\subsection{Level 4}
\label{sec:nec_not_suff}

We now compute two examples at level 4, illustrating some interesting phenomena. As usual, let $M = M_1 \otimes \cdots \otimes M_n$ denote a tensor product of nonzero homology classes of diagrams, with H-data $(h,s,t)$.


Our first example shows that the necessary conditions for $X_n$ to be nonzero in theorem \ref{thm:Xn_structure} are not sufficient. It is an $M$ with precisely $2$ critical matched pairs, and all other matched pairs tight --- and in fact one can find a tight diagram with the same H-data --- but with $X_4 (M) = 0$. 

Letting $P = \{p,p'\}$, $Q = \{q,q'\}$ be matched pairs with $P \prec Q$ as usual, we can compute
\[
X_4 \left( \begin{array}{cccccccc}
\on & q_+ & \off & q_- & \on & q'_+ & \off & \off \\
\on & p_+ & \off & p_- & \on & p'_+ & \off & \off
\end{array} \right) = 0,
\]
since in this case $\fbar_3 (M_1 \otimes M_2 \otimes M_3) = 0$ (theorem \ref{thm:fn_structure}; there are two critical pairs), $\fbar_3 (M_2 \otimes M_3 \otimes M_4) = 0$ (since $M_2 \otimes M_3 \otimes M_4$ is singular), and $\fbar_2 (M_3 \otimes M_4) = 0$ (lemma \ref{lem:fn_Xn}; as $M_3 M_4 \neq 0$).

One can also compute that the following are zero:
\begin{gather*}
X_4 \left( \begin{array}{ccccccccc}
\off & q'_- & \on & q_+ & \off & q_- & \on & q'_+ & \off \\
\on  &      & \on & p_+ & \off & p_- & \on & p'_+ & \off
\end{array} \right),
\quad 
X_4 \left( \begin{array}{ccccccccc}
\on & q_+ & \off & q_- & \on & q'_+ & \off &      & \off \\
\on & p_+ & \off & p_- & \on &      & \on  & p'_+ & \off
\end{array} \right), \\
X_4 \left( \begin{array}{ccccccccc}
\on & q_+ & \off &     & \off & q_- & \on & q'_+ & \off \\
\on & p_+ & \off & p_- & \on  &     & \on & p'_+ & \off
\end{array} \right), \quad
X_4 \left( \begin{array}{ccccccccc}
\on & q_+ & \off & q_- & \on & q'_+ & \off &      & \off \\
\on & p_+ & \off & p_- & \on & p'_+ & \off & p'_- & \on
\end{array} \right).
\end{gather*}

Our second example shows that $\fbar_n$ is not diagrammatically simple (as might appear from small cases). We have four matched pairs $P \prec Q \prec R \prec S$, with $P = \{p,p'\}$, $Q = \{q,q'\}$, $R = \{r,r'\}$, $S = \{s,s'\}$, and we claim that
\[
\fbar_4 \left( \begin{array}{ccccccccc}
\on & s_+ & \off	& s_-	& \on 	&			&	\on		&   	& \on \\
\on	& r_+	&	\off	&			&	\off	& r_-	&	\on		&			& \on \\
\on	&			&	\on		&			&	\on		& q_+	&	\off	&	q_-	&	\on \\
\on	&			& \on		& p_+	&	\off	&			&	\off	& p_- & \on \\
\end{array} \right) = 
\left( \begin{array}{c}
c_s \\
w_r \\
c_q \\
c_p
\end{array} \right)
+
\left( \begin{array}{c}
c_s \\
c_r \\
w_q \\
c_p \end{array} \right).
\]
Observe that, as there are no critical pairs, any $X_k$ term with $k>2$ is zero (theorem \ref{thm:Xn_structure}). Moreover, $M_1 M_2 = M_3 M_4 = 0$. Thus $\fbar_4 (M) = \Abar^*_P \circ \Ubar_4 (M)$, and 
\begin{align*}
\Ubar_4 (M) &= \fbar_1 (M_1) \fbar_3 (M_2 \otimes M_3 \otimes M_4) + \fbar_2 (M_1 \otimes M_2) \fbar_2 (M_3 \otimes M_4) \\
&\quad + \fbar_3 (M_1 \otimes M_2 \otimes M_3) \fbar_1 (M_4) + \fbar_3 (M_1 \otimes M_2 M_3 \otimes M_4).
\end{align*}
Now $M_2 \otimes M_3 \otimes M_4$ is twisted at $P$ and $Q$, and tight at $R$ and $S$; $\fbar_3 (M_2 \otimes M_3 \otimes M_4)$ is then given by lemma \ref{lem:f3_computations} and (in standard form) is a nonzero diagram. The same applies to $\fbar_3 (M_1 \otimes M_2 \otimes M_3)$. As $M_1 \otimes M_2$ and $M_3 \otimes M_4$ are twisted at a single matched pair, and tight elsewhere, $\fbar_2 (M_1 \otimes M_2)$ and $\fbar_2 (M_3 \otimes M_4)$ are also both given by single nonzero diagrams (section \ref{sec:low-level_ex}), each with a single crossing.

We now have all terms in $\Ubar_4 (M)$ except $\fbar_3 (M_1 \otimes M_2 M_3 \otimes M_4)$. To this end we note that $M_1 M_2 M_3 = M_2 M_3 M_4 = 0$, so $\Ubar_3 (M_1 \otimes M_2 M_3 \otimes M_4) = \fbar_1 (M_1) \fbar_2 (M_2 M_3 \otimes M_4) + \fbar_2 (M_1 \otimes M_2 M_3) \fbar_1 (M_4)$. Since $M_2 M_3 \otimes M_4$ is twisted at $P$ and $Q$, the creation operator inserts a crossing at $P$; and since $M_1 \otimes M_2 M_3$ is twisted at $R$ and $S$, the creation operator inserts a crossing at $R$. Hence
\begin{align*}
\fbar_3 (M_1 \otimes M_2 M_3 \otimes M_4) 
&= 
\Abar^*_P \left( \fbar_1 (M_1) \fbar_2 (M_2 M_3 \otimes M_4) + \fbar_2 (M_1 \otimes M_2 M_3 ) \fbar_1 (M_4) \right) \\
&= \Abar_P^* \left[
\left( \begin{array}{ccc}
\on & s_+ & \off \\
\on & r_+ & \off \\
\on &  & \on \\
\on &  & \on
\end{array} \right)
\left( \begin{array}{ccc}
\off & s_- & \on \\
\off & r_- & \on \\
\on  & w_q & \on \\
\on  & c_p & \on
\end{array} \right)
+
\left( \begin{array}{ccc}
\on  & w_s & \on \\
\on  & c_r & \on \\
\on  & q_+ & \off \\
\on  & p_+ & \off
\end{array} \right)
\left( \begin{array}{ccc}
\on  &     & \on \\
\on  &     & \on \\
\off & q_- & \on \\
\off & p_- & \on
\end{array} \right)
\right] \\
&= \Abar_P^* \left[
\left( \begin{array}{ccc}
\on & w_s & \on \\
\on & w_r & \on \\
\on & w_q & \on \\
\on & c_p & \on
\end{array} \right)
+
\left( \begin{array}{ccc}
\on & w_s & \on \\
\on & c_r & \on \\
\on & w_q & \on \\
\on & w_p & \on
\end{array} \right)
\right] 
= \left( \begin{array}{ccc}
\on & w_s & \on \\
\on & c_r & \on \\
\on & w_q & \on \\
\on & c_p & \on
\end{array} \right), \\
\Ubar_4 (M) &=
\left( \begin{array}{c}
w_s \\
w_r \\
c_q \\
c_p 
\end{array} \right)
+
\left( \begin{array}{c}
c_s \\
w_r \\
c_q \\
w_p
\end{array} \right)
+
\left( \begin{array}{c}
c_s \\
c_r \\
w_q \\
w_p
\end{array} \right)
+
\left( \begin{array}{c}
w_s \\
c_r \\
w_q \\
c_p
\end{array} \right)
\end{align*} 
so that, applying $\Abar^*_P$, $\fbar_4 (M)$ has the claimed form.

\section{Nontrivial higher operations}
\label{sec:nontrivial_higher_operations}

In this section we only consider $A_\infty$ structures arising from a pair ordering $\preceq$.

Although we have various necessary conditions for $X_n$ or $\fbar_n$ to be nonzero (viability, theorems \ref{thm:fn_structure} and \ref{thm:Xn_structure}, lemma \ref{lem:fn_Xn}, lemma \ref{lem:f3_rows_same}), we do not yet have conditions which are sufficient to ensure $X_n$ or $\fbar_n$ are nonzero --- whether the operations are defined via a pair ordering, or by Kadeishvili's construction more generally.

We have some results at low levels. For instance, $X_2 (M_1 \otimes M_2)$ is nonzero if and only of $M_1 \otimes M_2$ is tight, essentially by definition. Proposition \ref{prop:X3_description} shows that the necessary conditions of theorem \ref{thm:Xn_structure} for $X_3$ to be nonzero are also sufficient. However, the $X_4$ example of section \ref{sec:nec_not_suff} shows that these conditions are not sufficient for $X_4$ to be zero.

Indeed, the $\fbar_3$ examples of section \ref{sec:level_3} (particularly lemma \ref{lem:f3_computations}) show that even the question of whether $\fbar_3$ is zero or nonzero can be rather subtle. The $\fbar_4$ example of section \ref{sec:nec_not_suff} there shows that matters do not get simpler at higher levels.

In this section we prove some sufficient conditions for $\fbar_n$ and $X_n$ to be nonzero. They are, however, far from being necessary conditions.

As usual, throughout this section $M = M_1 \otimes \cdots \otimes M_n$ always denotes a tensor product of nonzero homology classes of diagrams

\subsection{Operation trees}
\label{sec:operation_trees}

Lemma \ref{lem:f3_rows_same} and some of the level 3 and 4 examples show that, even though a tensor product $M_1 \otimes \cdots \otimes M_n$ might have the right number of critical and twisted matched pairs, the steps of these pairs must be covered by the $M_i$ in a way that is appropriately ``horizontally distributed". 

To this end, we study rooted trees describing the order in which operations are performed.
\begin{defn}[Operation tree]
\label{def:operation_tree}
An \emph{operation tree} for $\HH^{\otimes n}$ is a rooted plane binary tree with $n$ leaves, ordered from left to right, and with each vertex $v$ labelled by a viable tensor product of homology classes of diagrams $M_v$, so that the following conditions are satisfied.
\begin{enumerate}
\item
Each leaf is labelled with a nonzero homology class of diagram in $\HH$.
\item
Each vertex is labelled with the tensor product of the labels on its ordered children.
\end{enumerate}
If the root vertex is labelled $M$, we say $\T$ is an \emph{operation tree for $M$}.
\end{defn}

Thus, if the leaves are labelled $M_1, \ldots, M_n$ in order, then the root vertex is labelled $M_1 \otimes \cdots \otimes M_n$. See figure \ref{fig:operation_tree} for an example. 


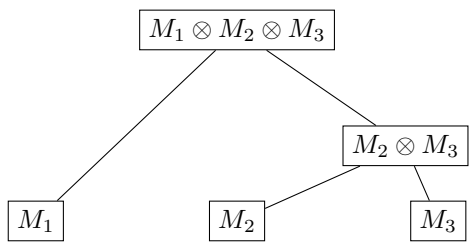
\begin{figure}
\begin{center}
\begin{tikzpicture}
\node[draw, rectangle] (root) {$M_1 \otimes M_2 \otimes M_3$};
\node[draw, rectangle, below left=2cm and 1cm of root] (1) {$M_1$} ;
\draw[-] (root) to (1);
\node [draw, rectangle, below right=1cm and 0.1cm of root] (23) {$M_2 \otimes M_3$};
\draw[-] (root) to (23);
\node [draw, rectangle, below=2cm of root] (2) {$M_2$};
\draw[-] (23) to (2);
\node[draw, rectangle, below right=2cm and 1cm of root] (3) {$M_3$};
\draw[-] (23) to (3);
\end{tikzpicture}
\caption{An operation tree.}
\label{fig:operation_tree}
\end{center}
\end{figure}

It will also be useful to consider a certain type of subtree, as in the following definition.
\begin{defn}[Subtree below $v$]
\label{def:subtree_below}
Let $\T$ be an operation tree, and $v$ a vertex of $\T$. The \emph{operation subtree of $\T$ below $v$} is the subtree $\T_v$ of $\T$ with root vertex $v$, consisting of all edges and vertices below $v$, and all vertex labels inherited from $\T$.
\end{defn}
Clearly $\T_v$ is also an operation tree.

\subsection{Validity and distributivity}
\label{sec:valid_distributive}

If $\T$ is an operation tree for $M$, each vertex of $\T$ is labelled by a sub-tensor-product $M_v$ of $M$. The various labels $M_v$ may have different types of tightness, depending on how the various steps around each matched pair are covered.

Singular tensor products should be avoided, and so we make the following definition.
\begin{defn}
\label{def:valid}
Let $\T$ be an operation tree for $\HH^{\otimes n}$.
\begin{enumerate}
\item 
A vertex of $\T$ is \emph{valid} if its label is non-singular.
\item
The operation tree $\T$ is \emph{valid} if it is valid at all of its vertices.
\end{enumerate}
\end{defn}

Thus, in a valid operation tree for $M$, each vertex label is tight, twisted or critical. (Note $M$ may have singular sub-tensor-products, but they do not appear as vertex labels.) Equivalently, each label is tight, twisted or critical at all matched pairs (lemma \ref{lem:homology_tensor_tightness_local}).

\begin{lem}
\label{lem:subtrees_valid}
Let $\T$ be a valid operation tree for $M$, and $v$ a vertex of $\T$. Then the operation subtree $\T_v$ of $\T$ below $v$ is valid.
\end{lem}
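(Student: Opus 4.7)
The plan is to observe that this lemma follows essentially immediately from the definitions, but I would still want to separate the verification into two parts: first showing that $\T_v$ is an operation tree in the sense of definition \ref{def:operation_tree}, and then showing it is valid in the sense of definition \ref{def:valid}.

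For the first part, I would check the two conditions of definition \ref{def:operation_tree} directly. The tree $\T_v$ consists of $v$ together with all edges and vertices of $\T$ below $v$, with labels inherited from $\T$ (definition \ref{def:subtree_below}). If $v$ is an internal vertex of $\T$, the leaves of $\T_v$ are exactly those leaves of $\T$ that are descendants of $v$; if $v$ is itself a leaf of $\T$, then $\T_v$ is the single-vertex tree and its unique leaf is $v$. In either case each leaf label of $\T_v$ is a label that was a leaf label of $\T$, hence a nonzero homology class of a diagram. For the compatibility condition (ii), each internal vertex of $\T_v$ is also an internal vertex of $\T$ with the same children and same labels, so the tensor-product relation is inherited verbatim.

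For the second part, I would just observe that every vertex of $\T_v$ is a vertex of $\T$, with the same label, and by hypothesis $\T$ is valid, i.e.\ every vertex label in $\T$ is non-singular. Hence the label of every vertex of $\T_v$ is non-singular, so $\T_v$ is valid.

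There is no substantive obstacle; the lemma is really a sanity check that validity, being a local property of individual vertex labels, is inherited under passage to operation subtrees. The only thing that could conceivably cause trouble is the implicit claim that $\T_v$ deserves to be called an operation tree, and this is what makes the first paragraph above worth writing down explicitly.
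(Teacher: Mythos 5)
Your proof is correct and takes the same route as the paper, whose proof is the one-line observation that every label of $\T_v$ is a label of $\T$ and hence non-singular. Your first paragraph verifying that $\T_v$ is an operation tree is fine but redundant relative to the paper, which already records this immediately after definition \ref{def:subtree_below}.
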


\begin{proof}
Each label is non-singular in $\T$, hence also non-singular in $\T_v$.
\end{proof}

Nonzero $A_\infty$ operations require carefully regulated numbers of twisted and critical matched pairs, as required by theorems \ref{thm:fn_structure} and \ref{thm:Xn_structure}. Hence we make the following definition.

\begin{defn}
\label{def:distributive}
Let $\T$ be a valid operation tree.  
\begin{enumerate}
\item
A vertex of $\T$ with $k$ leaves, labelled $M$, is \emph{distributive} if there are at least $k-2$ matched pairs at which $M$ is twisted or critical.
\item
The tree $\T$ is \emph{distributive} if every vertex of $\T$ is distributive.
\end{enumerate} 
\end{defn}

\subsection{Joining and grafting trees}
\label{sec:join_graft}

We now consider some methods to combine operation trees into larger trees.

The first operation, \emph{joining}, places two existing operation trees below a new root vertex.
\begin{defn}
\label{def:joining_trees}
Let $\T', \T''$ be operation trees for $M', M''$, where $M' \otimes M''$ is viable. Let $v',v''$ be the root vertices of $\T',\T''$ respectively. The \emph{join} of $\T'$ and $\T''$ is the tree $\T$ obtained by placing $\T'$ and $\T''$ below $v_0$, so that $v',v''$ are the left and right children of $\T$. The root vertex $v_0$ is labelled $M' \otimes M''$, and each other vertex inherits its label from $\T'$ or $\T''$.
\end{defn}
Clearly, the join of two operation trees is again an operation tree; note that this requires the assumption that $M' \otimes M''$ be viable. Figure \ref{fig:joining_trees} shows an example.

Under certain circumstances, joining trees preserves validity and distributivity.
\begin{lem}
\label{lem:join_valid_distributive}
Let $\T', \T''$ be operation trees for $M' = M_1 \otimes \cdots \otimes M_j$ and $M'' = M_{j+1} \otimes \cdots \otimes M_n$, and let $\T$ be their join. Suppose $\T'$ and $\T''$ are valid and distributive, and one of the following conditions holds:
\begin{enumerate}
\item $X_n (M' \otimes M'') \neq 0$;
\item $\fbar_n (M' \otimes M'') \neq 0$; or
\item $\fbar_j (M') \fbar_{n-j}(M'') \neq 0$, and $M$ contains no 11 doubly occupied pairs.
\end{enumerate}
Then $\T$ is also valid and distributive.
\end{lem}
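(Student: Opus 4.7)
The plan is to observe first that $\T$ has the same vertices as $\T'$ and $\T''$ together with a single new root vertex $v_0$, labelled $M = M' \otimes M''$. Every vertex of $\T$ other than $v_0$ inherits both its label and its subtree-leaf-count from $\T'$ or $\T''$, and therefore inherits validity and distributivity from the hypothesis. So the entire statement will reduce to verifying two things at $v_0$: that $M$ is non-singular, and that $M$ has at least $n-2$ matched pairs which are twisted or critical.

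For case (i), I will simply invoke theorem \ref{thm:Xn_structure}, which forces $M$ to have exactly $n-2$ critical matched pairs and all other matched pairs tight; both conditions at $v_0$ then hold immediately. For case (ii), theorem \ref{thm:fn_structure} applied to $\fbar_n(M) \neq 0$ gives $l$ twisted and $m$ critical pairs in $M$ with $l+m \geq n-1$ and all others tight, so again $M$ is non-singular and distributive.

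Case (iii) is the substantive one. For non-singularity, I will unfold $\fbar_j(M')\fbar_{n-j}(M'') \neq 0$ in $\AAbar$: the standard-form representatives of $\fbar_j(M')$ and $\fbar_{n-j}(M'')$ must contain diagrams $D'$ and $D''$ whose product $D'D''$ is viable and not in $\F$, since otherwise every term of the product would already lie in $\F$. Because the $f_i$ preserve H-data, $D'D''$ will be a diagram with the H-data $(h,s,t)$ of $M$, so $\A(h,s,t) \neq 0$ and lemma \ref{lem:homology_tensor_tightness_characterisation} will give that $M$ is non-singular.

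For distributivity in case (iii), I plan to apply theorem \ref{thm:fn_structure} separately to $\fbar_j(M')$ and $\fbar_{n-j}(M'')$ to obtain at least $j-1$ and at least $n-j-1$ twisted-or-critical matched pairs in $M'$ and $M''$ respectively; lemma \ref{lem:local_tightness_homology_sub-tensor-product}, applied locally at each pair to the sub-tensor-products $M'_P$ and $M''_P$ of $M_P$, will show that each such pair remains twisted or critical in $M$. The main obstacle will be to show that these two collections of pairs are disjoint, for then the count $(j-1)+(n-j-1) = n-2$ yields distributivity. The key local claim is that, under the hypothesis that $M$ has no 11 doubly occupied matched pairs, at each pair $P$ at most one of $M'_P, M''_P$ is twisted or critical. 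I plan to prove this by a short step-count using table \ref{tbl:local_tensor_products}: a twisted local tensor product covers the two steps around its occupied place and is 11 there, while a critical local tensor product covers at least $3$ of the $4$ steps of $\ZZ_P$. Were both $M'_P$ and $M''_P$ twisted or critical, then either the combined step-count would exceed $4$, contradicting viability of $M$, or else both would be twisted and $M_P$ would be 11 doubly occupied, contradicting the hypothesis of case (iii). This disjointness will then finish distributivity, and the proof.
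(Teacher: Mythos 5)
Your proposal is correct and follows essentially the same route as the paper: reduce everything to the new root vertex, dispose of cases (i) and (ii) via theorems \ref{thm:Xn_structure} and \ref{thm:fn_structure}, and in case (iii) apply theorem \ref{thm:fn_structure} to each factor and use the absence of 11 doubly occupied pairs to force the two collections of non-tight pairs to be disjoint, giving the count $(j-1)+(n-j-1)=n-2$. Your step-count argument for disjointness and your H-data argument for non-singularity of $M$ in case (iii) are correct elaborations of points the paper states more tersely.
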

Note that if $X_n (M' \otimes M'')$ or $\fbar_n (M' \otimes M'')$ 
is nonzero, then $M' \otimes M''$ is certainly viable, so that $\T$ is a well defined operation tree.

\begin{proof}
Each non-root vertex of $\T$ retains its label from $\T'$ or $\T''$. So if $\T', \T''$ are valid (resp. distributive), then $\T$ is valid (resp. distributive) at these vertices. So we only need consider the root vertex $v_0$ of $\T$, which is labelled with $M = M' \otimes M''$.

If $X_n (M) \neq 0$, then by theorem \ref{thm:Xn_structure}, $M$ has precisely $n-2$ matched pairs which are critical, and all other matched pairs are tight. If $\fbar_n (M) \neq 0$, then by theorem \ref{thm:fn_structure}, $M$ has at least $n-1$ matched pairs which are twisted or critical, and all other matched pairs are tight. 

If $\fbar_j (M') \fbar_{n-j}(M'') \neq 0$ then there are at least $j-1$ matched pairs at which $M'$ is twisted or critical, and at least $n-j-1$ pairs at which $M''$ is twisted or critical (theorem \ref{thm:fn_structure}). If any of these pairs coincide, then $M$ has a 11 doubly occupied pair; if these are ruled out, then $M$ has at least $(i-1)+(n-i-1)=n-2$ pairs at which it is twisted or critical.

In each case, $M$ is not singular, and the number of critical or twisted matched pairs is $\geq n-2$. Thus $v_0$ is valid and distributive, and hence so also is $\T$.
\end{proof}

\begin{figure}
\begin{center}
\begin{tikzpicture}
\node[draw, rectangle] (123) {$\begin{array}{c} M_1 \otimes M_2 \\ \quad \otimes M_3 \end{array}$};
\node[draw, rectangle, below left=1.5cm and 0.1cm of 123] (1) {$M_1$} ;
\draw[-] (123) to (1);
\node [draw, rectangle, below right=0.5cm and 0.1cm of 123] (23) {$M_2 \otimes M_3$};
\draw[-] (123) to (23);
\node [draw, rectangle, below=1.5cm of 123] (2) {$M_2$};
\draw[-] (23) to (2);
\node[draw, rectangle, below right=1.5cm and 1cm of 123] (3) {$M_3$};
\draw[-] (23) to (3);
\begin{scope}[xshift = 5 cm, yshift = 0.5 cm]
\node[draw, rectangle] (45) {$M_4 \otimes M_5$};
\node[draw, rectangle, below left=0.5cm and 0.1cm of 45] (4) {$M_4$};
\draw[-] (45) to (4);
\node[draw, rectangle, below right=0.5cm and 0.1cm of 45] (5) {$M_5$};
\draw[-] (45) to (5);
\end{scope}
\begin{scope}[xshift = 9 cm]
\node[draw, rectangle] (root) {$\begin{array}{c} M_1 \otimes M_2 \otimes M_3 \\ \quad \otimes M_4 \otimes M_5 \end{array}$};
\node[draw, rectangle, below left=1 cm and 0.1cm of root] (123) {$\begin{array}{c} M_1 \otimes M_2 \\ \quad \otimes M_3 \end{array}$};
\draw[-] (root) to (123);
\node[draw, rectangle, below left=1.5cm and 0.1cm of 123] (1) {$M_1$} ;
\draw[-] (123) to (1);
\node [draw, rectangle, below right=0.5cm and 0.1cm of 123] (23) {$M_2 \otimes M_3$};
\draw[-] (123) to (23);
\node [draw, rectangle, below=1.5cm of 123] (2) {$M_2$};
\draw[-] (23) to (2);
\node[draw, rectangle, below right=1.5cm and 1cm of 123] (3) {$M_3$};
\draw[-] (23) to (3);
\node[draw, rectangle, below right=1cm and 0.1cm of root] (45) {$M_4 \otimes M_5$};
\draw[-] (root) to (45);
\node[draw, rectangle, below left=0.5cm and 0.1cm of 45] (4) {$M_4$};
\draw[-] (45) to (4);
\node[draw, rectangle, below right=0.5cm and 0.1cm of 45] (5) {$M_5$};
\draw[-] (45) to (5);

\end{scope}
\end{tikzpicture}
\caption{Operation trees $\T', \T'', \T$ (left to right), where $\T$ is the join of $\T'$ and $\T''$.}
\label{fig:joining_trees}
\end{center}
\end{figure}

The second operation, \emph{grafting}, implants a tree at a leaf of an existing tree.
\begin{defn}
Let $\T', \T''$ be operation trees for $M' = M_1 \otimes \cdots \otimes M_n$ and $N' = N_1 \otimes \cdots \otimes N_j$, and suppose $N'$ and $M_k$ have the same H-data.

The \emph{grafting of $\T''$ onto $\T'$ at position $k$} is the tree $\T$ obtained by identifying the $k$'th leaf of $\T'$ with the root vertex of $\T''$. The vertices of $\T'$ are relabelled by replacing every instance of $M_k$ with the tensor product $N_1 \otimes \cdots \otimes N_j$; other labels are inherited from $\T''$.
\end{defn}

Figure \ref{fig:grafting_trees} shows an example. Thus $\T$ is an operation tree for the tensor product
\[
M = M_1 \otimes \cdots \otimes M_{k-1} \otimes N_1 \otimes \cdots \otimes N_j \otimes M_{k+1} \otimes \cdots M_n.
\]
The assumption that $N'$ and $M_k$ share the same H-data ensures $M$ is viable. 

\begin{figure}
\begin{center}
\begin{tikzpicture}
\node[draw, rectangle] (M123) {$\begin{array}{c} M_1 \otimes M_2 \\ \quad \otimes M_3 \end{array}$};
\node[draw, rectangle, below left=1.5cm and 0.1cm of M123] (M1) {$M_1$} ;
\draw[-] (M123) to (M1);
\node [draw, rectangle, below right=0.5cm and 0.1cm of M123] (M23) {$M_2 \otimes M_3$};
\draw[-] (M123) to (M23);
\node [draw, rectangle, below=1.5cm of M123] (M2) {$M_2$};
\draw[-] (M23) to (M2);
\node[draw, rectangle, below right=1.5cm and 1cm of M123] (M3) {$M_3$};
\draw[-] (M23) to (M3);
\begin{scope}[xshift = 5 cm, yshift = 0.5 cm]
\node[draw, rectangle] (N12) {$N_1 \otimes N_2$};
\node[draw, rectangle, below left=0.5cm and 0.1cm of N12] (N1) {$N_1$};
\draw[-] (N12) to (N1);
\node[draw, rectangle, below right=0.5cm and 0.1cm of N12] (N2) {$N_2$};
\draw[-] (N12) to (N2);
\end{scope}
\begin{scope}[xshift = 9 cm]
\node[draw, rectangle] (M123) {$\begin{array}{c} M_1 \otimes N_1 \otimes N_2 \\ \quad \otimes M_3 \end{array}$};
\node[draw, rectangle, below left=1.5cm and 0.1cm of M123] (M1) {$M_1$} ;
\draw[-] (M123) to (M1);
\node [draw, rectangle, below right=0.5cm and 0.1cm of M123] (M23) {$N_1 \otimes N_2 \otimes M_3$};
\draw[-] (M123) to (M23);
\node [draw, rectangle, below=1.5cm of M123] (N12) {$N_1 \otimes N_2$};
\draw[-] (M23) to (N12);
\node[draw, rectangle, below right=1.5cm and 1cm of M123] (M3) {$M_3$};
\draw[-] (M23) to (M3);
\node[draw, rectangle, below left=0.5cm and 0.1cm of N12] (N1) {$N_1$};
\draw[-] (N12) to (N1);
\node[draw, rectangle, below right=0.5cm and 0.1cm of N12] (N2) {$N_2$};
\draw[-] (N12) to (N2);
\end{scope}
\end{tikzpicture}
\caption{Operation trees $\T', \T'', \T$, where $\T$ is the grafting of $\T''$ onto $\T'$ at position 2.}
\label{fig:grafting_trees}
\end{center}
\end{figure}

As with joining, under certain circumstances grafting preserves validity and distributivity.
\begin{lem}
\label{lem:graft_valid_distributive}
Let $\T', \T''$ be operation trees for $M' = M_1 = M_1 \otimes \cdots \otimes M_n$ and $N' = N_1 \otimes \cdots \otimes N_j$ respectively. Suppose $X_j (N') = M_k$, and let $\T$ be the grafting of $\T''$ onto $\T'$ at position $k$.

If $\T'$ and $\T''$ are valid and distributive, then $\T$ is also valid and distributive.
\end{lem}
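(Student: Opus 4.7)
The plan is to classify the vertices of $\T$ and verify validity and distributivity in each case. The vertices split into three groups: (a) non-root vertices of $\T''$ (strictly below the grafting point), (b) vertices of $\T'$ that are not ancestors of the $k$-th leaf (unchanged by grafting), and (c) the grafting point itself (formerly the $k$-th leaf of $\T'$, now the root of $\T''$ labelled $N'$) together with the strict ancestors of the $k$-th leaf in $\T'$, whose labels in $\T$ are obtained by substituting $N_1 \otimes \cdots \otimes N_j$ for $M_k$ and whose leaf-counts grow by $j-1$.

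Validity is fairly direct. In groups (a) and (b), labels are inherited from valid trees, so the vertices are valid. The grafting point is labelled $N'$, which is non-singular by validity of $\T''$. For a strict ancestor in (c), let $L$ and $L'$ denote the labels in $\T'$ and $\T$ respectively. Since $X_j(N') = M_k \neq 0$, theorem \ref{thm:Xn_structure} forces $N'$ and $M_k$ to share H-data; hence $L$ is the H-contraction of $L'$ obtained by replacing $N_1 \otimes \cdots \otimes N_j$ with $M_k$, and lemma \ref{lem:tightness_H-contraction}(iii) propagates non-singularity from $L$ to $L'$.

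Distributivity at vertices in groups (a) and (b) is immediate, since both labels and leaf-counts of the subtrees below them are unchanged. At the grafting point, $N'$ has $j$ leaves below; by theorem \ref{thm:Xn_structure}, $X_j(N') = M_k \neq 0$ forces $N'$ to have exactly $j-2$ critical matched pairs, matching the $\geq j-2$ requirement. The core step is distributivity at a strict ancestor $v$ in (c): here the leaf-count grows from $a$ to $a+j-1$, so the twisted-or-critical pair count must grow from $\geq a-2$ in $L$ to $\geq a+j-3$ in $L'$. The argument combines two inputs. First, lemma \ref{lem:tightness_H-contraction}(i)(b) and (ii) imply that every twisted or critical matched pair of $L$ remains twisted or critical in $L'$. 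Second, the $j-2$ critical pairs of $N'$ inject into the critical set of $L'$: at each such pair, $N'$ appears as a critical sub-tensor-product of $L'$, and since $L'$ is non-singular there (by validity, established above), lemma \ref{lem:tightness_homology_sub-tensor-product} forces $L'$ to be critical. A Maslov-index bookkeeping — using that $X_j$ has degree $j-2$, so $\iota(L) - \iota(L') = j-2$, and that each pair transitioning from tight in $L$ to critical in $L'$ contributes $+1$ to this difference — confirms that the injected critical pairs are disjoint from the inherited ones.

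The main obstacle I expect is pinning down the exact count in this final step: naive accumulation of the two disjoint sources yields $(a-2) + (j-2) = a+j-4$ twisted-or-critical pairs in $L'$, which falls one shy of the required $a+j-3$. Closing this gap should follow from a finer accounting of how non-singularity at the pairs neighbouring the grafting position forces additional criticality in $L'$, exploiting the interaction between $L'$'s boundary idempotents and the substituted block $N_1 \otimes \cdots \otimes N_j$, and this is where the detail of the argument concentrates.
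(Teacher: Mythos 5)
Your handling of validity, and of the vertices whose labels and leaf-counts are unchanged, is essentially the paper's own argument (the paper gets non-singularity of a relabelled ancestor directly from equality of H-data, where you route it through lemma~\ref{lem:tightness_H-contraction}; both work). The substantive issue is the counting shortfall you flag at the end, and you should know it cannot be closed: your bound is sharp. Writing $M'_v$ for the label of a strict ancestor $v$ in $\T'$ (with $a$ leaves) and $M_v$ for its label in $\T$ (with $a+j-1$ leaves), the Maslov bookkeeping you sketch does exactly this much: since $X_j$ has degree $j-2$ and local H-data agree, precisely $j-2$ matched pairs pass from tight in $M'_v$ to critical in $M_v$, these are automatically disjoint from the inherited non-tight pairs of $M'_v$ (which stay non-tight by lemma~\ref{lem:tightness_H-contraction}), and every other pair keeps its tightness class. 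So $(a-2)+(j-2)=a+j-4$ is the best general bound, one short of the threshold $a+j-3$ demanded by definition~\ref{def:distributive}. Indeed the statement, read literally against that definition, fails: take $N_1=N_2=M_2$ to be the homology class of an idempotent $I(s)$, put $M_1=N_1N_2=X_2(N_1\otimes N_2)$, and let $\T',\T''$ be the two-leaf trees for $M_1\otimes M_2$ and $N_1\otimes N_2$. Both are valid and (vacuously) distributive, yet the grafted root is labelled by the tight tensor product $N_1\otimes N_2\otimes M_2$, which has three leaves and no twisted or critical matched pairs, hence is not distributive.

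Comparing with the paper's proof: it does not attempt your count at all. At a relabelled ancestor $v$ it argues only that $M_v$ has at least as many twisted or critical pairs as $M'_v$, i.e.\ at least $a-2$, and declares $v$ distributive; but after grafting $v$ has $a+j-1$ leaves, so definition~\ref{def:distributive} asks for $a+j-3$. The paper thus checks the pre-graft threshold rather than the post-graft one, which is exactly the step your finer accounting shows cannot be completed in general. So the gap in your write-up is really a lacuna in the lemma and its proof as stated: one needs either a weaker reading of distributivity at relabelled vertices (measured with the original leaf count), or extra hypotheses in the spirit of lemma~\ref{lem:join_valid_distributive} supplying the missing non-tight pair (for the root label this is what nonvanishing of the relevant $\fbar$-term provides via theorem~\ref{thm:fn_structure} in the intended application). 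The validity half of the statement, which both you and the paper fully establish, is unaffected.
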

Note $X_j (N') = M_k$ implies $N'$ and $M_k$ have equal H-data, so $\T$ is a well defined operation tree.

\begin{proof}
Each vertex of $\T''$ retains its label, hence validity and distributivity are satisfied. At vertices of $\T'$ which retain their label, the same applies. Thus we only need consider vertices of $\T'$ whose labels are changed in $\T$, i.e. those whose label involves $M_k$.

Let $v$ be a vertex of $\T'$ with $l$ leaves, labelled $M'_v = M_u \otimes \cdots \otimes M_k \otimes \cdots \otimes M_{u+l-1}$; the label in $\T$ is thus $M_v = M_u \otimes \cdots \otimes (N_1 \otimes \cdots \otimes N_j) \otimes \cdots \otimes M_{u+l-1}$. Since $\T'$ is valid, $M'_v$ is non-singular. Since $M_v$ and $M'_v$ have the same H-data, $L$ is non-singular; so $v$ is valid. 

It remains to show that $v$ is distributive. Since $\T'$ is distributive, $M'_v$ has at least $l-2$ matched pairs which are twisted or critical. Now $M_k = X_j (N')$ implies that $M_k$ is the unique nonzero homology class of diagram with the H-data of $N'$, so $M'_v$ is obtained from $M_v$ by an H-contraction (definition \ref{def:H-contraction}). By lemma \ref{lem:tightness_H-contraction}, if $M'_v$ is critical at a matched pair $P$, then $M_v$ is critical at $P$; and if $M'_v$ is twisted at $P$, then $M_v$ is twisted at $P$. Hence $M_v$ has at least as many twisted and critical matched pairs as $M'_v$.
\end{proof}

\subsection{Nonzero operations require trees}
\label{sec:trees_required}

As we now show, a valid distributive operation tree for $M$ is a necessary condition for $X_n(M)$ or $\fbar_n (M)$ to be nontrivial.
\begin{prop}
\label{prop:nonzero_implies_trees}
Consider an $A_\infty$ structure on $\HH$ arising from a pair ordering.
If $X_n (M) \neq 0$ or $\fbar_n (M) \neq 0$, then there is a valid distributive operation tree for $M$.
\end{prop}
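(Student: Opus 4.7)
The plan is to prove the proposition by strong induction on $n$, using the join and graft constructions of section \ref{sec:join_graft} to assemble a tree for $M$ out of trees for sub-tensor-products. The key tools will be lemma \ref{lem:Xn_directly} (which expresses $X_n(M)$ via products $\fbar_j \cdot \fbar_{n-j}$), the explicit formula (\ref{eqn:Un_def}) for $U_n$, the formula $f_n = A^*_{\mathcal{CR}^\preceq}(f_1 X_n - U_n)$ on twisted summands from theorem \ref{thm:main_thm}(iv), and the two preservation lemmas \ref{lem:join_valid_distributive} and \ref{lem:graft_valid_distributive}.

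The base case $n=1$ is immediate: $X_1 = 0$, and if $\fbar_1(M_1) \neq 0$, the single-leaf tree labelled $M_1$ is trivially valid (its label is a nonzero, hence non-singular, homology class) and trivially distributive (with $k=1$, we need at least $k-2 = -1$ non-tight matched pairs). For the inductive step, assume the result for all levels below $n \geq 2$, and split according to whether $X_n(M) \neq 0$.

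In the first case, $X_n(M) \neq 0$, lemma \ref{lem:Xn_directly} forces some term $\fbar_j(M_1 \otimes \cdots \otimes M_j) \cdot \fbar_{n-j}(M_{j+1} \otimes \cdots \otimes M_n)$ to contain a tight diagram, hence both factors are individually nonzero. The inductive hypothesis yields valid distributive trees $\T'$ and $\T''$ for the two halves, and joining them gives an operation tree $\T$ for $M$ which is valid and distributive at its root by lemma \ref{lem:join_valid_distributive}(i). In the second case, $X_n(M) = 0$ but $\fbar_n(M) \neq 0$; then $M$ has twisted H-data (theorem \ref{thm:fn_structure}), so $\fbar_n(M) = \Abar^*_{\mathcal{CR}^\preceq} \Ubar_n(M)$ and $\Ubar_n(M) \neq 0$. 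Expanding $U_n(M)$ via (\ref{eqn:Un_def}), at least one summand survives in $\AAbar$, which leads to two sub-cases. If some $\fbar_j(M_1 \otimes \cdots \otimes M_j) \cdot \fbar_{n-j}(M_{j+1} \otimes \cdots \otimes M_n)$ is nonzero in $\AAbar$, then by induction we obtain trees for each factor and join them; since $\fbar_n(M) \neq 0$, lemma \ref{lem:join_valid_distributive}(ii) certifies the join. Otherwise some $\fbar_{n-j+1}(M_1 \otimes \cdots \otimes M_k \otimes X_j(M_{k+1} \otimes \cdots \otimes M_{k+j}) \otimes \cdots \otimes M_n) \neq 0$ with $j \geq 2$; setting $N = X_j(M_{k+1} \otimes \cdots \otimes M_{k+j})$, induction at level $j<n$ gives a valid distributive tree $\T_b$ for $M_{k+1} \otimes \cdots \otimes M_{k+j}$, and induction at level $n-j+1 < n$ gives a valid distributive tree $\T_a$ for the contracted tensor product; grafting $\T_b$ onto $\T_a$ at position $k+1$ produces the required tree by lemma \ref{lem:graft_valid_distributive}.

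The main obstacle is ensuring that some term of $\Ubar_n(M)$ really is nonzero in $\AAbar$ whenever $\fbar_n(M) \neq 0$ with $X_n(M) = 0$; this requires verifying that the creation operator $\Abar^*_{\mathcal{CR}^\preceq}$ cannot send a nonzero class in $\AAbar$ to zero, which follows because $M$ has twisted H-data and the operator merely inserts a crossing at the chosen all-on once occupied pair, keeping the result outside $\F$. The rest is a bookkeeping exercise matching each nonzero term of $U_n$ to either a join or a graft, with the hypotheses of lemmas \ref{lem:join_valid_distributive} and \ref{lem:graft_valid_distributive} holding automatically from the very nonvanishing assumption that triggered the case.
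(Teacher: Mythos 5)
Your argument is correct and essentially identical to the paper's proof: induction on $n$, handling $X_n(M)\neq 0$ via lemma \ref{lem:Xn_directly} and joining (lemma \ref{lem:join_valid_distributive}), and handling $\fbar_n(M)\neq 0$ by expanding $\Ubar_n$ via (\ref{eqn:Un_def}) and applying joining or grafting (lemma \ref{lem:graft_valid_distributive}) to a surviving term. The only quibble is your stated ``main obstacle'': the implication $\fbar_n(M)\neq 0\Rightarrow \Ubar_n(M)\neq 0$ is immediate by linearity from $\fbar_n(M)=\Abar^*_{\mathcal{CR}^\preceq}\Ubar_n(M)$, and your justification that the creation operator never sends a nonzero element of $\AAbar$ to zero is actually false in general (it annihilates diagrams already crossed at the chosen pair), though this does not affect the proof.
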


Proposition \ref{prop:nonzero_implies_trees} is a precise version of proposition \ref{prop:intro_nonzero_trees}.

\begin{proof}
First note that as $X_n (M)$ or $\fbar_n (M) \neq 0$, $M$ is viable (lemma \ref{lem:fn_Xn_H-data_viability}).

When $n=1$, the valid and distributive conditions are trivial. 


Now suppose the statement holds for all $X_k$ and $\fbar_k$ for $k<n$, and consider $X_n$ and $\fbar_n$.

Suppose $X_n (M) \neq 0$. By lemma \ref{lem:Xn_directly}, $X_n (M)$ is represented by the sum of crossingless diagrams in $\fbar_j (M_1 \otimes \cdots \otimes M_j) \fbar_{n-j} ( M_{j+1} \otimes \cdots \otimes M_n)$, so some $\fbar_j (M_1 \otimes \cdots \otimes M_j)$ and $\fbar_{n-j} (M_{j+1} \otimes \cdots \otimes M_n)$ are nonzero. By induction there are valid distributive operation trees $\T'$ for $M_1 \otimes \cdots \otimes M_i$ and $\T''$ for $M_{i+1} \otimes \cdots \otimes M_n$. Now let $\T$ be the join of $\T'$ and $\T''$; this is well defined as $M$ is viable. Since $\T'$ and $\T''$ are valid and distributive, by lemma \ref{lem:join_valid_distributive} so is $\T$.

Now suppose $\fbar_n (M) \neq 0$. Then $X_n (M) = 0$ (lemma \ref{lem:fn_Xn}), and $M$ has all matched pairs tight, twisted or critical, with at least one matched pair twisted (theorem \ref{thm:fn_structure}). Thus $\fbar_n (M) = A^*_{\mathcal{CR}^\preceq} \Ubar_n (M)$, and hence $\Ubar_n (M) \neq 0$. From equation (\ref{eqn:Un_def}) then some term of the form $\fbar_j (M_1 \otimes \cdots \otimes M_j) \fbar_{n-j} (M_{j+1} \otimes \cdots \otimes a_M)$ or $\fbar_{n-j+1} (M_1 \otimes \cdots \otimes M_k \otimes X_j (M_{k+1} \otimes \cdots \otimes M_{k+j}) \otimes \cdots \otimes M_n)$ is nonzero. We consider the two cases separately.

In the first case, by induction, there are operation trees $\T'$ for $M_1 \otimes \cdots \otimes M_j$, and $\T''$ for $M_{j+1} \otimes \cdots \otimes M_n$, which are valid and distributive. Let $\T$ be the join of $\T'$ and $\T''$; as $M$ is viable, $\T$ is well defined. By lemma \ref{lem:join_valid_distributive} again, $\T$ is valid and distributive.

In the second case induction gives operation trees $\T'$ for $M_1 \otimes \cdots \otimes M_k \otimes X_j (M_{k+1} \otimes \cdots \otimes M_{k+j} ) \otimes \cdots \otimes M_n$, and $\T''$ for $M_{k+1} \otimes \cdots \otimes M_{k+j}$, which are valid and distributive. Let $\T$ be the grafting of $\T''$ onto $\T'$ at position $k+1$. This is clearly a well-defined operation tree, and by lemma \ref{lem:graft_valid_distributive}, $\T$ is valid and distributive.
\end{proof}

\subsection{Local trees}
\label{sec:local_trees}

Let $\T$ be an operation tree for $M$. We now consider $M$ at a single matched pair $P$, and use this to construct ``localised" versions of $\T$. We will define a \emph{local operation tree}, which has the same underlying tree, and a \emph{reduced local operation tree}, whose underlying tree is obtained by contracting ``extraneous" vertices.

Recall the local tensor product of $M = M_1 \otimes \cdots \otimes M_n$ at $P$ is given by $M_P = (M_1)_P \otimes \cdots \otimes (M_n)_P$ (definition \ref{def:local_tensor_product}).
\begin{defn}
The \emph{local operation tree} $\widetilde{\T_P}$ of $\T$ at $P$ is obtained from $\T$ by replacing each $M_i$ with $(M_i)_P$ in each vertex label.
\end{defn}
Since each $M_i$ is nonzero, so is each $(M_i)_P$; and since $M$ is viable, so is $M_P$. Each vertex label remains viable and the tensor product of its children; so $\widetilde{\T_P}$ is indeed an operation tree for $M_P$.

By proposition \ref{prop:homology_tensor_product_classification}, $M_P$ is an extension-contraction of one of the tensor products shown in the tight, twisted, critical or singular columns of table \ref{tbl:local_tensor_products}. So there are at most 4 tensor factors of $M$ which have non-horizontal strands at $P$, i.e. which cover one or more of the 4 steps around $P$.
\begin{defn}
A tensor factor $M_i$ of $M$ which has a non-horizontal strand at a matched pair $P$ is called \emph{$P$-active}. The corresponding leaves of $\T$ are called \emph{$P$-active leaves}.
\end{defn}
For each $P$, $\T$ has at most $4$ $P$-active leaves. These are precisely the leaves of $\T_P$ labelled by non-idempotent diagrams.

Now we reduce $\widetilde{\T_P}$ to remove non-active leaves and factors. Consider a non-$P$-active factor $M_v$ of $M$, and the corresponding leaf $v$ in $\widetilde{\T_P}$. Then $(M_v)_P$ is idempotent, so deleting it as a factor from $M_P$ leaves a tensor product which is still viable. (Indeed, such a deletion is a trivial contraction: definition \ref{def:trivial_contraction}). We delete $(M_v)_P$ from all labels on $\widetilde{\T}_P$, and we delete the leaf $v$ and its incident edge. This leaves a degree-2 vertex, which we smooth (i.e. we delete the degree-2 vertex and combine the two adjacent edges into a single edge). We then have a binary planar tree. (If the root vertex is smoothed, precisely one of its children remains; that child becomes the root.) It is an operation tree for $(M_1)_P \otimes \cdots \otimes \widehat{(M_v)_P} \otimes \cdots \otimes (M_n)_P$, where the hat denotes a deleted factor.

Repeating the process for all non-active factors, we obtain an operation tree $\T_P$ for $(M_{i_1})_P \otimes \cdots \otimes (M_{i_k})_P$, where the $M_{i_j}$ are the $P$-active factors of $M$. Note $0 \leq k \leq 4$; if $k=0$, $\T_P$ is the empty tree.
\begin{defn}
The operation tree $\T_P$ is called the \emph{reduced local operation tree} of $\T$ at $P$.
\end{defn}

The operation tree $\T_P$ does not depend on the order in which the non-active factors are deleted; in fact it can also be constructed ``at once", as follows.  The $P$-active leaves of $\widetilde{\T}_P$ have a lowest common ancestor $v_0$ in $\T$. Take the edges and vertices along shortest paths in $\T$ from each $P$-active leaf to $v_0$. The union of these edges and vertices is a planar subtree of $\widetilde{\T}_P$ with root $v_0$ and leaves labelled by $M_{i_j}$. Smoothing degree-2 vertices in this subtree and labelling vertices appropriately yields $\T_P$.

Note that the vertices of $\T_P$ can be regarded as a subset of the vertices of $\T$ or $\widetilde{\T}_P$: namely, those vertices which are not deleted or smoothed as we remove non-$P$-active factors.

Local operation trees are useful because of the following fact, a ``local-to-global" law for validity.
\begin{lem}
\label{lem:local_validity}
Let $\T$ be an operation tree. The following are equivalent:
\begin{enumerate}
\item $\T$ is valid.
\item For all matched pairs $P$, the local operation tree $\widetilde{\T}_P$ is valid.
\item For all matched pairs $P$, the reduced local operation tree $\T_P$ is valid.
\end{enumerate}
\end{lem}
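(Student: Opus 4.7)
The plan is to reduce everything to two simple facts: singularity of a tensor product of homology classes is detected pair-by-pair (lemma \ref{lem:homology_tensor_tightness_local}(iv)), and deleting idempotent tensor factors from a viable local tensor product preserves H-data, hence by lemma \ref{lem:homology_tensor_product_tightness_H-data}(iii) preserves singularity.

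For (i)$\Leftrightarrow$(ii), I would use that $\T$ and $\widetilde{\T}_P$ share the same underlying tree, and that the label at any vertex $v$ of $\widetilde{\T}_P$ is precisely $(M_v)_P$. By lemma \ref{lem:homology_tensor_tightness_local}(iv), $M_v$ is singular iff $(M_v)_P$ is singular for some matched pair $P$. Contrapositively, $M_v$ is non-singular (that is, $v$ is valid in $\T$) iff for every $P$, $(M_v)_P$ is non-singular (that is, $v$ is valid in $\widetilde{\T}_P$). Taking the conjunction over all vertices $v$ yields the equivalence.

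For (ii)$\Leftrightarrow$(iii), I would fix $P$ and perform a case analysis on the number $k$ of $P$-active leaves in the subtree of $\widetilde{\T}_P$ below a vertex $v$. A vertex $v$ survives the reduction from $\widetilde{\T}_P$ to $\T_P$ exactly when $k\geq 2$, and in that case its label $\hat{M}_v$ in $\T_P$ is obtained from $(M_v)_P$ by deleting the idempotent factors corresponding to non-$P$-active leaves below $v$. Such deletion leaves the H-data at $P$ unchanged: idempotents have vanishing H-grading, and the idempotent-matching condition of viability guarantees that excising an idempotent factor does not alter the endpoint idempotents of the product at $P$. Hence singularity transfers between $(M_v)_P$ and $\hat{M}_v$, and validity at $v$ agrees in $\widetilde{\T}_P$ and $\T_P$. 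For vertices $v$ with $k\leq 1$, which disappear under the reduction, validity in $\widetilde{\T}_P$ is automatic: if $k=0$ the label is a product of idempotents, tight (hence non-singular) by table \ref{tbl:local_tensor_products}; if $k=1$ the label shares its H-data with a single non-idempotent local diagram and so is non-singular, since an actual diagram with that H-data exists. Combining these observations gives $\widetilde{\T}_P$ valid iff $\T_P$ valid, for each $P$, hence (ii)$\Leftrightarrow$(iii). The edge cases where $\T_P$ is empty or a single leaf cause no trouble, being vacuously valid.

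The main obstacle is bookkeeping rather than mathematics: one must verify carefully that the smoothing procedure really only deletes idempotent tensor factors from each surviving label, and that this deletion genuinely preserves H-data including endpoint idempotents. Once one identifies \emph{non-$P$-active} with \emph{idempotent at $P$}, the argument is routine.
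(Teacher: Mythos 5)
Your overall mechanism is the same as the paper's — singularity is detected pair by pair (lemma \ref{lem:homology_tensor_tightness_local}), and deleting idempotent factors is a trivial contraction preserving H-data and hence singularity — and your proof of (i)$\Leftrightarrow$(ii) matches the paper's. The problem is in the bookkeeping for (ii)$\Leftrightarrow$(iii): your claim that a vertex $v$ of $\widetilde{\T}_P$ survives the reduction to $\T_P$ exactly when the number $k$ of $P$-active leaves below $v$ satisfies $k\geq 2$ is false. An internal vertex survives only if \emph{both} of its children's subtrees contain a $P$-active leaf; a vertex all of whose $P$-active leaves lie in a single child's subtree is smoothed even though $k\geq 2$, as is every vertex strictly above the lowest common ancestor of all the $P$-active leaves. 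For example, with leaves $M_1,\ldots,M_4$, tree shape $((M_1(M_2M_3))M_4)$ and $M_2,M_3,M_4$ the $P$-active factors, the parent of the $M_1$-leaf has $k=2$ but is smoothed away.

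This matters because these are precisely the vertices your case analysis fails to cover in the direction (iii)$\Rightarrow$(ii): their labels contain at least two non-idempotent local factors, so their validity in $\widetilde{\T}_P$ is not automatic (unlike your $k\leq 1$ cases) and must be deduced from the validity of $\T_P$. The repair is short: the label of such a vertex is an extension by idempotents of the label of the topmost surviving vertex below it (the lowest common ancestor of its $P$-active leaves), hence has the same H-data and is singular iff that surviving label is. The paper avoids this bookkeeping altogether by deleting one non-active leaf at a time: at each step every remaining label either is unchanged or undergoes a trivial contraction, the smoothed vertex's old label is an extension of the new ones, and the deleted leaf's label is an idempotent, so non-singularity of all labels is preserved in both directions step by step. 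With your survival criterion corrected (or replaced by the iterative argument), the rest of your proof goes through.
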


\begin{proof}
By lemma \ref{lem:homology_tensor_tightness_local}, a tensor product of homology classes of diagrams is non-singular if and only if it is non-singular at all its matched pairs. Since the labels on the operation trees $\widetilde{\T}_P$ are precisely the labels on $\T$, localised to $P$, (i) and (ii) are equivalent.

As mentioned above, deleting a non-$P$-active leaf from $\widetilde{\T_P}$, corresponding to a non-$P$-active factor $M_i$, produces a trivial contraction on vertex labels. Thus if all vertex labels were non-singular in $\widetilde{\T_P}$, then they remain non-singular. Conversely, if all the ``new" vertex labels are non-singular after deletion, their ``old" labels (being obtained by extension from the ``new" ones --- even at the smoothed vertices) were also non-singular. The deleted vertex was labelled by a single idempotent diagram, which is non-singular. After deleting all non-$P$-active leaves, $\widetilde{\T_P}$ is valid if and only if $\T_P$ is valid.
\end{proof}

\subsection{Climbing a tree}
\label{sec:climbing_tree}

Let $\T$ be a reduced local operation tree. Then $\T$ has no more than 4 leaves, so there are not many possible trees. Indeed, the number of rooted planar binary trees with $1,2,3,4,n$ leaves is $1,1,2,5, \frac{1}{n+1} \binom{2n}{n}$.

The tensor products arising in reduced local operation trees are also small in number. If $M$ is the tensor product labelling the root of $M$, then $M$ is a viable tensor product of homology classes of diagrams on the arc diagram $\ZZ_P$ consisting of a single matched pair. As $\T$ is a reduced local operation tree, $M$ has no idempotents, i.e. every tensor factor of $M$ has non-horizontal strands. Thus (proposition \ref{prop:homology_tensor_product_classification}) $M$ is one of the tensor products shown in the tight, twisted, critical or singular columns of table \ref{tbl:local_tensor_products}, or (in the tight case) a contraction thereof.

We ask: for each such tensor product $M$, which of the possible operation trees on $M$ is valid?

If $M$ is tight or twisted, then any sub-tensor product is tight or twisted (lemma \ref{lem:tightness_homology_sub-tensor-product}), and in particular non-singular, so any operation tree for $M$ is valid. And of course if $M$ is singular, then any operation tree for $M$ is invalid, since its root vertex has singular label $M$.

When $M$ is critical, some but not all operation trees are valid. By examining the possible cases in the critical column of table \ref{tbl:local_tensor_products}, we observe the following.
\begin{itemize}
\item
When $M$ is critical and $P$ is sesqui-occupied, precisely 1 of the 2 operation trees are valid.
\item
When $M$ is critical and $P$ is 00 doubly occupied, precisely 2 of the 5 operation trees are valid.
\item
When $M$ is critical and $P$ is 11 doubly occupied, precisely 3 of the 5 operation trees are valid.
\end{itemize}
Tables \ref{tbl:sesqui-occupied_validity} and \ref{tbl:doubly-occupied_validity} illustrate the valid and invalid trees.

\begin{table}
\begin{center}

\begin{tabular}{>{\centering\arraybackslash}m{4 cm}|>{\centering\arraybackslash}m{5cm}|>{\centering\arraybackslash}m{5cm}} 

$M$ & Valid operation trees & Invalid operation trees \\
\hline

\begin{tikzpicture}[scale=0.9]
\strandbackgroundshading
\tstrandbackgroundshading{1}
\tstrandbackgroundshading{2}
\beforevused
\tafterwused{1}
\tbeforewused{2}
\strandsetupn{}{}
\tstrandsetup{1}
\tstrandsetup{2}
\tstrandsetup{3}
\leftoff
\righton
\trightoff{1}
\trighton{2}
\used
\tusea{1}
\tuseb{2}
\end{tikzpicture}
&
\begin{tikzpicture}[scale=0.55]
\draw [draw=none] (0,0.2) circle (5pt);
\draw (0,0) -- (-1,-1);
\draw (0,0) -- (1,-1);
\draw (1,-1) -- (0,-2);
\draw (1,-1) -- (2,-2);
\fill [draw=black!30!green, fill=black!30!green] (0,0) circle (5pt);
\fill (-1,-1) circle (5pt);
\fill [draw=blue, fill=blue] (1,-1) circle (5pt);
\fill (0,-2) circle (5pt);
\fill (2,-2) circle (5pt);
\end{tikzpicture}
&
\begin{tikzpicture}[scale=0.55]
\draw [draw=none] (0,0.2) circle (5pt);
\draw (0,0) -- (-1,-1);
\draw (0,0) -- (1,-1);
\draw (-1,-1) -- (-2,-2);
\draw (-1,-1) -- (0,-2);
\fill [draw=black!30!green, fill=black!30!green] (0,0) circle (5pt);
\fill [draw=red, fill=red] (-1,-1) circle (5pt);
\fill (1,-1) circle (5pt);
\fill (-2,-2) circle (5pt);
\fill (0,-2) circle (5pt);
\end{tikzpicture}
\\
\hline

\begin{tikzpicture}[scale=0.9]
\strandbackgroundshading
\tstrandbackgroundshading{1}
\tstrandbackgroundshading{2}
\afterwused
\tbeforewused{1}
\taftervused{2}
\strandsetupn{}{}
\tstrandsetup{1}
\tstrandsetup{2}
\tstrandsetup{3}
\lefton
\rightoff
\trighton{1}
\trightoff{2}
\usea
\tuseb{1}
\tusec{2}
\end{tikzpicture}
&
\begin{tikzpicture}[scale=0.55]
\draw [draw=none] (0,0.2) circle (3pt);
\draw (0,0) -- (-1,-1);
\draw (0,0) -- (1,-1);
\draw (-1,-1) -- (-2,-2);
\draw (-1,-1) -- (0,-2);
\fill [draw=black!30!green, fill=black!30!green] (0,0) circle (5pt);
\fill [draw=blue, fill=blue] (-1,-1) circle (5pt);
\fill (1,-1) circle (5pt);
\fill (-2,-2) circle (5pt);
\fill (0,-2) circle (5pt);
\end{tikzpicture}
&
\begin{tikzpicture}[scale=0.55]
\draw [draw=none] (0,0.2) circle (5pt);
\draw (0,0) -- (-1,-1);
\draw (0,0) -- (1,-1);
\draw (1,-1) -- (0,-2);
\draw (1,-1) -- (2,-2);
\fill [draw=black!30!green, fill=black!30!green] (0,0) circle (5pt);
\fill (-1,-1) circle (5pt);
\fill [draw=red, fill=red] (1,-1) circle (5pt);
\fill (0,-2) circle (5pt);
\fill (2,-2) circle (5pt);
\end{tikzpicture}
\end{tabular}

\caption{Validity of operation trees on sesqui-occupied local critical tensor products. Red, green, blue, black vertices respectively indicate singular, critical, twisted and tight labels.}
\label{tbl:sesqui-occupied_validity}
\end{center}
\end{table}

\begin{table}
\begin{center}

\begin{tabular}{>{\centering\arraybackslash}m{2.7 cm}|>{\centering\arraybackslash}m{5.5cm}|>{\centering\arraybackslash}m{5.5cm}} 

$M$ & Valid operation trees & Invalid operation trees \\
\hline
\flushleft
\begin{tikzpicture}[xscale=0.6, yscale=0.8]
\strandbackgroundshading
\tstrandbackgroundshading{1}
\tstrandbackgroundshading{2}
\tstrandbackgroundshading{3}
\beforewused
\taftervused{1}
\tbeforevused{2}
\tafterwused{3}
\strandsetupn{}{}
\tstrandsetup{1}
\tstrandsetup{2}
\tstrandsetup{3}
\tstrandsetup{4}
\leftoff
\righton
\trightoff{1}
\trighton{2}
\trightoff{3}
\useb
\tusec{1}
\tused{2}
\tusea{3}
\end{tikzpicture}
&
\begin{tikzpicture}[scale=0.35]
\draw [draw=none] (0,0.2) circle (6pt);
\draw (0,0) -- (-1,-1);
\draw (0,0) -- (1,-1);
\draw (-1,-1) -- (-2,-2);
\draw (-1,-1) -- (0,-2);
\draw (0,-2) -- (-1,-3);
\draw (0,-2) -- (1,-3);
\fill [draw=black!30!green, fill=black!30!green] (0,0) circle (6pt);
\fill [draw=black!30!green, fill=black!30!green] (-1,-1) circle (6pt);
\fill (1,-1) circle (6pt);
\fill (-2,-2) circle (6pt);
\fill [draw=blue, fill=blue] (0,-2) circle (6pt);
\fill (-1,-3) circle (6pt);
\fill (1,-3) circle (6pt);
\end{tikzpicture}
\hspace{0.2 cm}
\begin{tikzpicture}[scale=0.35]
\draw [draw=none] (0,0.2) circle (6pt);
\draw (0,0) -- (-1,-1);
\draw (0,0) -- (1,-1);
\draw (1,-1) -- (0,-2);
\draw (1,-1) -- (2,-2);
\draw (0,-2) -- (-1,-3);
\draw (0,-2) -- (1,-3);
\fill [draw=black!30!green, fill=black!30!green] (0,0) circle (6pt);
\fill (-1,-1) circle (6pt);
\fill [draw=black!30!green, fill=black!30!green] (1,-1) circle (6pt);
\fill (0,-2) circle (6pt);
\fill [draw=blue, fill=blue] (2,-2) circle (6pt);
\fill (-1,-3) circle (6pt);
\fill (1,-3) circle (6pt);
\end{tikzpicture}
&
\begin{tikzpicture}[scale=0.35]
\draw [draw=none] (0,0.2) circle (6pt);
\draw (0,0) -- (-1,-1);
\draw (0,0) -- (1,-1);
\draw (-1,-1) -- (-2,-2);
\draw (-1,-1) -- (0,-2);
\draw (-2,-2) -- (-3,-3);
\draw (-2,-2) -- (-1,-3);
\fill [draw=black!30!green, fill=black!30!green] (0,0) circle (6pt);
\fill [draw=black!30!green, fill=black!30!green] (-1,-1) circle (6pt);
\fill (1,-1) circle (6pt);
\fill [draw=red, fill=red] (-2,-2) circle (6pt);
\fill (0,-2) circle (6pt);
\fill (-3,-3) circle (6pt);
\fill (-1,-3) circle (6pt);
\end{tikzpicture}
\hspace{0.1 cm}
\begin{tikzpicture}[scale=0.35]
\draw [draw=none] (0,0.2) circle (6pt);
\draw (0,0) -- (-1.2,-1);
\draw (0,0) -- (1.2,-1);
\draw (-1.2,-1) -- (-2.2,-2);
\draw (-1.2,-1) -- (-0.3,-2);
\draw (1.2,-1) -- (0.3,-2);
\draw (1.2,-1) -- (2.2,-2);
\fill [draw=black!30!green, fill=black!30!green] (0,0) circle (6pt);
\fill [draw=red, fill=red] (-1.2,-1) circle (6pt);
\fill [draw=red, fill=red] (1.2,-1) circle (6pt);
\fill (-2.2,-2) circle (6pt);
\fill (-0.3,-2) circle (6pt);
\fill (0.3,-2) circle (6pt);
\fill (2.2,-2) circle (6pt);
\end{tikzpicture}
\hspace{0.1 cm}
\begin{tikzpicture}[scale=0.35]
\draw [draw=none] (0,0.2) circle (6pt);
\draw (0,0) -- (-1,-1);
\draw (0,0) -- (1,-1);
\draw (1,-1) -- (0,-2);
\draw (1,-1) -- (2,-2);
\draw (2,-2) -- (1,-3);
\draw (2,-2) -- (3,-3);
\fill [draw=black!30!green, fill=black!30!green] (0,0) circle (6pt);
\fill (-1,-1) circle (6pt);
\fill [draw=black!30!green, fill=black!30!green] (1,-1) circle (6pt);
\fill (0,-2) circle (6pt);
\fill [draw=red, fill=red] (2,-2) circle (6pt);
\fill (1,-3) circle (6pt);
\fill (3,-3) circle (6pt);
\end{tikzpicture}
\\
\hline
\begin{tikzpicture}[xscale=0.6, yscale=0.8]
\strandbackgroundshading
\tstrandbackgroundshading{1}
\tstrandbackgroundshading{2}
\tstrandbackgroundshading{3}
\afterwused
\tbeforewused{1}
\taftervused{2}
\tbeforevused{3}
\strandsetupn{}{}
\tstrandsetup{1}
\tstrandsetup{2}
\tstrandsetup{3}
\tstrandsetup{4}
\lefton
\rightoff
\trighton{1}
\trightoff{2}
\trighton{3}
\usea
\tuseb{1}
\tusec{2}
\tused{3}
\end{tikzpicture}
&
\begin{tikzpicture}[scale=0.35]
\draw [draw=none] (0,0.2) circle (6pt);
\draw (0,0) -- (-1,-1);
\draw (0,0) -- (1,-1);
\draw (-1,-1) -- (-2,-2);
\draw (-1,-1) -- (0,-2);
\draw (-2,-2) -- (-3,-3);
\draw (-2,-2) -- (-1,-3);
\fill [draw=black!30!green, fill=black!30!green] (0,0) circle (6pt);
\fill [draw=black!30!green, fill=black!30!green] (-1,-1) circle (6pt);
\fill (1,-1) circle (6pt);
\fill [draw=blue, fill=blue] (-2,-2) circle (6pt);
\fill (0,-2) circle (6pt);
\fill (-3,-3) circle (6pt);
\fill (-1,-3) circle (6pt);
\end{tikzpicture}
\hspace{0.1 cm}
\begin{tikzpicture}[scale=0.35]
\draw [draw=none] (0,0.2) circle (6pt);
\draw (0,0) -- (-1.2,-1);
\draw (0,0) -- (1.2,-1);
\draw (-1.2,-1) -- (-2.2,-2);
\draw (-1.2,-1) -- (-0.3,-2);
\draw (1.2,-1) -- (0.3,-2);
\draw (1.2,-1) -- (2.2,-2);
\fill [draw=black!30!green, fill=black!30!green] (0,0) circle (6pt);
\fill [draw=blue, fill=blue] (-1.2,-1) circle (6pt);
\fill [draw=blue, fill=blue] (1.2,-1) circle (6pt);
\fill (-2.2,-2) circle (6pt);
\fill (-0.3,-2) circle (6pt);
\fill (0.3,-2) circle (6pt);
\fill (2.2,-2) circle (6pt);
\end{tikzpicture}
\hspace{0.1 cm}
\begin{tikzpicture}[scale=0.35]
\draw [draw=none] (0,0.2) circle (6pt);
\draw (0,0) -- (-1,-1);
\draw (0,0) -- (1,-1);
\draw (1,-1) -- (0,-2);
\draw (1,-1) -- (2,-2);
\draw (2,-2) -- (1,-3);
\draw (2,-2) -- (3,-3);
\fill [draw=black!30!green, fill=black!30!green] (0,0) circle (6pt);
\fill (-1,-1) circle (6pt);
\fill [draw=black!30!green, fill=black!30!green] (1,-1) circle (6pt);
\fill (0,-2) circle (6pt);
\fill [draw=blue, fill=blue] (2,-2) circle (6pt);
\fill (1,-3) circle (6pt);
\fill (3,-3) circle (6pt);
\end{tikzpicture}
&
\begin{tikzpicture}[scale=0.35]
\draw [draw=none] (0,0.2) circle (6pt);
\draw (0,0) -- (-1,-1);
\draw (0,0) -- (1,-1);
\draw (-1,-1) -- (-2,-2);
\draw (-1,-1) -- (0,-2);
\draw (0,-2) -- (-1,-3);
\draw (0,-2) -- (1,-3);
\fill [draw=black!30!green, fill=black!30!green] (0,0) circle (6pt);
\fill [draw=black!30!green, fill=black!30!green] (-1,-1) circle (6pt);
\fill (1,-1) circle (6pt);
\fill (-2,-2) circle (6pt);
\fill [draw=red, fill=red] (0,-2) circle (6pt);
\fill (-1,-3) circle (6pt);
\fill (1,-3) circle (6pt);
\end{tikzpicture}
\hspace{0.2 cm}
\begin{tikzpicture}[scale=0.35]
\draw [draw=none] (0,0.2) circle (6pt);
\draw (0,0) -- (-1,-1);
\draw (0,0) -- (1,-1);
\draw (1,-1) -- (0,-2);
\draw (1,-1) -- (2,-2);
\draw (0,-2) -- (-1,-3);
\draw (0,-2) -- (1,-3);
\fill [draw=black!30!green, fill=black!30!green] (0,0) circle (6pt);
\fill (-1,-1) circle (6pt);
\fill [draw=black!30!green, fill=black!30!green] (1,-1) circle (6pt);
\fill [draw=red, fill=red] (0,-2) circle (6pt);
\fill (2,-2) circle (6pt);
\fill (-1,-3) circle (6pt);
\fill (1,-3) circle (6pt);
\end{tikzpicture}
\end{tabular}

\caption{Validity of operation trees on doubly occupied local critical tensor products. Red, green, blue, black vertices respectively indicate singular, critical, twisted and tight labels.}
\label{tbl:doubly-occupied_validity}
\end{center}
\end{table}

Starting from the leaves of $\T$, which are all tight, we can climb $\T$, observing how tightness behaves as the (homology classes of) diagrams labelling the vertices are joined into tensor products.

We observe that whenever there is a singular or twisted vertex label, it occurs when two adjacent diagrams are joined into a singular tensor product. Also, we never see both a twisted vertex label and a singular vertex label. This leads to the following statement.
\begin{lem}
\label{lem:valid_twisted_vertex}
Let $\T$ be an operation tree for a viable tensor product of diagrams $M$. Then $\T$ is valid if and only if for every non-tight matched pair $P$ of $M$, $\T_P$ has a twisted vertex label.
\end{lem}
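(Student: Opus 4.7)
The strategy is to combine the local-to-global principle for validity with a case analysis according to the tightness of the local tensor product $M_P$ at each matched pair $P$.

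By lemma \ref{lem:local_validity}, $\T$ is valid if and only if for every matched pair $P$, the reduced local operation tree $\T_P$ is valid. Consequently it suffices to prove, for each matched pair $P$ individually, that $\T_P$ is valid if and only if either $P$ is tight in $M$, or $\T_P$ contains a twisted vertex label. This reduces the problem to classifying the validity of $\T_P$ based on the tightness of $M_P$, which by lemma \ref{lem:homology_tensor_tightness_local} is tight, twisted, critical, or singular.

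First I would dispatch the three easy cases. When $M_P$ is tight, every sub-tensor-product of $M_P$ is tight by lemma \ref{lem:local_tightness_homology_sub-tensor-product}, so every vertex label of $\T_P$ is tight (in particular non-singular), and $\T_P$ is automatically valid; no constraint is imposed at such $P$, in agreement with the lemma. When $M_P$ is twisted, the same lemma shows every sub-tensor-product of $M_P$ is tight or twisted, so $\T_P$ is valid, and the root of $\T_P$, labelled by $M_P$ itself, is a twisted vertex. When $M_P$ is singular, lemma \ref{lem:singular_structure} forces $M_P$ (after extension) to have exactly two $P$-active factors, both tight; therefore $\T_P$ has only those two leaves joined at a single internal vertex (its root), labelled by the singular $M_P$. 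Thus $\T_P$ is invalid and contains no twisted vertex, so the biconditional fails on both sides at $P$.

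The main case is $M_P$ critical. Here I would appeal to the enumeration already displayed in tables \ref{tbl:sesqui-occupied_validity} and \ref{tbl:doubly-occupied_validity}. By proposition \ref{prop:homology_tensor_product_classification} and lemma \ref{lem:homology_contractions}, a critical $M_P$ is an extension of one of the sesqui-occupied or doubly-occupied tensor products shown in the critical column of table \ref{tbl:local_tensor_products}, so $\T_P$ has either 3 or 4 leaves. Since the number of planar rooted binary trees on 3 (resp.\ 4) ordered leaves is $C_2 = 2$ (resp.\ $C_3 = 5$), validity reduces to inspecting a small finite collection of labelled trees. In each of the three subcases (sesqui-occupied, $00$ doubly occupied, $11$ doubly occupied) the valid trees are exactly those in which some internal vertex is labelled by a twisted tensor product; the invalid trees contain a singular vertex but no twisted vertex. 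This is precisely the content of tables \ref{tbl:sesqui-occupied_validity} and \ref{tbl:doubly-occupied_validity}. Combining the four cases yields the biconditional.

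The only real obstacle is the critical case, and it is handled by the finite case check laid out in those tables; the rest of the proof is essentially a bookkeeping exercise using the local-to-global principle and the structural lemmas for twisted and singular local tensor products.
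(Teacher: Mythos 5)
Your proof is correct and follows essentially the same route as the paper: reduce to the reduced local trees $\T_P$ via lemma \ref{lem:local_validity}, handle the tight and twisted cases by the sub-tensor-product tightness lemma, dispose of the singular case via the structure of singular local tensor products (two tight active factors, singular root, hence invalid with no twisted label), and settle the critical case by the finite check recorded in tables \ref{tbl:sesqui-occupied_validity} and \ref{tbl:doubly-occupied_validity}. No gaps.
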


\begin{proof}
By lemma \ref{lem:local_validity}, the validity of $\T$ is equivalent to the the validity of all the $\T_P$. Since $M$ is viable, at each matched pair $M$ is tight, twisted, critical or singular. As discussed above, if $M_P$ is tight at $P$ then $\T_P$ is valid. So it remains to check that when $M_P$ is twisted, critical, or singular, $\T_P$ is valid if and only if $\T_P$ has a twisted vertex label.

If $M_P$ is twisted then again $\T_P$ is valid, and moreover $\T_P$ has a twisted vertex label at the root. 
If $M_P$ is critical then, from tables \ref{tbl:sesqui-occupied_validity} and \ref{tbl:doubly-occupied_validity}, $\T_P$ is valid if and only if there is a twisted vertex label. And if $M$ is singular, then $\T_P$ is invalid, and moreover $M_P$ must be an extension of the singular example in table \ref{tbl:local_tensor_products} (lemma \ref{lem:homology_contractions}), so $\T_P$ must be the unique rooted binary planar tree with two leaves; the two leaf labels are tight, and the root label is singular, so there is no twisted vertex label. Thus in each case $\T_P$ is valid if and only if it has a twisted vertex label.
\end{proof}

\subsection{Strong validity}
\label{sec:strong_validity}

We saw above that when $M$ is valid, then at every non-tight $P = \{p,p'\}$, the reduced local operation tree $\T_P$ has a twisted vertex label. But in fact, in almost every case, there is \emph{precisely} one twisted vertex label. The only exception is when $M_P$ is 11 doubly occupied and critical (i.e. the second row of table \ref{tbl:doubly-occupied_validity}), and $\T_P$ is the unique rooted planar binary tree of depth 2 (i.e. the second valid operation tree shown). This particular operation tree can lead to the multiplication of a diagram crossed at $p$, with a diagram crossed at $p'$, producing a diagram in $\F$. To avoid it, we introduce a ``strong" form of validity.

\begin{lem}
\label{lem:strong_validity_conditions}
Let $\T$ be an operation tree for $M$. The following are equivalent.
\begin{enumerate}
\item
For every non-tight matched pair $P$ of $M$, there is a unique lowest vertex of $\T$ among those whose label is twisted at $P$.
\item
The operation tree $\T$ is valid, and for each non-tight matched pair $P$ of $M$, there is a unique lowest vertex of $\T$ among those whose label is not tight at $P$.
\item
For each non-tight matched pair $P$ of $M$, there is a unique lowest vertex of $\widetilde{\T_P}$ among those whose label is twisted.
\item
For every non-tight matched pair $P$ of $M$, $\T_P$ has a unique twisted vertex label.
\end{enumerate}
\end{lem}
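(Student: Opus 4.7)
The plan is to split the four-way equivalence into three steps: (i)$\Leftrightarrow$(iii), (iii)$\Leftrightarrow$(iv), and (i)$\Leftrightarrow$(ii). I expect (i)$\Leftrightarrow$(iii) to be essentially a tautology, since $\widetilde{\T_P}$ has the same underlying plane tree and ancestor relation as $\T$, with every vertex's label replaced by its localisation at $P$; so a vertex is twisted at $P$ in $\T$ exactly when its image is twisted in $\widetilde{\T_P}$, and the assertion that a unique lowest such vertex exists transfers verbatim.

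For (iii)$\Leftrightarrow$(iv) I would analyse the natural reduction map $\widetilde{\T_P}\to\T_P$. Each vertex $u$ of $\widetilde{\T_P}$ maps to a unique surviving vertex $V(u)\in\T_P$: namely $u$ itself when $u$ is not smoothed, and otherwise the surviving descendant into which $u$ is eventually absorbed under the smoothing process. Since every smoothing step amounts to a trivial contraction against an idempotent factor at $P$, the label of $u$ coincides with the label of $V(u)$. The fibre above each $V\in\T_P$ is then a chain consisting of $V$ at the bottom and a (possibly empty) sequence of smoothed ancestors, all sharing $V$'s label. Consequently the twisted vertices of $\widetilde{\T_P}$ decompose as the disjoint union of fibres above twisted vertices of $\T_P$, and within each fibre $V$ is the unique lowest element. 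The final ingredient is that two distinct twisted vertices of $\T_P$ are never ancestor-descendant: a twisted local label covers exactly two of the four steps at $P$, and any $P$-active leaves lying strictly between them in $\T_P$ would contribute further step coverage to the ancestor and push its label out of the twisted class. Combining these observations turns "unique lowest twisted in $\widetilde{\T_P}$" into "unique twisted vertex of $\T_P$".

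For (i)$\Leftrightarrow$(ii) I would show that in a valid tree the lowest-twisted-at-$P$ and lowest-nontight-at-$P$ vertices coincide. Condition (i) already forces validity via lemma \ref{lem:valid_twisted_vertex}, since it supplies a twisted vertex at each non-tight $P$. Assuming validity, suppose $v$ is lowest non-tight at $P$; its two children carry tight-at-$P$ labels, so $(M_v)_P$ is a 2-factor local tensor product. Such a tensor product cannot be critical, since local critical tensor products have at least three factors (lemma \ref{lem:it_takes_3} applied to $\ZZ_P$, equivalently by inspection of the critical column of table \ref{tbl:local_tensor_products}), and validity rules out singularity; so $v$ is twisted and hence lowest twisted. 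Conversely if $v$ is lowest twisted, any descendant's label is a sub-tensor-product of the twisted $M_v$, covering at most two local steps at $P$ and so ruling out criticality; validity again rules out singularity, so descendants are tight at $P$ and $v$ is lowest non-tight. Thus conditions (i) and (ii) are identical modulo the validity clause, which (i) itself supplies.

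The main obstacle I anticipate is the bookkeeping in (iii)$\Leftrightarrow$(iv) — in particular, verifying carefully that the fibres of the reduction map $\widetilde{\T_P}\to\T_P$ are connected chains, that labels are constant along them (this is where triviality of contraction by idempotents is essential), and that twisted vertices of $\T_P$ form an antichain. Once that local picture is secured, the other equivalences reduce to unpacking definitions together with the single observation that tight-tensor-tight at a single pair cannot produce a critical label.
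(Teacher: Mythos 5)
Your proposal is correct and follows essentially the same route as the paper's proof: (i) and (iii) are treated as reformulations; (i) and (ii) are identified by showing that, granted validity (which (i) itself forces via lemma \ref{lem:valid_twisted_vertex}), the lowest vertex twisted at $P$ coincides with the lowest vertex non-tight at $P$ — the paper does this via lemma \ref{lem:local_tightness_homology_sub-tensor-product} and lemma \ref{lem:it_takes_3}, and your step-coverage version of the same facts (a twisted label covers exactly two steps, so descendants cannot be critical, and a critical parent with tight children would contract to a two-factor critical product) carries the same content; and (iii), (iv) are compared through the reduction $\widetilde{\T_P}\to\T_P$, where deletions and smoothings only remove or merge tight (idempotent-extended) labels and hence preserve the set of lowest twisted vertices, exactly as in the paper's step-by-step invariance argument. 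The one point where you genuinely diverge is the final step of (iii)$\Leftrightarrow$(iv): the paper reads off from the enumeration of reduced local trees in section \ref{sec:climbing_tree} (tables \ref{tbl:sesqui-occupied_validity} and \ref{tbl:doubly-occupied_validity}) that such a tree has a unique lowest twisted label iff it has a unique twisted label, whereas you prove directly that twisted labels in $\T_P$ form an antichain — a twisted local label covers exactly two steps, while a proper ancestor in the reduced tree has strictly more $P$-active leaves below it and so covers at least three — which is a clean, case-free substitute for the table check. Two small touch-ups: your reduction map is undefined on deleted (idempotent-labelled) leaves, but since those labels are tight this costs nothing; and in the converse direction of (i)$\Leftrightarrow$(ii) you should say explicitly that proper descendants of a lowest twisted vertex are non-twisted by the meaning of ``lowest'', which combined with your coverage and validity observations yields that they are tight.
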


\begin{defn}
The operation tree $\T$ is \emph{strongly valid} if the conditions of lemma \ref{lem:strong_validity_conditions} hold. 
\end{defn}

Comparing lemmas \ref{lem:valid_twisted_vertex} and \ref{lem:strong_validity_conditions}(iv), it is clear that strong validity implies validity.

\begin{proof}[Proof of lemma \ref{lem:strong_validity_conditions}]
First we show equivalence of (i) and (ii). If $\T$ is not valid, then (i) fails by lemma \ref{lem:valid_twisted_vertex}, and (ii) obviously fails. So assume $\T$ is valid. We show that a vertex $v$ of $\T$, with label $M_v$, is lowest among those with labels twisted at $P$, if and only if it is lowest among those with labels non-tight at $P$.

If $v$ is lowest among vertices with label twisted at $P$, then the children of $v$ have labels which are sub-tensor-products of $M_v$ non-twisted  at $P$. Hence by lemma \ref{lem:local_tightness_homology_sub-tensor-product} and table \ref{tbl:local_homology_sub-tensor-product_tightness}, the labels on these children are tight at $P$. All descendants of these children have tight labels at $P$ also, again by lemma \ref{lem:local_tightness_homology_sub-tensor-product} and table \ref{tbl:local_homology_sub-tensor-product_tightness}. So $v$ is lowest among vertices of $\T$ with labels non-tight at $P$.

Conversely, if $v$ is lowest among those with labels non-tight at $P$, then all descendants of $v$ have tight labels at $P$. 
Then $(M_v)_P$ is the tensor product of the tight labels of its children: it cannot be critical, by lemma \ref{lem:it_takes_3}, and cannot be singular, since $\T$ is valid. So $v$ is a lowest vertex among those with label twisted at $P$. Thus (i) and (ii) are equivalent.

Condition (iii) is just a reformulation of (i).

To see equivalence of (iii) and (iv), recall how $\T_P$ is obtained from $\widetilde{\T_P}$. If the label $M_v$ on a leaf $v$ of $\widetilde{\T_P}$ is idempotent, then we delete $v$ and its incident edge, delete $M_v$ from all labels, and smooth the resulting degree-2 vertex $w$. Since $M_v$ has only horizontal strands, deleting $M_v$ from a label yields a trivial contraction (definition \ref{def:trivial_contraction}), which does not change the tightness of the label.

Now $w$ has two children $v$ and $x$ in $\widetilde{\T_P}$. Since $M_v$ is tight (being an idempotent), and $M_w$ is an extension of $M_x$ (by the horizontal strands of $M_v$), so $M_w$ and $M_x$ have the same tightness. In particular, neither $v$ nor $w$ cannot be lowest among those with twisted label. After deleting $v$ and all instances of $M_v$ in labels, the label on $w$ is the same as the label on $x$. After smoothing $w$, every remaining vertex has children and descendants with twisted labels if and only if it had them in $\widetilde{\T_P}$. Thus any vertex which was lowest among those with twisted labels was not $v$ or $w$, so remains as a vertex, and remains lowest among those with twisted labels. So the set of lowest vertices with twisted labels is preserved. 

Repeating this process we eventually arrive at $\T_P$. So $\widetilde{\T_P}$ has a unique lowest vertex among those with twisted labels, if and only if the same is true for $\T_P$. From the examination of reduced local operation trees in section \ref{sec:climbing_tree}, we observe that a reduced local operation tree has a unique lowest vertex with twisted label if and only if it has a unique vertex with a twisted label. Thus (iii) and (iv) are equivalent.
\end{proof}

The above discussion also immediately implies the following.
\begin{lem}
\label{lem:valid_not_strongly}
Suppose $\T$ an operation tree for $M$ which is valid but not strongly valid. Then $M$ has a matched pair which is 11 doubly occupied and critical.
\qed
\end{lem}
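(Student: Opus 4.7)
The plan is to reduce the statement to a local computation at a single matched pair, and then read off the answer from the enumeration of reduced local operation trees already carried out in section \ref{sec:climbing_tree} and tables \ref{tbl:sesqui-occupied_validity}, \ref{tbl:doubly-occupied_validity}.

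First I would combine lemma \ref{lem:valid_twisted_vertex} with condition (iv) of lemma \ref{lem:strong_validity_conditions}. Validity of $\T$ means that for every non-tight matched pair $P$ of $M$, the reduced local operation tree $\T_P$ has at least one twisted vertex label; strong validity means there is exactly one such label. Therefore, if $\T$ is valid but not strongly valid, there must exist a non-tight matched pair $P$ of $M$ such that $\T_P$ contains at least two vertices whose label is twisted.

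Next I would fix such a $P$ and argue by cases on the tightness type of $M_P$. The tensor product $M_P$ is viable and non-tight, so it is twisted, critical, or singular at $P$. The singular case is excluded by validity of $\T$ (equivalently $\T_P$, via lemma \ref{lem:local_validity}). If $M_P$ is twisted, then by proposition \ref{prop:homology_tensor_product_classification} and table \ref{tbl:local_tensor_products} it is (an extension-contraction of) a twisted product with only two $P$-active factors, so $\T_P$ has exactly two leaves and one internal (root) vertex; the leaves carry tight labels and the root carries the unique twisted label, contradicting the existence of two twisted vertex labels. So $M_P$ must be critical.

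It remains to inspect the critical cases listed in tables \ref{tbl:sesqui-occupied_validity} and \ref{tbl:doubly-occupied_validity}. For sesqui-occupied critical $M_P$, the unique valid reduced local tree has exactly one twisted (blue) vertex; for $00$ doubly occupied critical $M_P$, each of the two valid reduced local trees has exactly one twisted vertex; but for $11$ doubly occupied critical $M_P$, one of the three valid reduced local trees --- the symmetric depth-$2$ tree in which both internal non-root vertices carry twisted labels --- has two twisted vertex labels. This is the only possibility consistent with having at least two twisted labels in $\T_P$, so $P$ must be $11$ doubly occupied and critical for $M$, completing the proof. The only real work is the case analysis, which is already tabulated; no new computation is needed.
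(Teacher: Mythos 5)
Your proof is correct and follows essentially the same route as the paper: the lemma is stated there as an immediate consequence of the discussion opening section \ref{sec:strong_validity}, which rests on exactly the ingredients you use — lemma \ref{lem:valid_twisted_vertex}, the characterisation (iv) of lemma \ref{lem:strong_validity_conditions}, and the enumeration of reduced local operation trees in tables \ref{tbl:sesqui-occupied_validity} and \ref{tbl:doubly-occupied_validity}, where the only configuration admitting two twisted labels is the depth-$2$ tree over an 11 doubly occupied critical pair. Your write-up merely makes the paper's implicit case analysis (twisted versus critical local tensor products) explicit, which is fine.
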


By lemma \ref{lem:strong_validity_conditions}(i), the following map is well defined.
\begin{defn}
\label{def:pairs_to_vertices}
Let $\T$ be a strongly valid operation tree for $M$. The function
\[
V_\T \colon \{ \text{Non-tight matched pairs of $M$} \} \To \{ \text{Non-leaf vertices of $\T$} \}
\]
sends a matched pair $P$ to the lowest vertex of $\T$ whose label is twisted at $P$.
\end{defn}

By the argument in the proof of lemma \ref{lem:strong_validity_conditions} (that (i) and (ii) are equivalent), $V_\T (P)$ is also the lowest vertex of $\T$ whose label is not tight at $P$.

\begin{lem}
\label{lem:strongly_valid_non-tight_ancestors}
Let $\T$ be a strongly valid operation tree for $M$, and let $P$ be a non-tight matched pair of $M$. Then the vertices of $\T$ whose labels are non-tight at $P$ are precisely $V_\T (P)$ and its ancestors.
\end{lem}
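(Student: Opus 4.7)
\medskip

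\noindent\textbf{Proof plan.} The plan is to establish the set equality by proving both inclusions. Let $S_P$ denote the set of vertices $v$ of $\T$ with label $M_v$ non-tight at $P$, and let $A = \{V_\T(P)\} \cup \{\text{proper ancestors of } V_\T(P)\}$. I will show $A \subseteq S_P$ and $S_P \subseteq A$.

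For the inclusion $A \subseteq S_P$, first note $V_\T(P) \in S_P$ immediately from definition \ref{def:pairs_to_vertices}, since its label is twisted (hence non-tight) at $P$. For any proper ancestor $u$ of $V_\T(P)$, the label $M_u$ contains $M_{V_\T(P)}$ as a sub-tensor-product. Localising at $P$, $(M_{V_\T(P)})_P$ is a sub-tensor-product of $(M_u)_P$ that is twisted at $P$. By lemma \ref{lem:local_tightness_homology_sub-tensor-product} and table \ref{tbl:local_homology_sub-tensor-product_tightness}, if the sub-tensor-product is twisted, the ambient tensor product must be twisted or critical, hence non-tight. Thus $u \in S_P$.

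For the inclusion $S_P \subseteq A$, I use a descent argument on the subtree below a given $v \in S_P$. If no child of $v$ lies in $S_P$, then $v$ itself is a lowest vertex of $\T$ among those with non-tight label at $P$; by condition (ii) of lemma \ref{lem:strong_validity_conditions} (strong validity), this lowest vertex is unique and equals $V_\T(P)$, so $v = V_\T(P) \in A$. Otherwise, pick a child $v'$ of $v$ with $v' \in S_P$ and repeat with $v'$ in place of $v$. Because $\T$ is finite and we strictly descend, this process terminates at some vertex $w \in S_P$ all of whose children have tight label at $P$; as before, $w = V_\T(P)$. Since $w$ was reached by descending from $v$ via children, $V_\T(P) = w$ is a descendant of (or equal to) $v$, so $v$ is $V_\T(P)$ or one of its ancestors, i.e.\ $v \in A$.

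The only non-routine ingredient is the ``super-tensor-product preserves non-tightness at $P$'' fact used in $A \subseteq S_P$, which is an immediate reading of table \ref{tbl:local_homology_sub-tensor-product_tightness}; everything else is bookkeeping about a tree together with the unique-lowest-vertex property bundled into the definition of strong validity. I do not anticipate a substantive obstacle.
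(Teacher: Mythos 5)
Your proof is correct and follows essentially the same route as the paper's: the forward inclusion via sub-tensor-product tightness monotonicity (lemma \ref{lem:local_tightness_homology_sub-tensor-product}), and the reverse inclusion by descending to a lowest vertex with non-tight label and invoking the uniqueness built into strong validity (lemma \ref{lem:strong_validity_conditions} and the remark after definition \ref{def:pairs_to_vertices}).
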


\begin{proof}
Let the label on $V_\T (P)$ be $M'$. If $v$ is an ancestor of $V_\T (P)$, labelled $M_v$, then $M'$ is a sub-tensor-product of $M_v$. As $M'$ is not tight at $P$, by lemma \ref{lem:local_tightness_homology_sub-tensor-product} $M_v$ is not tight at $P$.

Conversely, suppose a vertex $v_0$ of $\T$ has label non-tight at $P$. Either $v_0$ is a lowest such vertex, or $v_0$ has a child $v_1$ whose label is also not tight at $P$. If the latter, then $v_1$ is either a lowest such vertex, or has a child whose label is non-tight at $P$. In this way, we eventually arrive at a descendant $v_*$ of $v_0$, which is lowest amongst those whose labels are not tight at $P$. By the comment after definition \ref{def:pairs_to_vertices} then $v_* = V_\T (P)$, so $v_0$ is $V_\T (P)$ or one of its ancestors.
\end{proof}

Strong validity shares many of the properties of validity. The following results generalise lemmas \ref{lem:local_validity} and \ref{lem:subtrees_valid}.
\begin{lem}
\label{lem:strongly_valid_local-global}
Let $\T$ be an operation tree. The following are equivalent:
\begin{enumerate}
\item 
$\T$ is strongly valid.
\item 
For all matched pairs $P$, the local operation tree $\widetilde{\T_P}$ is strongly valid.
\item
For all matched pairs $P$, the reduced local operation tree $\T_P$ is strongly valid.
\end{enumerate}
\end{lem}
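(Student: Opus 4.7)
The plan is to show all three conditions are equivalent by reducing each one to the characterisation of strong validity in terms of reduced local operation trees, namely condition (iv) of Lemma~\ref{lem:strong_validity_conditions}: an operation tree $\mathcal{S}$ for a tensor product $N$ is strongly valid iff for every non-tight matched pair $Q$ of $N$, the reduced local operation tree $\mathcal{S}_Q$ has a unique twisted vertex label.

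First I would observe the key structural fact: for any matched pair $P$ of $\ZZ$, we have $\T_P = (\widetilde{\T_P})_P$. This is because $\widetilde{\T_P}$ already has all non-$P$-active tensor factors replaced by their (idempotent) restrictions at $P$, so applying the reduction procedure at $P$ deletes exactly the leaves whose labels are idempotent, which are precisely the non-$P$-active leaves of $\T$; and similarly $(\T_P)_{P} = \T_P$, since $\T_P$ already has no idempotent leaves.

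Next, apply Lemma~\ref{lem:strong_validity_conditions}(iv) to the three situations. For (i), $\T$ being strongly valid is, by definition, equivalent to: for every non-tight matched pair $P$ of $M$, the reduced local operation tree $\T_P$ has a unique twisted vertex label. For (iii), $\T_P$ is an operation tree for $M_P$ on the single-pair arc diagram $\ZZ_P$; by Lemma~\ref{lem:strong_validity_conditions}(iv) applied to $\T_P$ (noting $(\T_P)_P = \T_P$), strong validity of $\T_P$ means that if $M_P$ is non-tight then $\T_P$ has a unique twisted vertex label. Since $M$ is tight at $P$ iff $M_P$ is tight by Lemma~\ref{lem:homology_tensor_tightness_local}, the condition ``for all $P$, $\T_P$ is strongly valid'' is literally the same condition as (i). This gives (i) $\Leftrightarrow$ (iii).

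For (ii) $\Leftrightarrow$ (iii), I would again invoke Lemma~\ref{lem:strong_validity_conditions}(iv), now applied to $\widetilde{\T_P}$: strong validity of $\widetilde{\T_P}$ means that for every non-tight matched pair $Q$ of $M_P$, the reduced local operation tree $(\widetilde{\T_P})_Q$ has a unique twisted vertex label. But $M_P$ lives on $\ZZ_P$, whose only matched pair is $P$, so the only $Q$ that can arise is $P$; and using the structural identity $(\widetilde{\T_P})_P = \T_P$ from the first step, this is exactly the same as strong validity of $\T_P$. The remaining point, that $M_P$ is non-tight iff $M$ is non-tight at $P$, is again Lemma~\ref{lem:homology_tensor_tightness_local}. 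No step should present a serious obstacle; the only care needed is in bookkeeping that the ``non-tight matched pair'' condition survives the passage from $M$ to $M_P$ and that reduction commutes with localisation, both of which are immediate from the constructions in Section~\ref{sec:local_trees}.
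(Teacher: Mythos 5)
Your proof is correct and follows essentially the paper's own route: the paper gets (i)$\Leftrightarrow$(iii) directly from the definition of strong validity via condition (iv) of lemma \ref{lem:strong_validity_conditions}, and (ii)$\Leftrightarrow$(iii) from the equivalence of characterisations (iii) and (iv) in that lemma (equivalently, from strong validity being preserved by the deletions turning $\widetilde{\T_P}$ into $\T_P$), which is exactly what your identities $(\widetilde{\T_P})_P = \T_P$ and $(\T_P)_P = \T_P$ unwind. The one small imprecision --- $\T_P$ is an operation tree for the tensor product of the $P$-active local factors rather than for $M_P$ itself --- is harmless, since deleting idempotent factors is a trivial contraction and does not change tightness, as you essentially note.
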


\begin{proof}
By definition (i) is equivalent to (iii). As discussed in the proof of lemma \ref{lem:strong_validity_conditions}, strong validity is preserved under the deletion operations which transform $\widetilde{\T_P}$ into $\T_P$, hence (ii) and (iii) are equivalent. (This also follows from the equivalence of characterisations (iii) and (iv) in lemma \ref{lem:strong_validity_conditions}.)
\end{proof}

\begin{lem}
\label{lem:subtree_strongly_valid}
Let $\T$ be a strongly valid operation tree for $M$, and let $v$ be a vertex of $\T$ labelled by $M_v$. Let $\T_v$ be the operation subtree of $\T$ below $v$. Then the following hold.
\begin{enumerate}
\item
$\T_v$ is a strongly valid operation tree for $M_v$.
\item
The function $V_{\T_v}$ is a restriction of the function $V_\T$.
\end{enumerate}
\end{lem}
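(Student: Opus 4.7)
\medskip

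\noindent\textbf{Proof plan.} The plan is to verify the characterisation (i) of strong validity from lemma \ref{lem:strong_validity_conditions} on $\T_v$, by pulling back to $\T$. Since $\T_v$ is obtained from $\T$ by restricting to $v$ and its descendants, with all labels inherited, $\T_v$ is automatically an operation tree for $M_v$. The main task is to locate, for each non-tight matched pair of $M_v$, a unique lowest twisted-labelled vertex in $\T_v$ and identify it with the corresponding vertex in $\T$.

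\medskip

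First I would fix a matched pair $P$ that is non-tight in $M_v$, and observe that $P$ is then also non-tight in $M$. This follows from lemma \ref{lem:local_tightness_homology_sub-tensor-product} (the local sub-tensor-product table \ref{tbl:local_homology_sub-tensor-product_tightness}): since $M_v$ is a sub-tensor-product of $M$, the local tensor product $(M_v)_P$ is a sub-tensor-product of $M_P$, so tightness of $M_P$ at $P$ would force tightness of $(M_v)_P$. Thus the strong validity of $\T$ applies at $P$, producing a vertex $w := V_\T(P)$.

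\medskip

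Next I would show $w$ is a vertex of $\T_v$. By lemma \ref{lem:strongly_valid_non-tight_ancestors}, the vertices of $\T$ with label non-tight at $P$ are exactly $w$ and its ancestors. The vertex $v$ has label $M_v$, which is non-tight at $P$, so $v$ is either $w$ itself or an ancestor of $w$; in either case $w$ lies in $\T_v$. Now I would argue $w$ is the unique lowest vertex of $\T_v$ labelled twisted at $P$. Since $\T_v$ is a subtree of $\T$, any vertex of $\T_v$ strictly below $w$ is strictly below $w$ in $\T$, and by the strong validity of $\T$ no such vertex carries a twisted label at $P$; so $w$ is a lowest twisted vertex. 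Conversely, any other vertex $w'$ in $\T_v$ with label twisted at $P$ is a vertex of $\T$ with twisted label at $P$, hence by lemma \ref{lem:strongly_valid_non-tight_ancestors} must be $w$ or an ancestor of $w$, and an ancestor cannot be lowest because $w$ itself is below it with twisted label. This gives uniqueness, establishing strong validity of $\T_v$ via condition (i) of lemma \ref{lem:strong_validity_conditions}, and simultaneously proves $V_{\T_v}(P) = w = V_\T(P)$, yielding (ii).

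\medskip

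There is no serious obstacle here; the argument is essentially bookkeeping about what ``lowest'' means once one transfers from $\T$ to its subtree. The only care needed is to invoke lemma \ref{lem:strongly_valid_non-tight_ancestors} in both directions (to locate $w$ inside $\T_v$, and to exclude competing twisted vertices above $w$), and to confirm via lemma \ref{lem:local_tightness_homology_sub-tensor-product} that non-tightness at $P$ is inherited upward from the sub-tensor-product $M_v$ to $M$, so that the strong validity hypothesis on $\T$ is actually applicable.
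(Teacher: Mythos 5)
Your proof is correct, but it takes a different route from the paper's. You argue globally in $\T$, using characterisation (i) of lemma \ref{lem:strong_validity_conditions} together with lemma \ref{lem:strongly_valid_non-tight_ancestors}: once you know $P$ is non-tight in $M$ (via lemma \ref{lem:local_tightness_homology_sub-tensor-product}), the set of vertices of $\T$ non-tight at $P$ is exactly the chain consisting of $V_\T(P)$ and its ancestors, and since $v$ lies on that chain, $w = V_\T(P)$ sits inside $\T_v$ and is automatically the unique lowest twisted-at-$P$ vertex there; this gives (i) and (ii) simultaneously. The paper instead passes to the local operation trees $\widetilde{\T_P}$ and $\widetilde{(\T_v)_P}$ and uses characterisation (iii): it first establishes that $(M_v)_P$ is twisted or critical (via validity of $\T_v$ from lemma \ref{lem:subtrees_valid} and the local-global lemmas), then invokes lemma \ref{lem:local_validity} and lemma \ref{lem:valid_twisted_vertex} to produce a twisted vertex in $\widetilde{(\T_v)_P}$, and finally matches lowest twisted vertices between the local subtree and the local tree. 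Your approach buys economy: existence of the twisted vertex in the subtree comes for free from the ancestors lemma rather than from a separate validity argument, and no localisation is needed; validity of $\T_v$ then falls out of the equivalence in lemma \ref{lem:strong_validity_conditions} rather than being an input. The paper's local argument, on the other hand, stays closer in style to the surrounding local-to-global lemmas (e.g.\ lemma \ref{lem:strongly_valid_local-global}) and reuses that machinery. The one point to state carefully in your version is what ``lowest'' means for competing twisted vertices above $w$ inside $\T_v$ (they all lie on the ancestor chain of $w$, so $w$ is the unique minimal one), which you do handle.
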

Note that $M_v$ is a sub-tensor-product of $M$, so by lemma \ref{lem:tightness_homology_sub-tensor-product}, a matched pair which is non-tight in $M_v$ is also non-tight in $M$. Hence the domain of $V_{\T_v}$ is a subset of the domain of $V_\T$, so the assertion of (ii) makes sense.

\begin{proof}
Let $P$ be a matched pair, and consider the local operation trees $\widetilde{\T_P}$ for $M_P$, and $\widetilde{(\T_v)_P}$ for $(M_v)_P$. To prove (i), we show that if $(M_v)_P$ is not tight, then $\widetilde{(\T_v)_P}$ has a unique lowest vertex with twisted label (lemma \ref{lem:strong_validity_conditions}(iii)); and to prove (ii), we show that this vertex is also the unique lowest vertex with twisted label in $\widetilde{\T_P}$.

So suppose $(M_v)_P$ is not tight. It is also not singular: as $\T$ is strongly valid, $\T$ is valid, so by lemma \ref{lem:subtrees_valid} $\T_v$ is valid; hence $M_v$ is non-singular, so $(M_v)_P$ is also non-singular (lemma \ref{lem:homology_tensor_tightness_local}(iv)). Thus $(M_v)_P$ is twisted or critical. 
By lemma \ref{lem:local_validity}(ii), $\widetilde{(\T_v)_P}$ is valid; being an operation tree for the non-tight $(M_v)_P$, by lemma \ref{lem:valid_twisted_vertex}, $\widetilde{(\T_v)_P}$ has a vertex with a twisted label. 

Now $\widetilde{(\T_v)_P}$ is the operation subtree of $\widetilde{\T_P}$ below $v$, with the the same vertex labels, consisting of everything in $\widetilde{\T_P}$ from $v$ down. Thus, any lowest vertex with twisted label in $\widetilde{(\T_v)_P}$ is also a lowest vertex in $\widetilde{\T_P}$ with twisted label. As $(M_v)_P$ is twisted or critical, and is a sub-tensor-product of $M_P$, then $M_P$ is also twisted or critical (lemma \ref{lem:local_tightness_homology_sub-tensor-product}). By strong validity of $\T$ and lemma \ref{lem:strong_validity_conditions}(iii), there is a unique lowest vertex in $\widetilde{\T_P}$ with twisted label. As $\widetilde{(\T_v)_P}$ has a vertex with twisted label, the unique lowest vertex in $\widetilde{\T_P}$ with twisted label lies in $\widetilde{(\T_v)_P}$, and it is also the unique lowest vertex in $\widetilde{(\T_v)_P}$ with twisted label.
\end{proof}

Finally, strong validity implies the following nice separation property of non-tight matched pairs.
\begin{lem}
\label{lem:disjoint_subtrees_tightness}
Let $\T$ be a strongly valid operation tree. Let $v, w$ be vertices of $\T$, with labels $M_v, M_w$ respectively, such that the operation subtrees $\T_v, \T_w$ below $v,w$ are disjoint.

For any matched pair $P$, at least one of $M_v, M_w$ is tight at $P$.
\end{lem}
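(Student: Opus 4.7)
The plan is to argue by contradiction, reducing to the fact that the set of vertices of $\T$ whose labels are non-tight at $P$ forms a chain (totally ordered by descent) under the ancestor relation. Concretely, I would suppose both $M_v$ and $M_w$ are non-tight at $P$, and deduce from the structure provided by lemma \ref{lem:strongly_valid_non-tight_ancestors} that $v$ and $w$ must lie on a single root-to-$V_\T(P)$ path, so that $\T_v$ and $\T_w$ cannot be disjoint.

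First I would observe that if $M_v$ is not tight at $P$, then $(M_v)_P$ is not tight, and since $(M_v)_P$ is a sub-tensor-product of $M_P$, lemma \ref{lem:local_tightness_homology_sub-tensor-product} (reading off table \ref{tbl:local_homology_sub-tensor-product_tightness}, where tight/tight is the only entry with tight $M$) forces $M_P$ also not to be tight; thus $P$ is a non-tight matched pair of $M$, so $V_\T(P)$ is defined. The same applies to $w$. Applying lemma \ref{lem:strongly_valid_non-tight_ancestors} to the strongly valid tree $\T$ at the pair $P$, both $v$ and $w$ must equal $V_\T(P)$ or be ancestors of $V_\T(P)$.

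Since the ancestors of any vertex of a rooted tree are totally ordered by the descent relation, $v$ and $w$ are comparable: either $v=w$, or one lies on the unique path from the other to the root. In every case, one of $\T_v,\T_w$ is contained in the other, so they share a vertex (in fact share a whole subtree), contradicting disjointness. Hence at least one of $M_v, M_w$ is tight at $P$. The argument is really a one-line observation once strong validity is unpacked through lemma \ref{lem:strongly_valid_non-tight_ancestors}; there is no serious obstacle, only the mild bookkeeping of passing from non-tightness of a sub-tensor-product to non-tightness of $M$ at $P$ so that $V_\T(P)$ is available.
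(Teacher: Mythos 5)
Your proof is correct. The reduction to "$P$ is a non-tight pair of $M$" via lemma \ref{lem:local_tightness_homology_sub-tensor-product} is the right bookkeeping step (it makes $V_\T(P)$ available), and lemma \ref{lem:strongly_valid_non-tight_ancestors} then does exactly what you want: the vertices with labels non-tight at $P$ form the chain consisting of $V_\T(P)$ and its ancestors, so $v$ and $w$ are comparable and $\T_v,\T_w$ cannot be disjoint.

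The paper takes a slightly different route to the same contradiction. Instead of invoking the global chain structure, it localises to the two subtrees: by lemma \ref{lem:subtree_strongly_valid}, $\T_v$ and $\T_w$ are themselves strongly valid, so each of the local trees $\widetilde{(\T_v)_P}$ and $\widetilde{(\T_w)_P}$ has a (unique) lowest vertex with twisted label; since the subtrees are disjoint, these give two distinct lowest vertices with twisted labels in $\widetilde{\T_P}$, contradicting characterisation (iii) of strong validity in lemma \ref{lem:strong_validity_conditions}. Both arguments ultimately rest on the same uniqueness statement built into strong validity, but yours goes through the already-packaged lemma \ref{lem:strongly_valid_non-tight_ancestors} and a purely combinatorial "ancestors form a chain" observation, which arguably makes the tree-theoretic content more transparent; the paper's version avoids needing that lemma and instead exercises the heredity of strong validity under passing to subtrees, which it reuses elsewhere. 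Either proof is acceptable as written.
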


The disjointness of $\T_v, \T_w$ is equivalent to neither of $v,w$ being a descendant of the other.
\begin{proof}
Suppose to the contrary that both $(M_v)_P, (M_w)_P$ are not tight. By lemma \ref{lem:subtree_strongly_valid}, $\T_v$ and $\T_w$ are strongly valid, so there is a unique lowest vertex $x_v$ in $\widetilde{(\T_v)_P}$ with twisted label, and a unique lowest vertex $x_w$ in $\widetilde{(\T_w)_P}$ with twisted label. But then $x_v, x_w$ are two distinct vertices of $\widetilde{\T_P}$ which are lowest vertices with twisted labels, contradicting strong validity of $\T$.
\end{proof}


\subsection{Transplantation and branch shifts}
\label{sec:transplantation}

We now define two further methods to modify operation trees.

The first method, \emph{transplantation}, replaces an operation subtree (definition \ref{def:subtree_below}) with another tree.
\begin{defn}
\label{def:transplant}
Let $\T$ be an operation tree, and let $\T_v$ be the operation subtree below a non-root vertex $v$, labelled $M'$. Let $\T'$ be another operation tree for $M'$. Then removing $\T_v$ from $\T$ and replacing it with $\T'$ gives an operation tree $\mathcal{U}$. We say $\mathcal{U}$ is obtained from $\T$ by \emph{transplanting} $\T'$ for $\T_v$.
\end{defn}
It is easily verified $\mathcal{U}$ is in fact an operation tree; viability of labels in $\T$ and $\T'$ implies viability of labels in $\mathcal{U}$. If $\T$ is an operation tree for $M$, then $\mathcal{U}$ is also an operation tree for $M$. So $\T$ and $\mathcal{U}$ describe operations on the same inputs, but the operations under $v$ are rearranged.

Note that transplantation is quite different from grafting (section \ref{sec:join_graft}). Grafting adds to an operation tree below a leaf, while transplantation replaces part of an operation tree. Grafting adds new leaves with new labels, requiring relabelling throughout the tree, while leaf labels are unchanged under transplantation.

\begin{lem}
\label{lem:transplant_valid}
Suppose $\U$ is obtained from $\T$ by transplanting $\T'$ for $\T_v$.
\begin{enumerate}
\item If $\T$ and $\T'$ are valid, then $\U$ is also valid.
\item If $\T$ and $\T'$ are strongly valid, then $\U$ is also strongly valid.
\end{enumerate}
\end{lem}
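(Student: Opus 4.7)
Part (i) is essentially immediate. Every vertex of $\U$ is either a vertex of $\T'$ (with its label from $\T'$) or a vertex of $\T \setminus \T_v$ (with its label from $\T$); indeed, the vertex at the position of $v$ is the root of $\T'$, which has the same label $M'$ as $v$. Since $\T$ and $\T'$ are valid, all such labels are non-singular, so $\U$ is valid.

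For (ii), I will verify characterisation (i) of lemma \ref{lem:strong_validity_conditions}: for every matched pair $P$ non-tight in the root label $M$ of $\U$, there is a unique lowest vertex of $\U$ with label twisted at $P$. Fix such a $P$, and split on the tightness of $M'$ at $P$.

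Suppose first that $M'$ is tight at $P$. Then $v$ is not a non-tight vertex of $\T$, so by lemma \ref{lem:strongly_valid_non-tight_ancestors} applied to $\T$, $V_\T(P)$ is neither $v$ nor a descendant of $v$; in particular $V_\T(P)$ lies outside $\T_v$, and so do all its ancestors (the remaining non-tight vertices of $\T$). Moreover, every label in $\T'$ is a sub-tensor-product of $M'$, hence tight at $P$ by lemma \ref{lem:tightness_homology_sub-tensor-product}. Thus the vertices of $\U$ with twisted label at $P$ are exactly those of $\T$ with twisted label at $P$, and their unique lowest element $V_\T(P)$ serves as the unique lowest twisted vertex in $\U$.

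Now suppose $M'$ is non-tight at $P$. Applying lemma \ref{lem:valid_twisted_vertex} to $\T'$ (which is valid) shows that $\T'$ has at least one vertex with twisted label at $P$, and by strong validity of $\T'$ this lowest vertex $V_{\T'}(P)$ is unique. In $\U$, the vertices with twisted label at $P$ consist of those inside $\T'$, together with those vertices of $\T$ that survive outside $\T_v$ and have twisted label at $P$. The latter, by lemma \ref{lem:strongly_valid_non-tight_ancestors} applied to $\T$ together with the fact that $v$ is non-tight at $P$ (so $v = V_\T(P)$ or $v$ is a strict ancestor of $V_\T(P)$), form a chain of strict ancestors of $v$ in $\U$. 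All such vertices sit strictly above the position of $v$, hence strictly above $V_{\T'}(P)$. Therefore $V_{\T'}(P)$ is the unique lowest vertex of $\U$ with twisted label at $P$. This establishes strong validity of $\U$.

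The main obstacle is simply the bookkeeping: one must track how transplantation affects the set of twisted vertices at each pair $P$, and the key tool is the characterisation of the non-tight vertices as a single chain above $V_\T(P)$ (lemma \ref{lem:strongly_valid_non-tight_ancestors}), which neatly decouples the contribution of $\T'$ from that of $\T \setminus \T_v$.
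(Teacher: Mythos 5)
Your proposal is correct and follows essentially the same route as the paper: part (i) is immediate because all labels of $\U$ are inherited from $\T$ or $\T'$, and part (ii) is the same case split (your condition ``$M'$ tight at $P$'' is exactly the paper's ``the lowest non-tight vertex $w$ of $\T$ lies outside $\T_v$''), using lemma \ref{lem:strongly_valid_non-tight_ancestors} to see that the non-tight vertices form a chain and gluing the contributions of $\T'$ and $\T\setminus\T_v$. The only cosmetic difference is that you verify characterisation (i) of lemma \ref{lem:strong_validity_conditions} (unique lowest \emph{twisted} vertex) where the paper verifies characterisation (ii) (validity plus unique lowest \emph{non-tight} vertex); both work.
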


\begin{proof}
All labels on vertices of $\U$ are inherited from $\T$ or $\T''$. If both $\T$ and $\T'$ are valid, then all labels are non-singular, so $\U$ is valid.

Now suppose $\T$ and $\T'$ are strongly valid. Let $M, M'$ be the labels on the root vertex of $\T$, and $v$, respectively, and let $P$ be a matched pair at which $M$ is not tight. By strong validity of $\T$, there is a unique vertex $w$ of $\T$ lowest among those with labels non-tight at $P$ (lemma \ref{lem:strong_validity_conditions}(ii)). Moreover, the vertices of $\T$ with labels non-tight at $P$ are precisely the ancestors of $w$ (lemma \ref{lem:strongly_valid_non-tight_ancestors}). 

If $w$ is not a vertex of $\T_v$, then $v$ is not an ancestor of $w$, so the label $M'$ of $v$ is tight at $P$. Every vertex label in $\T'$ is a sub-tensor-product of $M'$, hence tight at $P$ (lemma \ref{lem:local_tightness_homology_sub-tensor-product}). So the vertices of $\U$ with labels non-tight at $P$ are precisely the vertices of $\T$ with labels non-tight at $P$, and hence there is a unique lowest such vertex, namely $w$.

If $w$ is a vertex of $\T_v$, then $v$ is an ancestor of $w$, so the label $M'$ of $v$ is non-tight at $P$. Since $\T'$ is strongly valid, there is a unique lowest vertex $w'$ of $\T'$ with label non-tight at $P$ (lemma \ref{lem:strong_validity_conditions}(ii) again), and the set of vertices of $\T'$ whose labels are non-tight at $P$ are precisely the ancestors of $w'$ (lemma \ref{lem:strongly_valid_non-tight_ancestors} again). Thus in $\U$, the set of vertices whose labels are non-tight at $P$ are the ancestors of $w'$ in $\T'$, together with the ancestors of $v$ in $\T$ --- in other words, the ancestors of $w'$ in $\U$.

In any case, there is a unique vertex in $\U$ lowest among those with labels non-tight at $P$, so by lemma \ref{lem:strong_validity_conditions}(ii) once more, $\U$ is strongly valid.
\end{proof}

The second method, a \emph{branch shift}, rearranges an operation tree in a way corresponding to a modification $((AB)C) \leftrightarrow (A(BC))$.

Given an operation tree $\T$, denote the left and right children of the root vertex $v$ by $v_L$ and $v_R$, the left and right children of $v_L$ by $v_{LL}$ and $v_{LR}$, and generally for any word $w$ in $L$ and $R$, $v_w$ denotes the descendant of $v$ obtained by successively taking left or right children according to $w$ (if it exists). 

\begin{defn}
\label{defn:branch_shift}
The operation tree $\T'$ is defined by
\[
\T'_L = \T_{LL}, \quad
\T'_{RL} = \T_{LR}, \quad
\T'_{RR} = \T_R.
\]
We say the operation trees $\T$ and $\T'$ are related by a \emph{branch shift}.
\end{defn}
The vertex labels on $\T'$ are either inherited from $\T$, or determined by the fact that each vertex is labelled with the tensor product of its children's labels.

Let $\T_1, \T_2, \T_3$ respectively denote $\T_{LL}, \T_{LR}, \T_R$; let $N_1, N_2, N_3$ be the vertex labels on $v_{LL}, v_{LR}, v_R$ respectively; let the root vertex of $\T'$ be $v'$, and denote its vertices by $v'_w$ for words $w$ in $L$ and $R$. Then in $\T$, $v_L$ is labelled $N_1 \otimes N_2$; and in $\T'$, $v'_R$ is labelled $N_2 \otimes N_3$. The viability of labels in $\T$ ensures the viability of labels in $\T'$, so both $\T$ and $\T'$ are operation trees for $N = N_1 \otimes N_2 \otimes N_3$. Observe that upon reversing left and right, $\T$ is obtained from $\T'$ in the same way. See figure \ref{fig:branch_shift}.

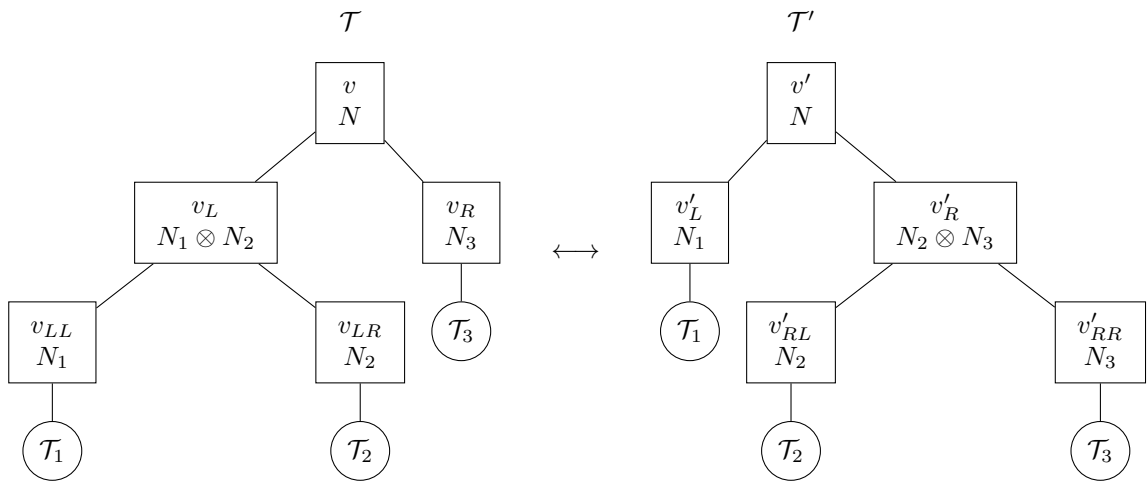
\begin{figure}
\begin{center}
\begin{tikzpicture}
\node[draw, rectangle] (root){$\begin{array}{c} v \\ N \end{array}$};
\node[above=0.3cm of root] {$\T$};
\node[draw, rectangle, below left=0.5cm and 0.5cm of root] (L) {$\begin{array}{c} v_L \\ N_1 \otimes N_2 \end{array}$} ;
\draw[-] (root) to (L);
\node [draw, rectangle, below right=0.5cm and 0.5cm of root] (R) {$\begin{array}{c} v_R \\ N_3 \end{array}$};
\draw[-] (root) to (R);
\node [draw, rectangle, below left=0.5cm and 0.5cm of L] (LL) {$\begin{array}{c} v_{LL} \\ N_1 \end{array}$};
\draw[-] (L) to (LL);
\node[draw, rectangle, below right=0.5cm and 0.5cm of L] (LR) {$\begin{array}{c} v_{LR} \\ N_2 \end{array}$};
\draw[-] (L) to (LR);
\node[draw, circle, below=0.5cm of LL] (T1) {$\T_1$};
\draw[-] (LL) to (T1);
\node[draw, circle, below=0.5cm of LR] (T2) {$\T_2$};
\draw[-] (LR) to (T2);
\node[draw, circle, below=0.5cm of R] (T3) {$\T_3$};
\draw[-] (R) to (T3);
\draw (3, -2) node {$\longleftrightarrow$};
\begin{scope}[xshift = 6cm]
\node[draw, rectangle] (root){$\begin{array}{c} v' \\ N \end{array}$};
\node[above=0.3cm of root] {$\T'$};
\node[draw, rectangle, below left=0.5cm and 0.5cm of root] (L) {$\begin{array}{c} v'_L \\ N_1 \end{array}$} ;
\draw[-] (root) to (L);
\node [draw, rectangle, below right=0.5cm and 0.5cm of root] (R) {$\begin{array}{c} v'_R \\ N_2 \otimes N_3 \end{array}$};
\draw[-] (root) to (R);
\node [draw, rectangle, below left=0.5cm and 0.5cm of R] (RL) {$\begin{array}{c} v'_{RL} \\ N_2 \end{array}$};
\draw[-] (R) to (RL);
\node[draw, rectangle, below right=0.5cm and 0.5cm of R] (RR) {$\begin{array}{c} v'_{RR} \\ N_3 \end{array}$};
\draw[-] (R) to (RR);
\node[draw, circle, below=0.5cm of L] (T1) {$\T_1$};
\draw[-] (L) to (T1);
\node[draw, circle, below=0.5cm of RL] (T2) {$\T_2$};
\draw[-] (RL) to (T2);
\node[draw, circle, below=0.5cm of RR] (T3) {$\T_3$};
\draw[-] (RR) to (T3);
\end{scope}

\end{tikzpicture}
\caption{A branch shift.}
\label{fig:branch_shift}
\end{center}
\end{figure}

All labels in $\T'$ appear in $\T$, with one exception. Thus if $\T$ is valid, then we only have one label to check for validity of $\T'$, giving the following.
\begin{lem} \
\begin{enumerate}
\item
Suppose $\T$ is valid. Then $\T'$ is valid if and only if $N_2 \otimes N_3$ is non-singular. 
\item
Suppose $\T'$ is valid. Then $\T$ is valid if and only if $N_1 \otimes N_2$ is non-singular.
\end{enumerate}
\qed
\end{lem}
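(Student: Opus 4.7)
The plan is simply to list the vertex labels of $\T$ and $\T'$ and observe they coincide except at one vertex of each, then invoke Definition \ref{def:valid}.

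First I would enumerate the vertices. By construction, $\T$ and $\T'$ both contain the three subtrees $\T_1, \T_2, \T_3$ as subtrees, with identical vertex labels inside them. In $\T$ these subtrees hang off vertices $v_{LL}, v_{LR}, v_R$ (which are their root vertices, labelled $N_1, N_2, N_3$ respectively); in $\T'$ they hang off $v'_L, v'_{RL}, v'_{RR}$ (again labelled $N_1, N_2, N_3$). Both trees have a root labelled $N = N_1 \otimes N_2 \otimes N_3$. The only discrepancy is at one intermediate vertex: $\T$ has the extra non-leaf vertex $v_L$ with label $N_1 \otimes N_2$, while $\T'$ has the extra non-leaf vertex $v'_R$ with label $N_2 \otimes N_3$.

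Since validity of an operation tree means that every vertex label is non-singular, the conditions characterising validity of $\T$ and $\T'$ differ only in one item. Specifically, both trees are valid exactly when $\T_1, \T_2, \T_3$ are valid and $N$ is non-singular, subject to the extra requirement "$N_1 \otimes N_2$ non-singular" for $\T$, or "$N_2 \otimes N_3$ non-singular" for $\T'$. (Note that non-singularity of $N_1, N_2, N_3$ themselves is automatic from validity of $\T_1, \T_2, \T_3$, since these are the labels of the roots of those subtrees.)

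From this, both parts are immediate: for (i), if $\T$ is valid then every condition needed for $\T'$ to be valid is already in force except possibly "$N_2 \otimes N_3$ non-singular", so $\T'$ is valid iff this extra condition holds; part (ii) is symmetric, swapping the roles of $N_1 \otimes N_2$ and $N_2 \otimes N_3$. There is no serious obstacle: the only care needed is a careful bookkeeping of vertices in Definition \ref{defn:branch_shift}, which I would make explicit by referring directly to Figure \ref{fig:branch_shift}.
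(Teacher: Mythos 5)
Your proposal is correct and matches the paper's own reasoning: the paper proves this lemma by exactly the observation that all vertex labels of $\T'$ coincide with those of $\T$ except at the single vertex labelled $N_1\otimes N_2$ (resp.\ $N_2\otimes N_3$), so validity reduces to checking that one label. Nothing further is needed.
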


\subsection{Strict distributivity}
\label{sec:strict_distributive}

We now strengthen our notion of distributivity (definition \ref{def:distributive}).

\begin{defn} \
\label{def:strictly_distributive}
Let $\T$ be a valid operation tree for $M = M_1 \otimes \cdots \otimes M_n$.
\begin{enumerate}
\item
Let $v$ be a vertex of $\T$ with $k$ leaves, labelled $M_v$. Then $v$ is \emph{strictly distributive} if there are exactly $k-1$ matched pairs at which $M_v$ is twisted or critical.
\item
The tree $\T$ is \emph{strictly $f$-distributive} if it is strictly distributive at each vertex.
\item
The tree $\T$ is \emph{strictly $X$-distributive} if it is strictly distributive at each non-root vertex, and there are precisely $n-2$ matched pairs at which $M$ is twisted or critical.
\end{enumerate}
\end{defn}
Recall distributivity (definition \ref{def:distributive}) at $v$ requires at least $k-2$ twisted or critical matched pairs at $v$; the strict requirement is that there are precisely $k-1$ such pairs. Note that definition \ref{def:strictly_distributive} requires $\T$ to be valid, so no labels are singular.

\begin{lem}
\label{lem:subtrees_f-distributive}
Let $\T$ be a valid strictly $f$- or $X$-distributive operation tree, and let $v$ be a non-root vertex. Then the operation subtree $\T_v$ of $\T$ below $v$ is strictly $f$-distributive.
\end{lem}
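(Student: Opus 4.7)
The statement is essentially a matter of checking that the local conditions defining strict distributivity are inherited when passing to the subtree, together with noting which vertex of $\T$ has the ``distinguished'' role.

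First I would recall that validity is automatically inherited by operation subtrees (lemma \ref{lem:subtrees_valid}), so $\T_v$ is valid in either case. The key point is then that the notion of ``strictly distributive at a vertex $w$'' depends only on two pieces of data: the label $M_w$, and the number $k$ of leaves lying below $w$. Both are intrinsic to the subtree rooted at $w$. In particular, if $w$ is any vertex of $\T_v$, then $w$ is also a vertex of $\T$, the operation subtree below $w$ is the same whether computed inside $\T$ or inside $\T_v$, and so $w$ carries the same label $M_w$ and the same number of leaves in $\T$ as in $\T_v$. Hence the strict distributivity condition at $w$ holds in $\T_v$ if and only if it holds in $\T$.

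Next I would split into the two cases. If $\T$ is strictly $f$-distributive, then every vertex of $\T$ is strictly distributive, so by the observation above every vertex of $\T_v$ is strictly distributive; thus $\T_v$ is strictly $f$-distributive. If $\T$ is strictly $X$-distributive, then every non-root vertex of $\T$ is strictly distributive. Since $v$ is assumed to be a non-root vertex of $\T$, the vertex $v$ itself (the root of $\T_v$) is strictly distributive in $\T$, and hence in $\T_v$. All other vertices of $\T_v$ are proper descendants of $v$, hence also non-root in $\T$, hence also strictly distributive. Therefore every vertex of $\T_v$ is strictly distributive, i.e.\ $\T_v$ is strictly $f$-distributive.

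\textbf{Main obstacle.} There is no real obstacle: the lemma is almost a bookkeeping statement. The only subtle point is noticing that in the strictly $X$-distributive case, the exception ``the root only needs $n-2$ twisted/critical matched pairs'' is precisely what forces the hypothesis that $v$ be \emph{non-root}; if $v$ were allowed to be the root of $\T$, then $v = \T_v$ might fail to be strictly $f$-distributive at its root. The hypothesis $v \neq \text{root}(\T)$ is exactly what is needed to eliminate this case.
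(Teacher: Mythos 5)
Your proof is correct and follows essentially the same route as the paper: validity of $\T_v$ comes from lemma \ref{lem:subtrees_valid}, and every vertex of $\T_v$ is a non-root vertex of $\T$ (since $v$ is non-root), hence strictly distributive in either the $f$- or $X$-distributive case. The paper's proof is just a one-line compression of your case analysis, and your remark about why the non-root hypothesis is needed matches the intent of the statement.
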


\begin{proof}
By lemma \ref{lem:subtrees_valid} $\T_v$ is valid, and every vertex of $\T_v$, being a non-root vertex of $\T$, is strictly distributive.
\end{proof}

Strict distributivity imposes strong conditions on the function $V_\T$ (definition \ref{def:pairs_to_vertices}). 
\begin{lem}
\label{lem:V_bijection}
Let $\T$ be an operation tree for $M = M_1 \otimes \cdots \otimes M_n$.
\begin{enumerate}
\item
If $\T$ is strongly valid and strictly $f$-distributive, then $V_\T$ is a bijection between non-tight matched pairs of $M$ and non-leaf vertices of $\T$.
\item
If $\T$ is strongly valid and strictly $X$-distributive, then $V_\T$ is a bijection between non-tight matched pairs of $M$ and non-leaf non-root vertices of $\T$.
\end{enumerate}
\end{lem}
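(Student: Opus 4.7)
The plan is a counting argument driven by strong validity and strict distributivity. Write $\mathrm{NT}(w)$ for the set of matched pairs at which the label $M_w$ is non-tight. Each leaf of $\T$ is labelled by a nonzero homology class, hence a tight diagram (tight at every matched pair), so $V_\T$ takes values among the non-leaf vertices. A rooted plane binary tree with $n$ leaves has $n-1$ non-leaf vertices and $n-2$ non-leaf non-root vertices. Under strict $f$-distributivity (resp.\ strict $X$-distributivity) the root satisfies $|\mathrm{NT}(M)| = n-1$ (resp.\ $n-2$). So in each case domain and target of $V_\T$ have matching finite cardinality, and it suffices to prove injectivity.

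Next I analyse the local structure at an arbitrary non-leaf vertex $v$ with children $v_1, v_2$ having $k_1, k_2$ leaves (so $k_1+k_2 = k$ leaves sit below $v$). Three ingredients combine: (a) the ``Tight'' row of table \ref{tbl:local_homology_sub-tensor-product_tightness} (via lemma \ref{lem:local_tightness_homology_sub-tensor-product}) shows that whenever $M_v$ is tight at $P$, so are $M_{v_1}$ and $M_{v_2}$, giving $\mathrm{NT}(v_1), \mathrm{NT}(v_2) \subseteq \mathrm{NT}(v)$; (b) lemma \ref{lem:disjoint_subtrees_tightness} applied to the disjoint subtrees $\T_{v_1}, \T_{v_2}$ (strongly valid by lemma \ref{lem:subtree_strongly_valid}) gives $\mathrm{NT}(v_1) \cap \mathrm{NT}(v_2) = \emptyset$; (c) the strict distributivity identity $|\mathrm{NT}(w)| = k_w - 1$ holds at every strictly distributive vertex $w$ with $k_w$ leaves, and trivially at leaves. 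Under strict $f$-distributivity we obtain $|\mathrm{NT}(v_1) \cup \mathrm{NT}(v_2)| = k-2$ and $|\mathrm{NT}(v)| = k-1$, so there is exactly one matched pair $P_v \in \mathrm{NT}(v) \setminus (\mathrm{NT}(v_1) \cup \mathrm{NT}(v_2))$.

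The pair $P_v$ is non-tight at $M_v$ but tight at every strict descendant of $v$ (applying (a) iteratively). Lemma \ref{lem:strongly_valid_non-tight_ancestors} inside the strongly valid subtree $\T_v$ then forces $V_{\T_v}(P_v) = v$, and lemma \ref{lem:subtree_strongly_valid}(ii) upgrades this to $V_\T(P_v) = v$; in particular $v$ carries a twisted (not merely non-tight) label at $P_v$. Conversely, if $V_\T(P) = v$ then lemma \ref{lem:strongly_valid_non-tight_ancestors} tells us all strict descendants of $v$ have tight label at $P$, so $P \notin \mathrm{NT}(v_1) \cup \mathrm{NT}(v_2)$, whence $P \in \mathrm{NT}(v) \setminus (\mathrm{NT}(v_1) \cup \mathrm{NT}(v_2)) = \{P_v\}$. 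Hence $V_\T^{-1}(v) = \{P_v\}$, proving injectivity and together with the cardinality count giving (i). For (ii) the same local argument applies at every non-root non-leaf vertex (strict distributivity still holds there; in particular at the root's children, so the formula $|\mathrm{NT}(v_i)| = k_i - 1$ is available). At the root $v_0$, strict $X$-distributivity gives $|\mathrm{NT}(v_0)| = n-2 = |\mathrm{NT}(v_1) \cup \mathrm{NT}(v_2)|$, forcing $\mathrm{NT}(v_0) = \mathrm{NT}(v_1) \cup \mathrm{NT}(v_2)$; no pair maps to $v_0$, so $V_\T$ bijects onto the non-leaf non-root vertices.

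The main subtlety is ensuring that $v$ actually bears a twisted (not critical) label at $P_v$, so the definition of $V_\T$ lands on $v$ rather than pushing it down further. This is exactly where strong validity of $\T_v$ does the work: it guarantees some twisted label exists in $\T_v$ at $P_v$, and since all strict descendants of $v$ are tight at $P_v$, that twisted label can only sit at $v$ itself. Everything else is bookkeeping on the inclusion-exclusion $|\mathrm{NT}(v_1) \cup \mathrm{NT}(v_2)| = (k_1-1) + (k_2-1)$ made possible by the disjoint-subtrees lemma.
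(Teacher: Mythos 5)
Your proof is correct; the gap you worry about (landing on a twisted rather than merely non-tight label at $v$) is indeed closed by strong validity of $\T_v$ together with lemma \ref{lem:strongly_valid_non-tight_ancestors}, exactly as you argue. The difference from the paper is organizational rather than substantive: the paper proves the lemma by induction on $n$, letting the inductive hypothesis supply bijections for the subtrees below the root's two children (which are restrictions of $V_\T$ by lemma \ref{lem:subtree_strongly_valid}(ii)), and then runs your count — disjointness of the children's non-tight pairs via lemma \ref{lem:disjoint_subtrees_tightness} against the strict-distributivity cardinalities — only at the root, producing the one extra pair $P_0$ with $V_\T(P_0)=v_0$ in the $f$-case and ruling such a pair out in the $X$-case. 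You instead avoid induction entirely and perform that same count at every non-leaf vertex, using lemma \ref{lem:strongly_valid_non-tight_ancestors} inside each $\T_v$ to pin down $V_\T(P_v)=v$; this computes every fibre of $V_\T$ directly, so surjectivity is automatic, no base cases are needed, and the final cardinality comparison you invoke is in fact redundant (singleton fibres over all target vertices already give the bijection). The ingredients are identical in both arguments — subtree strong validity, the restriction property of $V_\T$, the disjoint-subtrees lemma, and strict-distributivity counting — so the trade-off is simply the paper's inductive bookkeeping versus your invoking the strong-validity machinery at every vertex.
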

Since $M$ has $n$ tensor factors, $\T$ has $n$ leaves, hence $n-1$ non-leaf vertices and $n-2$ non-leaf non-root vertices. Strict $f$-distributivity (resp. $X$-distributivity) requires that $M$ has precisely $n-1$ (resp. $n-2$) non-tight matched pairs. So in each case the claimed bijective sets have the same size.

\begin{proof}
When $n=1$, if $\T$ is strictly $f$-distributive, then $M$ has no twisted or critical matched pairs (i.e. is tight), and $\T$ has no non-leaf vertices. When $n=2$, if $\T$ is strictly $X$-distributive, again $M$ is tight, and $\T$ has no non-leaf non-root vertices. In both cases $V_\T$ is a bijection between empty sets.

We now proceed by induction on $n$. So suppose the result is true for operation trees for $M = M_1 \otimes \cdots \otimes M_k$, where $k<n$, and consider an operation tree $\T$ for $M = M_1 \otimes \cdots M_n$ which is strongly valid and strictly $f$-distributive or strictly $X$-distributive.

Let $v_0$ be the root vertex of $\T$, and let $v_L$ and $v_R$ be its left and right children; let their labels be $M_L = M_1 \otimes \cdots \otimes M_i$ and $M_R = M_{i+1} \otimes \cdots \otimes M_n$ respectively. Let $\T_L, \T_R$ be the operation subtrees of $\T$ below $v_L$ and $v_R$ (definition \ref{def:subtree_below}). 
Now $\T_L$ and $\T_R$ are strongly valid (lemma \ref{lem:subtree_strongly_valid}) and strictly $f$-distributive (lemma \ref{lem:subtrees_f-distributive}), so by induction we have bijections
\begin{gather*}
V_L \colon \{ \text{non-tight matched pairs of $M_L$} \} \To \{ \text{non-leaf vertices of $\T_L$} \} \\
V_R \colon \{ \text{non-tight matched pairs of $M_R$} \} \To \{ \text{non-leaf vertices of $\T_R$} \},
\end{gather*} 
which by lemma \ref{lem:subtree_strongly_valid}(ii) are restrictions of $V_\T$. Moreover, since $\T_L$ and $\T_R$ are disjoint, and $\T$ is strongly valid, lemma \ref{lem:disjoint_subtrees_tightness} says that the 
domains of $V_L$ and $V_R$ are disjoint. It's also clear that the ranges of $V_L$ and $V_R$ are disjoint; their union consists of all non-leaf non-root vertices of $\T$. The domains (and ranges) of $V_L$ and $V_R$ have cardinalities $i-1$ and $n-i-1$ respectively.

Since non-tight matched pairs in $M_L$ or $M_R$ are non-tight in $M$ (lemma \ref{lem:tightness_homology_sub-tensor-product}), the non-tight matched pairs in $M_L$ and $M_R$ form precisely $(i-1)+(n-i-1) = n-2$ non-tight matched pairs of $M$.

If $M$ is strictly $X$-distributive, then these are all the matched pairs in $M$, and $V_\T$ is the disjoint union of $V_L$ and $V_R$, hence a bijection as claimed.

If $M$ is strictly $f$-distributive, then $M$ has precisely $n-1$ non-tight matched pairs. So there is precisely one non-tight matched pair $P_0$ in $M$ which is tight in $M_L$ and $M_R$. Since $P_0$ is tight in both $M_L$ and $M_R$, but non-tight in $M$, $v_0$ is the lowest vertex of $\T$ whose label is non-tight at $P_0$, so $V_\T (P_0) = v_0$. This, together with $V_L$ and $V_R$, defines $V_\T$; we conclude $V$ is a bijection. 
\end{proof}

\subsection{Guaranteed nonzero results}
\label{sec:guaranteed_nonzero}

We now show that, in certain cases, $X_n$ and $\fbar_n$ must be nonzero, and compute their values.

\begin{thm}
\label{thm:nonzero_f}
Consider an $A_\infty$ structure on $\HH$ arising from a pair ordering $\preceq$. 
Suppose $M$ is viable and satisfies the following conditions.
\begin{enumerate}
\item
Every valid and distributive operation tree for $M$ is 
strictly $f$-distributive, and such a tree exists.
\item
No matched pair of $M$ is on-on doubly occupied.
\end{enumerate}
Then $\fbar_n (M) \neq 0$. Moreover $\fbar_n (M)$ is given by a single diagram $D$, which 
is tight at all matched pairs where $M$ is tight or critical, and 
crossed at all matched pairs where $M$ is twisted.
\end{thm}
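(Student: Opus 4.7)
The plan is to prove Theorem \ref{thm:nonzero_f} by strong induction on $n$, identifying $\fbar_n(M)$ as a specific single diagram $D$ with the prescribed local behaviour at each matched pair. The base case $n=1$ is immediate: the unique valid distributive tree is a single leaf, which forces $M$ to be tight, and $\fbar_1(M) = f^{\mathcal{CY}^\preceq}(M)$ is then the tight diagram selected by the cycle choice function.

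For the inductive step, I would first pin down the structure of any valid distributive tree $\T$ for $M$. Hypothesis (ii) combined with Lemma \ref{lem:valid_not_strongly} implies that every valid operation tree for $M$ is automatically strongly valid (an on-on doubly occupied critical pair, which would be the only obstruction, is excluded). Together with the strict $f$-distributivity guaranteed by hypothesis (i), Lemma \ref{lem:V_bijection}(i) yields a bijection $V_\T$ between the $n-1$ non-tight matched pairs of $M$ and the $n-1$ non-leaf vertices of $\T$. Next I would check that $M$ has at least one twisted matched pair (so its H-data is twisted and Theorem \ref{thm:main_thm}(iv) applies), giving $\fbar_n(M) = \Abar^*_{\mathcal{CR}^\preceq}\bigl(\Ubar_n(M)\bigr)$. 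This follows from combining $l + m = n-1$ with the fact that each non-leaf vertex of $\T$ must carry a twisted label somewhere, which, via a standard counting argument at the pair-level structure, forces $l \geq 1$.

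The core computation is the identification of $\Ubar_n(M)$ in standard form. Expanding via equation (\ref{eqn:Un_def}), I would dispose of the two classes of terms separately. For a root-split term $\fbar_j(M^L)\fbar_{n-j}(M^R)$ arising from the root split of some valid distributive tree, Lemmas \ref{lem:subtree_strongly_valid} and \ref{lem:subtrees_f-distributive} show that $M^L = M_1 \otimes \cdots \otimes M_j$ and $M^R = M_{j+1} \otimes \cdots \otimes M_n$ inherit both hypotheses; induction then identifies both $\fbar_j(M^L)$ and $\fbar_{n-j}(M^R)$ as explicit single diagrams of the claimed form, whose product is a crossed diagram with crossings precisely at all but one of the twisted pairs of $M$. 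Splittings not realised by any valid distributive tree produce factors whose local structure violates the necessary conditions of Theorem \ref{thm:fn_structure}, hence contribute zero. For the $X$-terms $\fbar_{n-j+1}(\cdots \otimes X_j(\cdots) \otimes \cdots)$, Theorem \ref{thm:Xn_structure} forces the inner sub-tensor-product to have exactly $j-2$ critical pairs; the grafting construction of Lemma \ref{lem:graft_valid_distributive} would then assemble such data into a valid distributive tree for $M$ that fails strict $f$-distributivity, contradicting hypothesis (i).

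The main obstacle I anticipate is the final step: showing that the sum over admissible root-level splittings gives a single nonzero diagram rather than cancelling modulo $2$. The root splittings correspond, via $V_\T$, to the choice of non-tight pair $P_0$ with $V_\T(P_0) = \text{root}$, and different valid trees may give distinct splittings. I would address this locally, using the decomposition $\A(h,s,t) \cong \bigotimes_P \A_P(h_P, s_P, t_P)$ and the classification of local tensor products in Proposition \ref{prop:homology_tensor_product_classification}, to argue that the local diagram at each matched pair produced by the inductive description is uniform across all admissible splittings. Consistency at all-on doubly occupied pairs is supplied by the uniform selection rule of $\mathcal{CY}^\preceq$, and applying $\Abar^*_{\mathcal{CR}^\preceq}$ inserts the final crossing at the $\preceq$-minimal remaining twisted pair to produce the unique diagram $D$. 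Lemmas \ref{lem:f3_computations} and \ref{lem:f3_rows_same} provide the template for how this local coincidence is verified, and the level-4 calculation of Section \ref{sec:nec_not_suff} illustrates the kind of bookkeeping that will be needed to handle the creation operator's interaction with strict distributivity.
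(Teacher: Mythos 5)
Your overall architecture matches the paper's: induction on $n$, killing the $X$-type terms of $\Ubar_n(M)$ by a grafting argument that contradicts strict $f$-distributivity, identifying the nonzero product terms $\fbar_j(M^L)\fbar_{n-j}(M^R)$ with root splits of valid distributive trees (via Proposition \ref{prop:nonzero_implies_trees} and Lemma \ref{lem:join_valid_distributive}), describing them inductively, and finishing with the creation operator at the $\preceq$-minimal twisted pair. Up to that point the proposal is sound.

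The gap is in your final step, and your proposed fix would not work. The nonzero terms of $\Ubar_n(M)$ are \emph{not} uniform across admissible splittings: the term for the split at position $i$ is \emph{twisted} at one specific matched pair $P_i$ (the unique pair non-tight in $M$ but tight in both factors, i.e.\ the pair sent to the root by $V_\T$) and \emph{crossed} at every other twisted pair of $M$, so terms for different splits are genuinely different diagrams. Arguing "local coincidence across splittings" therefore cannot be carried out; and even if the terms were identical, that would only sharpen the mod-$2$ cancellation worry you raise (you would then need the number of terms to be odd, which you never address). What the paper actually proves is that $i \mapsto P_i$ is a \emph{bijection} between the set of nonzero product terms and the set of twisted matched pairs of $M$. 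Injectivity is the delicate point — it uses a lowest-common-ancestor argument together with hypothesis (ii) (no on-on doubly occupied pairs forces any sub-tensor-product twisted at $P$ to contain both principal factors of $P$) and rules out two splits producing the same diagram and cancelling mod $2$. Surjectivity requires a separate ingredient you never invoke: Lemma \ref{lem:lotsa_trees} ("plenty of trees"), which produces, for \emph{each} twisted pair $P$ — in particular the $\preceq$-minimal one $P_{\min}$ — a strongly valid strictly $f$-distributive tree with $V_\T(P)$ at the root, whence a nonzero term twisted at $P_{\min}$. Without surjectivity at $P_{\min}$, every nonzero term could be crossed at $P_{\min}$ and $\Abar^*_{\mathcal{CR}^\preceq}$ would annihilate all of $\Ubar_n(M)$, giving $\fbar_n(M)=0$; without injectivity, two identical terms twisted at $P_{\min}$ could cancel. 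Once the bijection is in place, the conclusion is exactly as you say: the creation operator kills every term crossed at $P_{\min}$ and converts the unique term twisted at $P_{\min}$ into the single diagram $D$.
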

Theorem \ref{thm:nonzero_f} is a precise version of theorem \ref{thm:third_thm}(i).
It explicitly describes $\fbar_n (M) = D$, which is determined by its H-data, and tightness at each matched pair. There is in fact no choice in constructing $D$, since choices only exist at 11 doubly occupied pairs, which are explicitly ruled out.

The description of $D$ follows entirely from Maslov index considerations.
The existence of a valid and strictly $f$-distributive tree for $M$ implies that $M$ is twisted or critical at precisely $n-1$ matched pairs, and tight at all other matched pairs. The Maslov index can only increase by 1 at each non-tight matched pair. Since $\fbar_n$ has Maslov grading $n-1$, Maslov grading must increase at every non-tight matched pair: from twisted to crossed, and from critical to tight.

When $n=1$, condition (i) says that $M=M_1$ is tight (there is only one possible operation tree), and the conclusion is that $\fbar_1 (M)$ is a tight diagram representing $M_1$.

When $n=2$, condition (i) says that $M=M_1 \otimes M_2$ has precisely one non-tight matched pair $P$ (again there is only one possible operation tree), which must be twisted (lemma \ref{lem:it_takes_3}), and all other matched pairs tight. The conclusion is that $\fbar_2 (M)$ is a single diagram $D$ twisted at $P$ and elsewhere tight, in agreement with the discussion of section \ref{sec:low-level_ex}.

\begin{thm}
\label{thm:nonzero_X}
Consider an $A_\infty$ structure on $\HH$ arising from a pair ordering $\preceq$. Suppose $M$ is viable and satisfies the following conditions.
\begin{enumerate}
\item
Every valid and distributive operation tree for $M$ is strictly $X$-distributive, and such a tree exists.
\item
No matched pair of $M$ is twisted or on-on doubly occupied.
\end{enumerate}
Then $X_n (M)$ is nonzero, and is the homology class of the unique tight diagram with the H-data of $M$.
\end{thm}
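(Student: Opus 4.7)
The first move is to unpack what hypotheses (i) and (ii) force on $M$. Strict $X$-distributivity of some (and thus every) valid distributive tree means $M$ has exactly $n-2$ non-tight matched pairs, and condition (ii) removes the twisted possibility, so all $n-2$ non-tight pairs are critical and the remaining pairs are tight. Lemma \ref{lem:homology_tensor_product_tightness_H-data} then gives that the H-data $(h,s,t)$ of $M$ is tight at every pair; proposition \ref{prop:homology_summands} gives $\HH(h,s,t) \cong \Z_2$; and (ii) together with lemma \ref{lem:selecting_occupied} forces the generator to be represented by a \emph{unique} tight diagram $D^*$. Finally, lemma \ref{lem:valid_not_strongly} upgrades every valid distributive tree for $M$ to a strongly valid one. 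Thus it suffices to prove $X_n(M) \neq 0$, since then automatically $X_n(M) = [D^*]$.

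My plan is to evaluate $X_n(M)$ through lemma \ref{lem:Xn_directly}, as the homology class of the crossingless part of
\[
\sum_{j=1}^{n-1} \fbar_j(M_1 \otimes \cdots \otimes M_j)\,\fbar_{n-j}(M_{j+1} \otimes \cdots \otimes M_n),
\]
and to apply theorem \ref{thm:nonzero_f} to each half. Fix a valid (hence strictly $X$-distributive and strongly valid) tree $\T$ for $M$ with root split $j^*$, and write $M' = M_1 \otimes \cdots \otimes M_{j^*}$ and $M'' = M_{j^*+1} \otimes \cdots \otimes M_n$. Lemmas \ref{lem:subtree_strongly_valid} and \ref{lem:subtrees_f-distributive} show $\T_L, \T_R$ are strongly valid and strictly $f$-distributive for $M', M''$. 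To invoke theorem \ref{thm:nonzero_f} on $M'$, I will verify two conditions: first, no on-on doubly occupied pair of $M'$, since if one existed at $P$, viability and idempotent matching would force $M''$ to be unoccupied-but-on-on at $P$, and hence $M$ itself to be on-on doubly occupied at $P$, contradicting (ii); second, every valid distributive tree $\T^*$ for $M'$ is strictly $f$-distributive, which I establish by transplanting $\T^*$ for $\T_L$ inside $\T$ (lemma \ref{lem:transplant_valid}) to obtain a valid distributive tree for $M$ that by (i) is strictly $X$-distributive, whence lemma \ref{lem:subtrees_f-distributive} yields strict $f$-distributivity of $\T^*$. A symmetric argument handles $M''$. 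Theorem \ref{thm:nonzero_f} then produces explicit single-diagram outputs $D' = \fbar_{j^*}(M')$ and $D'' = \fbar_{n-j^*}(M'')$, each tight at matched pairs where its sub-tensor-product is tight or critical, and crossed exactly where its sub-tensor-product is twisted.

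Next I compute $D'D''$ pair by pair. At a matched pair $P$ tight in $M$, lemma \ref{lem:tightness_homology_sub-tensor-product} forces both $M'$ and $M''$ to be tight at $P$, so $D'_P$ and $D''_P$ are both tight and their product is the unique tight local diagram. At a matched pair $P$ critical in $M$, the bijection $V_\T$ of lemma \ref{lem:V_bijection}(ii) places $V_\T(P)$ in exactly one of $\T_L, \T_R$; say in $\T_L$. Then no vertex of $\T_R$ has twisted label at $P$, so $M''_P$ is tight and $D''_P$ is the unique tight local diagram; meanwhile $M'_P$ is twisted or critical according to whether $V_\T(P)$ is the root of $\T_L$ or lies strictly below it, so $D'_P$ is respectively crossed or tight. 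Consulting the sublime column (for the crossed sub-case at an 11 once occupied pair) or the tight column (in the critical sub-case, where $P$ is sesqui- or 00 doubly occupied by $M$) of table \ref{tbl:local_tensor_products}, the product $D'_P D''_P$ is in every case the unique tight local diagram with H-data $(h_P, s_P, t_P)$. The case $V_\T(P) \in \T_R$ is symmetric. Gluing locally, $D'D'' = D^*$, so the term $j = j^*$ contributes $[D^*]$ to $X_n(M)$.

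The hard part will be controlling the remaining terms in $\Ubar_n$ from equation (\ref{eqn:Un_def}), namely the splits $j \neq j^*$ and the terms involving lower-level $X_j$'s, since these could in principle produce further copies of $[D^*]$ and spoil the $\Z_2$ count. My plan is two-pronged: any other split $j$ for which $\fbar_j(M_1 \otimes \cdots \otimes M_j)\,\fbar_{n-j}(M_{j+1} \otimes \cdots \otimes M_n)$ is nonzero must, by proposition \ref{prop:nonzero_implies_trees} and lemma \ref{lem:join_valid_distributive}, come from a valid distributive tree for $M$ that by (i) is strictly $X$-distributive, and the analysis of the previous paragraph shows the contribution is again $D^*$; meanwhile the $X_j$ terms can be handled inductively in $n$, using theorem \ref{thm:nonzero_X} itself at lower levels together with H-contraction (lemma \ref{lem:tightness_H-contraction}) to identify each surviving $X_j$ factor with the unique tight class on the appropriate sub-tensor-product. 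The genuinely delicate part is then verifying that all these additional tight contributions cancel in pairs in $\Z_2$, leaving exactly one effective copy of $D^*$ after passing to homology; since $X_n(M)$ lies a priori in the one-dimensional space $\Z_2 \cdot [D^*]$, it will then follow that $X_n(M) = [D^*] \neq 0$.
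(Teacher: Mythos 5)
Your first three paragraphs track the paper's proof closely: you use lemma \ref{lem:Xn_directly} to reduce to the sum of products $\fbar_j\,\fbar_{n-j}$, you verify the hypotheses of theorem \ref{thm:nonzero_f} for the two halves $M', M''$ of the root split of a chosen strictly $X$-distributive tree (your transplantation argument is exactly what lemma \ref{lem:induction_shortcut} packages), and your pair-by-pair computation showing $\fbar_{j^*}(M')\,\fbar_{n-j^*}(M'') = D^*$ is the same as the paper's. A small inefficiency: once you invoke lemma \ref{lem:Xn_directly} the terms of $\Ubar_n$ involving lower-level $X_j$'s are already irrelevant (they only produce crossed diagrams), so your proposed induction to handle them is unnecessary.

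The genuine gap is in your treatment of the splits $j \neq j^*$. You correctly observe that any such split with $\fbar_j\,\fbar_{n-j} \neq 0$ would, via proposition \ref{prop:nonzero_implies_trees} and lemma \ref{lem:join_valid_distributive}, again contribute the diagram $D^*$ — but then your plan is that these extra copies ``cancel in pairs in $\Z_2$, leaving exactly one effective copy,'' and your closing appeal to the one-dimensionality of $\HH(h,s,t)$ is circular: knowing $X_n(M) \in \Z_2\cdot[D^*]$ only tells you it is $0$ or $[D^*]$, and an even number of contributing splits would give $0$. This is not a hypothetical worry: the example of figure \ref{fig:11_doubly_occupied_example} is precisely a case where two splits contribute and cancel, killing $X_5$. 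What is missing is the argument that the split $j^*$ is the \emph{only} nonzero one. The paper supplies this with the principal-factors argument: for each critical pair $P$ of $M$ (sesqui- or 00 doubly occupied, since 11 doubly occupied and twisted pairs are excluded) there is a unique place $p$ whose steps $p_+, p_-$ are covered by factors $M_a, M_b$ with $a<b$; validity of the chosen tree forces $a,b$ both $\leq j^*$ or both $\geq j^*+1$, while conversely for every $j \neq j^*$ the vertex $V_\T$ assigned to the least common ancestor of the leaves $M_j, M_{j+1}$ produces a critical pair whose principal factors straddle position $j$. If $\fbar_j\,\fbar_{n-j}$ were nonzero for some $j \neq j^*$, joining trees for the two halves would give a valid strictly $X$-distributive tree with root split $j$, and the first half of the argument (applied to that tree) would say no critical pair straddles $j$, a contradiction. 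Without this step, or some substitute establishing an odd number of nonzero splits, your proof does not reach the conclusion.
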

Theorem \ref{thm:nonzero_X} is a precise version of theorem \ref{thm:third_thm}(ii). The description of $X_n (M)$ follows entirely from the fact that $X_n$ preserves H-data. The uniqueness claim in the theorem makes sense: since $M$ has no 11 doubly occupied pairs, there is only one tight diagram with the same H-data as $M$.


The exclusion of twisted matched pairs is necessary, since they preclude the existence of a tight diagram (or by theorem \ref{thm:Xn_structure}). The exclusion of 11 doubly occupied pairs is a more heavy-handed assumption, but is necessary for our proof; moreover it cannot be removed because of the example of figure \ref{fig:11_doubly_occupied_example}. In this example, $M = M_1 \otimes \cdots \otimes M_5$ is viable, has no twisted matched pairs, and has two valid distributive operation trees,  both of which are strongly valid and strictly $X$-distributive. However it also has a 11 doubly occupied matched pair $P = \{p,p'\}$, and $X_5 (M) = 0$. 

\begin{figure}
\begin{center}
\[
X_5 \left( \begin{array}{c|c|c|c|c}
p_+ & p_- & & p'_+ & p'_- \\
q'_- & & q_+ & q_- & \\
 & r_+ & r_- & & r'_- \end{array} \right) = 0
\]

\begin{tikzpicture}[scale=0.35]
\draw [draw=none] (0,0.2) circle (6pt);
\draw (1.2,1) -- (0,0);
\draw (1.2,1) -- (4.6,-2);
\draw (0,0) -- (-1.2,-1);
\draw (0,0) -- (1.2,-1);
\draw (-1.2,-1) -- (-2.2,-2);
\draw (-1.2,-1) -- (-0.3,-2);
\draw (1.2,-1) -- (0.3,-2);
\draw (1.2,-1) -- (2.2,-2);
\fill (1.2, 1) circle (6pt);
\fill (0,0) circle (6pt);
\fill (-1.2,-1) circle (6pt);
\fill (1.2,-1) circle (6pt);
\fill (-2.2,-2) circle (6pt);
\fill (-0.3,-2) circle (6pt);
\fill (0.3,-2) circle (6pt);
\fill (2.2,-2) circle (6pt);
\fill (4.6,-2) circle (6pt);
\end{tikzpicture}
\hspace{1 cm}
\begin{tikzpicture}[scale=0.35]
\draw [draw=none] (0,0.2) circle (6pt);
\draw (-1.2,1) -- (0,0);
\draw (-1.2,1) -- (-4.6,-2);
\draw (0,0) -- (-1.2,-1);
\draw (0,0) -- (1.2,-1);
\draw (-1.2,-1) -- (-2.2,-2);
\draw (-1.2,-1) -- (-0.3,-2);
\draw (1.2,-1) -- (0.3,-2);
\draw (1.2,-1) -- (2.2,-2);
\fill (-1.2,1) circle (6pt);
\fill (0,0) circle (6pt);
\fill (-1.2,-1) circle (6pt);
\fill (1.2,-1) circle (6pt);
\fill (-2.2,-2) circle (6pt);
\fill (-0.3,-2) circle (6pt);
\fill (0.3,-2) circle (6pt);
\fill (2.2,-2) circle (6pt);
\fill (-4.6,-2) circle (6pt);
\end{tikzpicture}

\caption{This tensor product $M = M_1 \otimes \cdots \otimes M_5$ (shown in shorthand) has a 11 doubly occupied matched pair $P = \{p,p'\}$, and two valid distributive operation trees as shown, both of which are strongly valid and strictly $X$-distributive. However $\fbar_1 \fbar_4 = \fbar_4 \fbar_1 \neq 0$ and $\fbar_2 \fbar_3 = \fbar_3 \fbar_2 = 0$, so $X_5 = 0$.}
\label{fig:11_doubly_occupied_example}
\end{center}
\end{figure}

While the conditions of theorems \ref{thm:nonzero_f} and \ref{thm:nonzero_X} may seem rather restrictive, they do show that $\fbar_n$ and $X_n$ are nonzero in many cases. For instance, in the $\fbar_3$ examples of lemma \ref{lem:f3_computations}, the first two lines (i.e. 10 out of 14 examples) can be shown to be nonzero directly from theorem \ref{thm:nonzero_f}. It follows from theorem \ref{thm:nonzero_X} that $X_4$ of all the following tensor products are nonzero. 

\begin{gather*}
\left( \begin{array}{c|c|c|c}
p_+ & p_- & p'_+ & \\
q_+ & & q_- & q'_+  
\end{array} \right)
\quad
\left( \begin{array}{c|c|c|c}
p_+ & p_- & p'_+ & \\
 & q_+ & q_- & q'_+ 
\end{array} \right)
\quad
\left( \begin{array}{c|c|c|c}
p_+ & p_- & & p'_+ \\
 & q_+ & q_- & q'_+ 
\end{array} \right)
\quad
\left( \begin{array}{c|c|c|c}
p_+ & & p_- & p'_+ \\
q_+ & q_- & q'_+ & 
\end{array} \right) \\
\left( \begin{array}{c|c|c|c}
p'_- & p_+ & p_- & p'_+ \\
q_+ & q_- & & q'_+ 
\end{array} \right)
\quad
\left( \begin{array}{c|c|c|c}
p'_- & p_+ & p_- & p'_+ \\
q_+ & & q_- & q'_+ 
\end{array} \right)
\quad
\left( \begin{array}{c|c|c|c}
p'_- & p_+ & p_- & p'_+ \\
q'_- & q_+ & & q_- 
\end{array} \right)
\quad
\left( \begin{array}{c|c|c|c}
p'_- & p_+ & p_- & p'_+ \\
q'_- & & q_+ & q_- 
\end{array} \right)
\end{gather*}

The hypotheses of theorems \ref{thm:nonzero_f} and \ref{thm:nonzero_X} essentially mandate that in each operation described by an operation tree, only one matched pair can be affected.

We first need a preliminary lemma.
\begin{lem}[Plenty of trees]
\label{lem:lotsa_trees}
Consider an $A_\infty$ structure on $\HH$ defined by a pair ordering.
Suppose $M = M_1 \otimes \cdots \otimes M_n$ is viable. Further suppose that every valid and distributive operation tree for $M$ is strongly valid and strictly $f$-distributive, and at least one such tree exists.

Let $P_0 = \{ p_0 , p'_0 \}$ be a matched pair at which $M$ is twisted. Then there exists a strongly valid, strictly $f$-distributive operation tree $\T$ for $M$ such that $V_\T (P)$ is the root vertex $v_0$ of $\T$.
\end{lem}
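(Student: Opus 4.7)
I would prove this by induction on the depth of $V_{\T'}(P_0)$ in an existing strongly valid strictly $f$-distributive tree $\T'$ for $M$ (which exists by the hypothesis, since any valid distributive tree is automatically such). The base case is when $V_{\T'}(P_0)$ is already the root, in which case $\T = \T'$ works. For the inductive step, my plan is to perform a single branch shift (definition \ref{defn:branch_shift}) that moves $V(P_0)$ up the tree by exactly one level, while preserving all the required properties; iterating this reduces the depth to zero.

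Concretely, suppose $w := V_{\T'}(P_0)$ has parent $u$, and WLOG $w$ is the left child of $u$, with sibling $v$. Let the labels of $w$'s children and of $v$ be $N_L, N_R, N_v$ respectively, so the subtree at $u$ has the shape $((N_L, N_R), N_v)$. The branch shift at $u$ produces a subtree $(N_L, (N_R, N_v))$, whose new internal vertex $w'$ is labelled by $N_R \otimes N_v$. I will show $V_\T(P_0) = u$. Since $N_L \otimes N_R$ is twisted at $P_0$, exactly one of $N_L, N_R$ covers $p_{0,+}$ and the other covers $p_{0,-}$; viability of $M$ then forces $N_v$ to cover neither step of $P_0$ (as $N_L, N_R$ already account for both covered steps), so $N_R \otimes N_v$ is half-occupied at $P_0$, hence tight. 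Thus both children of $u$ in $\T$ are tight at $P_0$ while $u$'s label $N_L \otimes N_R \otimes N_v$ is still twisted at $P_0$, so $u = V_\T(P_0)$ as claimed.

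The main obstacle is verifying that $\T$ still satisfies the hypothesis of the lemma, so that the induction can continue. By the lemma's hypothesis it suffices to check that $\T$ is valid and (weakly) distributive, since only the labels on unchanged vertices and on the new vertex $w'$ need examination. Validity at $w'$: its label $N_R \otimes N_v$ is a contiguous sub-tensor-product of $M$, so by the contrapositive of lemma \ref{lem:tightness_homology_sub-tensor-product}(ii), since $M$ is twisted, $N_R \otimes N_v$ is tight or twisted, hence non-singular. Weak distributivity at $w'$: if $w'$ has $r+s$ leaves (with $r$ under $w_R$ and $s$ under $v$), strict $f$-distributivity of $\T'$ gives $|N_R| = r-1$ and $|N_v| = s-1$ non-tight pairs. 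The key step is lemma \ref{lem:disjoint_subtrees_tightness} applied to the disjoint subtrees $\T'_{w_R}, \T'_v$ of $\T'$: no matched pair can be non-tight in both $N_R$ and $N_v$. Combined with the local sub-tensor-product table (lemma \ref{lem:local_tightness_homology_sub-tensor-product}), which guarantees that any pair non-tight in a factor remains non-tight in the tensor product, this yields at least $(r-1)+(s-1) = r+s-2$ non-tight pairs in $N_R \otimes N_v$, which is exactly the weak distributivity bound.

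Having verified validity and weak distributivity, the hypothesis promotes $\T$ to a strongly valid strictly $f$-distributive tree, so the inductive hypothesis applies anew to $\T$ with $V_\T(P_0) = u$ now one level closer to the root. After at most $\mathrm{depth}(V_{\T'}(P_0))$ iterations we arrive at a strongly valid strictly $f$-distributive tree whose $V(P_0)$ is the root vertex, as required.
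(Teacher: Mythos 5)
Your route is genuinely different from the paper's, and structurally simpler: the paper proves the lemma by induction on $n$, using lemma \ref{lem:induction_shortcut} and a transplantation to rearrange the subtree so that $V(P_0)$ sits at a child of the root, and then performs a single branch shift at the root; you instead iterate single branch shifts, moving $V(P_0)$ up one level at a time, with no induction on the number of tensor factors and no transplantation. Most of your verification is sound: the children of $V_{\T'}(P_0)$ are tight at $P_0$ (lemma \ref{lem:strongly_valid_non-tight_ancestors}), and since $M$ is twisted, i.e.\ 11 once occupied, at $P_0$, the sibling label $N_v$ covers no step of $P_0$, so $N_R\otimes N_v$ is one-half-occupied, hence tight, at $P_0$, which gives $V_\T(P_0)=u$; and your distributivity count at the new vertex $w'$ via lemmas \ref{lem:disjoint_subtrees_tightness} and \ref{lem:tightness_homology_sub-tensor-product} is exactly right. (The case where $w$ is the right child of $u$ needs the reverse branch shift, which the paper notes is also available.)

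The one genuine gap is the validity check at $w'$. You argue that $N_R\otimes N_v$ is non-singular ``since $M$ is twisted'', via the contrapositive of lemma \ref{lem:tightness_homology_sub-tensor-product}(ii). But the hypothesis only says $M$ is twisted \emph{at the pair $P_0$}; globally $M$ is typically critical --- strict $f$-distributivity at the root forces $n-1$ non-tight pairs, and a single critical one makes $M$ critical by lemma \ref{lem:homology_tensor_tightness_local} --- and sub-tensor-products of a critical tensor product can be singular (table \ref{tbl:local_homology_sub-tensor-product_tightness}). So that step fails as written. It is repairable with the paper's own criterion, lemma \ref{lem:valid_twisted_vertex}: the only vertex label destroyed by the branch shift is that of $w=V_{\T'}(P_0)$, and since $V_{\T'}$ is injective (lemma \ref{lem:V_bijection}), every non-tight pair $P\neq P_0$ still has its twisted-labelled vertex $V_{\T'}(P)$ present in the shifted tree, while $P_0$ has the twisted label at $u$ itself; as $M$ is non-singular at every pair (it admits a valid tree), lemma \ref{lem:valid_twisted_vertex} then gives validity of the shifted tree. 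With that substitution your induction on the depth of $V(P_0)$ goes through.
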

Let us say something about what lemma \ref{lem:lotsa_trees} means. At $P_0$, $M$ is twisted and hence an extension of the twisted tensor product of table \ref{tbl:local_tensor_products}  (lemma \ref{lem:homology_contractions}). So two steps of $P_0$ are covered, say $p_{0+}$ and $p_{0-}$, by some $M_i$ and $M_j$ respectively for some $i<j$. Now a sub-tensor-product $M'$ of $M$ labelling a non-root vertex of $\T$ is twisted or tight accordingly as $M'$ contains both $M_i$ and $M_j$, or does not. Lemma \ref{lem:lotsa_trees} guarantees the existence of a tree such that all labels on non-root vertices are tight at $P_0$. In other words, $M_i$ and $M_j$ never appear together in any label in $\T$ except at the root vertex $v_0$; as we work our way up the tree, combining tensor factors, $M_i$ and $M_j$ are only combined at the final step, at $v_0$. Since $P_0$ only becomes twisted at $v_0$, $v_0$ is the lowest vertex of $\T$ whose label is twisted at $P$, and $V_\T (P_0) = v_0$.

Since we can find such a tree for each all-on once occupied pair $P$, this gives us ``plenty of trees", which we need for the proof of theorem \ref{thm:nonzero_f}.

Note the hypotheses of lemma \ref{lem:lotsa_trees} are weaker than those of theorem \ref{thm:nonzero_f}. If $M$ satisfies the hypotheses of theorem \ref{thm:nonzero_f}, then every valid distributive operation tree for $M$ is strictly $f$-distributive; but as there are no 11 doubly occupied pairs, any such tree is strongly valid (lemma \ref{lem:valid_not_strongly}), so $M$ satisfies the hypotheses of lemma \ref{lem:lotsa_trees}.

The following lemma captures an argument we will use repeatedly. The terms in square brackets may be included or not.
\begin{lem}
\label{lem:induction_shortcut}
Let $M$ be a viable tensor product of nonzero homology classes of diagrams, which has one of the following two properties.
\begin{enumerate}
\item
Every valid and distributive operation tree for $M$ is [strongly valid and] strictly $f$-distributive, and such a tree exists.
\item
Every valid and distributive operation tree for $M$ is [strongly valid and] strictly $X$-distributive, and such a tree exists.
\end{enumerate}
Let $\T$ be an operation tree for $M$ of the type guaranteed by the condition, and let $v$ be a non-root vertex of $\T$, with label $M_v$. Then $M_v$ satisfies condition (i).
\end{lem}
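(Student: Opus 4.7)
The plan is to reduce condition (i) for $M_v$ to the hypothesis on $M$ by means of the transplantation operation of Definition~\ref{def:transplant}. Condition (i) has two parts: the existence of a valid and distributive operation tree for $M_v$ (with the optional additional property of being strongly valid and strictly $f$-distributive), and the assertion that every valid and distributive tree for $M_v$ is of this type.

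For existence, I would simply take $\T_v$ itself. Since $\T$ is valid and strictly $f$- or $X$-distributive, and, when assumed, strongly valid, Lemma~\ref{lem:subtrees_valid} gives validity of $\T_v$, Lemma~\ref{lem:subtree_strongly_valid} gives strong validity where applicable, and Lemma~\ref{lem:subtrees_f-distributive} (applied at the non-root vertex $v$) gives that $\T_v$ is strictly $f$-distributive. Since strict $f$-distributivity implies distributivity, $\T_v$ is in particular a valid and distributive operation tree for $M_v$ with all the required extra properties.

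For the main claim, let $\T''$ be any valid and distributive operation tree for $M_v$, and let $\T'$ be obtained from $\T$ by transplanting $\T''$ in place of $\T_v$. Since both $\T$ and $\T''$ are valid, Lemma~\ref{lem:transplant_valid}(i) shows that $\T'$ is a valid operation tree for $M$. To see $\T'$ is distributive, observe that every vertex of $\T'$ either comes from $\T''$ (with the same label as in $\T''$) or comes from $\T$ outside $\T_v$. Because transplantation leaves the label $M_v$ at $v$ unchanged, every ancestor of $v$ in $\T'$ inherits exactly its label from $\T$, so the vertex-local distributivity condition for these vertices is inherited from $\T$, while for the vertices coming from $\T''$ it is inherited from $\T''$. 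Thus $\T'$ is valid and distributive.

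Now the hypothesis on $M$ applies to $\T'$: it forces $\T'$ to be strictly $f$-distributive in case (i), or strictly $X$-distributive in case (ii), and also strongly valid whenever strong validity is part of the hypothesis. Applying Lemma~\ref{lem:subtrees_f-distributive} to $\T'$ at the non-root vertex $v$ shows that the operation subtree of $\T'$ below $v$, which is exactly $\T''$, is strictly $f$-distributive; Lemma~\ref{lem:subtree_strongly_valid} gives strong validity of $\T''$ in the bracketed case. This is the desired conclusion for $M_v$. The only conceptually delicate point in the argument is checking that $\T'$ is still distributive, and this is resolved by the elementary observation that transplanting below $v$ does not alter the label at $v$ nor at any of its ancestors.
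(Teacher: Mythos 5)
Your proposal is correct and follows essentially the same route as the paper: transplant an arbitrary valid distributive tree for $M_v$ into $\T$ in place of $\T_v$, use lemma \ref{lem:transplant_valid} (plus the observation that labels at $v$ and its ancestors are unchanged) to get a valid distributive tree for $M$, invoke the hypothesis on $M$, and then pull the conclusion back down to the subtree via lemmas \ref{lem:subtree_strongly_valid} and \ref{lem:subtrees_f-distributive}, with $\T_v$ itself witnessing existence. No gaps.
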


\begin{proof}
Let $\T'$ be a valid distributive operation tree for $M_v$. Then we can transplant $\T'$ for the operation subtree $\T_v$ of $\T$ below $v$ to obtain an operation tree $\U$ for $M$, which is valid (lemma \ref{lem:transplant_valid}) and distributive (since distributive at each vertex: definition \ref{def:distributive}). By assumption then $\U$ is [strongly valid  and] strictly $f$- or $X$-distributive, so its subtree $\T'$ is also [strongly valid (lemma \ref{lem:subtree_strongly_valid}) and] strictly $f$-distributive (lemma \ref{lem:subtrees_f-distributive}). Finally, $\T_v$ demonstrates that such a tree exists.
\end{proof}

\begin{proof}[Proof of lemma \ref{lem:lotsa_trees}]
When $n=1$ the statement is vacuous: $M=M_1$ is tight, the unique operation tree is strongly valid and strictly $f$-distributive, and $V_\T$ is a bijection between empty sets. 
Proceeding by induction on $n$, consider a general $n$, and suppose the result holds for all smaller values of $n$.

Let $\T$ be a strongly valid and strictly $f$-distributive operation tree for $M$, which exists by assumption. By strict $f$-distributivity at $v_0$, there are precisely $n-1$ matched pairs at which $M$ is non-tight (i.e. twisted or critical). Let the two children of $v_0$ be $v_L$ and $v_R$, with labels $M_L = M_1 \otimes \cdots \otimes M_m$ and $M_R = M_{m+1} \otimes \cdots \otimes M_n$ respectively. Let $\T_L, \T_R$ be the operation subtrees of $\T$ below $v_L, v_R$ respectively. Then $\T_L, \T_R$ are strongly valid (lemma \ref{lem:subtree_strongly_valid}) and strictly $f$-distributive (lemma \ref{lem:subtrees_f-distributive}) operation trees for $M_L, M_R$ respectively. 

By lemma \ref{lem:V_bijection}, $V_\T$, $V_{\T_L}$ and $V_{\T_R}$ are all bijections, between sets of size $n-1$, $m-1$ and $n-m-1$, respectively; moreover $V_{\T_L}$ and $V_{\T_R}$ are restrictions of $V_\T$ (lemma \ref{lem:subtree_strongly_valid}) with disjoint domains (lemma \ref{lem:disjoint_subtrees_tightness}). Hence there is a unique matched pair $P_1$ such that $V_\T (P_1) = v_0$. Then $P_1$ is twisted in $M$ (definition \ref{def:pairs_to_vertices}), but tight in every other tensor product labelling a vertex. 


If $P_1 = P_0$ then we are done; so suppose that $P_1$ and $P_0$ are distinct. 
Then $V_\T (P_0) \neq V_\T (P_1) = v_0$, so $V_\T (P_0)$ is a vertex of $\T_L$ or $\T_R$. Suppose $V_\T (P_0)$ lies in $\T_L$; the $\T_R$ case is similar.

By lemma \ref{lem:induction_shortcut}(i), $M_L$ satisfies the hypotheses of this lemma. By induction there then exists a strongly valid, strictly $f$-distributive operation tree $\T'_L$ for $M_L$, such that $V_{\T'_L} (P_0) = v_L$. Transplanting this $\T'_L$ for $\T_L$ yields a strongly valid (by lemma \ref{lem:transplant_valid}) and strictly $f$-distributive (since strictly distributive at each vertex: definition \ref{def:strictly_distributive}) operation tree $\T'$ for $M$. Moreover, $V_{\T'_L}$ is a restriction of $V_{\T'}$ (lemma \ref{lem:subtree_strongly_valid}), so $V_{\T'} (P_0) = v_L$, and since $P_1$ is tight in $M_L$ and $M_R$, $V_{\T'} (P_1) = v_0$.

Let the children of $v_L$ be $v_{LL}$ and $v_{LR}$, and let their labels in $\T'$ be $M'_{LL} = M_1 \otimes \cdots \otimes M_k$ and $M'_{LR} = M_{k+1} \otimes \cdots \otimes M_m$. Denote the operation subtrees of $\T'$ (or $\T'_L$) below $v_{LL}, v_{LR}$ respectively by $\T'_{LL}, \T'_{LR}$. These are again strongly valid and strictly $f$-distributive (lemmas \ref{lem:subtree_strongly_valid} and \ref{lem:subtrees_f-distributive}).


By strict $f$-distributivity, $M'_{LL}$, $M'_{LR}$, $M_R$, $M$ have precisely $k-1$, $m-k-1$, $n-m-1$, $n-1$ non-tight matched pairs respectively. But since $\T'_{LL}, \T'_{LR}$ and $\T_R$ are disjoint subtrees (below $v_{LL}, v_{LR}, v_R$) of the strongly valid $\T'$, the sets of matched pairs at which $M'_{LL}, M'_{LR}, M_R$ are non-tight are also disjoint (lemma \ref{lem:disjoint_subtrees_tightness}). Their union consists of $(k-1)+(m-k-1)+(n-m-1) = n-3$ matched pairs, which remain non-tight in $M$ (lemma \ref{lem:tightness_homology_sub-tensor-product}). The two remaining non-tight matched pairs of $M$ are $P_0$ and $P_1$; these two pairs are tight in each of $M'_{LL}, M'_{LR}, M_R$ since $V_{\T'} (P_0) = v_L$ and $V_{\T'} (P_1) = v_0$.

Now perform a branch shift on $\T'$ (definition \ref{defn:branch_shift}) to obtain an operation tree $\T''$ for $M$. Its root has children $v''_L = v_{LL}$ and $v''_R$, and the children of $v''_R$ are $v''_{RL} = v_{LR}$ and $v''_{RR} = v_R$. Below $v''_L, v''_{RL}, v''_{RR}$ respectively we have $\T''_L = \T'_{LL}$, $\T''_{RL} = \T'_{LR}$, and $\T''_{RR} = \T_R$. The labels on $\T''$ are inherited from $\T'_{LL}, \T'_{LR}, \T'_R$, except that $v''_0$ is labelled $M$ and $v''_R$ is labelled with $M''_R = M_{k+1} \otimes \cdots \otimes M_n = M'_{LR} \otimes M_R$. In particular, $v''_L, v''_{RL}, v''_{RR}$ are respectively labelled with $M''_L = M'_{LL}$, $M''_{RL} = M'_{LR}$ and $M''_{RR} = M_R$.

We claim $\T''$ is valid. If $P$ is a matched pair non-tight in $M$, other than $P_0$ or $P_1$, then $P$ is twisted in the label of $V_{\T'} (P)$ (definition \ref{def:pairs_to_vertices}), which is a vertex of one of $\T'_{LL} = \T''_L$, $\T'_{LR} = \T''_{RL}$, or $\T_R = \T''_{RR}$. And $P_0, P_1$ are twisted in $M$, which is the label of the root. Thus for every matched pair $P$, there is a  vertex of $\T''$ whose label is twisted at $P$. By lemma \ref{lem:valid_twisted_vertex} then $\T''$ is valid. 

We also claim $\T''$ is distributive. Each vertex of $\T''$ which shares a label with a vertex of distributive tree $\T'$ is distributive. The only remaining vertex is $v''_R$, which has label $M''_R = M_{k+1} \otimes \cdots \otimes M_n = M'_{LR} \otimes M_R$. Each of the $(m-k-1)+(n-m-1)=n-k-2$ matched pairs $P$ such that $V_{\T'} (P)$ is a vertex of $\T'_{LR}$ or $\T_R$ is non-tight in $M'_{LR}$ or $M_R$, hence also in $M''_R = M'_{LR} \otimes M_R$ (lemma \ref{lem:tightness_homology_sub-tensor-product}). Since there are $n-k$ leaves below $v''_R$, and there are at least $n-k-2$ matched pairs at which $M''_R$ is twisted or critical, $v''_R$ is distributive, and the claim follows.

Since $\T''$ is valid and distributive, by assumption then $\T''$ is strongly valid and strictly $f$-distributive. 
Now $P_0$ is twisted in $M$ and satisfies $V_{\T'} (P_0) = v_L$, so $P_0$ is twisted in $M_L = M_1 \otimes \cdots \otimes M_m$, but tight in $M'_{LL} = M_1 \otimes \cdots \otimes M_k$ and $M'_{LR} = M_{k+1} \otimes \cdots \otimes M_m$.
Supposing without loss of generality that $P_0$ is twisted at $p_0$ in $M$, then the step $p_{0+}$ must be covered by one of $M_1, \ldots, M_k$, and the step $p_{0-}$ must be covered by one of $M_{k+1}, \ldots, M_m$, with no steps of $P$ covered by any of $M_{m+1}, \ldots, M_n$. Thus $P_0$ is tight in $M''_R = M_{k+1} \otimes \cdots \otimes M_n$, and in $M''_L = M_1 \otimes \cdots \otimes M_k$, the labels of $v''_L$ and $v''_R$; but $P_0$ is twisted in $M$, the label of $v_0$. So $V_{\T''} (P_0) = v_0$, and $\T''$ is the desired tree. By induction, the proof is complete.
\end{proof}

\begin{proof}[Proof of theorem \ref{thm:nonzero_f}]
We have verified the theorem in small cases, so suppose it is true for all $\fbar_k$ with $k<n$, and consider $\fbar_n$. 

By lemma \ref{lem:valid_not_strongly}, since there are no 11 doubly occupied pairs in $M$, validity and strong validity are equivalent; we use this fact repeatedly. Note that if any sub-tensor-product $M'$ of $M$ contains a 11 doubly occupied pair, then $M$ would contain one too; so validity and strong validity are also equivalent for operation trees of sub-tensor-products of $M$.

Our strategy is to compute $\Ubar_n (M)$ explicitly, and then compute $\fbar_n$, using the construction of corollary \ref{cor:main_cor}. Recall $\Ubar_n (M)$ is a sum of terms of the form $\fbar_i (M_1 \otimes \cdots \otimes M_i) \fbar_{n-i} (M_{i+1} \otimes \cdots \otimes M_n)$, and $\fbar_{n-j+1} (M_1 \otimes \cdots \otimes M_k \otimes X_j (M_{k+1} \otimes \cdots \otimes M_{k+j}) \otimes \cdots \otimes M_n)$.

The latter type of term is easiest to deal with: we claim they are all zero. Suppose to the contrary that $\fbar_{n-j+1} (M_1 \otimes \cdots \otimes M_k \otimes X_j (M_{k+1} \otimes \cdots \otimes M_{k+j} \otimes \cdots \otimes M_n) \neq 0$. Then by proposition \ref{prop:nonzero_implies_trees} there are valid distributive operation trees $\T_X$ for $M_{k+1} \otimes \cdots \otimes M_{k+j}$ and $\T_f$ for $M_1 \otimes \cdots \otimes M_k \otimes X_j (M_{k+1} \otimes \cdots \otimes M_{k+j}) \otimes \cdots \otimes X_n$. Grafting $\T_X$ onto $\T_f$ at position $k+1$ yields (lemma \ref{lem:graft_valid_distributive}) a valid distributive operation tree $\T_{fX}$ for $M$. By assumption then $\T_{fX}$ is strictly $f$-distributive. Applying strict distributivity to the vertex of $\T_{fX}$ corresponding to the root of $\T_X$, then $M_{k+1} \otimes \cdots \otimes M_{k+j}$ is twisted or critical at precisely $j-1$ matched pairs. But by theorem \ref{thm:Xn_structure}, $X_j (M_{k+1} \otimes M_{k+j}) \neq 0$ implies that there are precisely $j-2$ such pairs. This gives a contradiction, so all such terms are zero.

We now consider terms of the form $\fbar_i (M_1 \otimes \cdots \otimes M_i) \fbar_{n-i} (M_{i+1} \otimes \cdots \otimes M_n)$ which are nonzero. We will associate to them matched pairs at which $M$ is twisted and eventually obtain a bijection $F \colon A \To B$ where
\[
A = \{ i \mid \fbar_i (M_1 \otimes \cdots \otimes M_i) \fbar_{n-i} ( M_{i+1} \otimes \cdots \otimes M_n ) \neq 0 \}, \quad
B = \{ P \mid \text{$M$ is twisted at $P$} \}.
\]
So let $M' = M_1 \otimes \cdots \otimes M_i$ and $M'' = M_{i+1} \otimes \cdots \otimes M_n$ and suppose $\fbar_i (M') \fbar_{n-i} (M'') \neq 0$. By proposition \ref{prop:nonzero_implies_trees} there are valid (hence strongly valid: $M'$ and $M''$ are sub-tensor-products of $M$, so their validity and strong validity are equivalent) distributive trees $\T'$ for $M'$ and $\T''$ for $M''$. Joining these trees yields an operation  tree $\T$ for $M$ (definition \ref{def:joining_trees}), which is valid (hence strongly valid) and distributive (lemma \ref{lem:join_valid_distributive}(iii)), hence by hypothesis strictly $f$-distributive. We then have a bijection $V_\T$ between non-tight matched pairs of $M$ and non-leaf vertices of $\T$ (lemma \ref{lem:V_bijection}). Moreover, since $\T', \T''$ are subtrees of $\T$, they are also strictly $f$-distributive (lemma \ref{lem:subtrees_f-distributive}). Thus $M$, $M'$, $M''$ are twisted or critical at $n-1$, $i-1$, $n-i-1$ matched pairs respectively, and tight elsewhere. 

By lemma \ref{lem:induction_shortcut}, any valid distributive tree for $M'$ is strongly valid and strictly $f$-distributive; and similarly for $M''$. And since $M$ has no 11 doubly occupied matched pairs, neither do the sub-tensor-products $M'$ or $M''$. So the hypotheses of the theorem apply to $M'$ and $M''$. By induction then $\fbar_i (M')$ and $\fbar_{n-i} (M'')$ are given by single diagrams as described in the statement. Moreover, as $\T', \T''$ are disjoint subtrees of the strongly valid $\T$, the matched pairs at which $M', M''$ are non-tight are disjoint (lemma \ref{lem:disjoint_subtrees_tightness}). This yields $(i-1)+(n-i-1) = n-2$ matched pairs at which $M'$ or $M''$ is non-tight; such pairs are also non-tight in $M$ (lemma \ref{lem:tightness_homology_sub-tensor-product}). So there is precisely one matched pair $P_i$ at which $M$ is non-tight but $M'$ and $M''$ are tight. Then $V_\T (P_i)$ is the root vertex $v_0$, and $P_i$ is twisted in $M$ (by the comment after definition \ref{def:pairs_to_vertices}, or lemma \ref{lem:it_takes_3}). Indeed, $V_\T (P_i)$ is the root vertex, for any $\T$ arising as the join of valid distributive operation trees for $M'$ and $M''$. Define the function $F \colon A \To B$ by $F(i) = P_i$.

By induction $\fbar_i (M')$ (resp. $\fbar_{n-i} (M'')$) is given by a single diagram which is tight at all matched pairs where $M'$ (resp. $M''$) is tight or critical, and crossed at all matched pairs where $M'$ (resp. $M''$) is twisted. We now describe the diagram representing $\fbar_i (M') \fbar_{n-i} (M'')$, at each matched pair $P$. 

First, suppose $P$ is critical in $M$. Then $P \neq P_i$, so $P$ is non-tight in precisely one of $M'$ or $M''$. 
Considering the known description of $\fbar_i (M')$ and $\fbar_{n-i}(M'')$, we examine the various cases in the critical column of table \ref{tbl:local_tensor_products}, of which $M$ is an extension (lemma \ref{lem:homology_contractions}), and how the $P$-active factors can be distributed across $M'$ and  $M''$. We observe that in every case $\fbar_i (M') \fbar_{n-i} (M'')$ is tight at $P$. 

Second, suppose $P$ is a matched pair at which $M$ is twisted, other than $P_i$. Then $P$ is non-tight in precisely one of $M'$ or $M''$. Indeed, there are two $P$-active factors and they are both in $M'$, or both in $M''$. So $\fbar_i (M') \fbar_{n-i} (M'')$ at $P$ is the product of an all-on once occupied crossed diagram, and an idempotent, hence is crossed.

Third, suppose $P$ is tight in $M$. Then $P$ is also tight in $M'$ and $M''$ (lemma \ref{lem:local_tightness_homology_sub-tensor-product}), hence also in $\fbar_i (M')$ and $\fbar_{n-i} (M'')$ (by inductive assumption). So $\fbar_i (M') \fbar_{n-i} (M'')$ at $P$ is given by multiplying factors in a tight tensor product, hence is tight.

Finally, at $P_i$, $M'$ and $M''$ are both tight, but $M$ is twisted. 
Hence $P_i$ is 11 once occupied by $M$, with one step covered by $M'$, and the other by $M''$; by inductive assumption then $\fbar_i (M')$ and $\fbar_{n-i} (M'')$ are both tight at $P_i$, so $\fbar_i (M') \fbar_{n-i} (M'')$ is twisted at $P_i$.

To summarise: when $\fbar_i (M') \fbar_{n-i} (M'')$ is nonzero, there is a unique matched pair $P_i$ which is non-tight (in fact twisted) in $M$ but tight in $M'$ and $M''$; $V_\T (P_i)$ is the root vertex of $\T$; and $\fbar_i (M') \fbar_{n-i} (M'')$ is given by a single diagram which is twisted at $P_i$, crossed at all other matched pairs which are twisted in $M$, and tight at all other matched pairs. We set $F(i) = P_i$.


Now we claim that $F$ is injective. Consider another nonzero term $\fbar_j (M_1 \otimes \cdots \otimes M_j) \fbar_{n-j} (M_{j+1} \otimes \cdots \otimes M_n)$, where $i \neq j$. We consider the case $i<j$; the case $i>j$ is similar.
Applying the same argument as above, we obtain strongly valid and strictly $f$-distributive trees $\T_j, \T'_j, \T''_j$ for $M$, $M'_j = M_1 \otimes \cdots \otimes M_j$ and $M''_j = M_{j+1} \otimes \cdots \otimes M_n$ respectively. 
We also obtain the bijection $V_{\T_j}$ between non-tight matched pairs of $M$ and non-leaf vertices of $\T_j$. The matched pair $P_j$ has $V_{\T_j} (P_j)$ as the root of $\T_j$, and $F(j) = P_j$. We will show that $P_i \neq P_j$.

Now in the valid distributive tree $\T$ constructed above for $\fbar_i (M') \fbar_{n-i} (M'')$, let $v$ be the lowest common ancestor of the leaves labelled $M_j$ and $M_{j+1}$. Let $P$ be the matched pair such that $V_\T (P) = v$ (well-defined since $V_\T$ is bijective). Since $i<j$, $v$ is a vertex of $\T''$, hence not the root, so $P \neq P_i$. The label $M_v$ of $v$ is then twisted at $P$ (definition \ref{def:pairs_to_vertices}), say at the place $p$.
So some $M_a$, with 
$a \leq j$, covers the step $p_+$, and some $M_b$, with $j+1 \leq b$, 
covers $p_-$. As $M$ contains no 11 doubly occupied pairs, any sub-tensor-product of $M$ which is twisted at $P$ must contain $M_a$ and $M_b$.

Now consider $V_{\T_j} (P)$, a vertex of $\T_j$; call its label $M_\#$. Then $M_\#$ is twisted at $P$ (definition \ref{def:pairs_to_vertices}), so $M_\#$ contains $M_a$ and $M_b$ as tensor factors. But since $a \leq j$ and $b \geq j+1$, $M_\#$ cannot be a sub-tensor-product of $M'_j$ or $M''_j$; thus $M_\# = M$ and $V_{\T_j} (P)$ is the root vertex. Thus $P = P_j$. As $P \neq P_i$ then $P_i \neq P_j$. Thus $F$ is injective.

We now show $F$ is surjective. Take a matched pair $P$ at which $M$ is twisted; we will show $P=P_i$ for some $i$. By lemma \ref{lem:lotsa_trees} (which, as discussed above, has weaker hypotheses than the present theorem) there is a strongly valid, strictly $f$-distributive operation tree $\T^*$ for $M$ such that $V_{\T^*} (P)$ is the root vertex $v_0^*$ of $\T^*$. Let the children of $v_0^*$ be $v_L^*, v_R^*$, with labels $M_L^* = M_1 \otimes \cdots \otimes M_i$ and $M_R^* = M_{i+1} \otimes \cdots \otimes M_n$ respectively. Then by definition of $V_{\T^*}$, $P$ is tight in $M_L^*$ and $M_R^*$. By lemma \ref{lem:induction_shortcut}, $M_L^*$ and $M_R^*$ satisfy condition (i) of the present theorem; and as $M_L^*$ and $M_R^*$ are sub-tensor-products of $M$, which has no 11 doubly occupied pairs, they satisfy condition (ii) also. So by induction $\fbar_i (M_L^*)$ and $\fbar_{n-i} (M_R^*)$ are both nonzero, given by single diagrams as described in the statement. By lemma \ref{lem:disjoint_subtrees_tightness} they are non-tight at disjoint matched pairs. Examining the various possible cases at each matched pair (just as we did for $\fbar_i (M') \fbar_{n-i} (M'')$ a few paragraphs ago), we conclude that $\fbar_i (M_L^*) \fbar_{n-i} (M_R^*) \neq 0$. Since $P$ is non-tight in $M$ but tight in $M_L^*$ and $M_R^*$, we have $P=P_i$. So $F$ is surjective, hence a bijection.

Returning to $\Ubar_n (M)$, we now see that each nonzero term of $\Ubar_n (M)$ is of the form $\fbar_i (M') \fbar_{n-i} (M'')$, and these terms correspond bijectively to the matched pairs $P_i$ at which $M$ is twisted. In fact $\fbar_i (M') \fbar_{n-i} (M'')$ is twisted at $P_i$, and crossed at all other matched pairs where $M$ is twisted.

We also observe that $X_n (M) = 0$, since $M$ has precisely $n-1$ non-tight matched pairs, by theorem \ref{thm:Xn_structure}. Thus, following the construction of corollary \ref{cor:main_cor} and the discussion of section \ref{sec:creation_operators}, 
\[
\fbar_n (M) = \Abar^*_{\mathcal{CR}^\preceq} \Ubar_n (M).
\]
By definition \ref{def:choice_functions_from_ordering}, $A^*_{\mathcal{CR}^\preceq}$ applies a creation operator at $P_{min}$, where $P_{min}$ is the $\preceq$-minimal matched pair among pairs where $M$ is twisted.

We observe that there is precisely one diagram in $\Ubar_n (M)$ which is twisted at $P_{min}$, namely $\fbar_i \fbar_{n-i}$ where $i = F^{-1} (P_{min})$, i.e. where $P_i = P_{min}$. Applying $\Abar^*_{\mathcal{CR}^\preceq} = \Abar^*_{P_{min}}$ inserts a crossing at $P_{min}$ to this diagram. All the other diagrams in $\Ubar_n (M)$ are crossed at $P_{min}$, and applying the creation operator gives zero.

We conclude that $\fbar_n (M)$ is given by a single diagram, crossed at all matched pairs where $M$ is twisted, and tight elsewhere, as desired.
\end{proof}

\begin{proof}[Proof of theorem \ref{thm:nonzero_X}]
As there are no 11 occupied matched pairs, by lemma \ref{lem:valid_not_strongly}, validity and strong validity are equivalent.

By lemma \ref{lem:Xn_directly} (since all the maps $f_k$ in the pair ordering construction are balanced), $X_n (M)$ is represented by the sum of all terms of the form $\fbar_i (M_1 \otimes \cdots \otimes M_i) \fbar_{n-i} (M_{i+1} \otimes \cdots \otimes M_n)$.

Let $\T$ be a valid and strictly $X$-distributive operation tree for $M$, which exists by hypothesis. Let its root vertex be $v_0$, with children $v_L, v_R$ labelled $M_L = M_1 \otimes \cdots \otimes M_i$, $M_R = M_{i+1} \otimes \cdots \otimes M_n$. Let $\T_L, \T_R$ be the subtrees below $v_L, v_R$ respectively.

By lemma \ref{lem:induction_shortcut}, $M_L$ and $M_R$ satisfy condition (i) of theorem \ref{thm:nonzero_f}; and being sub-tensor-products of $M$, which has no 11 doubly occupied pairs, $M_L$ and $M_R$ also satisfy condition (ii). So by theorem \ref{thm:nonzero_f}, $\fbar_i (M_L)$ and $\fbar_{n-i} (M_R)$ are both nonzero, given by single diagrams. Since $\T$ is strictly $X$-distributive, $M_L$ and $M_R$ have $i-1, n-i-1$ non-tight matched pairs respectively. These sets of non-tight matched pairs are distinct by lemma \ref{lem:disjoint_subtrees_tightness}, 
and also non-tight in $M$ (lemma \ref{lem:tightness_homology_sub-tensor-product}); hence they provide $n-2$ distinct non-tight matched pairs in $M$. By strict $X$-distributivity of $\T$, $M$ has precisely $n-2$ non-tight matched pairs, so each non-tight matched pair of $M$ is non-tight in precisely one of $M_L$ or $M_R$.

By theorem \ref{thm:nonzero_f}, $\fbar_i (M_L)$ (resp. $\fbar_{n-i} (M_R)$) is crossed at every matched pair where $M_L$ (resp. $M_R$) is twisted, and elsewhere tight. Thus at every non-tight (hence critical; twisted pairs are ruled out by hypothesis) matched pair of $M$, precisely one of $M_L, M_R$ is non-tight (twisted or critical), and the other is tight. If one of $M_L, M_R$ is critical and the other is tight, then $\fbar_i (M_L)$ and $\fbar_{n-i} (M_R)$ are tight, and by reference to table \ref{tbl:local_tensor_products} or otherwise, $\fbar_i (M_l) \otimes \fbar_{n-i} (M_R)$ is tight. If one of $M_L, M_R$ is twisted and the other is tight, then one of $\fbar_i (M_L), \fbar_{n-i} (M_R)$ is crossed, and the other is tight, so again by reference to table \ref{tbl:local_tensor_products} or otherwise, $\fbar_i (M_L) \otimes \fbar_{n-i} (M_R)$ is sublime. Either way, $\fbar_i (M_L) \fbar_{n-i} (M_R)$ is tight at each non-tight matched pair of $M$. At tight matched pairs of $M$, $\fbar_i (M_L)$ and $\fbar_i (M_R)$ are both tight, with tight product. So $\fbar_i (M_L) \fbar_{n-i} (M_R)$ is the unique tight diagram with the same H-data as $M$.

Now let $P$ be a non-tight matched pair of $M$. By assumption, $P$ is critical, but not 11 doubly occupied. Thus, by reference to table \ref{tbl:local_tensor_products}, $P$ is sesqui-occupied or 00 doubly occupied and $M_P$ is an extension of one of the corresponding critical diagrams shown there (proposition \ref{lem:homology_contractions}). In particular, there is precisely one place $p$ of $P$ such that the steps $p_+$ and $p_-$ are covered by some $M_a$ and $M_b$ respectively, where $a<b$. We call these the \emph{principal factors} of $P$. Now if $a \leq i < i+1 \leq b$, then considering the various cases of table \ref{tbl:local_tensor_products}, $P$ is singular in $M_L$ or $M_R$, contradicting validity of $\T$. Thus $a,b$ are both $\leq i$, or both $\geq i+1$. In other words, for any non-tight matched pair of $M$, its principal factors have positions which are both $\leq i$, or both $\geq i+1$; they do not cross the $i$'th position.

On the other hand, we claim that, for any for any $1 \leq j \leq n-1$ with $j \neq i$, there is a non-tight matched pair of $P$ whose principal factors have positions $\leq j$ and $\geq j+1$; they \emph{do} cross the $j$'th position. To see this, let $w$ be the least common ancestor of the leaves labelled $M_j$ and $M_{j+1}$. Then $w$ lies in $\T_L$ or $\T_R$, accordingly as $i>j$ or $i<j$.
We suppose $i<j$, so $w \in \T_R$; the $\T_L$ case is similar. Clearly $w$ is neither a leaf nor root, so by lemma \ref{lem:V_bijection}, there is a unique matched pair $P$ such that $V_\T (P) = w$. Let the principal factors of $P$ be $M_a$ and $M_b$, where $a<b$. Letting $M_w$ denote the label of $w$, then $M_w$ is twisted at $P$. Letting $w_L, w_R$ denote the children of $v$, their labels are tight at $P$. The label on $w_L$ contains $M_a$, so by construction $a \leq j$. Similarly the label of $w_R$ contains $M_b$, and $j+1 \leq b$. So the two principal factors have positions with are $\leq j$ and $\geq j+1$ respectively.

It follows that for any $j \neq i$, we must have $\fbar_j (M_1 \otimes \cdots \otimes M_j) \fbar_{n-j} (M_{j+1} \otimes \cdots \otimes M_n) = 0$. For if this product were nonzero, then we could repeat the argument above and find that no non-tight matched pair of $M$ has principal factors whose positions cross the $j$'th position, contradicting the previous paragraph.

We conclude that $X_n (M)$ is the homology class of the single diagram $\fbar_i (M_L) \fbar_{n-i} (M_R)$, which has the desired properties.
\end{proof}

\addcontentsline{toc}{section}{References}

\small

\bibliography{A-infinity_strands}

\providecommand{\bysame}{\leavevmode\hbox to3em{\hrulefill}\thinspace}
\providecommand{\MR}{\relax\ifhmode\unskip\space\fi MR }
\providecommand{\MRhref}[2]{%
  \href{http://www.ams.org/mathscinet-getitem?mr=#1}{#2}
}
\providecommand{\href}[2]{#2}
\begin{thebibliography}{10}

\bibitem{cooper_formal_2015}
Benjamin Cooper, \emph{Formal {Contact} {Categories}},  (2015), arXiv:
  1511.04765.

\bibitem{eliashberg_classification_1989}
Y.~Eliashberg, \emph{Classification of overtwisted contact structures on
  3-manifolds}, Inventiones Mathematicae \textbf{98} (1989), no.~3, 623--637.

\bibitem{hilton_course_1971}
Peter~John Hilton and Urs Stammbach, \emph{A course in homological algebra},
  Springer-Verlag, New York-Berlin, 1971.

\bibitem{honda_classification_2000}
Ko~Honda, \emph{On the classification of tight contact structures. {I}},
  Geometry and Topology \textbf{4} (2000), 309--368.

\bibitem{honda_gluing_2002}
\bysame, \emph{Gluing tight contact structures}, Duke Mathematical Journal
  \textbf{115} (2002), no.~3, 435--478.

\bibitem{honda_contact_2016}
Ko~Honda and Yin Tian, \emph{Contact categories of disks},  (2016), arXiv:
  1608.08325.

\bibitem{huebschmann_construction_2010}
Johannes Huebschmann, \emph{On the construction of {A}-infinity structures},
  Georgian Mathematical Journal \textbf{17} (2010), no.~1, 161--202.

\bibitem{kadeishvili_homology_1980}
T.~V. Kadeishvili, \emph{On the homology theory of fibre spaces}, Russian
  Mathematical Surveys \textbf{35} (1980), no.~3, 231 (en).

\bibitem{keller_introduction_1999}
Bernhard Keller, \emph{Introduction to {A}-infinity algebras and modules},
  (1999), arXiv:math/9910179.

\bibitem{kontsevich_homological_2001}
Maxim Kontsevich and Yan Soibelman, \emph{Homological mirror symmetry and torus
  fibrations}, Symplectic geometry and mirror symmetry ({Seoul}, 2000), World
  Sci. Publ., River Edge, NJ, 2001, pp.~203--263.

\bibitem{lipshitz_bordered_2008}
Robert Lipshitz, Peter~S. Ozsv\'{a}th, and Dylan~P. Thurston, \emph{Bordered
  {Heegaard} {Floer} homology: {Invariance} and pairing},  (2008), arXiv:
  0810.0687.

\bibitem{lipshitz_slicing_2008}
\bysame, \emph{Slicing planar grid diagrams: a gentle introduction to bordered
  {Heegaard} {Floer} homology},  (2008), arXiv: 0810.0695.

\bibitem{lipshitz_notes_2012}
\bysame, \emph{Notes on bordered {Floer} homology},  (2012), arXiv: 1211.6791.

\bibitem{lipshitz_bimodules_2015}
\bysame, \emph{Bimodules in bordered {Heegaard} {Floer} homology}, Geometry \&
  Topology \textbf{19} (2015), no.~2, 525--724. \MR{3336273}

\bibitem{mathews_chord_2010}
Daniel~V. Mathews, \emph{Chord diagrams, contact-topological quantum field
  theory and contact categories}, Algebraic \& Geometric Topology \textbf{10}
  (2010), no.~4, 2091--2189. \MR{2745667}

\bibitem{mathews_sutured_2011}
\bysame, \emph{Sutured {Floer} homology, sutured {TQFT} and non-commutative
  {QFT}}, Algebraic \& Geometric Topology \textbf{11} (2011), no.~5,
  2681--2739.

\bibitem{mathews_sutured_2013}
\bysame, \emph{Sutured {TQFT}, torsion and tori}, International Journal of
  Mathematics \textbf{24} (2013), no.~5, 1350039, 35. \MR{3070808}

\bibitem{mathews_contact_2014}
\bysame, \emph{Contact topology and holomorphic invariants via elementary
  combinatorics}, Expositiones Mathematicae \textbf{32} (2014), no.~2,
  121--160.

\bibitem{mathews_itsy_2014}
\bysame, \emph{Itsy bitsy topological field theory}, Annales Henri Poincaré. A
  Journal of Theoretical and Mathematical Physics \textbf{15} (2014), no.~9,
  1801--1865. \MR{3245886}

\bibitem{mathews_twisty_2014}
\bysame, \emph{Twisty itsy bitsy topological field theory}, International
  Journal of Mathematics \textbf{25} (2014), no.~10, 1450097, 68. \MR{3275825}

\bibitem{mathews_strand_2016}
\bysame, \emph{Strand algebras and contact categories},  (2016), arXiv:
  1608.02710.

\bibitem{mathews_strings_2017}
\bysame, \emph{Strings, fermions and the topology of curves on annuli}, The
  Journal of Symplectic Geometry \textbf{15} (2017), no.~2, 421--506.
  \MR{3683789}

\bibitem{mathews_dimensionally_2015}
Daniel~V. Mathews and Eric Schoenfeld, \emph{Dimensionally reduced sutured
  {Floer} homology as a string homology}, Algebraic \& Geometric Topology
  \textbf{15} (2015), no.~2, 691--731. \MR{3342673}

\bibitem{nikolov_curved_2013}
Nikolay~M. Nikolov and Svetoslav Zahariev, \emph{Curved {A}-infinity algebras
  and {Chern} classes}, Pure and Applied Mathematics Quarterly \textbf{9}
  (2013), no.~2, 333--369.

\bibitem{seidel_fukaya_2008}
Paul Seidel, \emph{Fukaya categories and {Picard}-{Lefschetz} theory}, Zurich
  {Lectures} in {Advanced} {Mathematics}, European Mathematical Society (EMS),
  Zürich, 2008.

\bibitem{stasheff_homotopy_1963}
James~Dillon Stasheff, \emph{Homotopy {Associativity} of {H}-{Spaces}. {I}},
  Transactions of the American Mathematical Society \textbf{108} (1963), no.~2,
  275--292.

\bibitem{stasheff_homotopy_1963-1}
\bysame, \emph{Homotopy {Associativity} of {H}-{Spaces}. {II}}, Transactions of
  the American Mathematical Society \textbf{108} (1963), no.~2, 293--312.

\bibitem{tian_categorification_2012}
Yin Tian, \emph{A categorification of ${U}_q \mathfrak{sl}(1,1)$ as an
  algebra},  (2012), arXiv: 1210.5680.

\bibitem{tian_categorification_2014}
\bysame, \emph{A categorification of ${U}_{T} (\mathfrak{sl}(1 | 1))$ and its
  tensor product representations}, Geometry \& Topology \textbf{18} (2014),
  no.~3, 1635--1717. \MR{3228460}

\bibitem{zarev_bordered_2009}
Rumen Zarev, \emph{Bordered {Floer} homology for sutured manifolds},  (2009),
  arXiv: 0908.1106.

\bibitem{zarev_joining_2010}
\bysame, \emph{Joining and gluing sutured {Floer} homology},  (2010), arXiv:
  1010.3496.

\end{thebibliography}
\bibliographystyle{amsplain}

\end{document}